\newtheorem{theorem}{Theorem}[section]
\newtheorem{proposition}[theorem]{Proposition}
\newtheorem{lemma}[theorem]{Lemma}
\theoremstyle{definition}
\providecommand{\customgenericname}{}
\newcommand{\newcustomtheorem}[2]{\newenvironment{#1}[1]
  {\renewcommand\customgenericname{#2}
   \renewcommand\theinnercustomgeneric{##1}\innercustomgeneric}{\endinnercustomgeneric}}
\newcommand{\newcustomlemma}[2]{\newenvironment{#1}[1]
  {\renewcommand\customgenericname{#2}
   \renewcommand\theinnercustomgeneric{##1} \innercustomgeneric}{\endinnercustomgeneric}}
\numberwithin{equation}{section}
\newcommand{\rr}{\mathbb{R}}
\newcommand{\nn}{\mathbb{N}}
\newcommand{\rn}{\mathbb{R}^n}
\newcommand{\zz}{\mathbb{Z}}
\newcommand{\zn}{\mathbb{Z}^n}
\def\SS{{\mathscr{S}}}
\def\aaa{\vec{\boldsymbol{\alpha}}}
\def\xxxi{\vec{\boldsymbol{\xi}}}
\def\|{{\boldsymbol{|}}}
\def\fff{\vec{\boldsymbol{f}}}
\def\vv{\vec{\boldsymbol{v}}}
\def\yyy{\vec{\boldsymbol{y}}}
\def\ppp{\vec{\boldsymbol{p}}}
\def\sss{\vec{\boldsymbol{s}}}
\newcommand{\wt}{\widetilde}
\newcommand{\wh}{\widehat}
\begin{document}

\author{Loukas Grafakos}
\address{L. Grafakos, Department of Mathematics, University of Missouri, Columbia, MO 65211, USA} 
\email{grafakosl@missouri.edu}

\author{Bae Jun Park}
\address{B. Park, School of Mathematics, Korea Institute for Advanced Study, Seoul 02455, Republic of Korea}
\email{qkrqowns@kias.re.kr}

\thanks{The first author would like to acknowledge the support of  the Simons Foundation grant 624733. The second author is supported in part by NRF grant 2019R1F1A1044075 and by a KIAS Individual Grant MG070001 at the Korea Institute for Advanced Study}

\title[Multilinear multiplier theorem]{Characterization  of  
multilinear multipliers in terms of Sobolev space regularity}

\subjclass[2010]{Primary 42B15, 42B25}
\keywords{Multilinear operators, H\"ormander's multiplier theorem}

\begin{abstract} 
We provide necessary and sufficient conditions for multilinear multiplier operators with symbols 
in $L^r$-based product-type Sobolev spaces uniformly over all annuli  to be bounded 
from products of Hardy spaces to a Lebesgue space. We   consider the case $1<r\le 2$ and
we   characterize   boundedness in terms of inequalities relating 
the Lebesgue indices (or Hardy indices), the dimension, and the regularity and integrability indices of the Sobolev space. The case $r>2$ cannot be handled by known techniques and remains open.  Our result not only  extends but also establishes  the sharpness of 
previous results of   Miyachi, Nguyen,   Tomita, and the first author  
\cite{Gr_Mi_Tom, Gr_Mi_Ng_Tom, Gr_Ng, Mi_Tom}, who only considered the case $r=2$.  
\end{abstract}

\maketitle

\section{Introduction}\label{intro}

Given a bounded function $\sigma$ on $\rn$, the linear Fourier multiplier operator $T_{\sigma}$ acting on a 
Schwartz function $f$ is given by
\begin{equation*}
T_{\sigma}f(x):=\int_{\rn}{\sigma(\xi)\wh{f}(\xi)e^{2\pi i\langle x,\xi\rangle}}d\xi ,
\end{equation*}
where $\wh{f}(\xi):=\int_{\rn}{f(x)e^{-2\pi i\langle x,\xi\rangle}}dx$ is the Fourier transform of $f$.
The classical Mikhlin multiplier theorem \cite{Mik} states that $T_{\sigma}$ admits an $L^p$-bounded extension for $1<p<\infty$ whenever
\begin{equation*}
\big| \partial_{\xi}^{\alpha}\sigma(\xi)\big|\lesssim_{\alpha}|\xi|^{-|\alpha|}, \quad \xi\not= 0
\end{equation*} for all multi-indices $\alpha$ with $|\alpha|\leq [n/2]+1$. H\"ormander \cite{Ho} refined this result by introducing the weaker condition
\begin{equation}\label{Hocondition}
\sup_{j\in\zz}{\big\Vert \sigma(2^j\cdot)\wh{\psi}\big\Vert_{L_s^2(\rn)}}<\infty
\end{equation} for $s>n/2$, where $L_s^2(\rn)$ denotes the standard fractional Sobolev space of order $s$ on $\rn$ and $\psi$ is a Schwartz function on $\rn$ whose Fourier transform is supported in the annulus $1/2<|\xi|<2$ and satisfies $\sum_{j\in\zz}{\wh{\psi}(\xi/2^j)}=1$ for $\xi\not= 0$.
Calder\'on and Torchinsky \cite{Ca_To} proved that if (\ref{Hocondition}) holds for $s>n/p-n/2$, then $T_{\sigma}$ is bounded on $H^p(\rn)$ for $0<p\leq 1$.
They also showed that $L_s^2$ in (\ref{Hocondition}) can be replaced by $L_s^r$ for the $L^p$-boundedness, using a complex interpolation method, and their assumptions were weakened by Grafakos, He, Honz\'ik, and Nguyen \cite{Gr_He_Ho_Ng}. 

The  multilinear counterparts of the Fourier multiplier theory have analogous formulations but substantially more complicated proofs. Let $m$ be a positive integer greater than $1$; this index will serve as the degree of the 
multilinearity of a Fourier multiplier. 
For a bounded function $\sigma$ on $(\rn)^m$ we define the corresponding $m$-linear multiplier operator $T_{\sigma}$ by
\begin{equation*}
T_{\sigma}\big(f_1,\dots,f_m \big)(x):={\int_{(\rn)^m}{\sigma(\xxxi)\Big(\prod_{j=1}^{m}\widehat{f_j}(\xi_j)\Big)e^{2\pi i\langle x,\sum_{j=1}^{n}{\xi_j} \rangle}}d\xxxi}
\end{equation*} for Schwartz functions $f_j$ on $\rn$, where $\xxxi:=(\xi_1,\dots,\xi_m)$ and $d\xxxi:=d\xi_1\cdots d\xi_m$.
As a multilinear extension of Mikhlin's result, Coifman and Meyer \cite{Co_Me2} proved that if $L$ is sufficiently large and $\sigma$ satisfies
\begin{equation*}
\big| \partial_{\xi_1}^{\alpha_1} \cdots\partial_{\xi_m}^{\alpha_m}\sigma(\xi_1,\dots,\xi_m)\big|\lesssim_{\alpha_1,\dots,\alpha_m}\big(|\xi_1|+\dots+|\xi_m|\big)^{-(|\alpha_1|+\dots +|\alpha_m|)}
\end{equation*} 
for multi-indices $\alpha_1,\dots,\alpha_m$ satisfying  $|\alpha_1|+\dots+|\alpha_m|\leq L$, then $T_{\sigma}$ is bounded from $L^{p_1}\times \cdots\times L^{p_m}$ to $L^p$ for all $1<p_1,\dots,p_m\leq \infty$ and $1<p<\infty$ with $1/p_1+\dots+1/p_m=1/p$. This result was extended to $p\leq 1$ by Kenig and Stein \cite{Ke_St} and Grafakos and Torres \cite{Gr_To}.

Let $\Psi^{(m)}$ be the $m$-linear counterpart of $\psi$. That is, $\Psi^{(m)}$ is a Schwartz function on $(\rn)^m$ having the properties:  
\begin{equation*}
\textup{Supp}(\widehat{\Psi^{(m)}})\subset \big\{\xxxi\in (\rn)^m: 1/2\leq |\xxxi|\leq 2 \big\}, \qquad \sum_{j\in\zz}{\widehat{\Psi^{(m)}}(\xxxi/2^j)}=1, \quad \xxxi\not= \vec{\boldsymbol{0}}.
\end{equation*}
Let $ (\vec{I}-\vec{\Delta})^{s/2}F=\Big( \big(1+4\pi^2( |\cdot_1|^2 +\dots+|\cdot_m|^2)\big)^{s/2}\wh{F}\Big)^{\vee}$ for 
a nice function on $  (\rn)^m$, where $F^{\vee}(\xxxi):=\wh{F}(-\xxxi)$ is the inverse Fourier transform of $F$. 
For $s\geq 0$ and $0<r<\infty$ we define the Sobolev space $L_s^r((\rn)^m)$ in terms of the finiteness of the norm: 
\begin{equation}\label{HHocondition}
\Vert F\Vert_{L_s^r((\rn)^m)}:=\big\Vert  (\vec{I}-\vec{\Delta})^{s/2}F\big\Vert_{L^r((\rn)^m)}. 
\end{equation}

Tomita \cite{Tom} was the first to obtain an $L^{p_1}\times\cdots\times L^{p_m}$ to $L^p$ boundedness for  $T_{\sigma}$ in the range  
 $1<p_1,\dots,p_m,p<\infty$,   under a condition analogous to (\ref{Hocondition}) for the 
 Sobolev space $L_s^r((\rn)^m)$. Grafakos and Si \cite{Gr_Si} extended this result to $p\leq 1$  using $L^r$-based Sobolev norms of $\sigma$ for $1<r\leq 2$:  
 
\begin{customthm}{A}(\cite{Gr_Si}) \label{knownresult1}
Let $1<r\leq 2$, $r\leq p_1,\dots,p_m<\infty$, $0<p<\infty$, and $1/p_1+\dots+1/p_m=1/p$. 
Suppose that
\begin{equation*}
s>mn/r.
\end{equation*}
If $\sigma$ satisfies 
\begin{equation}\label{Sob9988}
\sup_{j\in \zz}\big\Vert \sigma(2^j\cdot_1,\dots,2^j\cdot_m)\wh{\Psi^{(m)}}\big\Vert_{L^{r}_s((\rn)^m)}<\infty,
\end{equation} 
then we have
\begin{equation*}
\big\Vert T_{\sigma}\big(f_1,\dots,f_m \big) \big\Vert_{L^p(\rn)}\lesssim \sup_{j\in \zz}\big\Vert \sigma(2^j\cdot_1,\dots,2^j\cdot_m)\wh{\Psi^{(m)}}\big\Vert_{L^{r}_s((\rn)^m)} \prod_{i=1}^{m}{\Vert f_i\Vert_{L^{p_i}(\rn)}}
\end{equation*}
for   functions $f_1,\dots,f_m\in \SS(\rn)$.
\end{customthm} 

In the preceding  theorem and in the rest of this paper, 
$\SS(\rn)$ denotes   the space of all Schwartz functions on $\rn$.

The standard Sobolev space in \eqref{Sob9988} in many recent multiplier results is replaced by  
 a product type Sobolev space where the different powers of the Laplacian fall on different  variables
 $\xi_i\in \rn$.  For $s_1,\dots,s_m\geq 0$ and a function $F$ on $(\rn)^m$ let
\begin{equation*}
(I-\Delta_1)^{s_1/2}\cdots (I-\Delta_m)^{s_m/2}F:=\big((1+4\pi^2|\cdot_1|^2)^{s_1/2}\cdots (1+4\pi^2|\cdot_m|^2)^{s_m/2}\wh{F}\big)^{\vee}
\end{equation*} 
and for $0<r<\infty$ and $\sss:=(s_1,\dots,s_m)$,   define
\begin{equation*}
\Vert F\Vert_{L_{\sss}^{r}((\rn)^m)}:=\big\Vert (I-\Delta_1)^{s_1/2}\cdots (I-\Delta_m)^{s_m/2} F\big\Vert_{L^r((\rn)^m)}. 
\end{equation*} 
Here $\Delta_i$ is the Laplacian acting in the $i$th variable and $s_i\ge 0$. 
 Note that for $1<r<\infty$ and $s\geq s_1+\dots+s_m$, we have
\begin{equation}\label{pgcompare}
\big\Vert F \big\Vert_{L_{(s_1,\dots,s_m)}^{r}((\rn)^m)}\lesssim \big\Vert F \big\Vert_{L_s^{r}((\rn)^m)}. 
\end{equation}
When $r=2$, this is an immediate consequence of the pointwise estimate
\begin{equation*}
(1+4\pi^2|\xi_1|^2)^{s_1/2}\cdots (1+4\pi^2|\xi_m|^2)^{s_m/2}\leq (1+4\pi^2(|\xi_1|^2+\dots+|\xi_m|^2))^{s/2}
\end{equation*} for $s\geq s_1+\dots+s_m$.
In general, it follows from the fact that for $s\geq s_1+\dots+s_m$
\begin{equation*}
\mathcal{N}(\xi_1,\dots,\xi_m):=\frac{(1+4\pi^2|\xi_1|^2)^{s_1/2}\cdots (1+4\pi^2|\xi_m|^2)^{s_m/2}}{(1+4\pi^2(|\xi_1|^2+\dots+|\xi_m|^2))^{s/2}}
\end{equation*}
is an $L^r((\rn)^m)$-multiplier for $1<r<\infty$ due to the Marcinkiewicz multiplier theorem, which proves 
the inequality below
\begin{align*}
\Vert F\Vert_{L_{(s_1,\dots,s_m)}^r((\rn)^m)}&=\big\Vert (I_1-\Delta_1)^{s_1/2}\cdots (I_m-\Delta_m)^{s_m/2}F\big\Vert_{L^r((\rn)^m)}\\
&=\big\Vert T_{\mathcal{N}}(\vec{I}-\vec{\Delta})^{s/2}F\big\Vert_{L^r((\rn)^m)}\\
&\lesssim \big\Vert    (\vec{I}-\vec{\Delta})^{s/2}F  \big\Vert_{L^r((\rn)^m)}=\Vert F\Vert_{F_s^r((\rn)^m)},
\end{align*} 
where $T_{\mathcal{N}}$ is the multiplier operator associated with $\mathcal{N}$.

For a function $\sigma $ on $(\rn)^m$, 
throughout this work we will  use the notation: 
\begin{equation*}
\mathcal{L}_{\sss}^{r,\Psi^{(m)}}[\sigma]:=\sup_{j\in\zz }{\big\Vert \sigma(2^j\cdot_{1},\dots,2^j\cdot_{m})\wh{\Psi^{(m)}}\big\Vert_{L_{\sss}^r((\rn)^m)}}.
\end{equation*}
Research work has also  focused on boundedness properties of
 $T_\sigma$ under the assumption    
$\mathcal{L}_{\sss}^{r,\Psi^{(m)}}[\sigma]<\infty$ for given $\sss$. Under this assumption with $r=2$, 
Fujita and Tomita \cite{Fu_Tom1} provided weighted estimates for $T_\sigma$. Miyachi and Tomita \cite{Mi_Tom} obtained boundedness for bilinear multipliers (i.e., $m=2$) in the full range of indices $0<p,p_1,p_2\leq \infty$  extending a result of Calder\'on and Torchinsky \cite{Ca_To} to the bilinear setting; here Lebesgue spaces in the domain are replaced by Hardy spaces when $p_i\le 1$.  Multilinear extensions were later  provided by Grafakos, Miyachi, and Tomita \cite{Gr_Mi_Tom}, Grafakos and Nguyen \cite{Gr_Ng}, Grafakos, Miyachi, Nguyen, and Tomita \cite{Gr_Mi_Ng_Tom}, but all these results were proved only in the case $r=2$. 
We review most of these results in one formulation:

\begin{customthm}{B}(\cite{Gr_Mi_Tom, Gr_Mi_Ng_Tom, Gr_Ng, Mi_Tom})\label{knownresult2}
Let $0<p_1,\dots,p_m\leq \infty$, $0<p<\infty$, and $1/p_1+\dots+1/p_m=1/p$. Suppose that
\begin{equation}\label{minimal0}
s_1,\dots,s_m>n/2,\qquad \sum_{k\in J}\big({s_k}/{n}-{1}/{p_k} \big)>-{1}/{2}
\end{equation}
for every nonempty subset $J$ of $ \{1,\dots,m\}$.
If $\sigma$ satisfies $\mathcal{L}_{\sss}^{2,\Psi^{(m)}}[\sigma]<\infty$, then we have
\begin{equation}\label{boundresult0}
\big\Vert T_{\sigma}\big(f_1,\dots,f_m \big) \big\Vert_{L^p(\rn)}\lesssim \mathcal{L}_{\sss}^{2,\Psi^{(m)}}[\sigma] \prod_{i=1}^{m}{\Vert f_i\Vert_{H^{p_i}(\rn)}}
\end{equation}
for Schwartz functions $f_1,\dots,f_m\in \SS(\rn)$.
\end{customthm} 
Here and in the sequel,  $H^p(\rn)$ denotes the classical real Hardy space of Fefferman and Stein \cite{Fe_St2}. 
This space is defined for $ 0<p\leq \infty$ and coincides with $L^p(\rn)$ for $1<p\leq \infty$. \\

The optimality of (\ref{minimal0}) was also studied in \cite{Gr_Ng, Gr_Mi_Ng_Tom, Mi_Tom} and indeed, if (\ref{boundresult0}) holds, then we must necessarily have 
\begin{equation*}
s_1,\dots,s_m\geq n/2,\qquad \sum_{k\in J}\big({s_k}/{n}-{1}/{p_k} \big)\geq -{1}/{2}
\end{equation*} for every nonempty subset $J$ of $\{1,\dots,m\}$. However, this does not guarantee the validity of (\ref{boundresult0}) in the critical case
\begin{equation}\label{criticalcase}
\min{(s_1,\dots,s_m)}=n/2 \quad\text{or}\quad \sum_{k\in J}\big({s_k}/{n}-{1}/{p_k} \big)= -{1}/{2} \quad \text{for some } ~J\subset \{1,\dots,m\}
\end{equation} and recently, it was proved in Park \cite{Park4} that (\ref{boundresult0}) fails in the case (\ref{criticalcase}) as well. 
\begin{customthm}{C}(\cite{Park4})\label{knownresult3}
Let $0<p_1,\dots,p_m\leq \infty$ and $0<p<\infty$ with $1/p_1+\dots+1/p_m=1/p$.  
 If $\sss=(s_1,\dots,s_m)$ satisfies
\begin{equation*}
\min{(s_1,\dots,s_m)}\leq n/2 \quad\text{or}\quad \sum_{k\in J}{\big({s_k}/{n}-{1}/{p_k} \big)\leq -{1}/{2}} \quad \text{for some }~ J\subset \{1,\dots,m\},
\end{equation*}
then there exists a function $\sigma$ on $(\rn)^m$ such that $\mathcal{L}_{\sss}^{2,\Psi^{(m)}}[\sigma]<\infty$, but (\ref{boundresult0}) does not hold.
\end{customthm}

Therefore  (\ref{minimal0}) is a necessary and sufficient condition for (\ref{boundresult0}) to hold.\\

In this paper, we focus on the case $1<r\le 2$ and we prove necessary and sufficient conditions for bounded functions $\sigma $ on 
$(\mathbb R^{n})^m$ 
that satisfy the H\"ormander  
condition $\mathcal{L}_{\sss}^{r,\Psi^{(m)}}[\sigma]<\infty$ to be bounded multilinear multipliers.  
The case $r<2$ was also considered in \cite{Gr_He_Ng_Yan} but the results obtained there were
 non optimal. 
The  characterization we provide is given in terms of explicit inequalities relating different relevant indices and provides  generalizations   for Theorems  \ref{knownresult2} and \ref{knownresult3}, and  an extension of Theorem \ref{knownresult1} in view of (\ref{pgcompare}). 
The main result of this article is the following: 
\begin{theorem}\label{main}
Let $1<r\leq 2$, $s_1,\dots,s_m\geq 0$, $0<p_1,\dots,p_m\leq \infty$, $0<p<\infty$, and $1/p_1+\dots+1/p_m=1/p$.
Then the conditions 
\begin{equation}\label{minimal}
s_1,\dots,s_m>n/r  \qquad \textup{and} \qquad\sum_{k\in J}{\big({s_k}/{n}-{1}/{p_k}\big)}>-{1}/{r'} 
\end{equation} 
hold for every nonempty subset $J$ of $ \{1,2,\dots,m\}$ if and only if  every $T_{\sigma}$ with $ \mathcal{L}_{\sss}^{r,\Psi^{(m)}}[\sigma]<\infty$ satisfies 
\begin{equation}\label{boundresult}
\Vert T_{\sigma}(f_1,\dots,f_m)\Vert_{ L^p(\rn)}\lesssim \mathcal{L}_{\sss}^{r,\Psi^{(m)}}[\sigma]\,\,
\prod_{i=1}^{m}\Vert f_i\Vert_{H^{p_i}(\rn)}
\end{equation}
for $f_1,\dots,f_m\in \SS(\rn)$.
\end{theorem}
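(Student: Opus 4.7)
The plan is to prove the two directions of the equivalence separately. For necessity, I would extend the counterexamples of Theorem \ref{knownresult3} (due to \cite{Park4}) from $r=2$ to the full range $1<r\le 2$. For sufficiency, I would adapt the Littlewood--Paley scheme used in \cite{Gr_Mi_Tom, Gr_Mi_Ng_Tom, Gr_Ng, Mi_Tom}, replacing the $L^2$/Plancherel tools, which are particular to $r=2$, by $L^r$/Hausdorff--Young tools combined with $L^r$ Sobolev embeddings.

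For necessity, I would construct two families of counterexamples. When $s_k\le n/r$ for some $k$, I would build $\sigma$ from a lacunary sum of modulated smooth bumps in the $\xi_k$-variable, tensored with harmless smooth factors in the other variables; the exponent $n/r$ appears precisely as the scaling threshold at which the $L^r_{s_k}$-norms of such bumps remain uniformly bounded, while the operator norm diverges logarithmically. When $\sum_{k\in J}(s_k/n-1/p_k)\le -1/r'$ for some nonempty $J\subset\{1,\dots,m\}$, I would form a product over $k\in J$ of modulated bumps inside $\sigma$ and test against matching products of modulated $H^{p_k}$-atoms in the inputs; the Hausdorff--Young inequality applied on the symbol side yields the threshold $-1/r'$ sharply, while the factors with $k\notin J$ are handled by a smooth tensor factor.

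For sufficiency, I would decompose $\sigma=\sum_{j\in\zz}\sigma_j$ into dyadic annular frequency pieces and apply Littlewood--Paley projections $P_{k_i}$ to each input $f_i$. Since $\sigma_j$ is frequency-localized at scale $2^j$, only certain combinations of $(k_1,\dots,k_m)$ contribute, and these split into a diagonal case and paraproduct cases indexed by the subsets $J$ of dominant variables. In the diagonal case, a rescaled application of Theorem \ref{knownresult1} combined with the Littlewood--Paley characterization of Hardy spaces yields the desired bound, with geometric decay in $j$ arising from the strict inequality $s_k>n/r$. In each paraproduct case, I would use Hausdorff--Young to pass from the product-type Sobolev norm of $\sigma_j$ to a weighted $L^{r'}$-estimate on its convolution kernel $K_j=\sigma_j^{\vee}$, and then exploit the strict inequality $\sum_{k\in J}(s_k/n-1/p_k)>-1/r'$ to produce summability over $j$ in the subset $J$ of dominant frequencies, via a standard interplay between Hardy space size estimates and weighted kernel bounds.

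The main obstacle I anticipate lies in the paraproduct estimates when several $p_i\le 1$, since the Plancherel-based variable decoupling used in the $r=2$ argument of \cite{Gr_Mi_Ng_Tom} is unavailable. I would replace it by a partial-convolution argument: fix the dominant frequency variables, integrate out the remaining ones via Hausdorff--Young against the product Sobolev norm, and apply a vector-valued Hardy-space estimate to the resulting operator acting on the low-frequency inputs; the product structure of the norm $L^r_{\sss}$ is essential here, as it allows the regularity indices $s_k$ to be distributed among the variables in a manner adapted to each subset $J$. Matching the thresholds in \eqref{minimal} against these estimates confirms sharpness, while the boundary cases are excluded by the counterexamples from the necessity step.
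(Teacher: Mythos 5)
Your outline captures the broad architecture (necessity via counterexamples, sufficiency via Littlewood--Paley plus Hausdorff--Young, atomic decomposition for $p_i\le 1$), but it passes over three ideas that the paper cannot do without, and one of your proposed constructions is unlikely to reach the stated endpoint.

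\smallskip

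\emph{Necessity.}\ You propose ``a lacunary sum of modulated smooth bumps'' with logarithmic divergence. Such constructions typically give failure strictly beyond the threshold (e.g.\ for $s_k<n/r$), not at $s_k=n/r$ itself. The paper instead uses the log-modified Bessel potential $\mathcal{H}_{(t,\gamma)}(x)=(1+4\pi^2|x|^2)^{-t/2}(1+\ln(1+4\pi^2|x|^2))^{-\gamma/2}$: the characterizations
$\Vert \mathcal{H}_{(t,\gamma)}\Vert_{L^p}<\infty \Leftrightarrow t>n/p$ or $(t=n/p,\,\gamma>2/p)$, and the analogous one for $\widehat{\mathcal{H}_{(t,\gamma)}}$, make the critical exponent accessible: choosing $\gamma$ between the two integrability thresholds gives $\mathcal{L}^{r,\Psi^{(m)}}_{\sss}[\sigma]<\infty$ while the target Lebesgue norm diverges. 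Moreover, for the second constraint $\sum_{k\in J}(s_k/n-1/p_k)\le -1/r'$, the symbol $M^{(l)}$ in the paper is \emph{not} a tensor product of one-variable bumps but a coupled symbol built from linear combinations $\frac{1}{l}\sum_k(\xi_k-\mu_1)$ and $\frac{1}{l}\sum_k(\xi_k-\xi_j)$; the change of variables that factorizes the kernel (and hence makes the Sobolev norm computable via $\mathcal{N}_{(M)}$ as an $L^r$-multiplier) exploits this coupling essentially. A separable $\sigma$ will not reproduce this.

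\smallskip

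\emph{Sufficiency.}\ Your plan says nothing about $p_i=\infty$. The Littlewood--Paley square function characterization you lean on fails for $H^\infty=L^\infty$ and $BMO$, and the $r=2$ papers handled this via a Carleson-measure argument. The present paper replaces this with a genuinely new tool: the generalized Peetre maximal function $\mathfrak{M}_{s,2^j}^t$ and the $L^\infty(\ell^2)$-characterization of $BMO$ in Lemma~\ref{equivalence}, together with the mixed $H^p\times BMO$ estimates of Lemma~\ref{bmoboundlemma}. Without such a mechanism your paraproduct estimates will not close when one or more $p_i=\infty$. Separately, your scheme as stated covers only $0<p_i\le 1$ (atoms) and $r\le p_i$ (Lemma~\ref{keyestilemma}-type pointwise control and Fefferman--Stein); the intermediate range $1<p_i<r$ requires the complex interpolation of Lemma~\ref{interpolation} together with the convexity analysis in Lemma~\ref{interpolation2} that re-expresses $\Gamma_m(\ppp)$ as a convex hull. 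This interpolation step should be made an explicit part of the plan, not an afterthought.

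\smallskip

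In short: the skeleton of your approach matches the paper, but you need (i) a counterexample family whose integrability is logarithmically tuned to hit the critical indices exactly (rather than a lacunary construction), together with a coupled, non-separable symbol for the $\sum$-criterion; (ii) a maximal-function-based $BMO$ characterization to handle $p_i=\infty$; and (iii) a complex interpolation step to bridge $1<p_i<r$.
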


The implicit constant in \eqref{boundresult} depends only on the dimension $n$, the degree of multilinearity $m$, and the indices $p_j$, $s_j$, and $r$. Here $r'=r/(r-1)$. 
We remark that, when $r=2$, Theorem \ref{main} coincides with Theorem \ref{knownresult2} and \ref{knownresult3}. Moreover, since 
\begin{equation*}
s_1,\dots,s_m>n/r ~ \text{ implies }~ \sum_{k\in J}\big({s_k}/{n}-{1}/{p_k} \big)>-{1}/{r'}  ~\text{ for all }~ J \quad \text{ when }~ r\leq p_1,\dots,p_m,
\end{equation*}
and
\begin{equation*}
\mathcal{L}_{\sss}^{r,\Psi^{(m)}}[\sigma]\leq \sup_{j\in\zz}{\big\Vert \sigma(2^j\cdot_1,\dots,2^j\cdot_m)\wh{\Psi^{(m)}}\big\Vert_{L_s^r((\rn)^m)}} \quad \text{for }~s\geq s_1+\dots+s_m,
\end{equation*}
Theorem \ref{main} also covers Theorem \ref{knownresult1} and  extends  its   range 
of indices to $0<p_1,\dots,p_m\leq \infty$.

\subsection{Necessary condition}
In order to prove the direction $(\ref{boundresult})\Rightarrow (\ref{minimal})$ in Theorem \ref{main}, two different multipliers will be constructed based on an  idea contained in \cite{Park4}. However, the methods in \cite{Park4} essentially rely on   Plancherel's theorem to obtain the upper bound of
\begin{equation*}
\mathcal{L}_{\sss}^{2,\Psi^{(m)}}[\sigma]=\sup_{j\in\zz}{\Big\Vert \Big(\prod_{k=1}^{m}(1+4\pi^2|\cdot_k|^2)^{s_k/2}\Big)\big( \sigma(2^j\cdot_1,\dots,2^j\cdot_m)\wh{\Psi^{(m)}}\big)^{\vee}\Big\Vert_{L^2((\rn)^m)}}
\end{equation*} 
and this cannot be applied in the case   $1<r<2$ anymore.

To overcome this difficulty, we  benefit from a recent calculation of Grafakos and Park \cite{Gr_Park}
concerning a variant of the Bessel potentials that involves a logarithmic term.
For any $0<t,\gamma<\infty$ we define
\begin{equation}\label{hdefinition}
\mathcal{H}_{(t,\gamma)}(x):=\frac{1}{(1+4\pi^2|x|^2)^{t/2}}\frac{1}{(1+\ln{(1+4\pi^2|x|^2)})^{\gamma/2}}.
\end{equation}
We first observe that for any $t,\gamma>0$
\begin{equation}\label{hproperty1}
\mathcal{H}_{(t,\gamma)}(x-y)\geq \mathcal{H}_{(t,\gamma)}(x)\mathcal{H}_{(t,\gamma)}(y)
\end{equation} and
\begin{equation}\label{hproperty2}
\Vert \mathcal{H}_{(t,\gamma)}\Vert_{L^p(\rn)}<\infty \quad \text{if and only if}\quad  t>n/p \quad \text{or} \quad t=n/p, \gamma>2/p.
\end{equation}
Moreover, it was shown   in \cite{Gr_Park} that
\begin{equation*}
\big| \wh{\mathcal{H}_{(t,\gamma)}}(\xi)\big|\lesssim_{t,\gamma,n}e^{-|\xi|/2} \quad \text{for }~ |\xi|>1
\end{equation*} and when $0<t<n$,
\begin{equation*}
\big| \wh{\mathcal{H}_{(t,\gamma)}}(\xi)\big|\approx_{t,\gamma,n} |\xi|^{-(n-t)}(1+2\ln|\xi|^{-1})^{-\gamma/2}\quad \text{for }~ |\xi|\leq 1.
\end{equation*}
The estimates imply that
\begin{equation}\label{hproperty4}
\big\Vert \wh{\mathcal{H}_{(t,\gamma)}}\big\Vert_{L^p(\rn)}<\infty \quad \text{if and only if} \quad t>n-n/p \quad \text{or}\quad t=n-n/p, \gamma>2/p.
\end{equation}

These   properties provide us with tools that allow us to prove the following two propositions: 
\begin{proposition}\label{main21}
Let $1<r<\infty$, $0<p_1,\dots,p_m\leq \infty$, $0<p<\infty$, and $1/p_1+\dots+1/p_m=1/p$. 
Suppose that 
\begin{equation*}
s_1\leq s_2,\dots,s_m\qquad \text{and} \qquad  s_1\leq n/r.
\end{equation*}
Then there exists a function $\sigma$ on $(\rn)^m$ such that $\mathcal{L}_{\sss}^{r,\Psi^{(m)}}[\sigma]<\infty$, but 
$$
\Vert T_{\sigma}\Vert_{H^{p_1}\times\cdots\times H^{p_m}\to L^p}=\infty .
$$
\end{proposition}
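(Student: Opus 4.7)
The plan is to construct a multiplier $\sigma$ on $(\rn)^m$ witnessing the failure of \eqref{boundresult} when the first Sobolev index $s_1$ lies at or below the critical threshold $n/r$. By the symmetry of the hypotheses under permutation of indices, we may assume $s_1 = \min_{k} s_k$, so that $s_1 \le n/r$. The driving idea is to build $\sigma$ of tensor type with its singular behaviour concentrated entirely in the first variable, thereby reducing the problem to a sharp one-dimensional linear question.

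Concretely, we would take
\begin{equation*}
\sigma(\xxxi) := \tau(\xi_1)\,\wh{\phi_2}(\xi_2)\cdots \wh{\phi_m}(\xi_m),
\end{equation*}
with $\phi_k\in \SS(\rn)$ Schwartz bumps and $\tau$ a singular linear multiplier on $\rn$ to be designed. Under this ansatz,
\begin{equation*}
T_\sigma(f_1,\dots,f_m)(x) = T_\tau(f_1)(x)\prod_{k=2}^{m}(\phi_k*f_k)(x),
\end{equation*}
and the iterated Sobolev quantity $\mathcal{L}_{\sss}^{r,\Psi^{(m)}}[\sigma]$ can be reduced via Fubini and Leibniz to a one-variable norm essentially equivalent to $\sup_{j} \Vert \tau(2^{j}\cdot)\wh{\psi}\Vert_{L_{s_1}^{r}(\rn)}$. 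Taking $\tau$ supported away from the origin eliminates the scales $j<0$, and for $j\ge 0$ a standard scaling identity yields a factor $2^{j(s_1-n/r)}$, which stays bounded precisely because $s_1\le n/r$.

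The crux of the construction is the design of $\tau$, where the family $\mathcal{H}_{(t,\gamma)}$ from \eqref{hdefinition} replaces the Plancherel-based arguments that are unavailable when $r<2$. Our plan is to let $\tau$ be a smoothly truncated and translated version of $\wh{\mathcal{H}_{(s_1,\gamma)}}$ centred at some $\xi_0\ne 0$, with $\gamma$ chosen just above the threshold dictated by \eqref{hproperty2}--\eqref{hproperty4} (so that $\gamma>2/r$ is required in the borderline case $s_1=n/r$). The submultiplicativity \eqref{hproperty1} together with the integrability cut-offs \eqref{hproperty2} and \eqref{hproperty4} will simultaneously ensure that $\Vert\tau\Vert_{L_{s_1}^{r}(\rn)}<\infty$ and that the kernel of $T_\tau$ is morally $\mathcal{H}_{(s_1,\gamma)}$, so that the corresponding linear mapping just fails. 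Testing on an $f_1$ whose Fourier transform concentrates near $\xi_0$, together with Schwartz $f_k$ ($k\ge 2$) for which $\phi_k*f_k$ is bounded below on a fixed ball, transfers the linear failure to the multilinear operator.

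The principal obstacle is precisely the absence of Plancherel when $r<2$: every estimate must be carried out directly at the $L^r$ level, and the logarithmic correction built into $\mathcal{H}_{(t,\gamma)}$ is what enables us to capture the boundary case $s_1 = n/r$. A second technical point is the uniform-in-$j$ control of $\mathcal{L}_{\sss}^{r,\Psi^{(m)}}[\sigma]$ across all annular scales, together with the construction of $H^{p_1}$-normalised test functions $f_1^{(N)}$ realising the quantitative failure, which becomes delicate when $p_1\le 1$ and one must work with atomic representations.
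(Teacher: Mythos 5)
Your high-level plan matches the paper's: build $\sigma$ of tensor type with all the singularity in the first variable, anchor it at a nonzero frequency so that $\sigma$ lives in a fixed annulus (hence only $|j|\le 1$ contribute to $\mathcal{L}_{\sss}^{r,\Psi^{(m)}}[\sigma]$), use the log-corrected Bessel weights $\mathcal{H}_{(t,\gamma)}$ to bypass Plancherel, smoothly truncate the kernel via an approximate identity $\Phi_N$, and test on dilated modulated bumps $f_j^{(\epsilon)}(x)=\epsilon^{n/p_j}\theta(\epsilon x)$ whose $H^{p_j}$ norms are uniformly bounded. The paper does exactly this with $\sigma^{(N)}(\xxxi)=\wh{\mathcal{H}^{(N)}_{(n,\delta)}}(\xi_1-e_1)\wh{\wt\theta}(\xi_1-e_1)\prod_{j\ge 2}\wh{\wt\theta}(\xi_j)$.

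However, there is a genuine and fatal error in the single most important design decision: the index $t$ in $\mathcal{H}_{(t,\gamma)}$. You propose $\tau$ so that the kernel of $T_\tau$ is $\mathcal{H}_{(s_1,\gamma)}$, i.e.\ $\tau\approx \wh{\mathcal{H}_{(s_1,\gamma)}}$. But then
\begin{equation*}
\Vert\tau\Vert_{L^r_{s_1}(\rn)}\approx \big\Vert(I-\Delta)^{s_1/2}\wh{\mathcal{H}_{(s_1,\gamma)}}\big\Vert_{L^r(\rn)}=\big\Vert\wh{\mathcal{H}_{(0,\gamma)}}\big\Vert_{L^r(\rn)},
\end{equation*}
and by \eqref{hproperty4} this is finite only when $0>n-n/r$ or $0=n-n/r$ with $\gamma>2/r$, i.e.\ only when $r\le 1$. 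For the range $1<r<\infty$ in the proposition this norm is always infinite, so your $\sigma$ never satisfies $\mathcal{L}_{\sss}^{r,\Psi^{(m)}}[\sigma]<\infty$. The index must be $t=n$, not $t=s_1$: with kernel $\mathcal{H}_{(n,\delta)}$ and $2/r<\delta\le 2$, on one hand $(I-\Delta)^{s_1/2}\wh{\mathcal{H}_{(n,\delta)}}=\wh{\mathcal{H}_{(n-s_1,\delta)}}\in L^r$ precisely when $s_1\le n/r$ (with $\delta>2/r$ at equality) by \eqref{hproperty4}; on the other hand $\mathcal{H}_{(n,\delta)}\notin L^1$ whenever $\delta\le 2$ by \eqref{hproperty2}, so the interval $(2/r,2]$ is a nonempty window that simultaneously gives finite Sobolev norm and divergent kernel mass. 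This reflects the true failure mechanism, which you never quite pin down: as $\epsilon\to 0$ the test functions spread out to constants and the output captures $\int \mathcal{H}^{(N)}_{(n,\delta)}$, so unboundedness of $T_\sigma$ is driven by $\Vert\mathcal{H}_{(n,\delta)}\Vert_{L^1}=\infty$, not by a borderline Sobolev estimate for $\tau$ itself. Your concluding concern about atomic decompositions when $p_1\le 1$ is also unnecessary: since $\wh{f_j^{(\epsilon)}}$ is supported in an annulus of fixed size, $\Vert f_j^{(\epsilon)}\Vert_{H^{p_j}}\approx\Vert f_j^{(\epsilon)}\Vert_{L^{p_j}}=\Vert\theta\Vert_{L^{p_j}}$ uniformly in $\epsilon$ by scaling, for all $0<p_j\le\infty$. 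Similarly, because $\sigma$ is supported in a fixed annulus, only finitely many scales $j$ occur and no $2^{j(s_1-n/r)}$ factor ever appears.
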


\begin{proposition}\label{main22}
Let $1<r<\infty$, $0<p_1,\dots,p_m\leq \infty$, $0<p<\infty$, and  $1/p_1+\dots+1/p_m=1/p$.
Let $1\leq l\leq m$.
Suppose that $s_1,\dots,s_m>n/r$  and
\begin{equation*}
\sum_{k=1}^{l}{\big({s_k}/{n}-{1}/{p_k}\big)}\leq -{1}/{r'}.
\end{equation*}
Then there exists a function $\sigma$ on $(\rn)^m$ such that $\mathcal{L}_{\sss}^{r,\Psi^{(m)}}[\sigma]<\infty$, but 
$$
\Vert T_{\sigma}\Vert_{H^{p_1}\times\cdots\times H^{p_m}\to L^p}=\infty .
$$
\end{proposition}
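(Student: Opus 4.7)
The plan is to adapt the counterexample construction from Park \cite{Park4} (for the case $r=2$, where Plancherel's identity controlled the Sobolev side) to the full range $1<r<\infty$ by replacing Plancherel-based building blocks with the logarithmic Bessel potentials $\mathcal{H}_{(t,\gamma)}$ of (\ref{hdefinition}). The key algebraic inputs are the subadditivity (\ref{hproperty1}) together with the sharp $L^r$/$L^p$ integrability thresholds (\ref{hproperty2}) and (\ref{hproperty4}).

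After permuting indices, I would assume the offending subset is $J=\{1,\dots,l\}$ and reduce, by a continuity argument, to the critical equality $\sum_{k=1}^l(s_k/n-1/p_k)=-1/r'$. Fix points $\vec{\eta}_k\in\rn$ so that $(\vec{\eta}_1,\dots,\vec{\eta}_m)$ lies in the interior of the unit annulus where $\wh{\Psi^{(m)}}$ is bounded below, and propose the symbol
\begin{equation*}
\sigma(\xxxi)=\Phi(\xxxi)\,\mathcal{G}(\xi_1-\vec{\eta}_1,\dots,\xi_l-\vec{\eta}_l)\prod_{k=l+1}^m\wh{\phi}(\xi_k-\vec{\eta}_k),
\end{equation*}
where $\Phi$ is a smooth localizer, $\wh{\phi}$ is a Schwartz bump, and $\mathcal{G}\colon(\rn)^l\to\cc$ is a coupled multiplier whose inverse Fourier transform concentrates along a single direction such as $\zeta_1+\dots+\zeta_l$ and behaves like $\mathcal{H}_{(n/r',\gamma)}$ there, rather than as a tensor product.

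The Sobolev finiteness $\mathcal{L}_{\sss}^{r,\Psi^{(m)}}[\sigma]<\infty$ is verified by commuting the derivatives $(I-\Delta_k)^{s_k/2}$ through $\mathcal{G}$ (trivial for $k>l$ by the product structure), reducing to an $L^r$-estimate of the form $\|\wh{\mathcal{H}_{(n/r',\gamma)}}\|_{L^r}$, which is finite by (\ref{hproperty4}) for $\gamma>2/r$. For the unboundedness of $T_\sigma$, I would test against $f_k(x)=e^{2\pi i\la\vec{\eta}_k,x\ra}\chi(x)$ for $k\leq l$ and $f_k^{(N)}(x)=e^{2\pi i\la\vec{\eta}_k,x\ra}\chi(x/N)$ for $k>l$ with $\|f_k^{(N)}\|_{H^{p_k}}\sim N^{n/p_k}$. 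Using the subadditivity (\ref{hproperty1}), one obtains a pointwise lower bound $|T_\sigma(f_1,\dots,f_m)(x)|\gtrsim\mathcal{H}_{(\tau,\tilde\gamma)}(x)$ on a ball of radius $\sim N$ with $\tau$ determined by the construction; at the critical case of (\ref{hproperty2}) this forces the ratio $\|T_\sigma(f_1,\dots,f_m)\|_{L^p}/\prod_k\|f_k\|_{H^{p_k}}$ to blow up as $N\to\infty$.

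The principal obstacle is designing the coupled $\mathcal{G}$: a naive tensor product $\bigotimes_{k\leq l}\wh{\mathcal{H}_{(t_k,\gamma_k)}}$ yields only the overly strong condition $\sum_{k=1}^l(s_k/n-1/p_k)\leq -l/r'$, so the coupling is essential to isolate the Bessel deficit to a single effective direction. Verifying Sobolev regularity for a function of $\zeta_1+\dots+\zeta_l$ is delicate because the derivatives $(I-\Delta_k)^{s_k/2}$ do not diagonalize across the $l$ variables, but this can be handled via the integral representation
\begin{equation*}
\mathcal{G}(\zeta_1,\dots,\zeta_l)=\int_{\rn}\mathcal{H}_{(n/r',\gamma)}(y)\prod_{k=1}^l e^{-2\pi i\la y,\zeta_k\ra}\wh{\phi}(\zeta_k)\,dy,
\end{equation*}
which converts the coupling into a continuous superposition of tensor products. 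A secondary issue is verifying the simultaneous compatibility of the constraints $\gamma>2/r$ (Sobolev side) and $\tilde\gamma\leq 2/p$ ($L^p$ side); the hypotheses $s_k>n/r$ together with $\sum_{k\leq l}(s_k/n-1/p_k)\leq -1/r'$ force $\sum_{k\leq l}1/p_k>(l-1)/r+1$, which implies $p<1\leq r$ and renders the two $\gamma$-constraints simultaneously satisfiable.
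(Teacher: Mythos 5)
Your overall plan matches the paper's architecture: couple the first $l$ frequency variables along a single diagonal direction, place an $\mathcal{H}_{(t,\gamma)}$ profile there, and exploit the threshold facts \eqref{hproperty2}, \eqref{hproperty4}; the observation that the coupling is essential (a naive tensor product only reaches $-l/r'$) is exactly the right insight. However, the construction as written fails the Sobolev estimate, because of a misplaced exponent. From your formula $\mathcal{G}(\zeta)=\int\mathcal{H}_{(n/r',\gamma)}(y)\prod_k e^{-2\pi i\la y,\zeta_k\ra}\wh\phi(\zeta_k)\,dy$ one gets $\wh{\mathcal{G}}(x_1,\dots,x_l)=\int\mathcal{H}_{(n/r',\gamma)}(y)\prod_k\phi(-(x_k+y))\,dy$, which concentrates on the diagonal $x_1=\dots=x_l$ and behaves like $\mathcal{H}_{(n/r',\gamma)}$ there, as you intend. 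But the norm $\big\Vert(I-\Delta_1)^{s_1/2}\cdots(I-\Delta_l)^{s_l/2}\mathcal{G}\big\Vert_{L^r}$ multiplies $\wh{\mathcal{G}}$ by $\prod_j\langle x_j\rangle^{s_j}$, which on the diagonal is $\approx\langle x\rangle^{s_1+\dots+s_l}$, yielding $\mathcal{H}_{(n/r'-(s_1+\dots+s_l),\gamma)}(x)$; since each $s_k>n/r>0$ this is \emph{unbounded}, and $\mathcal{L}_{\sss}^{r,\Psi^{(m)}}[\sigma]=\infty$: the construction produces no counterexample. The profile must instead be $\mathcal{H}_{(s_1+\dots+s_l+n/r',\tau)}$ — this is precisely what the paper takes for $K^{(l)}$ — so that the derivatives absorb $s_1+\dots+s_l$ and the reduced $L^r$ estimate becomes $\Vert\wh{\mathcal{H}_{(n/r',\tau)}}\Vert_{L^r}$, the quantity you correctly anticipate but reach by an inconsistent route.

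Beyond that, the phrase ``commuting the derivatives through $\mathcal{G}$'' conceals a nontrivial step: the weights $\prod_j\langle x_j\rangle^{s_j}$ do not tensorize in the rotated coordinates adapted to the diagonal, and proving that they may be traded for a tensor is the content of the Marcinkiewicz-type Lemma \ref{nmmultiplier} for the ratio $\mathcal{N}_{(M)}$ — a substantive ingredient your outline omits and which is the real work at the Sobolev level for general $1<r<\infty$. Two peripheral remarks. First, the reduction to equality in $\sum_{k\le l}(s_k/n-1/p_k)=-1/r'$ is by \emph{monotonicity} in the regularity indices (increase $s_k$, which only strengthens the hypothesis $\mathcal{L}_{\sss}^{r,\Psi^{(m)}}[\sigma]<\infty$), not by ``continuity''; it is valid, though the paper avoids it and handles $\le$ directly. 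Second, your scaled family $\chi(\cdot/N)$ for $k>l$ is a workable substitute for the paper's fixed test functions $f_j=\mathcal{H}_{(n/p_j,\tau_j)}\ast\varphi\cdot e^{2\pi i\la\mu_1,\cdot\ra}$, which already satisfy $\Vert f_j\Vert_{H^{p_j}}\lesssim 1$ and render $\Vert T_\sigma\fff\Vert_{L^p}=\infty$ outright, eliminating any ratio analysis; with scaling one must instead track the logarithmic exponents at criticality, which you assert are compatible but do not verify.
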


The necessity part of Theorem \ref{main} is a consequence of the preceding   two propositions along with a rearrangement argument.\\

\subsection{Sufficiency condition}

The sufficiency condition part in Theorem \ref{main} is a consequence of the following four propositions 
combined with a rearrangement argument.
\begin{proposition}\label{propo1}
Let $1<r\leq 2$, $r\leq p_1,\dots,p_m\leq \infty$,  and $1/p=1/p_1+\dots+1/p_m$.
Suppose that 
\begin{equation}\label{conditionsss}
s_1,\dots,s_m>n/r.
\end{equation} 
If $\sigma$ satisfies $\mathcal{L}_{\sss}^{r,\Psi^{(m)}}[\sigma]<\infty$, then (\ref{boundresult}) holds.

\end{proposition}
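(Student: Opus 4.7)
The plan is to carry out a Littlewood--Paley decomposition of $\sigma$, derive a pointwise bound for each piece using the product-type Sobolev regularity $s_i>n/r$, and then sum in $j$ by exploiting the Fourier support of the pieces together with the Littlewood--Paley square function and the Fefferman--Stein vector-valued maximal inequality. Write $\sigma=\sum_{j\in\zz}\sigma_j$ with $\sigma_j(\xxxi):=\sigma(\xxxi)\wh{\Psi^{(m)}}(\xxxi/2^j)$ and let $K_j:=\sigma_j^\vee$; by scaling, $K_j(\vec y)=2^{jmn}m_j^\vee(2^j\vec y)$ where $m_j(\xxxi):=\sigma(2^j\xxxi)\wh{\Psi^{(m)}}(\xxxi)$.

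\textbf{Pointwise estimate.}
Starting from the kernel representation
\[
T_{\sigma_j}(f_1,\ldots,f_m)(x)=\int_{(\rn)^m}K_j(\vec y)\prod_{i=1}^{m}f_i(x-y_i)\,d\vec y,
\]
I apply H\"older's inequality with exponents $(r',r)$ and weight $W_j(\vec y):=\prod_{i=1}^{m}(1+2^j|y_i|)^{s_i}$. The Hausdorff--Young inequality on $(\rn)^m$ (valid for $1<r\le 2$) applied to
\[
\bigl[(I-\Delta_1)^{s_1/2}\cdots(I-\Delta_m)^{s_m/2}m_j\bigr]^\vee(\vec u)=\prod_{i=1}^{m}(1+4\pi^2|u_i|^2)^{s_i/2}m_j^\vee(\vec u),
\]
together with a change of variables, produces the kernel-factor bound
$\bigl(\int|K_j|^{r'}W_j^{r'}d\vec y\bigr)^{1/r'}\lesssim 2^{jmn/r}\mathcal{L}_{\sss}^{r,\Psi^{(m)}}[\sigma]$. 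For the function factor, the hypothesis $rs_i>n$ yields (via a dyadic decomposition of the integral in $y_i$)
\[
\Bigl(\int\frac{|f_i(x-y_i)|^r}{(1+2^j|y_i|)^{rs_i}}\,dy_i\Bigr)^{1/r}\lesssim 2^{-jn/r}\mathcal M(|f_i|^r)(x)^{1/r},
\]
with $\mathcal M$ the Hardy--Littlewood maximal operator. Combining gives the $j$-uniform pointwise bound
\[
|T_{\sigma_j}(f_1,\ldots,f_m)(x)|\lesssim \mathcal{L}_{\sss}^{r,\Psi^{(m)}}[\sigma]\prod_{i=1}^{m}\mathcal M(|f_i|^r)(x)^{1/r}.
\]

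\textbf{Summation in $j$.}
Since this bound is uniform in $j$ and therefore not directly summable, I must exploit Fourier support. Decompose $\sigma_j=\sum_{\emptyset\ne E\subset\{1,\ldots,m\}}\sigma_j^E$, where $\sigma_j^E$ smoothly localizes to the region $\{|\xi_i|\sim 2^j$ for $i\in E,\ |\xi_i|\lesssim 2^{j-C}$ for $i\notin E\}$. For a paraproduct piece $E=\{i_0\}$, one may replace $f_{i_0}$ by its Littlewood--Paley projection $\Pi_j f_{i_0}$ to frequency $\sim 2^j$, and the output of $T_{\sigma_j^{\{i_0\}}}$ is then Fourier-supported in the annulus $|\eta|\sim 2^j$. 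The $L^p$-square-function characterization ($1<p<\infty$), the pointwise bound above applied with $\mathcal M(|\Pi_j f_{i_0}|^r)^{1/r}$ replacing $\mathcal M(|f_{i_0}|^r)^{1/r}$, H\"older in $1/p=\sum 1/p_i$, and the Fefferman--Stein $\ell^{2/r}$-valued maximal inequality together with the Littlewood--Paley characterization of $L^{p_{i_0}}$ then close the estimate for these pieces. Pieces with $|E|\ge 2$ are handled analogously by Littlewood--Paley decomposing the output, noting that the annular projection $\Delta_\nu^{\mathrm{LP}}$ applied to $T_{\sigma_j^E}(f_1,\ldots,f_m)$ vanishes for $\nu>j+O(1)$, and summing geometrically in the shift $\nu-j$.

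\textbf{Main obstacle.}
The central technical difficulty is converting the $j$-uniform pointwise bound of the first step into a summable $L^p$ estimate, which forces the Fourier-localized splitting of $\sigma_j$ and the subsequent vector-valued maximal inequality. For this scheme both constraints $r\le 2$ (ensuring $2/r\ge 1$) and $r\le p_i$ (ensuring $p_i/r\ge 1$) are essential; the endpoint $r=2$ is already covered by Theorem~\ref{knownresult2}, and the boundary cases $p_i=\infty$ (absorbed as $\|f_i\|_{L^\infty}$) and $p_i=r$ (where $\mathcal M$ fails to be bounded on $L^1$) are recovered by a routine interpolation argument.
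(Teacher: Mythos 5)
Your high-level strategy — Littlewood--Paley decompose $\sigma$, prove a $j$-uniform pointwise bound by H\"older plus Hausdorff--Young, and then sum by exploiting Fourier support — is the same one the paper follows, and your pointwise estimate and its proof are essentially correct and match Lemma \ref{keyestilemma}. However, there are genuine gaps in how you close the argument at the endpoints, and these are precisely where the paper's new machinery lives.

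First, the case $p_i=\infty$ is not ``absorbed as $\|f_i\|_{L^\infty}$.'' When the index $i_0$ with $p_{i_0}=\infty$ is one of the \emph{active} frequencies in your set $E$, the term that appears is $\psi_j\ast f_{i_0}$, and summing in $j$ requires a bound on the square function $\bigl(\sum_j|\psi_j\ast f_{i_0}|^2\bigr)^{1/2}$. This is not controlled by $\|f_{i_0}\|_\infty$ pointwise; the Littlewood--Paley characterization \eqref{littlewood} simply fails at $p=\infty$. This is exactly the obstruction the paper highlights in the introduction and overcomes by replacing $\mathcal M_t$ with the generalized Peetre maximal function $\mathfrak M_{s,2^j}^t$, together with the $L^\infty(\ell^2)$ characterization of $BMO$ via the measurable subsets $S_Q$ (Lemmas \ref{equivalence} and \ref{bmoboundlemma}). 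Without that ingredient, your argument only reproduces Theorem \ref{knownresult1} with $p_i<\infty$.

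Second, the ``routine interpolation'' for $p_i=r$ is not routine in this context. $\mathcal M_r=\mathcal M(|\cdot|^r)^{1/r}$ is not bounded on $L^r$, and there is no readily available endpoint with $p_i<r$ to interpolate against: Proposition \ref{propo4}, which covers $1<p_i<r$, is itself proved by interpolating \emph{from} this proposition. The paper circumvents this entirely by observing that $\sigma(2^j\cdot)\wh{\Theta^{(m)}}$ is supported in a fixed annulus, which gives the embedding $\mathcal L_{\sss}^{t}[\sigma]\lesssim\mathcal L_{\sss}^{r}[\sigma]$ for any $t<r$ (the inequality \eqref{compactembedding1}); one then chooses $1<t<r$ with $s_i>n/t$ and works with $\mathcal M_t$ (equivalently $\mathfrak M_{s_i,2^j}^t$), which is bounded on $L^{p_i}$ for all $p_i\ge r>t$. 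Your proposal never invokes this compact-support embedding.

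Third, your treatment of the pieces with $|E|\ge 2$ is incorrect as stated. The assertion that one can ``sum geometrically in the shift $\nu-j$'' fails: for two or more coincident high-frequency inputs the output is supported in a ball of radius $\lesssim 2^j$ with no lower bound on $|\eta|$, so the sum $\sum_{j\ge\nu}$ has no geometric decay. The paper does not Fourier-localize the output at all for this part. Instead it keeps $j$ as the summation index on the inputs, writes the low-frequency part as $\sum_j T_{\sigma_j}\big((f_1)_j,(f_2)_j,(f_3)^j,\dots\big)$ with two genuine Littlewood--Paley projections, and closes via Cauchy--Schwarz in $j$ (with the $S_Q$ device to reach $p_i=\infty$) followed by H\"older in $x$. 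In short, the square function must come from the two fine-scale inputs, not from a Littlewood--Paley decomposition of the output. You would need to replace your ``geometric decay'' step with this Cauchy--Schwarz mechanism.
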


\begin{proposition}\label{propo2}
Let $1<r\leq 2$, $1\leq l\leq m$, $0<p_1,\dots,p_l\leq 1$, $p_{l+1},\dots,p_m=\infty$, and $1/p=1/p_1+\dots+1/p_l$.
Suppose that 
\begin{equation}\label{conditionss}
s_{l+1},\dots,s_m>n/r,\qquad \sum_{k\in J}{\big({s_k}/{n}-{1}/{p_k} \Big)}>-{1}/{r'}
\end{equation}
for every nonempty subset $J\subset \{1,\dots,l\}$.
If $\sigma$ satisfies $\mathcal{L}_{\sss}^{r,\Psi^{(m)}}[\sigma]<\infty$, then (\ref{boundresult}) holds.
\end{proposition}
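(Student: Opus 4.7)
The plan is to adapt the strategy from \cite{Gr_Mi_Tom, Gr_Mi_Ng_Tom} (used for $r=2$) to the regime $1<r\le 2$, replacing the role of Plancherel's theorem by the Hausdorff--Young inequality. I would begin by normalizing $\mathcal{L}_{\sss}^{r,\Psi^{(m)}}[\sigma]=1$ and applying the Littlewood--Paley decomposition $\sigma=\sum_{j\in\zz}\sigma_j$ with $\sigma_j(\xxxi)=\sigma(\xxxi)\wh{\Psi^{(m)}}(\xxxi/2^j)$. Setting $\tau_j(\vec\eta):=\sigma(2^j\vec\eta)\wh{\Psi^{(m)}}(\vec\eta)$, the hypothesis gives $\|\tau_j\|_{L_{\sss}^r}\le 1$ uniformly in $j$, and the kernel of $T_{\sigma_j}$ is $K_j(\vec u)=2^{jmn}\tau_j^\vee(2^j\vec u)$. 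Applying Hausdorff--Young to $(I-\Delta_1)^{s_1/2}\cdots(I-\Delta_m)^{s_m/2}\tau_j$ yields the weighted integral bound
\[
\int_{(\rn)^m}|\tau_j^\vee(\vec w)|^{r'}\prod_{i=1}^m(1+|w_i|)^{r's_i}\,d\vec w\lesssim 1,
\]
which is the $L^r$-based substitute for the Plancherel identity used when $r=2$.

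To make the structure discrete, I would expand $\tau_j$ as a Fourier series on a cube $[-A,A]^{mn}$ containing $\textup{supp}(\wh{\Psi^{(m)}})$, writing $\tau_j(\vec\eta)=\Theta(\vec\eta)\sum_{\vec k\in(\zn)^m}c_{\vec k}^{(j)}e^{2\pi i\la\vec k,\vec\eta\ra/(2A)}$, where $\Theta(\vec\eta)=\prod_i\theta(\eta_i)$ is a tensor-product cutoff equal to $1$ on $\textup{supp}(\wh{\Psi^{(m)}})$. Hausdorff--Young on the torus then gives the sequence bound $\big(\sum_{\vec k}|c_{\vec k}^{(j)}|^{r'}\prod_i(1+|k_i|)^{r's_i}\big)^{1/r'}\lesssim 1$, and the scale-$j$ piece admits the representation
\[
T_{\sigma_j}(\vec f)(x)=\sum_{\vec k}c_{\vec k}^{(j)}\prod_{i=1}^m (f_i*\phi_{2^{-j}})\!\big(x-k_i/(2^{j+1}A)\big),
\]
where $\phi=\check\theta$ is a fixed Schwartz function.

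For $i\le l$ I would apply Peetre's maximal inequality: for any $q_i\in(0,p_i)$,
\[
|(f_i*\phi_{2^{-j}})(x-y)|\lesssim (1+2^j|y|)^{n/q_i}\bigl[M(|f_i*\phi_{2^{-j}}|^{q_i})(x)\bigr]^{1/q_i},
\]
where $M$ is the Hardy--Littlewood maximal operator. For $k>l$, I use the trivial bound $|g_k*\phi_{2^{-j}}|\le \|g_k\|_{L^\infty}\|\phi\|_{L^1}$. Substituting these into the representation above, taking the $L^p$-quasi-norm, and invoking the vector-valued Fefferman--Stein inequality together with the Littlewood--Paley characterization of $H^{p_i}$ produces a scale-$j$ bound in terms of $\prod_{i\le l}\|f_i\|_{H^{p_i}}\prod_{k>l}\|g_k\|_{L^\infty}$ multiplied by a residual sum in $\vec k$; the $j$-summation is then controlled by pairing $\sigma_j$ against the Littlewood--Paley pieces $\Delta_{j_i}f_i$ and exploiting that $T_{\sigma_j}(\Delta_{j_1}f_1,\dots,\Delta_{j_m}f_m)$ vanishes unless $\max_i j_i\ge j-C$, a frequency-support constraint forced by the annulus support of $\sigma_j$.

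The main obstacle is the residual sum over $\vec k$. A naive H\"older that pairs the weighted $\ell^{r'}$-bound on $c_{\vec k}^{(j)}$ against the Peetre powers $(1+|k_i|)^{n/q_i}$ yields only the much stronger per-index condition $s_i>n/p_i+n/r$ instead of the subset-sum hypothesis $\sum_{k\in J}(s_k/n-1/p_k)>-1/r'$ of \eqref{conditionss}. To recover the sharp condition, I would split $\sum_{\vec k}$ into dyadic shells indexed by a subset $J\subset\{1,\dots,l\}$ marking the coordinates where $|k_i|$ realizes the overall maximum, and on each shell distribute the polynomial weights between the Peetre power $(1+|k_i|)^{n/q_i}$ and the Sobolev weight $(1+|k_i|)^{s_i}$ in proportions tailored to $J$. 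The subset-sum hypothesis then yields a geometric gain $2^{-\delta(J)|\vec k|}$ on each shell; the tail condition $s_k>n/r$ handles the $L^\infty$ slots; and a final rearrangement argument permutes indices so that the proposition covers all orderings of Hardy and $L^\infty$ slots.
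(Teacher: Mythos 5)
Your strategy of Fourier--series expansion on each annulus together with Peetre's maximal inequality is essentially the Tomita / Grafakos--Si route (Theorem A in the paper), which is precisely what works for Proposition~\ref{propo1}: there $r\le p_i$, so $n/q_i$ can be taken just above $n/\max(p_i,2)$, and the hypothesis $s_i>n/r$ leaves enough room. In the present proposition, however, $p_i\le 1$ for $i\le l$, which forces $q_i<p_i\le 1$ and hence $n/q_i>n$ for those slots; the Peetre cost is therefore \emph{structurally} at least $n$ units per coordinate, and after H\"older against the weighted $\ell^{r'}$ bound on $c_{\vec k}^{(j)}$ the convergence of $\sum_{\vec k}$ demands $s_i>n/q_i+n/r\approx n/p_i+n/r$. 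You correctly identify this mismatch, but the proposed fix does not close it: the dyadic-shell splitting changes how you estimate the sum, not what is being summed. On every shell you still carry the factor $\prod_{i\le l}(1+|k_i|)^{n/q_i-s_i}$ coming from Peetre, and in the extreme shell $|k_1|\approx\cdots\approx|k_l|\approx 2^{\nu}$ the residual is $2^{\nu\bigl(\sum_{i\le l}(n/q_i-s_i)+nl/r'\bigr)}$, which decays only when $\sum_i(s_i/n-1/q_i)>l/r'$; as $q_i\uparrow p_i$ this tends to $\sum_i(s_i/n-1/p_i)>l/r'$, a \emph{positive} threshold, whereas the hypothesis \eqref{conditionss} allows that sum to be negative, as small as $-1/r'$. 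There is therefore a deficit of roughly $n$ in regularity per Hardy slot that no re-weighting among $J\subset\{1,\dots,l\}$ can bridge, because you never exploit anything beyond $|(f_i*\phi_{2^{-j}})(x-y)|\lesssim(1+2^j|y|)^{n/q_i}\mathcal{M}_{q_i}(\psi_j*f_i)(x)$.

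The ingredient you are missing is exactly the one that distinguishes $p_i\le1$ from $p_i\ge r$: atoms of $H^{p_i}$ have vanishing moments, and the paper's proof of this proposition passes through the atomic decomposition of each $f_i$, $i\le l$ (after regularizing $\sigma$ via Lemma~\ref{regularization} so that Lemma~\ref{switchsum} applies). With atoms, one Taylor-expands the kernel (Lemma~\ref{smalllemma}) and gains the factor $(2^jl(Q_k))^{N_k+1}$ when the scale $2^{-j}$ dominates $l(Q_k)$, while in the opposite regime one uses the compact support of the atom to localize. The two regimes are captured by the functions $h_j^{(k,0)},h_j^{(k,1)}$ and the $\min$ in \eqref{equ:GjQ2}; on each connected component $E_{J_0}$, choosing exponents $\alpha_i,\beta_i$ with $\beta_i=1-r'\lambda_i$ and $\sum_{i\in J_0}\beta_i=1$ realizes the interpolation that turns the subset-sum hypothesis into summability in $j$. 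This interpolation has no counterpart in the translation-and-maximal-function framework because a generic $f_i\in H^{p_i}$, $p_i\le1$, carries no cancellation you can see through $\mathcal{M}_{q_i}$. Near the diagonal the paper bounds the averaged atom contribution via Cauchy--Schwarz and Proposition~\ref{propo1} and then applies the Grafakos--Kalton lemma, all of which are also absent from your outline. So the decomposition and the first half of your argument are sound, but the concluding step is not an afterthought to be hand-waved --- it requires the atomic decomposition and vanishing moments, which the proposal does not use at all.
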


\begin{proposition}\label{propo3}
Let $1<r\leq 2$, $1\leq l<\rho\leq m$, $0<p_1,\dots,p_l\leq 1$, $r\leq p_{l+1},\dots,p_{\rho}<\infty$, $p_{\rho+1},\dots,p_m=\infty$, and $1/p=1/p_1+\dots+1/p_{\rho}$.
Suppose that (\ref{conditionss}) holds for every nonempty subset $J\subset \{1,\dots,l\}$.
If $\sigma$ satisfies $\mathcal{L}_{\sss}^{r,\Psi^{(m)}}[\sigma]<\infty$, then (\ref{boundresult}) holds.
\end{proposition}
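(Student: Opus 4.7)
The plan is to combine the techniques of Propositions \ref{propo1} and \ref{propo2} via a single dyadic decomposition of the symbol. Writing $\sigma=\sum_{j\in\zz}\sigma_j$ with $\sigma_j(\xxxi):=\sigma(\xxxi)\widehat{\Psi^{(m)}}(\xxxi/2^j)$, each $T_{\sigma_j}(f_1,\dots,f_m)$ is Fourier-localized in a ball of radius $O(2^j)$. Since $p<\infty$ and the frequency supports overlap only boundedly many times at each scale, a vector-valued Littlewood--Paley inequality reduces (\ref{boundresult}) to bounding the square function
\[
\Bigl\Vert\Bigl(\sum_{j\in\zz}|T_{\sigma_j}(f_1,\dots,f_m)|^2\Bigr)^{1/2}\Bigr\Vert_{L^p(\rn)}.
\]

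I would then pointwise dominate each $|T_{\sigma_j}(f_1,\dots,f_m)(x)|$ by a product of maximal functions, treating the three index blocks separately. For $k>\rho$, where $p_k=\infty$ and $s_k>n/r$, the slicewise Sobolev embedding $L^r_{s_k}\hookrightarrow L^\infty$ produces the plain factor $\Vert f_k\Vert_{L^\infty}$, as in the argument for Proposition \ref{propo2}. For $l<k\le\rho$, where $r\le p_k<\infty$ and $s_k>n/r$, I would bound the $\xi_k$-convolution by a constant multiple of the Hardy--Littlewood maximal function $Mf_k(x)$, following the argument for Proposition \ref{propo1}. For the Hardy indices $k\le l$, the domination is by a Peetre-type maximal function adapted to scale $2^j$,
\[
\mathcal{M}^{(t_k)}_{j}f_k(x):=\sup_{y\in\rn}\frac{|(\Phi_j*f_k)(x-y)|}{(1+2^j|y|)^{t_k}},
\]
with $t_k$ slightly larger than $n/p_k$; its $\ell^2$-valued norm in $j$ is controlled by $\Vert f_k\Vert_{H^{p_k}}$ through the Peetre--Fefferman--Stein characterization of Hardy spaces.

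Assembling these bounds, a H\"older inequality in $x$ with exponents $p_1,\dots,p_\rho,\infty,\dots,\infty$ together with an $\ell^2\times\ell^\infty$ H\"older across $j$ (taking $\ell^2$ on the Hardy block $k\le l$ and $\ell^\infty$ elsewhere, which is harmless since the Hardy--Littlewood and $L^\infty$ factors are $j$-independent) reduces the matter to a uniform-in-$j$ weighted $L^{r'}$ bound on the inverse Fourier transform of $\sigma_j$. This in turn follows from Hausdorff--Young duality and the very definition of $\mathcal{L}_{\sss}^{r,\Psi^{(m)}}[\sigma]$, provided the exponents $t_k>n/p_k$ for $k\le l$ can be chosen so that the resulting geometric sums in $j$ are finite; the admissibility of such a choice is exactly the subset condition $\sum_{k\in J}(s_k/n-1/p_k)>-1/r'$ for $J\subset\{1,\dots,l\}$ in (\ref{conditionss}).

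The main obstacle is to make these two endpoint regimes coexist within a single vector-valued estimate: the Hardy block demands a Peetre maximal function whose admissibility is tied to the strict subset hypothesis, while the middle block requires only Hardy--Littlewood bounds governed by $s_k>n/r$. Choosing the Peetre exponents $t_k$ compatibly with both $t_k>n/p_k$ and the $L^r_{\sss}$-Sobolev control of the rescaled kernels of $\sigma_j$ is the delicate point, and it is precisely there that the full strength of (\ref{conditionss}) --- rather than merely $s_k>n/r$ for $k\le l$ --- is needed.
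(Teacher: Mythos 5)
Your proposed reduction hinges on two steps that do not go through as stated, and both are precisely the places where the paper has to work much harder.

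First, the single-scale decomposition $\sigma=\sum_j\sigma_j$ with $\sigma_j=\sigma\,\wh{\Psi^{(m)}}(\cdot/2^j)$ puts the Fourier support of $T_{\sigma_j}\fff$ in a \emph{ball} of radius $O(2^j)$, not an annulus. The inequality
$\Vert\sum_j T_{\sigma_j}\fff\Vert_{L^p}\lesssim\Vert(\sum_j|T_{\sigma_j}\fff|^2)^{1/2}\Vert_{L^p}$
is exactly the "hard" direction of Littlewood--Paley theory and requires the summands to be \emph{annulus}-supported; for ball-supported pieces it is false, and the failure is most severe for $0<p\le1$, which is the range you must handle since $p_1,\dots,p_l\le1$. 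This is the reason the paper uses the full $m$-fold decomposition $\sum_{j_1,\dots,j_m}$, pre-localizing each input $f_k$ at its own scale $j_k$, and then splits by whether the maximal $j_k$ is unique (the high-frequency part, where the output IS annular at scale $2^{\max j_k}$ and the square function reduction\eqref{marshall} can be invoked) or shared by at least two indices (the low-frequency part, where no square function reduction is available and the argument must run through $H^{p_i}$ atoms, Lemma \ref{switchsum}, Grafakos--Kalton's Lemma \ref{grkalemma}, and the pointwise-product factorization of Lemma \ref{keylemma2}). Your scheme collapses both regimes into a square function, which is not valid.

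Second, your $\ell^2$-in-$j$ control of the Hardy-block factors is based on $\mathcal{M}^{(t_k)}_jf_k$ with a low-pass kernel $\Phi_j$ rather than the annular piece $\psi_j\ast f_k$. The Peetre--Fefferman--Stein $L^{p}(\ell^2)$ characterization of $H^p$ requires frequency-\emph{annular} inputs; with low-pass $\Phi_j\ast f_k$ one controls $L^{p}(\ell^\infty)$, not $L^p(\ell^2)$, so the $\ell^2$-sum over $j$ diverges. In the paper this is fixed at the source: the $m$-fold decomposition hands you $\psi_{j_k}\ast f_k$ for at least the distinguished indices, and the generalized Peetre maximal function $\mathfrak{M}^t_{s,2^j}$ together with Lemmas \ref{maximal2} and \ref{equivalence} is what makes the $\ell^2$ analysis correct. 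Relatedly, your treatment of the $L^\infty$ block via a "slicewise Sobolev embedding'' giving $\Vert f_k\Vert_{L^\infty}$ is fine only for a single fixed $j$; in the configurations where the distinguished high-frequency slot lands on an $L^\infty$ input (Cases 3, 5, 6 in the paper's proof of Lemma \ref{keylemma2}), the argument necessarily runs through the $BMO=\dot F^{0,2}_\infty$ characterization and Lemma \ref{bmoboundlemma}, not through a plain sup-norm pull-out. So the proposal as written is missing the decomposition that makes the square function legitimate, the annular localization of the Hardy inputs, the atomic/Grafakos--Kalton machinery for the $p\le1$ block, and the $BMO$-based handling of the $L^\infty$ block; any one of these is a hard stop.
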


\begin{proposition}\label{propo4}
Let $1<r\leq 2$ and $1\leq l\leq m$.
Suppose that $\mathfrak{L}$ is a subset of $\{1,\dots,m\}$ with $|\mathfrak{L}|=l$, and 
$$1<p_i<r \qquad \text{ for }~ i\in\mathfrak{L}$$
and 
$$0<p_i\leq 1 \quad \text{or} \quad r\leq p_i\leq \infty \qquad \text{ for }~i\in \{1,\dots,m\}\setminus\mathfrak{L}.$$
Suppose that (\ref{minimal}) holds for every nonempty subset $J$ of $ \{1,\dots,m\}$. If the function 
$\sigma$ satisfies $\mathcal{L}_{\sss}^{r,\Psi^{(m)}}[\sigma]<\infty$, then (\ref{boundresult}) holds.
\end{proposition}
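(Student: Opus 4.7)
The plan is to deduce Proposition \ref{propo4} from Propositions \ref{propo1}--\ref{propo3} via multilinear complex interpolation in the Stein--Calder\'on--Torchinsky framework. For each $i\in\mathfrak{L}$, since $1<p_i<r\leq 2$, the reciprocal $1/p_i$ lies strictly between $1/r$ and $1$, so one can write $\frac{1}{p_i}=\frac{1-\theta}{r}+\frac{\theta}{q_i}$ for a common interpolation parameter $\theta\in(0,1)$ and some $q_i\in(0,1]$. This yields two endpoint configurations of Hardy indices: at $\theta=0$ one has $p_i^{(0)}=r$ for $i\in\mathfrak{L}$, and at $\theta=1$ one has $p_i^{(1)}=q_i\leq 1$, while the exponents $p_k$ for $k\notin\mathfrak{L}$ are left unchanged. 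In parallel, I choose auxiliary Sobolev orders $s_i^{(0)},s_i^{(1)}$ with $s_i=(1-\theta)s_i^{(0)}+\theta s_i^{(1)}$ and $s_k^{(0)}=s_k^{(1)}=s_k$ for $k\notin\mathfrak{L}$. The strict inequalities in (\ref{minimal}) furnish exactly enough slack to take $s_i^{(0)}$ slightly smaller and $s_i^{(1)}$ slightly larger than $s_i$ so that both endpoint configurations $(p_k^{(j)},s_k^{(j)})_{k=1}^m$ satisfy the strict hypotheses of Proposition \ref{propo3} (or, in degenerate cases, of Proposition \ref{propo1} or \ref{propo2}), which together cover every mixture of $p_k\leq 1$ and $p_k\in[r,\infty]$.

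The interpolation is implemented by an analytic family $\{\sigma_z\}_{z\in\overline{S}}$ of multipliers on the closed strip $\overline{S}=\{z\in\mathbb{C}:0\leq\mathrm{Re}(z)\leq 1\}$, constructed so that $\sigma_\theta=\sigma$ and, on each boundary line $\mathrm{Re}(z)=j\in\{0,1\}$,
$$\mathcal{L}_{\sss^{(j)}}^{r,\Psi^{(m)}}[\sigma_{j+it}]\lesssim (1+|t|)^{N}\,\mathcal{L}_{\sss}^{r,\Psi^{(m)}}[\sigma]$$
for some $N>0$. Applying Proposition \ref{propo3} on each boundary line then yields operator-norm bounds for $T_{\sigma_{j+it}}$ from the appropriate product of Hardy spaces to $L^{p^{(j)}}$, with polynomial growth in $|t|$ coming from the $L^r$-boundedness of the purely imaginary Bessel potentials $(I-\Delta_i)^{it/2}$. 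Stein's interpolation theorem for analytic families of multilinear operators on products of Hardy spaces---in the form of Calder\'on--Torchinsky \cite{Ca_To} and its multilinear extensions used in \cite{Gr_Mi_Tom, Gr_Mi_Ng_Tom, Gr_Ng}---then produces (\ref{boundresult}) at $z=\theta$.

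The main technical obstacle is the construction of the family $\{\sigma_z\}$ itself. The naive candidate $\sigma_z(\xi)=\sigma(\xi)\prod_{i\in\mathfrak{L}}(1+4\pi^2|\xi_i|^2)^{\alpha_i(z)/2}$ is unworkable because, under the rescaling $\xi\mapsto 2^j\xi$ involved in the definition of $\mathcal{L}_{\sss^{(j)}}^{r,\Psi^{(m)}}$, it introduces scale-dependent factors of order $2^{j\alpha_i(z)}$ on the support of $\wh{\Psi^{(m)}}$, destroying the uniformity in the annular scale $j$. To bypass this, one instead employs a scale-adapted weight built from a product Littlewood--Paley decomposition of $\sigma$ in each variable $\xi_i$ with $i\in\mathfrak{L}$, inserting the analytic correction inside each dyadic shell in a way that cancels the intrinsic dyadic scaling. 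Verifying that the resulting $\sigma_z$ is analytic in $z$, reduces to $\sigma$ at $z=\theta$, and has Sobolev norms on both boundary lines controlled with only admissible polynomial growth in $|\mathrm{Im}(z)|$ constitutes the heart of the argument.
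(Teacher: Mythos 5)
Your overall strategy---an analytic family indexed by a complex strip, with the two boundary configurations landing in the setting of Propositions \ref{propo1}--\ref{propo3}---is the right one, and you correctly identify the key technical obstacle: a naive global Bessel-weight $\prod_i(1+4\pi^2|\xi_i|^2)^{\alpha_i(z)/2}$ breaks uniformity over the annular scale. The paper's Lemma \ref{interpolation} resolves this exactly as you sketch, by inserting the complex Bessel powers \emph{scale by scale}: $\sigma^z=\frac{(1+\theta)^{mn+1}}{(1+z)^{mn+1}}\sum_{j}\bigl[(I-\Delta_1)^{-\cdots}\cdots(I-\Delta_m)^{-\cdots}\bigl((I-\Delta_1)^{s_1/2}\cdots(\sigma(2^j\cdot)\wh{\Psi^{(m)}})\bigr)\bigr](\cdot/2^j)\wh{\wt{\Psi^{(m)}}}(\cdot/2^j)$, with the decaying prefactor absorbing the Marcinkiewicz growth of the imaginary-order Bessel multipliers. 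So that half of your proposal is sound, if only morally sketched.

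The genuine gap is in the index bookkeeping. You propose moving all $p_i$, $i\in\mathfrak{L}$, simultaneously from $r$ at $\theta=0$ to some $q_i\le 1$ at $\theta=1$ with a common $\theta$, accompanied by ``slight'' perturbations of the Sobolev indices. This cannot work in general. Keeping the endpoint-$1$ cumulative conditions $\sum_{k\in J}(s_k^{(1)}/n-1/q_k)>-1/r'$ close to the center values forces $s_k^{(1)}-s_k^{(0)}$ to be of order $n(1/q_k-1/r)$, and then $s_k^{(0)}=s_k-\theta(s_k^{(1)}-s_k^{(0)})=s_k-n(1/p_k-1/r)$, which exceeds $n/r$ only when $s_k>n/p_k$. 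But condition \eqref{minimal} permits $n/r<s_k\le n/p_k$ for $k\in\mathfrak L$ (for example: $m=2$, $\mathfrak L=\{1,2\}$, $p_1=p_2=p\in(1,r)$, and $s_1=s_2=s$ with $n/p-n/(2r')<s<n/p$), so the $\theta=0$ endpoint then violates $s_k^{(0)}>n/r$, and the interpolation hypothesis of Proposition \ref{propo1}/\ref{propo3} fails. The perturbations of the Sobolev indices are not small: they are forced to be large precisely because $1/q_k-1/p_k$ is bounded away from $0$ when $q_k\le1<p_k$. The paper circumvents this with two ingredients you do not mention: Lemma \ref{interpolation2}, a convex-hull decomposition of the admissible Sobolev region $\Gamma_m(\ppp)$ into the sets $\Lambda_m^u(\ppp)$ in which all but one $s_k$ satisfy $s_k\geq n/p_k$, and an induction on $|\mathfrak{L}|$ that moves only \emph{one} $p_v$ at a time (with the easy $u=v$ vertex handled by leaving all $s_k$ fixed and the $u\ne v$ vertices handled by a one-variable Sobolev interpolation $s_v=(1-\theta)t^0+\theta t^1$). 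You should incorporate both: write $\sss$ as a convex combination of points in the $\Lambda_m^u$ sets, show boundedness at each extreme point by interpolating a single $p_v$ between $1$ and $r$, and then use the analytic family to interpolate across the Sobolev-index convex combinations, iterating at most $\rho-1$ times.
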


The statements in the above propositions can be thought of as extensions   of Theorems \ref{knownresult1} and \ref{knownresult2} from $r=2$ to $1<r\leq 2$.
However,  the ingredients of their proofs are significantly more involved than in the case $r=2$, 
  in view of the lack of Plancherel's   identity.
The proofs we employ depend  on the Littlewood-Paley theory for the Hardy space $H^p$, but this certainly does not work for $H^{\infty}=L^{\infty}$ or $BMO$, and 
this  is the reason   the case $p_i=\infty$ was excluded in Theorem \ref{knownresult1}.
It was addressed in the proof of Theorem \ref{knownresult2} by applying a modified version of the Carleson measure estimate related to $BMO$ functions, which is contained in \cite{Gr_Mi_Tom}. We provide a new method to deal with this issue, using a generalization of Peetre's maximal function, saying $\mathfrak{M}_{\sigma,2^j}^tf$, introduced by Park \cite{Park3}. As we have an $L^{\infty}(\ell^2)$ characterization of $BMO$ with this maximal function,   stated in Lemma \ref{equivalence}, we may still utilize the Littlewood-Paley theory to obtain $H^{p_i}$ bounds for all $0<p_i\leq \infty$.

The proof of Proposition \ref{propo1} is based on that of Theorem \ref{knownresult1} for which the pointwise estimate in Lemma \ref{keyestilemma} below is essential. In 
Propositions \ref{propo2} and \ref{propo3}   at least one index $p_i$ satisfies $0<p_i\leq 1$ and  the $H^{p_i}$ atomic decomposition is very useful. 
In this case we need to employ an approximation argument for $\sigma$ as we don't know that we can interchange infinite sums of atoms and the action of the operator as in
\begin{equation*}
T_{\sigma}\big(f_1,\dots,f_m\big)=\sum_{k_1=1}^{\infty}\cdots\sum_{k_m=1}^{\infty}\lambda_{1,k_1},\cdots \lambda_{l,k_l}T_{\sigma}\big(a_{1,k_l},\dots,a_{l,k_l},f_{l+1},\dots,f_m\big)
\end{equation*} 
for functions $f_i\in H^{p_i}(\rn)$ with atomic representation 
$f_i=\sum_{k_i=1}^{\infty}{\lambda_{i,k_i}a_{i,k_i}}$, $1\leq i\leq l$. This regularization of the multiplier was also used  in \cite{Gr_Ng} but here it is stated in Lemma \ref{switchsum}. Afterwards, we   apply the method of Grafakos and Kalton \cite{Gr_Ka} and a  pointwise estimate of the form
\begin{equation}\label{keypointest}
\big| T_{\sigma}\big(a_{1,k_1},\dots,a_{l,k_l},f_{l+1},\dots,f_m\big)(x)\big|\lesssim \mathcal{L}_{\sss}^{r,\Psi^{(m)}}[\sigma]b_1(x)\cdots b_l(x) F_{l+1}(x)\cdots F_{m}(x)
\end{equation} where $\Vert b_i\Vert_{L^{p_i}(\rn)}\lesssim 1$  and $\Vert F_i\Vert_{L^{p_i}(\rn)}\lesssim \Vert f_i\Vert_{L^{p_i}(\rn)}$.
Since the above estimate separates the left-hand side to $m$ functions of $x$, we may now apply   
H\"older's inequality with exponents $1/p=1/p_1+\dots+1/p_m$.
The main idea in the proof of Proposition \ref{propo4} is a multilinear extension of the complex interpolation method of Calder\'on \cite{Ca} and Calder\'on and Torchinsky \cite{Ca_To}. Specifically, we   apply the interpolation to Propositions \ref{propo1}, \ref{propo2}, and \ref{propo3} to obtain (\ref{boundresult}) in the entire  range $0<p_1,\dots,p_m\leq \infty$.\\

{\bf Remark.}
The direction $(\ref{boundresult})\Rightarrow (\ref{minimal})$ is valid even for   $2<r<\infty$, in view of Propositions~\ref{main21} and ~\ref{main22}. Thus, 
 under the assumption $\mathcal{L}_{\sss}^{r,\Psi^{(m)}}[\sigma]<\infty$
   conditions $(\ref{minimal})$  are necessary for the 
boundedness of $T_\sigma$ 
  for all $r$ in the range $1<r<\infty$. However the sufficiency of $(\ref{minimal})$ for the boundedness of $T_\sigma$, i.e., the
  direction $(\ref{minimal}) \Rightarrow (\ref{boundresult})$
  is missing in the case $r>2$. It seems that our techniques are not applicable in this case. We hope to address
  this problem in the future but we welcome interested researchers to investigate this topic as well. \\

{\bf Organization.}
Section \ref{preliminary} contains some preliminary facts that are crucial in the proofs of the preceding  propositions. The proofs of Propositions \ref{main21} - \ref{propo4} are given in Sections \ref{proofpropo1} - \ref{proofpropo6}. Some key lemmas that appear in the proofs of the propositions are contained in the last section. \\

{\bf Notation.}
We denote by $\nn$ and $\zz$ the sets of   natural numbers and   integers, respectively. We   use the symbol $A\lesssim B$ to indicate that $A\leq CB$ for some constant $C>0$ independent of the variable quantities $A$ and $B$, and $A\approx B$ if $A\lesssim B$ and $B\lesssim A$ hold simultaneously.
The set of all dyadic cubes in $\rn$ is denoted by   $\mathcal{D}$, and for each $j\in\mathbb{Z}$ we 
designate    $\mathcal{D}_{j}$ to be  the subset of  $\mathcal{D}$ consisting of dyadic cubes with side length $2^{-j}$. For each $Q\in\mathcal{D}$, $\chi_Q$ denotes the characteristic function of $Q$.
We also use the notation $\fff:=(f_1,\dots,f_m)$, $\vv:=(v_1,\dots,v_m)$, and $\langle x\rangle:=(1+4\pi^2|x|^2)^{1/2}$.

\section{Preliminaries}\label{preliminary}

Let $\phi$ be a Schwartz function on $\rn$ with $\wh{\phi}(0)= 1$. For $0<p\leq \infty$ the Hardy space $H^p(\rn)$ contains all tempered distributions $f$ on $\rn$ which satisfy 
\begin{equation*}
\Vert f\Vert_{H^p(\rn)}:=\big\Vert \sup_{j\in\zz}{|\phi_j\ast f|} \big\Vert_{L^p(\rn)}<\infty
\end{equation*} where $\phi_j:=2^{jn}\phi(2^j\cdot)$.
According to \cite{Fr_Ja2, Tr}, different choices of   Schwartz function $\phi$ with $\wh{\phi}(0)= 1$ provide equivalent
Hardy space norms.
In this paper we fix a 
Schwartz function $\psi$    on $\rn$ whose Fourier transform is supported in the annulus $1/2<|\xi|<2$ and satisfies $\sum_{j\in\zz}{\wh{\psi}(\xi/2^j)}=1$ for $\xi\not= 0$. Set $\wh{\psi}(\cdot /2^j)= \wh{\psi_j} $. 
Then we define a
  function  $\phi \in \SS(\rn)$  by    
\begin{equation}\label{phidef}
\wh{\phi}(\xi):=\begin{cases}
\sum_{j\leq 0}{\wh{\psi_j}(\xi)}, & \xi\not= 0\\
1,& \xi=0,
\end{cases}
\end{equation}
 and let $\phi_j:=2^{jn}\phi(2^j\cdot)$ so that $\wh{ \phi_j} =\wh{ \phi } (\cdot/2^j)$. 
Note that $H^p(\rn)=L^p(\rn)$ for all $1<p\leq \infty$. A nice feature of the Hardy spaces $H^p$ for $0<p\leq 1$ is their atomic decomposition. More precisely, when $N$ is a positive integer greater or equal to $[n/p-n]+1$, every $f$ in $H^p(\rn)$, $0<p\leq 1$, can be written as $ \sum_{k=1}^{\infty}{\lambda_k a_k}$, where 
$\lambda_k$ are coefficients satisfying $\big( \sum_{k=1}^{\infty}{|\lambda_k|^{p}}\big)^{1/p}\lesssim \Vert f\Vert_{H^p(\rn)}$ and 
$a_k$ are $L^{\infty}$-atoms for $H^p$; this means that there exist cubes $Q_k$ such that $\textup{Supp}(a_k)\subset Q_k$, $\Vert a_k\Vert_{L^{\infty}(\rn)}\leq |Q_k|^{-1/p}$, and $\int_{Q_k}{x^{\alpha}a_k(x)}dx=0$ for all multi-indices $\alpha$ with $|\alpha|\leq N$.

The Hardy space $H^p$ can be characterized in terms of 
 Littlewood-Paley theory. For $0<p<\infty$  we have
\begin{equation}\label{littlewood}
\Vert f\Vert_{H^p(\rn)}\approx \Big\Vert \Big(\sum_{j\in\zz}{\big| \psi_j\ast f\big|^2} \Big)^{1/2}\Big\Vert_{L^{p}(\rn)}
\end{equation} 
where $\psi_j$ is a Littlewood-Paley function defined as above. This property is also independent of the choice of functions $\psi_j$ because of the Calder\'on reproducing formula and the Fefferman-Stein vector-valued maximal inequality   \cite{Fe_St} which states that
\begin{equation}\label{maximal1}
\big\Vert \big\{ \mathcal{M}_tf_j\big\}_{j\in\zz} \big\Vert_{L^p(\ell^q)}\lesssim \Vert \{f_j\}_{j\in\zz}\Vert_{L^p(\ell^q)} \qquad \text{ for }~ t<p,q<\infty
\end{equation} 
where $\mathcal{M}f(x):=\sup_{Q:x\in Q}{|Q|^{-1}\int_Q{|f(y)|}dy}$ is the Hardy-Littlwood maximal functions and $\mathcal{M}_tf(x):=\big(\mathcal{M}(|f|^t)\big)^{1/t}$ for $0<t<\infty$.
Note that (\ref{maximal1}) also holds for $0<p<\infty$, $q=\infty$ or for $p=q=\infty$.

For $j\in \zz$, $s>0$, and $0<t\leq \infty$ we now define
\begin{equation*}
\mathfrak{M}_{s,2^j}^{t}f(x):=2^{jn/t}\Big\Vert \frac{f(x-\cdot)}{(1+2^j|\cdot|)^{s}}\Big\Vert_{L^t(\rn)},
\end{equation*}
which is a generalization of the Peetre's maximal function $\mathfrak{M}_{s,2^j}f(x):=\mathfrak{M}_{s,2^j}^{\infty}f(x)$. 
It is easy to verify that if $0<t<\infty$ and $s>n/t$, then 
\begin{equation}\label{maximalcompare}
\mathfrak{M}_{s,2^j}^{t}f(x)\lesssim \mathcal{M}_tf(x), \quad \text{ uniformly in }~j\in\zz.
\end{equation}  
Moreover, for $s>0$, $0<t\leq r\leq \infty$, and $j\in \mathbb{Z}$, we have
\begin{equation}\label{mcomposition}
\mathfrak{M}_{s,2^j}^{r}\mathfrak{M}_{s,2^j}^tf(x)\lesssim \mathfrak{M}_{s,2^j}^tf(x).
\end{equation}
See \cite{Park3} for more details. 

Elementary considerations reveal that for $s>0$ and $Q\in\mathcal{D}_j$
\begin{equation*}
 \sup_{y\in Q}{|f(y)|}\lesssim \inf_{y\in Q}{\mathfrak{M}_{s,2^j}f(y)}
\end{equation*}
and then it follows from (\ref{mcomposition}) that for $0<t< \infty$
\begin{equation}\label{infmax2}
\sup_{y\in Q}{\mathfrak{M}_{s,2^j}^tf(y)}\lesssim \inf_{y\in Q}{\mathfrak{M}_{s,2^j}\mathfrak{M}_{s,2^j}^{t}f(y)}\lesssim \inf_{y\in Q}{\mathfrak{M}_{s,2^j}^tf(y)}.
\end{equation} 
In addition, the following maximal inequality holds. 
\begin{lemma}[\cite{Park3}]\label{maximal2}
Let $0<p,q,t \leq \infty$ and $s>n/\min{(p,q,t)}$. Suppose that the Fourier transform of $f_j$ is supported in a ball of radius $A2^j$ for some $A>0$.  
\begin{enumerate}
\item For $0<p<\infty$ and $0<q\le \infty$ or for $p=q=\infty$, we have
\begin{equation*}
 \big\Vert \big\{ \mathfrak{M}_{s,2^j}^tf_j\big\}_{j\in\mathbb{Z}}\big\Vert_{L^p(\ell^q)}\lesssim \big\Vert \{f_j\}_{j\in\mathbb{Z}}\big\Vert_{L^p(\ell^q)}.
\end{equation*}
\item For $p=\infty$, $0<q<\infty$, and $\mu\in\mathbb{Z}$, we have
\begin{equation*}
\sup_{P\in\mathcal{D}_{\mu}}{\Big( \frac{1}{|P|}\int_P{\sum_{j=\mu}^{\infty}{\big(\mathfrak{M}_{s,2^j}^tf_j(x) \big)^q}}dx\Big)^{1/q}}\lesssim \sup_{P\in\mathcal{D}_{\mu}}{\Big( \frac{1}{|P|}\int_P{\sum_{j=\mu}^{\infty}{|f_j(x) |^q}}dx\Big)^{1/q}}
\end{equation*} where the constant in the inequality is independent of $\mu$.
\end{enumerate}
\end{lemma}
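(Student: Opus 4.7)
The proof rests on a pointwise estimate under the spectral support hypothesis: for any $0 < \tau \le t$ with $s > n/\tau$,
\[
\mathfrak{M}_{s,2^j}^{t} f_j(x) \lesssim \mathcal{M}_{\tau} f_j(x), \qquad x\in \rn,\ j\in \zz.
\]
My plan is to establish this bound first and then derive parts (1) and (2) by choosing $\tau$ according to the indices involved. To prove it, I would pick a Schwartz $\eta$ with $\wh{\eta} \equiv 1$ on $\{|\xi|\le A\}$ so that $f_j = f_j \ast \eta_{2^j}$, and then invoke Peetre's classical inequality $\mathfrak{M}^{\infty}_{\epsilon,2^j} f_j(x) \lesssim \mathcal{M}_{\tau} f_j(x)$ for any $\epsilon > n/\tau$. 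Factorizing $|f_j(x-y)|^{t} = |f_j(x-y)|^{\tau}|f_j(x-y)|^{t-\tau}$ inside the defining integral of $\mathfrak{M}^{t}$, absorbing the second factor via Peetre with $n/\tau < \epsilon \le s$, and combining with the unconditional bound $\mathfrak{M}_{s,2^j}^{\tau} f_j(x) \lesssim \mathcal{M}_{\tau} f_j(x)$ (proved by a dyadic-annulus decomposition in $|y|$, valid since $s\tau > n$) yields the pointwise estimate.

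For part (1), given $s > n/\min(p,q,t)$ I would choose $\tau$ with $n/s < \tau \le \min(p,q,t)$ and $\tau < \min(p,q)$, which is possible in all the listed cases. The pointwise estimate then reduces the claim to the Fefferman--Stein vector-valued maximal inequality \eqref{maximal1}. The case $p = q = \infty$ is handled by the trivial bound $\mathcal{M}_{\tau} f_j \le \|f_j\|_{L^{\infty}}$.

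Part (2) is the main obstacle, since Fefferman--Stein is not available at $p = \infty$. For $P \in \mathcal{D}_{\mu}$, $x \in P$, and $j \ge \mu$, I would split the defining integral of $\mathfrak{M}^{t}_{s,2^j} f_j(x)$ into the near region $\{|y| \le 2\ell(P)\}$ and the far region $\{|y| > 2\ell(P)\}$. A dyadic-annulus decomposition in $|y|$ bounds the near piece by $\mathcal{M}(|f_j|^{t}\chi_{P^{*}})(x)$, with $P^{*}$ a modest dilation of $P$, while the polynomial weight $(1 + 2^{j}|y|)^{-st}$ together with $2^{j}|y| \gtrsim 2^{j-\mu}$ furnishes the far piece with a geometric decay factor $2^{-(j-\mu)(st - n)}$ against $\mathcal{M}(|f_j|^{t})(x)$. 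The near contribution, after summing in $j \ge \mu$ and averaging over $P$, is controlled by the $L^{q/t}$-boundedness of the scalar maximal function (reducing to a smaller $\tau < q$ via Jensen if needed to enforce $q/\tau > 1$) and produces a local average of $\sum_{j \ge \mu} |f_j|^{q}$ over $P^{*}$, which is dominated by the right-hand side. For the far contribution, the constraint $j \ge \mu$ is essential: it ensures convergence of the geometric series $\sum_{j \ge \mu} 2^{-(j-\mu)\lambda}$ with $\lambda = (st-n)q/t > 0$, which is then paired with a scalar Carleson-type bound $\tfrac{1}{|P|}\int_{P} \mathcal{M}_{\tau} f_j^{\,q}\, dx \lesssim \sup_{Q} \tfrac{1}{|Q|}\int_{Q} |f_j|^{q}$ (itself obtained by splitting $|f_j|^{\tau}$ into near and far pieces relative to $P^{*}$ and applying Jensen to the far piece) to close the argument.
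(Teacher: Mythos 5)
The pointwise estimate $\mathfrak{M}_{s,2^j}^{t}f_j\lesssim\mathcal{M}_\tau f_j$ for $n/s<\tau\le t$ under band-limitation, and the derivation of part (1) from it via Fefferman--Stein, are sound. The gap is in the near-region treatment of part (2) whenever $q\le t$. In that regime you need $\tau<q$ so that $q/\tau>1$, and the only mechanism you propose for lowering the integrability from $t$ to $\tau$ inside the defining integral is the factorization $|f_j(x-y)|^{t}=|f_j(x-y)|^{\tau}\,|f_j(x-y)|^{t-\tau}$, with the second factor absorbed into $\big(\mathfrak{M}^{\infty}_{\epsilon,2^j}f_j(x)\big)^{t-\tau}$ via Peetre. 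But this Peetre bound exploits band-limitation of $f_j$ over all of $\rn$ (it is $\lesssim\mathcal{M}_\tau f_j(x)$, a \emph{global} quantity), whereas $f_j\chi_{P^{*}}$ is not band-limited. So after restricting $y$ to $\{|y|\le 2\ell(P)\}$, what you actually obtain for the near piece is $\mathcal{M}(|f_j|^{\tau}\chi_{P^{*}})(x)\cdot\big(\mathcal{M}_\tau f_j(x)\big)^{t-\tau}$, and the second factor carries no spatial localization; summing it over $j\ge\mu$ and averaging over $P$ brings you straight back to the unknown quantity. Your parenthetical appeal to Jensen cannot rescue this: for exponents $\alpha=q/t\le 1$, concavity gives $\mathcal{M}(g)^{\alpha}\ge\mathcal{M}(g^{\alpha})$, the \emph{wrong} direction, and in the borderline case $q=t$ the vector-valued maximal inequality at $L^{1}(\ell^{1})$ simply fails. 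For $q>t$, and for the far piece (where the factorization inherits the $2^{-(j-\mu)\cdot}$ decay in both factors and your scalar Carleson bound with $\tau<q$ then applies), the argument does go through.

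A correct treatment cannot rely on a single binary near/far cut. The standard way out is to refine Peetre to a dyadic form: writing $f_j=f_j\ast\eta_{2^j}$ with $\wh{\eta}\equiv 1$ on $\{|\xi|\le A\}$ and $\eta$ Schwartz, one has for any $L$
\begin{equation*}
|f_j(z)|\lesssim\sum_{k\ge 0}2^{-kL}\Big(\frac{1}{|B(z,2^{k-j})|}\int_{B(z,2^{k-j})}|f_j|^{\tau}\Big)^{1/\tau},
\end{equation*}
and substituting this into a dyadic-annulus decomposition of $\int(1+2^{j}|y|)^{-st}|f_j(x-y)|^{t}\,dy$ produces, for $x\in P\in\mathcal{D}_\mu$ and $j\ge\mu$, a double-indexed sum in which the terms with $k\le j-\mu$ involve only balls contained in a fixed dilate of $P$ (genuinely local, exponent $\tau$) while those with $k>j-\mu$ carry the arbitrarily fast decay $2^{-kL}$. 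This simultaneously lowers the exponent from $t$ to $\tau$ and splits local from nonlocal at every scale, which the binary split cannot do when $q\le t$.
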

Using Lemma \ref{maximal2}  we can prove the following result.
\begin{lemma}[\cite{Park3}]\label{equivalence}
Let $0<p\leq \infty$, $0<t\leq \infty$, $0<\gamma<1$, and $s >{n}/{\min{(p,2,t)}}$.
 Then for any dyadic cubes $Q\in\mathcal{D}$, there exists a proper measurable subset $S_Q$ of $Q$, depending on $\gamma, s,t,f$, such that
$|S_Q|>\gamma |Q|$ and
\begin{equation*}
\Vert f\Vert_{X^p}\approx \Big\Vert \Big\{ \sum_{Q\in\mathcal{D}_j}{\Big( \inf_{y\in Q}{\mathfrak{M}_{s ,2^j}^t\big(\psi_j\ast f\big)(y) }\Big)\chi_{S_Q}}\Big\}_{j\in\mathbb{Z}}\Big\Vert_{L^{p}(\ell^2)}
\end{equation*} 
where $X^p=H^p$ for $0<p<\infty$ and $X^{\infty}=BMO$.
\end{lemma}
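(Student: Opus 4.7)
The plan is to prove the two inequalities separately, using the Littlewood-Paley characterization \eqref{littlewood} (and its Carleson-measure analogue for $BMO$) as the link to $\Vert f\Vert_{X^p}$. For brevity write $c_{Q}:=\inf_{y\in Q}\mathfrak{M}_{s,2^j}^{t}(\psi_j\ast f)(y)$ for $Q\in\mathcal{D}_j$ and let $Q(x,j)$ denote the unique cube in $\mathcal{D}_j$ containing $x$. For the upper bound on the right-hand side, the pairwise disjointness of cubes in $\mathcal{D}_j$ together with $S_Q\subset Q$ immediately gives $\sum_{Q\in\mathcal{D}_j}c_{Q}\chi_{S_Q}(x)\leq \mathfrak{M}_{s,2^j}^{t}(\psi_j\ast f)(x)$ pointwise. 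For $0<p<\infty$, Lemma \ref{maximal2}(1) (applicable since $s>n/\min(p,2,t)$) followed by \eqref{littlewood} yields the desired bound by $\Vert f\Vert_{H^p}$; the $p=\infty$ case is identical after replacing these two inputs by the Carleson-measure characterization of $BMO$ in terms of $\psi_j\ast f$ and Lemma \ref{maximal2}(2).

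For the reverse direction I would take $S_Q$ to be any measurable subset of $Q$ with $|S_Q|>\gamma|Q|$; the argument below depends on $(\gamma,s,t,f)$ only through this measure condition. Because $\psi_j\ast f$ has Fourier transform supported in an annulus of radius comparable to $2^j$, the Plancherel-P\'olya inequality yields $|\psi_j\ast f(y)|\lesssim \mathfrak{M}_{s,2^j}^{t}(\psi_j\ast f)(y)$ pointwise (here one uses $s>n/t$, which follows from the standing hypothesis), and combining this with \eqref{infmax2} upgrades the bound to the uniform pointwise estimate $|\psi_j\ast f(y)|\lesssim c_{Q(y,j)}$ on $\rn$. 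Via \eqref{littlewood} (resp.\ its $BMO$ version) this produces
\begin{equation*}
\Vert f\Vert_{X^p}\lesssim \Big\Vert\Big\{\sum_{Q\in\mathcal{D}_j}c_{Q}\chi_Q\Big\}_j\Big\Vert_{L^p(\ell^2)},
\end{equation*}
reducing the problem to replacing the bigger indicators $\chi_Q$ by the smaller $\chi_{S_Q}$.

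This last replacement, which reverses a trivial inequality in the wrong direction, is where I expect the main technical work to sit. The measure hypothesis enters through the pointwise estimate $\chi_Q(x)\leq\gamma^{-1/t_0}\mathcal{M}_{t_0}(\chi_{S_Q})(x)$; choosing $t_0<\min(p,2,t)$ with $s>n/t_0$, which is possible under the standing hypothesis, and applying the Fefferman-Stein vector-valued inequality \eqref{maximal1} to the $\ell^2$-valued sequence $c_{Q}\chi_{S_Q}$ indexed by all dyadic cubes and grouped by scale $j$ gives
\begin{equation*}
\Big\Vert\Big\{\sum_{Q\in\mathcal{D}_j}c_{Q}\chi_Q\Big\}_j\Big\Vert_{L^p(\ell^2)}\lesssim \gamma^{-1/t_0}\Big\Vert\Big\{\sum_{Q\in\mathcal{D}_j}c_{Q}\chi_{S_Q}\Big\}_j\Big\Vert_{L^p(\ell^2)},
\end{equation*}
which closes the $H^p$ case. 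For $p=\infty$ the same substitution must be routed through Lemma \ref{maximal2}(2) in place of \eqref{maximal1}; the subtle point there will be matching the local truncation $j\geq \mu(P)$ in the Carleson form of the $BMO$ norm with the globally indexed maximal argument, which I plan to handle by splitting, for each testing cube $P\in\mathcal{D}_{\mu(P)}$, the contributions from cubes $Q\subset P$ (by the localized maximal argument inside $P$) and from cubes $Q\supseteq P$ at scale $j<\mu(P)$, each of which contributes a single term essentially constant on $P$ that can be absorbed into the supremum defining the $BMO$ norm.
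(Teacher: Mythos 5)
The paper does not prove this lemma; it only cites \cite{Park3}, so there is no in-paper proof to compare against. That said, your argument is essentially sound, and it in fact establishes a \emph{stronger} result: the equivalence holds for \emph{every} choice of measurable $S_Q\subset Q$ with $|S_Q|>\gamma|Q|$, not merely for an $f$-dependent one as the existence-style statement requires. Two remarks.

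First, in the $p<\infty$ lower bound the reduction to the substitution $\chi_Q\rightsquigarrow\chi_{S_Q}$ via $\chi_Q\leq\gamma^{-1/t_0}\mathcal{M}_{t_0}(\chi_{S_Q})$ and the Fefferman--Stein inequality \eqref{maximal1} (with $t_0<\min(p,2,t)$ and the index set $\{Q\}_{Q\in\mathcal D}$, using disjointness of the $S_Q$ at each scale to pass between $\ell^2_j$ and $\ell^2_Q$) is correct; this is in fact the same substitution the present paper performs inside the proof of Lemma~\ref{bmoboundlemma}. The only step you should flag is the inequality $|\psi_j\ast f(y)|\lesssim\inf_{z\in Q}\mathfrak M_{s,2^j}^t(\psi_j\ast f)(z)$: the paper's \eqref{infmax2} only compares $\mathfrak M_{s,2^j}^t$ with itself across $Q$, so the bridge $\mathfrak M_{s,2^j}^\infty g\lesssim\mathfrak M_{s,2^j}^t g$ for band-limited $g$ with $s>n/t$ (the Plancherel--P\'olya--Nikolskii inequality) is used and needs a citation; it is not one of the tools recorded in Section~\ref{preliminary}.

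Second, the concern you raise for $p=\infty$ is misplaced. In the Carleson/$\dot F_\infty^{0,2}$ form of the $BMO$ norm one only tests scales $j\geq\mu(P)$ for each $P\in\mathcal D_{\mu(P)}$, and for such $j$ every $Q\in\mathcal D_j$ that meets $P$ satisfies $Q\subset P$; there are simply no cubes $Q\supseteq P$ at scale $j<\mu(P)$ to control. Better yet, no maximal-function machinery is needed there at all: for each $P$,
\begin{equation*}
\frac{1}{|P|}\int_P\sum_{j\geq\mu(P)}\Big(\sum_{Q\in\mathcal D_j}c_Q\chi_Q\Big)^{2}
=\frac{1}{|P|}\sum_{j\geq\mu(P)}\sum_{\substack{Q\in\mathcal D_j\\ Q\subset P}}c_Q^{2}|Q|
\leq \frac{\gamma^{-1}}{|P|}\int_P\sum_{j\geq\mu(P)}\Big(\sum_{Q\in\mathcal D_j}c_Q\chi_{S_Q}\Big)^{2},
\end{equation*}
which settles the $BMO$ lower bound directly from the measure hypothesis and the at-most-one-term disjointness. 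Your plan of ``splitting off $Q\supseteq P$ and absorbing'' addresses a problem that does not arise; replace it by this one-line computation, and the rest (the upper bound via Lemma~\ref{maximal2} and the $\dot F_\infty^{0,2}$ characterization of $BMO$) goes through as you describe.
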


We observe that if $S_Q$ is a   measurable subset of $Q\in\mathcal{D}$ with $|S_Q|>\gamma|Q|$ for some $0<\gamma<1$,
then  we have
\begin{equation}\label{chamaximal}
\chi_{Q}(x)\lesssim_{\tau,\gamma} \mathcal{M}_{\tau}(\chi_{S_Q})(x),
\end{equation}
which is due to the fact that for $x\in Q$
\begin{equation}\label{inversechi}
1<\frac{1}{\gamma^{1/{\tau}}}\frac{|S_Q|^{1/{\tau}}}{|Q|^{1/{\tau}}}=\frac{1}{\gamma^{1/{\tau}}}\Big(\frac{1}{|Q|}\int_Q{\chi_{S_Q}(y)}dy \Big)^{1/{\tau}}\leq \gamma^{-1/{\tau}}\mathcal{M}_{\tau}(\chi_{S_Q})(x).
\end{equation}

Based on the $L^{\infty}(\ell^2)$ characterization of $BMO$  from Lemma \ref{equivalence}, we have the following lemma, which will be essential in obtaining $L^{\infty}$ bounds in the proof of our main theorem. 
\begin{lemma}\label{bmoboundlemma}
Let $0<p,t<\infty$, $N\geq 3$, and
\begin{equation*}
s_1>{n}/{\min{(p,t)}}, \qquad s_i>{n}/{\min{(2,t)}} \quad 2\leq i\leq N.
\end{equation*}
Let $\varphi_j,\vartheta_j\in \SS(\rn)$, $j\in\zz$, satisfy $\textup{Supp}(\wh{\varphi_j})\subset \{\xi\in\rn: |\xi|\leq C2^j\}$ and $\textup{Supp}(\wh{\vartheta_j})\subset \{\xi\in\rn: D^{-1}2^j\leq |\xi|\leq D2^j\}$
for some $C,D>1$.
Suppose that $T^1$ and $T^2$ are the bilinear operators and $T^3$ is the $N$-linear operator, defined by
\begin{align*}
T^1(f_1,f_2)(x)&:=\Big[ \sum_{j\in\zz}{\big( \mathfrak{M}_{s_1,2^j}^t\big(\varphi_j\ast f_1\big)(x)\big)^2\big( \mathfrak{M}_{s_2,2^j}^{t}\big(\vartheta_j\ast f_2\big)(x)\big)^2}\Big]^{1/2},\\
T^2(f_1,f_2)(x)&:= \sum_{j\in\zz} \mathfrak{M}_{s_1,2^j}^t\big(\vartheta_j\ast f_1\big)(x) \mathfrak{M}_{s_2,2^j}^{t}\big(\vartheta_j\ast f_2\big)(x),\\
T^3(f_1,\dots,f_N)(x)&:=\sum_{j\in\zz}\mathfrak{M}_{s_1,2^j}^t\big(\varphi_j\ast f_1\big)(x)\prod_{i=2}^{N}\mathfrak{M}_{s_i,2^j}^{t}\big(\vartheta_j\ast f_i\big)(x)
\end{align*} for $f_1,\dots,f_N\in \SS(\rn)$ and $x\in \rn$.
Then we have
\begin{align}
\big\Vert T^1(f_1,f_2)\big\Vert_{L^p(\rn)} &\lesssim \Vert f_1\Vert_{H^p(\rn)}\Vert f_2\Vert_{BMO(\rn)} \label{t1est} \\
\big\Vert T^2(f_1,f_2)\big\Vert_{L^p(\rn)} &\lesssim \Vert f_1\Vert_{H^p(\rn)}\Vert f_2\Vert_{BMO(\rn)}  \label{t2est}  \\
\big\Vert T^3(f_1,\dots,f_N)\big\Vert_{L^p(\rn)} &\lesssim \Vert f_1\Vert_{H^p(\rn)}\prod_{i=2}^{N}\Vert f_i\Vert_{BMO(\rn)}. \label{t3est}
\end{align}
\end{lemma}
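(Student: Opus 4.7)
I would prove \eqref{t3est} in full; the estimates \eqref{t1est} and \eqref{t2est} then follow by specialization — \eqref{t2est} being the $N=2$ case of \eqref{t3est} with $\varphi_j$ replaced by $\vartheta_j$, and \eqref{t1est} the corresponding $\ell^2$-variant handled by the same scheme after a Cauchy--Schwarz in $j$. The plan combines four ingredients from Section~\ref{preliminary}: the $L^p(\ell^2)$ characterization of $H^p$ and the $L^\infty(\ell^2)$ characterization of $BMO$ supplied by Lemma~\ref{equivalence}; the dyadic comparison $\sup_{y\in Q}\mathfrak{M}_{s,2^j}^t f(y)\lesssim \inf_{y\in Q}\mathfrak{M}_{s,2^j}^t f(y)$ from \eqref{infmax2}; the bound $\chi_Q\lesssim \mathcal{M}_\tau(\chi_{S_Q})$ of \eqref{chamaximal}; and the Fefferman--Stein vector-valued inequality \eqref{maximal1}.

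\textbf{Discretization and common good subsets.} For each $j\in\zz$ partition $\rn$ into cubes $Q\in\mathcal{D}_j$; on each such cube, \eqref{mcomposition}--\eqref{infmax2} allow me to pass from $\mathfrak{M}_{s_i,2^j}^t(K_j^{(i)}\ast f_i)(x)$ (with $K_j^{(i)}=\varphi_j$ if $i=1$ and $K_j^{(i)}=\vartheta_j$ otherwise) to the dyadic infimum $a_{j,Q}^{(i)}:=\inf_{y\in Q}\mathfrak{M}_{s_i,2^j}^t(K_j^{(i)}\ast f_i)(y)$, producing the pointwise bound
\begin{equation*}
T^3(\fff)(x)\lesssim \sum_{j\in\zz}\sum_{Q\in\mathcal{D}_j}\Big(\prod_{i=1}^{N} a_{j,Q}^{(i)}\Big)\chi_Q(x).
\end{equation*}
Next, I would apply Lemma~\ref{equivalence} to each $f_i$ with a single parameter $\gamma$ close to $1$, and intersect the resulting sets $S_Q^{(i)}\subset Q$ to obtain one subset $S_Q$ with $|S_Q|>\gamma'|Q|$ for some $\gamma'>0$ that simultaneously realizes the $H^p$-characterization for $f_1$ and the $BMO$-characterizations for $f_2,\dots,f_N$. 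Since $\chi_Q=\chi_Q^{N}$, iterating \eqref{chamaximal} $N$ times yields $\chi_Q(x)\lesssim \prod_{i=1}^{N}\mathcal{M}_\tau(\chi_{S_Q})(x)$, so I can allot one factor of $\mathcal{M}_\tau(\chi_{S_Q})$ to each $a_{j,Q}^{(i)}$.

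\textbf{Peeling off norms.} Apply Cauchy--Schwarz in $j$ to separate the $f_1$-factor from the remaining ones; then an inductive reduction in $N$, using the pointwise identity $\sum_{Q\in\mathcal{D}_j} a_{j,Q}^{(i)}\chi_{S_Q}(x)=a_{j,Q_j(x)}^{(i)}\chi_{S_{Q_j(x)}}(x)$ (valid because cubes at scale $j$ are disjoint) together with the $L^\infty(\ell^2)$ characterization of $BMO$, extracts each $\Vert f_i\Vert_{BMO}$ for $i\geq 2$ in turn. The residual $L^p(\ell^2)$-quantity, which only involves $f_1$ and powers of $\mathcal{M}_\tau(\chi_{S_Q})$, is then dominated by $\Vert f_1\Vert_{H^p}$ via the $H^p$-side of Lemma~\ref{equivalence} combined with \eqref{maximal1} for $t$ sufficiently small, which is permitted by the hypothesis $s_1>n/\min(p,t)$.

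\textbf{Main obstacle.} The technical heart of the argument is interfacing the $\mathcal{M}_\tau$ factors produced above with the $L^\infty(\ell^2)$ side of the $BMO$ characterization: Fefferman--Stein's vector-valued maximal inequality fails at $p=\infty$, so these maximal operators cannot simply be discarded under an $L^\infty(\ell^2)$ norm. I would resolve this by exploiting the disjointness of $\{S_Q:Q\in\mathcal{D}_j\}$ at each scale $j$ together with the dyadic nesting across scales, so that the contribution of $\prod_i \mathcal{M}_\tau(\chi_{S_Q})$ can be localized back to a neighborhood of $S_Q$ where the $BMO$ characterization of Lemma~\ref{equivalence} directly applies, rather than requiring a vector-valued $L^\infty$ maximal estimate.
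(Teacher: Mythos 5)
You correctly diagnose the central obstacle — the Fefferman--Stein vector-valued maximal inequality fails at $p=\infty$, so a factor of $\mathcal{M}_\tau(\chi_{S_Q})$ sitting inside an $L^\infty(\ell^2)$ norm cannot simply be discarded. But your strategy \emph{creates} exactly this obstacle rather than avoiding it: writing $\chi_Q\lesssim\prod_{i=1}^{N}\mathcal{M}_\tau(\chi_{S_Q})$ and assigning one maximal-function factor to each of the $N-1$ $BMO$ slots leaves you, after H\"older, with uncontrollable quantities of the form $\big\Vert\{\sum_{Q\in\mathcal{D}_j}a^{(i)}_{j,Q}\,\mathcal{M}_\tau(\chi_{S_Q})\}_{j}\big\Vert_{L^\infty(\ell^2)}$. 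The proposed resolution by ``localizing back'' is not a real argument: $\mathcal{M}_\tau(\chi_{S_Q})$ has a global tail, so for fixed $j$ the sum $\sum_{Q\in\mathcal{D}_j}a^{(i)}_{j,Q}\mathcal{M}_\tau(\chi_{S_Q})(x)$ receives contributions from cubes at all distances from $x$, and those tails bear no relation to the quantity $\sum_{Q}a^{(i)}_{j,Q}\chi_{S_Q}(x)$ that Lemma~\ref{equivalence} controls; the disjointness of $\{S_Q:Q\in\mathcal{D}_j\}$ does nothing to kill them.

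The paper's proof avoids the difficulty by never placing a maximal operator inside an $L^\infty$ norm. It produces good sets for $f_2,f_3$ \emph{only}, with $|S_Q^2|,|S_Q^3|>\tfrac34|Q|$, so $|S_Q^2\cap S_Q^3|>\tfrac12|Q|$ and $\chi_Q\lesssim\mathcal{M}_\tau(\chi_{S_Q^2\cap S_Q^3})$ with a \emph{single} $\mathcal{M}_\tau$. That one maximal function is removed via \eqref{maximal1} inside the outer $L^p(\ell^1)$ norm (hence $\tau<\min(1,p)$), \emph{before} any H\"older splitting, leaving $\chi_{S_Q^2\cap S_Q^3}=\chi_{S_Q^2}\chi_{S_Q^3}$. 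Then the $j$-sum is split by
\begin{equation*}
\sum_{j}a^{(1)}_j\prod_{k=2}^{N}a^{(k)}_j\le\Big(\sup_j a^{(1)}_j\Big)\Big(\sum_j (a^{(2)}_j)^2\Big)^{1/2}\Big(\sum_j (a^{(3)}_j)^2\Big)^{1/2}\prod_{k=4}^{N}\Big(\sup_j a^{(k)}_j\Big),
\end{equation*}
so only $f_2,f_3$ enter in $\ell^2$, carrying $\chi_{S_Q^2}$ and $\chi_{S_Q^3}$ respectively so that Lemma~\ref{equivalence} applies directly, while $f_1$ goes into $L^p(\ell^\infty)$ and the remaining $BMO$ factors $f_4,\dots,f_N$ go into $L^\infty(\ell^\infty)$. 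Those last are handled by the $p=q=\infty$ case of Lemma~\ref{maximal2} together with the embedding $\Vert f_k\Vert_{\dot{F}^{0,\infty}_\infty}\lesssim\Vert f_k\Vert_{\dot{F}^{0,2}_\infty}\approx\Vert f_k\Vert_{BMO}$, and need no good-set indicator at all. This $\ell^\infty$ exit for the extra $BMO$ factors is the idea missing from your plan: at most two factors per term ever need a $\chi_{S_Q}$, and no vector-valued $L^\infty(\ell^2)$ maximal bound is ever invoked. Incidentally, \eqref{t1est} is not a literal specialization of \eqref{t3est} (the $j$-sum there is already $\ell^2$, not $\ell^1$), though the same mechanism — one $\mathcal{M}_\tau$, removed by \eqref{maximal1} in $L^p(\ell^2)$ with $\tau<\min(p,2)$, then H\"older with $L^p\times L^\infty$ — treats it.
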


\begin{proof}
We will  only be concerned with (\ref{t1est}) and (\ref{t3est}) as the proof of (\ref{t2est}) is very similar to that of (\ref{t3est}) with $N=3$.

Since dyadic cubes with the same side length are pairwise disjoint,
the left-hand side of (\ref{t1est}) can be written as
\begin{equation*}
\Big\Vert \Big( \sum_{j\in\zz}\sum_{Q\in\mathcal{D}_j}  \big( \mathfrak{M}_{s_1,2^j}^t\big(\varphi_j\ast f_1\big)\big)^2\big( \mathfrak{M}_{s_2,2^j}^{t}\big(\vartheta_j\ast f_2\big)\big)^2 \chi_Q  \Big)^{1/2}\Big\Vert_{L^p(\rn)}
\end{equation*} and the estimate (\ref{infmax2}) implies that the preceding expression is dominated by a constant multiple of
\begin{equation}\label{squarecharacter}
\Big\Vert \Big( \sum_{j\in\zz}\sum_{Q\in\mathcal{D}_j}  \Big( \inf_{y\in Q}\mathfrak{M}_{s_1,2^j}^t\big(\varphi_j\ast f_1\big)(y)\Big)^2\Big( \inf_{y\in Q}\mathfrak{M}_{s_2,2^j}^{t}\big(\vartheta_j\ast f_2\big)(y)\Big)^2 \chi_Q  \Big)^{1/2}\Big\Vert_{L^p(\rn)}.
\end{equation}
According to Lemma \ref{equivalence}, for each $Q\in \mathcal{D}$ we can choose a proper measurable subset $S_Q$ of $Q$ such that $|S_Q|>\frac{1}{2}|Q|$ and 
\begin{equation}\label{BMOcha}
\Vert f_2\Vert_{BMO}\approx \Big\Vert \Big\{ \sum_{Q\in\mathcal{D}_j}{\Big( \inf_{y\in Q}{\mathfrak{M}_{s_2,2^j}^t\big(\vartheta_j\ast f_2\big)(y) }\Big)\chi_{S_Q}}\Big\}_{j\in\mathbb{Z}}\Big\Vert_{L^{\infty}(\ell^2)}.
\end{equation} 
Here, we may use $\vartheta_j$, instead of $\psi_j$, because of the Calder\'on reproducing formula and (\ref{mcomposition}).
Now, using (\ref{chamaximal}) with $\tau<\min{(p,2)}$ and the vector-valued maximal inequality (\ref{maximal1}) of $\mathcal{M}_{\tau}$ with the index set $\{Q\}_{Q\in\mathcal{D}}$, $\chi_Q$ can be replaced by $\chi_{S_Q}$ in (\ref{squarecharacter}) and then   H\"older's inequality yields that (\ref{squarecharacter}) is less than a constant times
\begin{align*}
\Big\Vert \sup_{j\in\zz}{\mathfrak{M}_{s_1,2^j}^t\big(\varphi_j\ast f_1\big)}\Big\Vert_{L^p(\rn)}\Big\Vert \Big\{ \sum_{Q\in\mathcal{D}_j}{\Big( \inf_{y\in Q}{\mathfrak{M}_{s_2,2^j}^t\big(\vartheta_j\ast f_2\big)(y) }\Big)\chi_{S_Q}}\Big\}_{j\in\mathbb{Z}}\Big\Vert_{L^{\infty}(\ell^2)}.
\end{align*}
The second term is definitely comparable to $\Vert f_2\Vert_{BMO}$ due to (\ref{BMOcha}) and the first one can be estimated by
\begin{equation*}
\Big\Vert \sup_{j\in\zz}{\big| \varphi_j\ast f_1\big|}\Big\Vert_{L^p(\rn)}\approx \Vert f_1\Vert_{H^{p}(\rn)}
\end{equation*} 
in view of Lemma \ref{maximal2}. This proves (\ref{t1est}).

Similarly, for each $Q\in\mathcal{D}$ we choose proper measurable subsets $S_{Q}^{2}$ and $S_Q^{3}$ of $Q$ such that $|S_Q^{2}|, |S_Q^{3}|>\frac{3}{4} |Q|$ and 
\begin{equation*}
\Vert f_k\Vert_{BMO}\approx \Big\Vert \Big\{ \sum_{Q\in\mathcal{D}_j}{\Big( \inf_{y\in Q}{\mathfrak{M}_{s_k,2^j}^t\big(\vartheta_j\ast f_k\big)(y) }\Big)\chi_{S_Q^k}}\Big\}_{j\in\mathbb{Z}}\Big\Vert_{L^{\infty}(\ell^2)}, \quad k=2,3.
\end{equation*} 
We note that $|S_Q^2\cap S_Q^3|>\frac{1}{2}|Q|$ and thus (\ref{chamaximal}) implies 
\begin{equation*}
\chi_Q(x)\lesssim_{\tau}\mathcal{M}_{\tau}\big(\chi_{S_Q^2\cap S_Q^3}\big)(x), \quad \text{ for all }~ 0<\tau<\infty.
\end{equation*}
Choose $\tau<\min{(1,p)}$. Then we can prove that the left-hand side of (\ref{t3est}) is smaller than a constant times
\begin{align*}
&\Big\Vert \sum_{j\in\zz}\sum_{Q\in\mathcal{D}_j} \Big( \inf_{y\in Q}{\mathfrak{M}_{s_1,2^j}^t\big(\varphi_j\ast f_1\big)(y)}\Big) \prod_{k=2}^{N}\Big( \inf_{y\in Q}{\mathfrak{M}_{s_k,2^j}^{t}\big(\vartheta_j\ast f_k \big)(y)}\Big) \chi_{S_Q^2}\chi_{S_Q^{3}}   \Big\Vert_{L^p(\rn)}\\
&\leq \Big\Vert \sup_{j\in\zz}{\mathfrak{M}_{s_1,2^j}^t\big(\varphi_j\ast f_1\big)}\Big\Vert_{L^p(\rn)} \prod_{k=2}^{3}\Big\Vert \Big\{ \sum_{Q\in\mathcal{D}_j}{\Big( \inf_{y\in Q}{\mathfrak{M}_{s_k,2^j}^t\big(\vartheta_j\ast f_k\big)(y) }\Big)\chi_{S_Q^k}}\Big\}_{j\in\mathbb{Z}}\Big\Vert_{L^{\infty}(\ell^2)}\\
&\qquad \qquad \qquad \qquad \qquad \qquad \qquad  \qquad \qquad  \times \prod_{k=4}^{N}\big\Vert \big\{ \mathfrak{M}_{s_k,2^j}^t\big(\vartheta_j\ast f_k\big)\big\}_{j\in\zz}\big\Vert_{L^{\infty}(\ell^{\infty})} \\
&\lesssim \Vert f_1\Vert_{H^{p_1}(\rn)}\prod_{k=2}^{N}\Vert f_k\Vert_{BMO}
\end{align*} as desired.
Here, we used the fact that for $4\leq k\leq N$,
\begin{align*}
\big\Vert \big\{ \mathfrak{M}_{s_k,2^j}^t\big(\vartheta_j\ast f_k\big)\big\}_{j\in\zz}\big\Vert_{L^{\infty}(\ell^{\infty})}\lesssim \big\Vert  \{\vartheta_j\ast f_k\}_{j\in\zz}\big\Vert_{L^{\infty}(\ell^{\infty})}\approx \Vert f_k\Vert_{\dot{F}_{\infty}^{0,\infty}}\lesssim \Vert f_k\Vert_{\dot{F}_{\infty}^{0,2}}\approx \Vert f_k\Vert_{BMO}
\end{align*} where $\dot{F}_p^{0,q}$ is the homogeneous Triebel-Lizorkin space, and Lemma \ref{maximal2}, the embedding $\dot{F}_{\infty}^{0,2}\hookrightarrow \dot{F}_{\infty}^{0,\infty}$, and the characterization $BMO=\dot{F}_{\infty}^{0,2}$ are applied. We refer to \cite{Park3} for more details.
\end{proof}

The following lemma is the main tool used to derive  pointwise estimates like (\ref{keypointest}). In fact, similar results can be found in \cite{Gr_Mi_Tom, Gr_Mi_Ng_Tom, Gr_Ng, Gr_Si, Mi_Tom} with the maximal function $\mathcal{M}_t$, but here we replace $\mathcal{M}_t$ by $\mathfrak{M}_{s_k,2^j}^t$ in order to apply the arguments in Lemmas \ref{equivalence} and \ref{bmoboundlemma}.
\begin{lemma}\label{keyestilemma}
Let $1<t \leq 2$ and $s_1,\dots,s_m>n/t$. Suppose that $\sigma$ is a bounded function with a compact support in $(\rn)^m$.
Then we have
\begin{equation*}
\big| T_{\sigma}\fff(x)\big|\lesssim \big\Vert \sigma(2^j\cdot)\big\Vert_{L_{\sss}^{t}((\rn)^m)}\prod_{k=1}^{m}{\mathfrak{M}_{s_k,2^j}^tf_k(x)}, \qquad \text{ uniformly in }~ j\in\zz.
\end{equation*}
\end{lemma}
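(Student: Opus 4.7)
The plan is to convert $T_\sigma \fff(x)$ into an integral against the kernel $\tau_j^\vee$, where $\tau_j(\eeta):=\sigma(2^j\eta_1,\dots,2^j\eta_m)$, and then apply H\"older's inequality together with a Hausdorff–Young estimate.

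First, write out $T_\sigma\fff(x)$ in Fourier form, substitute $\sigma(\xi)=\tau_j(\xi/2^j)$, change variables $\xi_k=2^j\eta_k$, and use Fourier inversion in each $\eta_k$-variable. A short calculation yields the representation
\begin{equation*}
T_\sigma\fff(x) = \int_{(\rn)^m} \tau_j^{\vee}(\vec{y})\, \prod_{k=1}^m f_k(x-y_k/2^j)\,d\vec{y},
\end{equation*}
which is legitimate because $\sigma$ is bounded and compactly supported, so $\tau_j^\vee$ is a nice function. Next, insert the factor $\prod_k \langle y_k\rangle^{s_k}\langle y_k\rangle^{-s_k}=1$ and apply H\"older's inequality on $(\rn)^m$ with exponents $t'$ and $t$ (where $1/t+1/t'=1$):
\begin{equation*}
|T_\sigma\fff(x)| \le \Big\|\tau_j^\vee(\vec{y})\prod_{k=1}^m\langle y_k\rangle^{s_k}\Big\|_{L^{t'}((\rn)^m)}\cdot
\Big\|\prod_{k=1}^m \langle y_k\rangle^{-s_k} f_k(x-y_k/2^j)\Big\|_{L^t((\rn)^m)}.
\end{equation*}

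The second factor is the product of one-dimensional integrals; substituting $y_k=2^j z_k$ and using $\langle 2^j z_k\rangle\approx 1+2^j|z_k|$ gives the bound $C\prod_{k=1}^m \mathfrak{M}^t_{s_k,2^j}f_k(x)$, which is finite because $s_k>n/t$. For the first factor, let $g:=(I-\Delta_1)^{s_1/2}\cdots(I-\Delta_m)^{s_m/2}\tau_j$, so that by the very definition of the product Sobolev norm, $\|g\|_{L^t}=\|\tau_j\|_{L^t_{\sss}}$ and $\widehat{g}(\eta)=\prod_k \langle \eta_k\rangle^{s_k}\widehat{\tau_j}(\eta)$. Since $\widehat{\tau_j}(\eta)=\tau_j^\vee(-\eta)$, reflection gives
\begin{equation*}
\Big\|\tau_j^\vee(\vec{y})\prod_{k=1}^m\langle y_k\rangle^{s_k}\Big\|_{L^{t'}((\rn)^m)}
= \Big\|\prod_{k=1}^m\langle \eta_k\rangle^{s_k}\widehat{\tau_j}(\eta)\Big\|_{L^{t'}((\rn)^m)}
= \|\widehat{g}\|_{L^{t'}((\rn)^m)}.
\end{equation*}
The Hausdorff–Young inequality, valid precisely because $1<t\le 2$, then yields $\|\widehat{g}\|_{L^{t'}}\le \|g\|_{L^t}=\|\tau_j\|_{L^t_{\sss}}=\|\sigma(2^j\cdot)\|_{L^t_{\sss}}$. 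Combining the two factors produces the desired pointwise bound, and the implicit constant is manifestly independent of $j$ since the scaling $y_k\mapsto 2^j z_k$ absorbs all $j$-dependence.

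The essential step—and the one obstructing any extension to $r>2$—is the invocation of Hausdorff–Young to pass from $\|\widehat{g}\|_{L^{t'}}$ to $\|g\|_{L^t}$. For $t>2$ this inequality fails in the direction we need, and no replacement is available; this is the structural reason the same ansatz does not yield an analogue of the lemma (and, consequently, Propositions~\ref{propo1}--\ref{propo4}) in the regime $r>2$.
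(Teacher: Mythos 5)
Your proof is correct and is essentially identical to the paper's argument: both hinge on writing $T_\sigma\fff(x)$ as an integral against $\sigma^\vee$, inserting the weights $\langle\cdot\rangle^{\pm s_k}$, separating via H\"older into a kernel factor controlled by Hausdorff--Young and a factor controlled by the Peetre-type maximal functions. The only superficial difference is that you perform the dyadic rescaling $\xi_k=2^j\eta_k$ at the outset (so the $2^{jmn/t}$ factors never appear), whereas the paper keeps $\sigma^\vee$ unscaled and tracks those factors through the final change of variables; the content is the same and the concluding remark about why $t>2$ is obstructed is also accurate.
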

\begin{proof}
Using the H\"older inequality, we obtain
\begin{align*}
\big| T_{\sigma}\fff(x)\big|&= \Big|\int_{(\rn)^m}{\sigma^{\vee}(\vv)\prod_{k=1}^{m}f_k(x-v_k)}d\vv \Big|\\
&\leq 2^{-jmn/t}\bigg[ \int_{(\rn)^m}{\Big( \prod_{k=1}^{m}\langle 2^jv_k\rangle^{s_kt'}\Big)|\sigma^{\vee}(\vv)|^{t'}}d\vv \bigg]^{1/t'}\prod_{k=1}^{m}\mathfrak{M}_{s_k,2^j}^{t}f_k(x)
\end{align*}
where we applied the simple inequality that
\begin{equation*}
\big\Vert f(x-\cdot)\langle 2^j\cdot\rangle^{-s_k}\big\Vert_{L^t(\rn)}\lesssim 2^{-jn/t}\mathfrak{M}_{s_k,2^j}^tf(x).
\end{equation*}
Then the Hausdorff Young inequality with $1<t\leq 2$ yields that
\begin{equation*}
\bigg[ \int_{(\rn)^m}{\Big( \prod_{k=1}^{m}\langle 2^jv_k\rangle^{s_kt'}\Big)|\sigma^{\vee}(\vv)|^{t'}}d\vv \bigg]^{1/t'}\lesssim 2^{jmn/t} \big\Vert \sigma(2^j\cdot )\big\Vert_{L_{\sss}^t((\rn)^m)}
\end{equation*} and this completes the proof.
\end{proof}

The next lemma is a multi-parameter inequality of Kato-Ponce type. 
\begin{lemma}\label{katoponce}
Let $1< t<\infty$ and $s_1,\dots,s_m\geq 0$. 
Suppose that $g$ is a function in $L_{\sss}^{t}((\rn)^m)$ and $\Xi\in \SS((\rn)^m)$. Then we have
\begin{equation}\label{Katoestimate}
\big\Vert \Xi \cdot g \big\Vert_{L_{\sss}^{t}((\rn)^m)}\lesssim_{\Xi} \Vert g \Vert_{L_{\sss}^{t}((\rn)^m)}.
\end{equation}
\end{lemma}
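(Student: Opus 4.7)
The assertion is a multi-parameter fractional Leibniz (Kato--Ponce) type estimate: multiplication by any Schwartz function is a bounded operator on the product-type Sobolev space $L^{t}_{\sss}((\rn)^m)$. My approach is to reduce the claim to a uniform Mikhlin-type multiplier bound by decomposing $\Xi$ through Fourier inversion, exploiting that modulation corresponds to Fourier-side translation.

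Writing $\Xi(x)=\int_{(\rn)^m}\wh{\Xi}(\xi)e^{2\pi i\langle x,\xi\rangle}\,d\xi$, setting $e_\xi(x):=e^{2\pi i\langle x,\xi\rangle}$, and $J^{\sss}:=\prod_{k=1}^{m}(I-\Delta_k)^{s_k/2}$, Minkowski's integral inequality (valid since $t>1$) gives
\[
\|\Xi g\|_{L^{t}_{\sss}}=\|J^{\sss}(\Xi g)\|_{L^t}\le \int|\wh{\Xi}(\xi)|\,\|J^{\sss}(e_\xi g)\|_{L^t}\,d\xi.
\]
A direct Fourier computation shows $J^{\sss}(e_\xi g)=e_\xi\cdot N_\xi(J^{\sss}g)$, where $N_\xi$ is the Fourier multiplier on $(\rn)^m$ with symbol
\[
n_\xi(\eta):=\prod_{k=1}^{m}\frac{\langle \eta_k+\xi_k\rangle^{s_k}}{\langle \eta_k\rangle^{s_k}},
\]
so $\|J^{\sss}(e_\xi g)\|_{L^t}\le \|N_\xi\|_{L^t\to L^t}\,\|g\|_{L^{t}_{\sss}}$. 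It suffices to show that $\|N_\xi\|_{L^t\to L^t}$ grows at most polynomially in $\xi$.

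By differentiation, each one-dimensional factor $n_\xi^{(k)}(\eta_k):=\langle \eta_k+\xi_k\rangle^{s_k}\langle \eta_k\rangle^{-s_k}$ satisfies Mikhlin-type estimates
\[
\bigl|\partial_{\eta_k}^{\alpha_k}n_\xi^{(k)}(\eta_k)\bigr|\lesssim \langle \xi_k\rangle^{s_k+|\alpha_k|}\langle \eta_k\rangle^{-|\alpha_k|}
\]
for all multi-indices of order $|\alpha_k|\le N_0:=[n/2]+1$. The classical 1D Mikhlin multiplier theorem then yields $\|N_\xi^{(k)}\|_{L^t(\rn)\to L^t(\rn)}\lesssim \langle \xi_k\rangle^{s_k+N_0}$; since $n_\xi$ factorises as a tensor product over the $m$ groups of variables and the same exponent $t$ is used in all variables, Fubini upgrades this to $\|N_\xi\|_{L^t((\rn)^m)\to L^t((\rn)^m)}\lesssim \prod_{k=1}^{m}\langle \xi_k\rangle^{s_k+N_0}$. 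Combining,
\[
\|\Xi g\|_{L^{t}_{\sss}}\lesssim \bigg(\int|\wh{\Xi}(\xi)|\prod_{k=1}^{m}\langle \xi_k\rangle^{s_k+N_0}\,d\xi\bigg)\|g\|_{L^{t}_{\sss}},
\]
and the parenthesised integral is finite since $\wh{\Xi}\in\SS((\rn)^m)$.

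The main technical step is the verification of the Mikhlin-type derivative estimates on $n_\xi^{(k)}$ with the explicit polynomial $\xi_k$-dependence; the first-derivative identity $\partial_{\eta}\log n_\xi^{(k)}(\eta)=4\pi^2 s_k\bigl[(\eta+\xi_k)\langle \eta+\xi_k\rangle^{-2}-\eta\langle \eta\rangle^{-2}\bigr]$ reduces matters to Peetre-type control, and higher-order derivatives are handled by iteration of this computation. Once these estimates are in place, the tensor-product structure of $n_\xi$, Fubini, and the Schwartz decay of $\wh{\Xi}$ close the argument without further subtlety.
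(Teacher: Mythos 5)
Your argument is correct but proceeds along a genuinely different route from the paper. The paper observes that the estimate is immediate when each $s_k$ is an even integer (by the Leibniz rule, since then $J^{\sss}:=\prod_k(I-\Delta_k)^{s_k/2}$ is a differential operator) and then promotes this to arbitrary $s_k\ge 0$ by complex interpolation of the product-type Sobolev scales, one regularity index at a time. You instead prove the estimate directly: you Fourier-invert $\Xi$ to write $\Xi g$ as a superposition $\int\wh{\Xi}(\xi)\,e_\xi g\,d\xi$ of modulations, use the exact commutation $J^{\sss}(e_\xi g)=e_\xi\,N_\xi(J^{\sss}g)$ with the conjugated symbol $n_\xi(\eta)=\prod_k\langle\eta_k+\xi_k\rangle^{s_k}\langle\eta_k\rangle^{-s_k}$, establish Mikhlin bounds for each factor with explicit polynomial growth $\langle\xi_k\rangle^{s_k+N_0}$ in the modulation parameter, pass to the $m$-fold tensor product by iterated one-block application (your ``1D'' should read ``$\rn$-valued per block,'' $N_0=[n/2]+1$), and absorb the polynomial growth with the rapid decay of $\wh{\Xi}$ via Minkowski. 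The verification of $|\partial_{\eta_k}^{\alpha_k}n^{(k)}_\xi|\lesssim\langle\xi_k\rangle^{s_k+|\alpha_k|}\langle\eta_k\rangle^{-|\alpha_k|}$, which you correctly flag as the technical core, does hold by Leibniz applied to $\langle\eta_k+\xi_k\rangle^{s_k}\cdot\langle\eta_k\rangle^{-s_k}$ together with Peetre's inequality. Your route is more self-contained in that it avoids invoking complex interpolation of Bessel-type scales, at the cost of doing symbol calculus by hand; the paper's route is shorter but leans on an interpolation theorem that itself needs justification. Both buy the same statement, and either approach could be cited in the paper.
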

The above lemma is clear when $s_1,\dots,s_m$ are even integers as the derivatives of $\Xi$ are bounded functions, using the embedding $L^{t}_{\sss^{(1)}}\hookrightarrow L_{\sss^{(2)}}^{t}$ for $\sss^{(2)}:=(s_1^{(2)},\dots,s_m^{(2)}) \leq \sss^{(1)}:=(s_1^{(1)},\dots,s_m^{(1)})$, which means $s_k^{(2)}\leq s_k^{(1)}$ for each $1\leq k\leq m$. 
 To be specific,
$$\big\Vert \Xi \cdot g\Vert_{L_{\sss}^t((\rn)^m)}\lesssim \Big(\max_{1\le j\le m}\max_{1\le l_j\le s_j/2} \big\Vert (I-\Delta_{1})^{l_1}\cdots(I-\Delta_m)^{l_m}\Xi \; \big\Vert_{L^{\infty}((\rn)^m)} \Big)\Vert g\Vert_{L^t_{\sss}((\rn)^m)}$$
for even integers $s_1,\dots,s_m$. 
Now suppose, by induction, that (\ref{Katoestimate}) holds for all even integers $s_1,\dots,s_l$ and all nonnegative numbers $s_{l+1},\dots, s_m$. Let $r_1,\dots,r_{l-1}$ be even integers and $r_l,\dots,r_m$ be nonnegative numbers. Choose an integer $N_l$ with $2N_l\le r_l<2(N_l+1)$. Then we obtain 
$$\big\Vert \Xi \cdot g \big\Vert_{L_{(r_1,\dots,r_m)}^{t}((\rn)^m)}\lesssim_{\Xi} \Vert g \Vert_{L_{(r_1,\dots,r_m)}^{t}((\rn)^m)}$$
by interpolating the two estimates (\ref{Katoestimate}) with
$$(s_1,\dots,s_m)=(r_1,\dots,r_{l-1},2N_l,r_{l+1},\dots,r_m)$$ 
and 
$$(s_1,\dots,s_m)=(r_1,\dots,r_{l-1},2(N_l+1),r_{l+1},\dots,r_m),$$ which follow from the induction hypothesis. By repeating this process $(m-1)$ times, we obtain (\ref{Katoestimate}) for any $s_1,\dots,s_m\ge 0$.
We refer to \cite[Section 5]{Gr2} for a similar interpolation technique.


We now discuss a regularization of multipliers.
\begin{lemma}\label{regularization}
Let $1<r\leq 2$ and $\sigma$ satisfy $\mathcal{L}_{\sss}^{r,\Psi^{(m)}}[\sigma]<\infty$ for $s_k>n/r$, $1\leq k\leq m$. Then there exists a family of Schwartz functions $\{\sigma^{\epsilon}\}_{0<\epsilon<1/2}$ such that $\wh{\sigma^{\epsilon}}$ has a compact support in $(\rn)^m$,
\begin{equation}\label{sigmaapprox}
\sup_{0<\epsilon<1/2}\mathcal{L}_{\sss}^{r,\Psi^{(m)}}[\sigma^{\epsilon}]\lesssim \mathcal{L}_{\sss}^{r,\Psi^{(m)}}[\sigma],
\end{equation}  and 
\begin{equation}\label{l2approx}
\lim_{\epsilon\to 0}\big\Vert T_{\sigma}\fff-T_{\sigma^{\epsilon}}\fff\big\Vert_{L^2(\rn)}=0
\end{equation}
for Schwartz functions $f_1,\dots,f_m$ on $\rn$.
\end{lemma}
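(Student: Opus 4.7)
My plan is to take a convolution-and-cutoff regularization of $\sigma$ and verify each of the three properties in turn. Fix auxiliary $\eta_0 \in C_c^\infty((\rn)^m)$ with $\eta_0\equiv 1$ on $\{|\xxxi|\le 1\}$ and supported in $\{|\xxxi|\le 2\}$, and $\Phi \in \SS((\rn)^m)$ with $\wh{\Phi}\in C_c^\infty((\rn)^m)$, $\wh{\Phi}(0)=1$, and $\int \Phi =1$. Setting $\Phi_\epsilon(\xxxi):=\epsilon^{-mn}\Phi(\xxxi/\epsilon)$, I define
\[ \sigma^\epsilon := \bigl(\sigma \cdot \eta_0(\epsilon\,\cdot)\bigr) \ast \Phi_\epsilon .\]
Then $\sigma^\epsilon$ is $C^\infty$ (convolution of a bounded function with the Schwartz function $\Phi_\epsilon$), and it decays rapidly because $\sigma\cdot\eta_0(\epsilon\,\cdot)$ is compactly supported in $\{|\xxxi|\le 2/\epsilon\}$: for $|\xxxi|\gtrsim 1/\epsilon$ the integrand in the convolution is evaluated at arguments where $\Phi_\epsilon$ is already in its Schwartz tail. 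The Fourier transform factorizes as $\wh{\sigma^\epsilon} = \wh{\sigma\cdot \eta_0(\epsilon\,\cdot)} \cdot \wh{\Phi}(\epsilon\,\cdot)$; the second factor is supported in a ball of radius $O(1/\epsilon)$, so $\wh{\sigma^\epsilon}$ has compact support.

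For the $L^2$ convergence (\ref{l2approx}) I apply the linear change of variables $\eta_1 = \xi_1+\dots+\xi_m$, $\eta_i = \xi_i$ for $i \ge 2$, which writes $T_\sigma\fff$ as the inverse Fourier transform in $\eta_1$ of an integral in the remaining variables. Plancherel in $\eta_1$ followed by Cauchy--Schwarz in $(\eta_2, \dots, \eta_m)$ against the integrable weight $(1+|\eta_2|^2+\dots+|\eta_m|^2)^{-K}$ for $K>(m-1)n/2$ reduces $\|T_\sigma\fff - T_{\sigma^\epsilon}\fff\|_{L^2(\rn)}^2$ to an integral of $|\sigma-\sigma^\epsilon|^2$ against a weight built from the $\wh{f_i}$, which is integrable by the Schwartz decay of the $f_i$. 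Since $\|\sigma^\epsilon\|_{L^\infty}\le \|\Phi\|_{L^1}\|\sigma\|_{L^\infty}$ uniformly and $\sigma^\epsilon \to \sigma$ almost everywhere (by Lebesgue differentiation through $\Phi_\epsilon$ and $\eta_0(\epsilon\,\cdot)\to 1$ pointwise), dominated convergence yields (\ref{l2approx}).

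The core technical step is the uniform H\"ormander bound (\ref{sigmaapprox}). A direct change of variables in the convolution gives
\[ \sigma^\epsilon(2^j\,\cdot) = \bigl[\sigma(2^j\,\cdot)\,\eta_0(2^j\epsilon\,\cdot)\bigr] \ast \Phi_{2^{-j}\epsilon} , \]
which I analyze according to the size of $2^j\epsilon$. When $2^j\epsilon > 8$, the function $\sigma(2^j\,\cdot)\eta_0(2^j\epsilon\,\cdot)$ is supported in $\{|\xxxi|\le 1/4\}$, disjoint from the support of $\wh{\Psi^{(m)}}$; on that support the convolution reduces to the Schwartz tail of $\Phi_{2^{-j}\epsilon}$ at distances $\gtrsim 1$, and because $2^{-j}\epsilon\le\epsilon^2/8$ is tiny this contributes an arbitrarily high negative power of $2^{-j}\epsilon$. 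Combined with the Sobolev embedding $L_{\sss}^r((\rn)^m)\hookrightarrow L^\infty((\rn)^m)$ (valid since $s_k>n/r$) giving $\|\sigma\|_{L^\infty}\lesssim\mathcal{L}_{\sss}^{r,\Psi^{(m)}}[\sigma]$, this case is controlled. When $2^j\epsilon\le 8$, I rewrite the convolution as an average of translates,
\[ \sigma^\epsilon(2^j\xxxi) = \int \bigl[\sigma(2^j\,\cdot)\eta_0(2^j\epsilon\,\cdot)\bigr]\bigl(\xxxi - (2^{-j}\epsilon)\omega\bigr)\,\Phi(\omega)\,d\omega , \]
apply Minkowski's inequality in $L_{\sss}^r$, and use the translation-invariance of $L_{\sss}^r$ to transfer the shift $(2^{-j}\epsilon)\omega$ onto $\wh{\Psi^{(m)}}$. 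Choosing a slightly enlarged Schwartz cutoff $\wh{\widetilde{\Psi}}$ equal to $1$ on a neighborhood of the support of $\wh{\Psi^{(m)}}(\cdot+(2^{-j}\epsilon)\omega)$ for $|\omega|$ moderate, Lemma \ref{katoponce} absorbs both the translated $\wh{\Psi^{(m)}}$ and the Schwartz factor $\eta_0(2^j\epsilon\,\cdot)$---whose Kato--Ponce seminorms are uniformly bounded for $2^j\epsilon\le 8$---and reduces the estimate to $\|\sigma(2^j\,\cdot)\wh{\widetilde{\Psi}}\|_{L_{\sss}^r}$. Decomposing $\wh{\widetilde{\Psi}}$ as a finite sum of dilations $\wh{\Psi^{(m)}}(\cdot/2^l)$ with $|l|\le L$ fixed, each summand becomes, after a bounded-scale dilation on $L_{\sss}^r$, some $\|\sigma(2^{j+l}\,\cdot)\wh{\Psi^{(m)}}\|_{L_{\sss}^r}\le \mathcal{L}_{\sss}^{r,\Psi^{(m)}}[\sigma]$. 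The large-$|\omega|$ contribution is absorbed by the Schwartz decay of $\Phi$ against the polynomial growth of the shifted Sobolev norm in $|\omega|$.

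The hard part will be exactly this H\"ormander step, where three competing scales---the unit scale of $\wh{\Psi^{(m)}}$, the cutoff scale $1/(2^j\epsilon)$, and the mollifier scale $2^{-j}\epsilon$---must be reconciled so that all constants remain uniform in $j$ and $\epsilon$. Isolating the geometric tail decay from the essential Sobolev contribution, and correctly tracking how the shift $(2^{-j}\epsilon)\omega$ propagates through the Littlewood--Paley decomposition, will be the delicate bookkeeping.
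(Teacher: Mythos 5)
The paper does not prove this lemma itself; it refers to \cite[Theorem 3.1]{Gr_Ng} and indicates the needed modification via Lemma~\ref{katoponce}. Your mollify-and-truncate construction $\sigma^\epsilon=(\sigma\cdot\eta_0(\epsilon\,\cdot))\ast\Phi_\epsilon$ is therefore an independent route, and most of it is sound: the compact support of $\wh{\sigma^\epsilon}$ via the factorization $\wh{\sigma^\epsilon}=\wh{\sigma\eta_0(\epsilon\,\cdot)}\cdot\wh{\Phi}(\epsilon\,\cdot)$, the $L^2$-convergence \eqref{l2approx} via Plancherel and Cauchy--Schwarz against an integrable weight in $(\xi_2,\dots,\xi_m)$ followed by dominated convergence, and the $L^\infty$-bound $\Vert\sigma\Vert_{L^\infty((\rn)^m)}\lesssim\mathcal{L}_\sss^{r,\Psi^{(m)}}[\sigma]$ from Sobolev embedding used in your disjoint-support case $2^j\epsilon>8$ all check out.

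The verification of \eqref{sigmaapprox}, however, has a genuine gap when $j$ is very negative. Set $\delta:=2^{-j}\epsilon$, so $\sigma^\epsilon(2^j\cdot)=[\sigma(2^j\cdot)\eta_0(2^j\epsilon\,\cdot)]\ast\Phi_\delta$. Your second case $2^j\epsilon\le 8$ includes all $j$ down to $-\infty$, so $\delta$ is \emph{not} small throughout; indeed $\delta\ge 1$ exactly when $2^j\le\epsilon$. After Minkowski and translation, you must integrate $N(\delta\omega):=\Vert\wh{\Psi^{(m)}}(\cdot+\delta\omega)\,\eta_0(2^j\epsilon\,\cdot)\,\sigma(2^j\cdot)\Vert_{L_\sss^r}$ against $|\Phi(\omega)|$. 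Localizing $\sigma(2^j\cdot)$ to the dyadic shell $|\xxxi|\approx\delta|\omega|$ reached by the shifted $\wh{\Psi^{(m)}}$ and dilating that shell back to unit size costs the inhomogeneous Sobolev norm a factor of order $(1+\delta|\omega|)^{mn/r}$, giving $N(\delta\omega)\lesssim(1+\delta|\omega|)^{mn/r}\,\mathcal{L}_\sss^{r,\Psi^{(m)}}[\sigma]$ and hence $\int N(\delta\omega)|\Phi(\omega)|\,d\omega\lesssim\delta^{mn/r}\mathcal{L}_\sss^{r,\Psi^{(m)}}[\sigma]$ once $\delta>1$. This is not uniform in $j$. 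Your remark that the ``polynomial growth of the shifted Sobolev norm in $|\omega|$'' is beaten by the Schwartz decay of $\Phi$ hides exactly this $\delta$-dependence: the growth is polynomial in $\delta|\omega|$, and the $\delta^{mn/r}$ prefactor is unbounded. The fix is elementary but must be made: for $\delta\ge 1$ the wide mollifier itself supplies uniform derivative control, $\Vert\partial^\alpha([\sigma(2^j\cdot)\eta_0(2^j\epsilon\,\cdot)]\ast\Phi_\delta)\Vert_{L^\infty}\le\Vert\sigma\Vert_{L^\infty}\,\delta^{-|\alpha|}\Vert\partial^\alpha\Phi\Vert_{L^1}\lesssim\Vert\sigma\Vert_{L^\infty}$, and then $\Vert\sigma^\epsilon(2^j\cdot)\wh{\Psi^{(m)}}\Vert_{L_\sss^r}\lesssim\Vert\sigma\Vert_{L^\infty}\lesssim\mathcal{L}_\sss^{r,\Psi^{(m)}}[\sigma]$ follows from finitely many $C^k$-norms on a fixed compact set. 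You should therefore split $2^j\epsilon\le 8$ into $\delta<1$ (where your Minkowski argument does give a uniform bound, since then $(1+\delta|\omega|)^{mn/r}\le(1+|\omega|)^{mn/r}$) and $\delta\ge1$ (this direct estimate); without that split, the supremum over $j\in\zz$ in \eqref{sigmaapprox} is not controlled.
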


The above lemma can be verified with a very similar argument as described in \cite[Theorem 3.1]{Gr_Ng}, by using Lemma \ref{katoponce} and just replacing $L_{\sss}^2$ by $L_{\sss}^r$.
Therefore, the proof will not be pursued here.
As shown in \cite{Gr_Ng}, the $L^2$ convergence in (\ref{l2approx}) implies the existence of a sequence of positive numbers $\{\epsilon_j\}_{j\in\nn}$, converging to $0$ as $j\to \infty$, such that
\begin{equation*}
\lim_{j\to \infty}{T_{\sigma^{\epsilon_j}}\fff(x)}=T_{\sigma}\fff(x) \qquad \text{ a.e. } ~x\in\rn.
\end{equation*} 
Then   Fatou's lemma and (\ref{sigmaapprox}) yield that
\begin{align*}
\big\Vert T_{\sigma}\fff\big\Vert_{L^p(\rn)}& \leq \liminf_{j\to\infty}{\big\Vert T_{\sigma^{\epsilon_j}}\fff\big\Vert_{L^p(\rn)}}\lesssim \sup_{0<\epsilon<1/2}\mathcal{L}_{\sss}^{r,\Psi^{(m)}}[\sigma^{\epsilon}]\prod_{i=1}^{m}\Vert f_i\Vert_{H^{p_i}(\rn)}\\
&\lesssim \mathcal{L}_{\sss}^{r,\Psi^{(m)}}[\sigma]\prod_{i=1}^{m}\Vert f_i\Vert_{H^{p_i}(\rn)}.
\end{align*}
In view of this reduction,  in the proof of the main theorem we may actually assume that $\sigma$ is a Schwartz function such that $\wh{\sigma}$ has a compact support. Our estimates will depend only on 
$ \mathcal{L}_{\sss}^{r,\Psi^{(m)}}[\sigma]$ and not on other quantities related to $\sigma$.

With the regularization in Lemma \ref{regularization}, we may apply the following lemma in the case that for at least one $i$ with $1\leq i\leq m$ we have $p_i \le 1$,  so that the $H^{p_i}$-atomic decomposition is applied.
\begin{lemma}[\cite{Gr_Mi_Ng_Tom}]\label{switchsum}
  Let $1\leq l\leq m$, $0<p_1, \dots, p_l\le 1$, and $1<p_{l+1}, \dots, p_m\le \infty$.
Let $\sigma$ be a Schwartz function on $(\rn)^m$ whose Fourier transform has compact support (as   $\sigma^{\epsilon}$ does in Lemma \ref{regularization}).
Suppose that $f_i\in H^{p_i}(\rn)$, $1\le i\le l$, have atomic representations
$f_i = \sum_{k_i=1}^{\infty}\lambda_{i,k_i}a_{i,k_i},$
where $a_{i,k_i}$ are $L^{\infty}$-atoms for $H^{p_i}$ and
$\big( \sum_{k_i=1}^{\infty}{|{\lambda_{i,k_i}}|}^{p_i}\big)^{1/p_i}\leq \Vert{f_i}\Vert_{H^{p_i}(\rn)}$.
Suppose $f_i \in \SS(\rn)$ for $l + 1\le i \le m$.
Then
  \begin{equation*}
   T_{\sigma}\fff(x) = \sum_{k_1=1}^{\infty}\cdots\sum_{k_l=1}^{\infty}\lambda_{1,k_1}\cdots\lambda_{l,k_l}   T_{\sigma}\big(a_{1,k_1},\ldots,a_{l,k_l},f_{l+1},\ldots,f_m\big) (x)
   \end{equation*} for almost all $x\in\rn$.
\end{lemma}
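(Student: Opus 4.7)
The plan is to exploit the fact that, since $\wh{\sigma}$ has compact support and $\sigma\in\SS((\rn)^m)$, the inverse Fourier transform $K:=\sigma^{\vee}$ is itself a Schwartz function on $(\rn)^m$, so that
\begin{equation*}
T_\sigma(g_1,\dots,g_m)(x)=\int_{(\rn)^m}K(x-y_1,\dots,x-y_m)g_1(y_1)\cdots g_m(y_m)\,dy_1\cdots dy_m
\end{equation*}
is an absolutely convergent integral representation. I would integrate out the Schwartz arguments $f_{l+1},\dots,f_m$ first and set
\begin{equation*}
\Phi_x(y_1,\dots,y_l):=\int_{(\rn)^{m-l}}K(x-y_1,\dots,x-y_m)f_{l+1}(y_{l+1})\cdots f_m(y_m)\,dy_{l+1}\cdots dy_m,
\end{equation*}
which, since $K$ and each $f_j$ are Schwartz, is a jointly Schwartz function of $(x,y_1,\dots,y_l)$; in particular, for every $N$ and every multi-index $\alpha$, $|\partial_y^{\alpha}\Phi_x(y_1,\dots,y_l)|$ is bounded by a constant $C_{N,\alpha}(x)$ that is polynomial in $|x|$, times $\prod_{i=1}^l(1+|x-y_i|)^{-N}$. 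The lemma then reduces to verifying
\begin{equation*}
\int_{(\rn)^l}\Phi_x(\vec{y})f_1(y_1)\cdots f_l(y_l)\,d\vec{y}=\sum_{k_1,\dots,k_l}\lambda_{1,k_1}\cdots\lambda_{l,k_l}\int_{(\rn)^l}\Phi_x(\vec{y})a_{1,k_1}(y_1)\cdots a_{l,k_l}(y_l)\,d\vec{y},
\end{equation*}
and by Fubini it suffices to show that the iterated sum of absolute values is finite for a.e.\ $x$.

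Because the pointwise bound on $|\Phi_x|$ factors over the $l$ coordinates, the iterated sum decouples into the product over $i\in\{1,\dots,l\}$ of one-dimensional series
\begin{equation*}
\sum_{k_i}|\lambda_{i,k_i}|\int(1+|x-y_i|)^{-N}|a_{i,k_i}(y_i)|\,dy_i,
\end{equation*}
and it is enough to verify finiteness of each such series for almost every $x$. Here the cancellation of the atoms enters. Each $a_{i,k_i}$ has support in a cube $Q_{i,k_i}$, satisfies $\Vert a_{i,k_i}\Vert_{L^\infty}\le|Q_{i,k_i}|^{-1/p_i}$, and has vanishing moments up to order $[n/p_i-n]$; Taylor-expanding the kernel in the $i$-th variable around the center $c_{Q_{i,k_i}}$ to the matching order and invoking the moment cancellation produces the ``molecular'' bound
\begin{equation*}
\int(1+|x-y_i|)^{-N}|a_{i,k_i}(y_i)|\,dy_i\lesssim|Q_{i,k_i}|^{1-1/p_i+\beta}\bigl(1+|x-c_{Q_{i,k_i}}|\bigr)^{-M}
\end{equation*}
for some $\beta>1/p_i-1$ and for arbitrarily large $M$.

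To finish, I would invoke the quasi-Banach inequality $\sum|\lambda_k|A_k\le\bigl(\sum|\lambda_k|^{p_i}A_k^{p_i}\bigr)^{1/p_i}$, available since $p_i\le 1$, combined with the atomic-norm bound $\sum_{k_i}|\lambda_{i,k_i}|^{p_i}\le\Vert f_i\Vert_{H^{p_i}}^{p_i}<\infty$ and a dyadic-annulus decomposition of the cubes $Q_{i,k_i}$ according to their distance from $x$, to conclude that each series is finite off a set of measure zero; Fubini then yields the claimed identity. I expect the main obstacle to be the molecular step itself: the moment order of the atoms and the exponent $\beta$ must be coordinated so that $1-1/p_i+\beta>0$, and the polynomial-in-$|x|$ growth of $C_{N,\alpha}(x)$ has to be absorbed by the rapid decay factors uniformly in the distance from $x$ to the atomic supports. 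Once that bookkeeping is in place, the remaining steps are routine.
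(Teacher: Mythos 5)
Your plan breaks down at the ``molecular bound'' step. You decouple the iterated sum of absolute values into the one-variable series
\begin{equation*}
\sum_{k_i}|\lambda_{i,k_i}|\int_{\rn}(1+|x-y_i|)^{-N}|a_{i,k_i}(y_i)|\,dy_i ,
\end{equation*}
and then invoke the vanishing moments of $a_{i,k_i}$ to claim
\begin{equation*}
\int_{\rn}(1+|x-y_i|)^{-N}|a_{i,k_i}(y_i)|\,dy_i \lesssim |Q_{i,k_i}|^{1-1/p_i+\beta}\bigl(1+|x-c_{Q_{i,k_i}}|\bigr)^{-M}, \qquad \beta>1/p_i-1.
\end{equation*}
But you have already placed the atom inside an absolute value, so the integrand is nonnegative and moment cancellation is simply not available; the displayed inequality is false. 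Take an atom $a_{i,k_i}$ with $|a_{i,k_i}|\gtrsim |Q_{i,k_i}|^{-1/p_i}$ on a set of measure $\gtrsim|Q_{i,k_i}|$ (such atoms exist: differences of two bump functions, or sign patterns with the required moments): for $x\in Q_{i,k_i}$ with $\ell(Q_{i,k_i})\le 1$ the left-hand side is comparable to $|Q_{i,k_i}|^{1-1/p_i}$, which \emph{exceeds} the claimed $|Q_{i,k_i}|^{1-1/p_i+\beta}$ since $|Q_{i,k_i}|^\beta<1$. Without the $\beta$-gain you are left with the exponent $1-1/p_i<0$; then $|Q_{i,k_i}|^{1-1/p_i}$ is unbounded over a family of shrinking cubes near $x$, and the $\ell^{p_i}\hookrightarrow\ell^1$ control on the $\lambda$'s no longer forces the series to converge. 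The Fubini reduction therefore does not close as you have set it up.

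The obstruction is structural. To use the moment conditions you must keep the signed multi-variable pairing $\int\Phi_x(\vec y)\,a_{1,k_1}(y_1)\cdots a_{l,k_l}(y_l)\,d\vec y$ and Taylor-expand $\Phi_x$ around each center $c_{Q_{i,k_i}}$ before any absolute values are introduced (this is precisely the role of Lemma~\ref{smalllemma}); the resulting estimate no longer factors into $l$ one-dimensional integrals and instead requires the near/far dichotomy over subsets $J_0\subset\{1,\dots,l\}$ and the bounds $b_i^{J_0}$ of Lemma~\ref{keylemma1}. Proving a.e.\ finiteness of the iterated absolute series by that route essentially reproduces the $L^p$ estimate of Proposition~\ref{propo2}, i.e.\ the very result that is meant to \emph{use} Lemma~\ref{switchsum} rather than precede it. A cleaner way to get the interchange, which avoids quantitative atom estimates altogether, is a soft argument: the atomic partial sums of each $f_i$ converge to $f_i$ in $\mathscr{S}'(\rn)$ (since $H^{p_i}\hookrightarrow\mathscr{S}'$), and because the kernel $\sigma^\vee$ is a fixed function in $C_c^\infty((\rn)^m)$ the map $g_i\mapsto T_\sigma(g_1,\ldots,g_i,\ldots,g_m)(x)$ is, for each fixed $x$ and each choice of the remaining arguments in $\mathscr{S}'$ or $\mathscr{S}$, sequentially continuous on $\mathscr{S}'$. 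Iterating these $l$ one-variable limits turns $\lim_{K_1}\cdots\lim_{K_l}T_\sigma(f_1^{(K_1)},\ldots,f_l^{(K_l)},f_{l+1},\ldots,f_m)(x)$ into $T_\sigma\fff(x)$, which is the claimed identity.
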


In order to establish an inequality such as (\ref{keypointest}), the vanishing moment condition of $a_{i,k_i}$ will be exploited in the following way.
\begin{lemma}\label{smalllemma}
Suppose that $a\in L^{\infty}_{0}(\rn)$ is a bounded function with a compact support and has   vanishing moments in the sense that there is a $M\in \nn\cup \{0\}$ such that 
\begin{equation}\label{momentcondition}
\int_{\rn}{x^{\alpha}a(x)}dx=0, \quad |\alpha|\leq M.
\end{equation}
Then for any $K\in \SS(\rn)$ and $c_0\in\rn$, we have 
\begin{equation}\label{smallgoal}
\big|K\ast a(x)\big|\lesssim \int_0^1{\int_{\rn}{|y-c_0|^{M+1}\sum_{|\alpha|=M+1}{\big|\partial^{\alpha}K\big(x-c_0-t(y-c_0)\big) \big||a(y)|}}dy}dt.
\end{equation}
\end{lemma}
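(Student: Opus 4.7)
The plan is to expand $K(x-y)$ as a function of $y$ around $y=c_0$ via Taylor's formula and use the vanishing-moment hypothesis to annihilate the polynomial part of the expansion, leaving only the integral remainder, which will give precisely the claimed bound.

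First I would fix $x \in \rn$ and define the auxiliary function
\[
g(t) := K\bigl(x - c_0 - t(y - c_0)\bigr), \qquad t \in [0,1],
\]
so that $g(0) = K(x - c_0)$ and $g(1) = K(x - y)$. A direct computation using the chain rule yields
\[
g^{(k)}(t) = (-1)^k \sum_{|\alpha| = k} \frac{k!}{\alpha!}(y - c_0)^\alpha (\partial^\alpha K)\bigl(x - c_0 - t(y - c_0)\bigr),
\]
so in particular $g^{(k)}(0)$ is, as a function of $y$, a polynomial in $y$ of degree exactly $k$. Consequently the partial sum $P(y) := \sum_{k=0}^{M} g^{(k)}(0)/k!$ is a polynomial in $y$ of total degree at most $M$.

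Next I would apply Taylor's theorem with integral remainder to $g$ on $[0,1]$:
\[
K(x-y) \;=\; g(1) \;=\; \sum_{k=0}^{M} \frac{g^{(k)}(0)}{k!} \;+\; \frac{1}{M!}\int_0^1 (1-t)^M g^{(M+1)}(t)\, dt .
\]
Multiplying by $a(y)$ and integrating in $y$, the polynomial part $P(y)$ disappears because condition \eqref{momentcondition} says $\int y^\beta a(y)\,dy = 0$ for $|\beta| \le M$, and $P$ has degree $\le M$. Hence
\[
K * a(x) \;=\; \int_{\rn} \frac{1}{M!} \int_0^1 (1-t)^M g^{(M+1)}(t)\, dt \, a(y)\, dy ,
\]
where the interchange of the $t$- and $y$-integrals is justified by the compact support of $a$ and the Schwartz decay of the derivatives of $K$.

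Finally, I would substitute the explicit form of $g^{(M+1)}(t)$, use $|(y-c_0)^\alpha| \le |y - c_0|^{M+1}$ for $|\alpha| = M+1$, and bound $(1-t)^M \le 1$ to obtain
\[
|K * a(x)| \;\lesssim\; \int_0^1 \!\!\int_{\rn} |y-c_0|^{M+1} \sum_{|\alpha| = M+1} \bigl|\partial^\alpha K\bigl(x - c_0 - t(y-c_0)\bigr)\bigr|\, |a(y)|\, dy\, dt ,
\]
which is exactly \eqref{smallgoal}. There is no substantive obstacle here; the only point requiring a little care is verifying that $P(y)$ really is a polynomial in $y$ of degree $\le M$ (so that the vanishing-moments hypothesis applies to it), and this follows because each factor $(y - c_0)^\alpha$ expands into a linear combination of monomials $y^\beta$ with $|\beta| \le |\alpha| \le M$.
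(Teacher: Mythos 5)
Your proof is correct and follows essentially the same route as the paper: both apply Taylor's theorem with integral remainder to $K$ around the point $x-c_0$ and use the vanishing moments to kill the polynomial part of the expansion. The paper simply quotes the multivariable Taylor formula directly, whereas you derive it by parametrizing along the segment via $g(t)=K(x-c_0-t(y-c_0))$; these are the same computation.
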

\begin{proof}
 We recall   Taylor's formula saying that for any $x,y\in\mathbb{R}^n$ and $M\in\nn\cup \{0\}$ we have 
\begin{equation*}
f(x+y)=\sum_{|\alpha|\leq M}{\frac{\partial^{\alpha}f(x)}{\alpha !}y^{\alpha}}+(M+1)\sum_{|\alpha|=M+1}{\frac{1}{\alpha !}\Big( \int_0^1{(1-t)^M\partial^{\alpha}f(x+ty)}dt\Big) y^{\alpha}}.
\end{equation*}
Then (\ref{momentcondition}) yields that the left-hand side of (\ref{smallgoal}) is dominated by a constant times
\begin{align*}
&\sum_{|\alpha|=M+1}{\frac{1}{\alpha !}\int_0^1{(1-t)^M\int_{\rn}{\big| \partial^{\alpha}K\big(x-c_0-t(y-c_0)\big)\big||y-c_0|^{M+1}|a(y)|}dy}dt} 
\end{align*} and this is clearly less than the right-hand side of (\ref{smallgoal}).
\end{proof}

The argument in Lemma \ref{smalllemma} will help us  estimate the $L^{r'}$ norm of the product of $\langle x_1\rangle^{s_1}\cdots \langle x_m\rangle^{s_m}$ and derivatives of $\big( \sigma(2^j\cdot )\wh{\Psi^{(m)}}\big)^{\vee}$ to obtain the quantity $\mathcal{L}_{\sss}^{r,\Psi^{(m)}}[\sigma]$, as the Hausdorff-Young inequality $\Vert F^{\vee}\Vert_{L^{r'}((\rn)^m)}\lesssim \Vert F\Vert_{L^r((\rn)^m)}$ is applicable for $1<r\leq 2$. The following lemma will play a significant role in this. 

\begin{lemma}[\cite{Gr_Mi_Ng_Tom,Mi_Tom}]\label{lem:LInfL2}
Let  $1\leq p\leq q\leq \infty$, and $s_k\geq 0$ for $1\leq k\leq m$.
Let ${\sigma}$ be a function defined on $(\rn)^m$ and $K={\sigma}^{\vee}$ be the inverse Fourier transform of $\sigma$.
Suppose that  $\sigma$ is supported in a ball of a constant radius.
Then for $1\le l\le m$ and any multi-index $\aaa$ in $(\zn)^l$
there exists a constant $C_{\aaa}$ such that
\begin{align*}
&\big\Vert{\langle\cdot_1\rangle^{s_{1}} \cdots\langle\cdot_l\rangle^{s_{l}}\partial^{\aaa}K(\cdot_1,\dots,\cdot_l,y_{l+1},\dots,y_m)}\big\Vert_{L^{q}((\rn)^l)}\\
&\leq C_{\aaa} \big\Vert{\langle \cdot_1\rangle^{s_{1}}\cdots\langle\cdot_l\rangle^{s_{l}}K(\cdot_1,\dots,\cdot_l,y_{l+1},\dots,y_m)}\big\Vert_{L^{p}((\rn)^l)}
\end{align*}
where $\partial^{\aaa}$ denotes $\aaa$ derivatives in the first $l$ variables.
\end{lemma}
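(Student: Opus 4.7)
The plan is to exploit the compact Fourier support of $K$ in the first $l$ variables to rewrite $\partial^{\aaa}K$ as a convolution (in those variables) against a Schwartz function, and then to deduce the weighted $L^p$-to-$L^q$ bound from Young's convolution inequality.

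Since $\sigma$ is compactly supported in $(\rn)^m$, the projection of $\textup{supp}(\sigma)$ onto the first $l$ factors is contained in a fixed compact set. Pick $\Phi\in C_c^{\infty}((\rn)^l)$ that equals one on that projection and set
$$\Psi(\xi_1,\dots,\xi_l):=(2\pi i)^{|\aaa|}(\xi_1,\dots,\xi_l)^{\aaa}\,\Phi(\xi_1,\dots,\xi_l).$$
Because $\sigma(\xxxi)=\Phi(\xi_1,\dots,\xi_l)\sigma(\xxxi)$, one has $(2\pi i\xi)^{\aaa}\sigma(\xxxi)=\Psi(\xi_1,\dots,\xi_l)\sigma(\xxxi)$, and taking inverse Fourier transforms in all $m$ variables produces the convolution identity
$$\partial^{\aaa} K(x_1,\dots,x_l,y_{l+1},\dots,y_m)=\bigl(\Psi^{\vee}\ast_{(\rn)^l}K(\,\cdot\,,y_{l+1},\dots,y_m)\bigr)(x_1,\dots,x_l),$$
in which $\Psi^{\vee}\in\SS((\rn)^l)$ is independent of $(y_{l+1},\dots,y_m)$.

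Next, since each $s_k\ge 0$, the elementary pointwise bound $\langle x_k\rangle^{s_k}\le C_{s_k}\langle x_k-z_k\rangle^{s_k}\langle z_k\rangle^{s_k}$ distributes each weight under translation, so multiplying over $k=1,\dots,l$ gives
$$\prod_{k=1}^{l}\langle x_k\rangle^{s_k}\bigl|\partial^{\aaa}K(x_1,\dots,x_l,y_{l+1},\dots,y_m)\bigr|\le C\int_{(\rn)^l}G(x-z)\prod_{k=1}^{l}\langle z_k\rangle^{s_k}\bigl|K(z_1,\dots,z_l,y_{l+1},\dots,y_m)\bigr|\,dz,$$
where $G(w_1,\dots,w_l):=\prod_{k=1}^{l}\langle w_k\rangle^{s_k}\,|\Psi^{\vee}(w_1,\dots,w_l)|$ is dominated by a Schwartz function on $(\rn)^l$, because the rapid decay of $\Psi^{\vee}$ absorbs the polynomial factors $\langle w_k\rangle^{s_k}$. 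In particular $G\in L^t((\rn)^l)$ for every $1\le t\le\infty$.

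Finally, the hypothesis $1\le p\le q\le\infty$ guarantees that the exponent defined by $1/t:=1+1/q-1/p$ satisfies $1/t\in[0,1]$, so a single application of Young's convolution inequality in $(\rn)^l$ yields
$$\Big\|\prod_{k=1}^{l}\langle\,\cdot_k\rangle^{s_k}\partial^{\aaa}K(\,\cdot_1,\dots,\cdot_l,y_{l+1},\dots,y_m)\Big\|_{L^q((\rn)^l)}\le \|G\|_{L^t}\,\Big\|\prod_{k=1}^{l}\langle\,\cdot_k\rangle^{s_k}K(\,\cdot_1,\dots,\cdot_l,y_{l+1},\dots,y_m)\Big\|_{L^p((\rn)^l)},$$
which is the desired estimate with $C_{\aaa}=C\|G\|_{L^t}$. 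No step is a genuine obstacle; the crucial observation is that the compact frequency support of $\sigma$ converts the differential operator $\partial^{\aaa}$ into convolution with a Schwartz kernel in the first $l$ variables, after which the subadditivity of $\langle\,\cdot\,\rangle^{s}$ lets the weights commute through that convolution up to a harmless Schwartz factor.
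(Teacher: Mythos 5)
The paper does not supply a proof of Lemma~\ref{lem:LInfL2}; it is quoted directly from \cite{Gr_Mi_Ng_Tom,Mi_Tom}, so there is no in-paper argument to compare yours against. Your proof is correct and is indeed the standard route: the compact frequency support of $\sigma$ lets you write $(2\pi i\xi)^{\aaa}\sigma = \Psi\sigma$ with $\Psi$ a fixed $C_c^\infty$ symbol in the first $l$ variables, so $\partial^{\aaa}K(\cdot,y') = \Psi^\vee\ast_{(\rn)^l}K(\cdot,y')$ with $\Psi^\vee\in\SS((\rn)^l)$ independent of $y'$; Peetre's inequality $\langle x_k\rangle^{s_k}\lesssim\langle x_k-z_k\rangle^{s_k}\langle z_k\rangle^{s_k}$ for $s_k\ge 0$ moves the weights across the convolution at the cost of a Schwartz majorant $G$, and Young's inequality with $1/t=1+1/q-1/p\in[0,1]$ (here is exactly where $1\le p\le q\le\infty$ is used) finishes. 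One small point worth being explicit about in a write-up is why the convolution identity reduces to a convolution in only the first $l$ variables: $\Psi$ depends on $(\xi_1,\dots,\xi_l)$ only, so its contribution in the last $m-l$ variables is a Dirac mass, which is clean but deserves a line. No gaps otherwise.
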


We end this section by reviewing the technique of Grafakos and Kalton \cite{Gr_Ka}, which will be very useful in estimating the $L^p$ norm of the sum of functions having a compact support for $0<p\leq 1$.
\begin{lemma}\cite[Lemma 2.1]{Gr_Ka}\label{grkalemma}
  Let $0<p\leq 1$ and $\{f_Q\}_{Q\in \mathcal{J}}$ be a family of nonnegative integrable functions with $\textup{Supp}(f_Q)\subset Q$ for all $Q\in\mathcal{J}$, where $\mathcal{J}$ is a finite or countable family of cubes in $\rn$.
Then we have
 \begin{equation*}
\Big\Vert{\sum_{Q\in\mathcal{J}} f_Q}\Big\Vert_{L^p(\rn)}\lesssim \Big\Vert{\sum_{Q\in\mathcal{J}} \Big(\frac1{{\left\vert{Q}\right\vert}}\int_Q f_Q(y) dy\Big)\chi_{Q}}\Big\Vert_{L^p(\rn)},
\end{equation*}
  where the constant in the inequality depends only on $p$.
\end{lemma}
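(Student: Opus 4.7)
The plan is to combine the $p$-subadditivity of the $L^p$-quasi-norm (for $0<p\le 1$, $\|g+h\|_{L^p}^p\le\|g\|_{L^p}^p+\|h\|_{L^p}^p$ whenever $g,h\ge 0$) with a dyadic stratification of $\mathcal J$ by the size of the averages $\bar f_Q:=|Q|^{-1}\int_Q f_Q$, estimating each stratum via Jensen's inequality for the concave function $t\mapsto t^p$, and then summing the contributions against the layer-cake representation of the quasi-norm of $H:=\sum_{Q\in\mathcal J}\bar f_Q\chi_Q$. By monotone convergence I will first reduce to the case $\mathcal J$ finite, so that every integral in sight is finite and every manipulation is legitimate.

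The elementary per-cube Jensen bound $\int_Q f_Q^p\le|Q|^{1-p}(\int_Q f_Q)^p=|Q|\bar f_Q^p$ already gives $\|f_Q\|_{L^p}^p\le\|\bar f_Q\chi_Q\|_{L^p}^p$ individually, but combining it with naive subadditivity $\|\sum_Q f_Q\|_{L^p}^p\le\sum_Q\|f_Q\|_{L^p}^p$ produces only the sum $\sum_Q|Q|\bar f_Q^p$. This sum can exceed $\|H\|_{L^p}^p$ by an arbitrarily large factor whenever many cubes overlap: for instance $N$ identical copies of one cube $Q$ with $\bar f_Q=1$ give $\sum_Q|Q|\bar f_Q^p=N|Q|$ but $\|H\|_{L^p}^p=N^p|Q|$, a mismatch of order $N^{1-p}$. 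The proof must therefore exploit the overlap structure rather than dissipate it through term-by-term estimation.

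To that end, set $\mathcal J_k:=\{Q\in\mathcal J : 2^k<\bar f_Q\le 2^{k+1}\}$, $F_k:=\sum_{Q\in\mathcal J_k}f_Q$, $H_k:=\sum_{Q\in\mathcal J_k}\bar f_Q\chi_Q$, and $E_k:=\bigcup_{Q\in\mathcal J_k}Q$. The inequality $\bar f_Q>2^k$ for $Q\in\mathcal J_k$ supplies the key pointwise relations $2^k\sum_{Q\in\mathcal J_k}\chi_Q\le H_k\le H$, so that $\sum_{Q\in\mathcal J_k}\chi_Q\le 2^{-k}H$ throughout $\rn$ and $E_k\subseteq\{H>2^k\}$. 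In particular the total packing $\sum_{Q\in\mathcal J_k}|Q|$ is dominated by $2^{-k}\int_{\{H>2^k\}}H$. Now $p$-subadditivity level-by-level gives $\|F\|_{L^p}^p\le\sum_k\|F_k\|_{L^p}^p$, and Jensen's inequality applied \emph{collectively} on the stratum $E_k$ (rather than cube by cube) yields $\|F_k\|_{L^p}^p\le|E_k|^{1-p}\|F_k\|_{L^1}^p$ with $\|F_k\|_{L^1}=\sum_{\mathcal J_k}|Q|\bar f_Q\le 2^{k+1}\sum_{\mathcal J_k}|Q|$. Inserting the packing bound together with $|E_k|\le|\{H>2^k\}|$ and then performing a Fubini exchange against the layer-cake representation $\|H\|_{L^p}^p\approx_p\sum_k 2^{kp}|\{H>2^k\}|$ should collapse the resulting double sum to a constant multiple of $\|H\|_{L^p}^p$.

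The principal obstacle will be executing this summation in $k$ cleanly. The per-stratum bound carries a geometric factor $2^{k(p-1)}$, and for $p<1$ the series $\sum_k 2^{k(p-1)}$ would diverge as $k\to-\infty$; closing it requires using the decay $|\{H>2^k\}|\le 2^{-kp}\|H\|_{L^p}^p$ supplied by Chebyshev's inequality and arranging the Fubini exchange so that contributions from low levels telescope rather than accumulate. Once the bound is established for finite $\mathcal J$ with a constant depending only on $p$, the extension to countable $\mathcal J$ follows from monotone convergence applied to finite truncations of the family.
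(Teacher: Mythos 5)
You are right that the naive per-cube Jensen-plus-subadditivity attack loses an unbounded factor when $N$ identical cubes coincide, but the dyadic stratification you substitute for it inherits the same defect: the opening step $\big\|\sum_{Q}f_Q\big\|_{L^p}^p\le\sum_{k}\|F_k\|_{L^p}^p$ is itself a $p$-subadditive split, and it forfeits an unbounded factor whenever many strata contribute comparably. Concretely, fix a unit cube $Q_0\subset\rn$ and, for each $j\in\{1,\dots,N\}$, include $2^j$ cubes equal to $Q_0$ in $\mathcal J$, each carrying $f_Q\equiv 2^{-j}\chi_{Q_0}$, so that $\bar f_Q=2^{-j}$. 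Then $\sum_Q f_Q=\sum_Q\bar f_Q\chi_{Q_0}=N\chi_{Q_0}=H$ and
\begin{equation*}
\Big\|\sum_{Q}f_Q\Big\|_{L^p(\rn)}^p=\Big\|\sum_{Q}\bar f_Q\chi_{Q_0}\Big\|_{L^p(\rn)}^p=N^p,
\end{equation*}
so the lemma holds with constant $1$. However, the $2^j$ cubes with $\bar f_Q=2^{-j}$ form exactly the stratum $\mathcal J_{-j-1}$, hence $F_{-j-1}=\chi_{Q_0}$ and $\sum_k\|F_k\|_{L^p}^p=\sum_{j=1}^N 1=N$, which for $p<1$ is not $\lesssim N^p$ with a constant depending only on $p$. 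The loss is incurred at the very first inequality, before any Jensen, packing, Chebyshev, or Fubini step is invoked; indeed your collective Jensen bound $\|F_k\|_{L^p}^p\le|E_k|^{1-p}\|F_k\|_{L^1}^p$ is an \emph{equality} in this example, so nothing downstream can compensate.

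Two further points reinforce this. The packing estimate $\sum_{Q\in\mathcal J_k}|Q|\le 2^{-k}\int_{\{H>2^k\}}H$ compounds the loss: in the example it replaces the correct value $\sum_{\mathcal J_{-j-1}}|Q|=2^j$ by $2^{j+1}N$, because $\{H>2^k\}$ is dominated by contributions from other strata and therefore carries no useful information about the size of $\mathcal J_k$ alone. And the Chebyshev decay $|\{H>2^k\}|\le 2^{-kp}\|H\|_{L^p}^p$ you invoke to rescue the summation gives nothing here, since $|\{H>2^k\}|=|Q_0|$ already satisfies it for every relevant $k\le\log_2 N$. A correct argument has to couple the strata against one another rather than peel them off $p$-subadditively: the overlap between cubes at \emph{different} levels of $\bar f_Q$ is precisely what makes $H$ large, and this must be exploited directly, e.g.\ through the distribution function of $\sum_Q f_Q$ measured against the nested level sets $\{H>\lambda\}$, rather than discarded by an early subadditive decomposition.
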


\section{Proof of Proposition \ref{main21}}\label{proofpropo1}

Let $\theta$ and $\wt{\theta}$ denote Schwartz functions on $\rn$ having the properties   
\begin{align*}
&\textup{Supp}(\wh{\theta})\subset \big\{\xi\in\rn: \tfrac{1}{2000\sqrt{m}}\leq |\xi|\leq \tfrac{1}{1000\sqrt{m}}\big\}  \\ 
& \textup{Supp}(\wh{\wt{\theta}})\subset \big\{\xi\in\rn:  |\xi|\leq \tfrac{1}{100\sqrt{m}}\big\} &
\end{align*}  
and $ \wh{\wt{\theta}}(\xi)=1 $ for $ |\xi|\leq \frac{1}{1000\sqrt{m}}.$
Then it is clear that
\begin{equation}
\big(\theta(\epsilon \;\cdot)\big)\ast \wt{\theta}(x)=\theta(\epsilon x)\qquad  \text{ for any }~ 0<\epsilon\le 1.
\end{equation}
Let $e_1:=(1,0,\dots,0)\in \zn$.
Choose $2/r<\delta\leq 2$ and let $N>0$ be a sufficiently large number to be chosen later.
Recall that our fixed Schwartz function $\phi_j$ satisfies $\textup{Supp}(\wh{\phi_j})\subset \{\xi\in\rn: |\xi|\leq 2^{j+1}\}$ and $\wh{\phi_j}(\xi)=1$ for $|\xi|\leq 2^j$.
Let $\Phi$ be a Schwartz function on $\rn$ whose Fourier transform is equal to $1$ on the ball $\{\xi\in \rn: |\xi|\le 1\}$ and is supported in a larger ball.
Let $N$ be a sufficiently large positive integer and $\Phi_N:=N^{n}\Phi(N \cdot)$.

We define
\begin{equation*}
\mathcal{H}_{(n,\delta)}^{(N)}(x):=\mathcal{H}_{(n,\delta)}(x)\wh{\Phi_N}(x), \qquad x\in\rn
\end{equation*}
and
\begin{equation*}
\sigma^{(N)}(\xxxi):=\wh{\mathcal{H}_{(n,\delta)}^{(N)}}(\xi_1-e_1)\wh{\wt{\theta}}(\xi_1-e_1)\wh{\wt{\theta}}(\xi_2)\cdots \wh{\wt{\theta}}(\xi_m), \qquad \xxxi\in (\rn)^m ,
\end{equation*}
where $\mathcal{H}_{(n,\delta)}$ is defined in (\ref{hdefinition}).

It follows from the support of $\wh{\wt{\theta}}$ that $\sigma^{(N)}$ is supported in $\{\xxxi\in (\rn)^m: \frac{99}{100} \leq |\xxxi| \leq\frac{101}{100}\}$, which implies that $\sigma^{(N)}(2^l\xxxi)\wh{\Psi^{(m)}}(\xxxi)$ vanishes unless $-1\leq l\leq 1$.
Moreover, in view  of Lemma \ref{katoponce} we have
\begin{equation*}
\mathcal{L}_{\sss}^{r,\Psi^{(m)}}[\sigma^{(N)}]\lesssim \max_{-1\leq l\leq 1}{\big\Vert \sigma^{(N)}(2^l\cdot_1,\dots,2^l\cdot_m)\big\Vert_{L_{\sss}^{r}((\rn)^m)}},
\end{equation*} 
which can be estimated, via scaling, by a constant times
\begin{equation}\label{scalingargument}
\big\Vert \sigma^{(N)} \big\Vert_{L_{\sss}^r((\rn)^m)}\lesssim \Big\Vert \wh{\mathcal{H}_{(n,\delta)}^{(N)}}\cdot\wh{\wt{\theta}} \;\Big\Vert_{L_{s_1}^{r}(\rn)}\lesssim \big\Vert \wh{\mathcal{H}_{(n,\delta)}^{(N)}}\big\Vert_{L_{s_1}^{r}(\rn)}, 
\end{equation}  
where we used Lemma \ref{katoponce}  in the last inequality.
We observe that 
\begin{equation*}
(I-\Delta)^{s_1/2}\wh{\mathcal{H}_{(n,\delta)}^{(N)}}(\xi)=\wh{\mathcal{H}_{(n-s_1,\delta)}^{(N)}}(\xi)=\Phi_N\ast \wh{\mathcal{H}_{(n-s_1,\delta)}}(\xi)
\end{equation*}
and  $\wh{\mathcal{H}_{(n-s_1),\delta}}\in L^r(\rn)$, using (\ref{hproperty4}) with $\delta >2/r$ and $s_1=n/r$.
Since $\{\Phi_N\}_{N\in\nn}$ is an approximate identity, we have
\begin{equation*}
\lim_{N\to \infty}\Big\Vert \Phi_N\ast \wh{\mathcal{H}_{(n-s_1,\delta)}}-\wh{\mathcal{H}_{(n-s_1,\delta)}}\Big\Vert_{L^r(\rn)}=0,
\end{equation*} which proves
\begin{equation}\label{lsigmabound}
\limsup_{N\to\infty}\mathcal{L}_{\sss}^{r,\Psi^{(m)}}[\sigma^{(N)}]\lesssim \lim_{N\to \infty}\Big\Vert \Phi_N\ast \wh{\mathcal{H}_{(n-s_1,\delta)}}\Big\Vert_{L^r(\rn)}\leq \big\Vert \wh{\mathcal{H}_{(n-s_1,\delta)}} \big\Vert_{L^r(\rn)}<\infty.
\end{equation}

On the other hand, for $0<\epsilon<1/100$, let
\begin{equation*}
f_1^{(\epsilon)}(x):= \epsilon^{n/p_1}\theta(\epsilon x)e^{2\pi i\langle x,e_1\rangle}, \qquad         f^{(\epsilon)}_j(x):=\epsilon^{n/p_j}{\theta}(\epsilon x), ~~2\leq j\leq m.
\end{equation*} 
Then it is clear, from the Littlewood-Paley theory for Hardy spaces and scaling arguments  that for each $1\leq j\leq m$
\begin{equation}\label{secondest}
\big\Vert f_j^{(\epsilon)}\big\Vert_{H^{p_j}(\rn)}\approx \big\Vert f_j^{(\epsilon)}\big\Vert_{L^{p_j}(\rn)}=\Vert \theta\Vert_{L^{p_j}(\rn)}\lesssim 1,\qquad \text{ uniformly in }~~\epsilon.
\end{equation}
Moreover, we observe that
\begin{equation*}
\Big| T_{\sigma^{(N)}}\big(f_1^{(\epsilon)},\dots,f_m^{(\epsilon)}\big)(x) \Big|=\epsilon^{n/p}\Big|\mathcal{H}_{(n,\delta)}^{(N)}\ast \big( \theta(\epsilon\cdot)\big)(x) \Big| \big|\theta(\epsilon x)\big|^{m-1}
\end{equation*} and this, together with scaling,  yields that
\begin{align*}
\Big\Vert T_{\sigma^{(N)}}\big(f_1^{(\epsilon)},\dots,f_m^{(\epsilon)} \big)\Big\Vert_{L^p(\rn)}=\Big\Vert \big|\theta\big|^{m-1}\Big( \mathcal{H}_{(n,\delta)}^{(N)}\ast\big(\theta(\epsilon\cdot)\big)\Big)(\cdot/\epsilon)\Big\Vert_{L^p(\rn)}.
\end{align*}
Applying (\ref{secondest}) and   Fatou's lemma, we obtain that
\begin{align}
\big\Vert T_{\sigma^{(N)}}\big\Vert_{H^{p_1}\times\cdots\times H^{p_m}\to L^p}&\gtrsim \liminf_{\epsilon\to 0} \Big\Vert T_{\sigma^{(N)}}\big(f_1^{(\epsilon)},\dots,f_m^{(\epsilon)} \big)\Big\Vert_{L^p(\rn)}\nonumber\\
&\geq \Big\Vert \big| \theta\big|^{m-1}\Big| \liminf_{\epsilon\to 0}\int_{\rn}{\theta(x-\epsilon y)\mathcal{H}_{(n,\delta)}^{(N)}(y)}dy\Big|\Big\Vert_{L^p(\rn)}.\label{continueest}
\end{align}
Since 
\begin{equation*}
\big| \theta(x-\epsilon y)\mathcal{H}_{(n,\delta)}^{(N)}(y)\big|\lesssim \mathcal{H}_{(n,\delta)}^{(N)}(y) \quad \text{ uniformly in }~\epsilon>0, x\in\rn
\end{equation*} and 
\begin{equation*}
\big\Vert \mathcal{H}_{(n,\delta)}^{(N)}\big\Vert_{L^1(\rn)}\leq \big\Vert \wh{\Phi_N}\big\Vert_{L^1(\rn)}\lesssim N^n<\infty,
\end{equation*}
the Lebesgue dominated convergence theorem yields 
\begin{equation*}
(\ref{continueest})=\big\Vert | \theta |^m\big\Vert_{L^p(\rn)}\int_{\rn}{\mathcal{H}_{(n,\delta)}^{(N)}(y)}dy\approx \int_{\rn}{\mathcal{H}_{(n,\delta)}(y)\wh{\Phi_N}(y)}dy.
\end{equation*}
Taking $\liminf_{N\to \infty}$, we finally obtain that
\begin{equation*}
\liminf_{N\to\infty}\big\Vert T_{\sigma^{(N)}}\big\Vert_{H^{p_1}\times\cdots\times H^{p_m}\to L^p}\gtrsim \big\Vert \mathcal{H}_{(n,\delta)}\big\Vert_{L^1(\rn)}=\infty
\end{equation*} where we applied the monotone convergence theorem and the fact that $\mathcal{H}_{(n,\delta)}\not\in L^1(\rn)$ for $\delta\leq 2$ because of (\ref{hproperty2}).

This fact combined with (\ref{lsigmabound})  completes the proof.

\section{Proof of Proposition \ref{main22}}

We first consider the case $1\leq l<m$.
Let $\mu_1:=(m^{-1/2},0,\dots,0)\in\rn$.
The condition 
\begin{equation*}
\sum_{k=1}^{l}{\big({s_k}/{n}-{1}/{p_k}\big)}\leq- {1}/{r'}
\end{equation*} is equivalent to
\begin{equation}\label{equivcondition}
s_1+\dots +s_l+n/r' \leq n/p_1+\dots+n/p_l=n/p-\big(n/p_{l+1}+\dots+n/p_m).
\end{equation}
On the other hand, it follows from the condition $s_j>n/r$, $1\leq j\leq m$, that
\begin{equation*}
s_1+\dots+s_l+n/r'>ln/r+n/r',
\end{equation*} which further implies, combined with (\ref{equivcondition}), that
\begin{equation*}
2l/r+2/r'<2/p-\big(2/p_{l+1}+\dots+2/p_m \big).
\end{equation*}
Now we choose $\tau, \tau_{l+1},\dots, \tau_m>0$ such that
\begin{equation*}
\tau_{l+1}>2/p_{l+1},\dots,\tau_m>2/p_m
\end{equation*}
and
\begin{equation}\label{taucondition}
2/r<\tau<2l/r+2/r'<2/p-(\tau_{l+1}+\dots+\tau_m)<2/p-(2/p_{l+1}+\dots+2/p_m).
\end{equation}

Let $\varphi,\wt{\varphi}\in S(\rn)$ be radial functions having the properties that $\varphi\geq 0$, $\varphi(0)\not= 0$, $\textup{Supp}(\wh{\varphi})\subset \{\xi\in\rn: |\xi|\leq \frac{1}{200lm}\}$, $\textup{Supp}(\wh{\wt{\varphi}})\subset \{\xi\in\rn: |\xi|\leq \frac{1}{100m}\}$, and $\wh{\wt{\varphi}}(\xi)=1$ for $|\xi|\leq \frac{1}{200m}$.
In what follows, we denote $\mathcal{H}_{(s_1+\dots+s_l+n/r',\tau)}$ by $\mathcal{H}$ for notational convenience.
We define
\begin{equation*}
K^{(l)}(x):=\mathcal{H}\ast \varphi(x), \quad x\in\rn,
\end{equation*}
and
\begin{equation*}
M^{(l)}(\xi_1,\dots,\xi_l):={(K^{(l)})}^{\vee}\Big(\frac{1}{l}\sum_{k=1}^{l}{(\xi_k-\mu_1)} \Big)  \prod_{j=2}^{l} {\varphi}^{\vee}\Big(\frac{1}{l}\sum_{k=1}^{l}{(\xi_k-\xi_j)} \Big)
\end{equation*}
where $M^{(l)}$ is defined on $(\rn)^l$.
Then the multiplier $\sigma$ on $(\rn)^m$ is defined by
\begin{equation*}
\sigma(\xi_1,\dots,\xi_m):=M^{(l)}(\xi_1,\dots,\xi_l){\wt{\varphi}}^{\vee}(\xi_{l+1}-\mu_1)\cdots {\wt{\varphi}}^{\vee}(\xi_{m}-\mu_1).
\end{equation*}
To investigate the support of $\sigma$ we first look at the support of $M^{(l)}$.
From the support of ${\varphi}^{\vee}$, we have
\begin{equation*}
\big| \xi_1+\dots+\xi_l-l\mu_1\big|\leq \frac{1}{200m},
\end{equation*} and for each $2\leq j\leq l$
\begin{equation}\label{jest}
\big| \xi_1+\dots+\xi_l-l\xi_j\big|\leq \frac{1}{200m}.
\end{equation}
By adding up all of them, we obtain 
\begin{equation}\label{1est}
\big| \xi_1-\mu_1\big|\leq \frac{1}{200m}
\end{equation}
and the sum of (\ref{jest}) and (\ref{1est}) yields that for each $2\leq j\leq l$
\begin{equation*}
\big| \mu_1+\xi_2+\dots+\xi_l-l\xi_j\big|\leq \frac{1}{100m}.
\end{equation*}
Let us call the above estimate $\mathcal{E}(j)$.
Then for  $2\leq j\leq l$, it follows from  $$\mathcal{E}(j)+\sum_{k=2}^{l}\mathcal{E}(k)$$
that 
\begin{equation*}
\big|\xi_j-\mu_1\big|\leq \frac{1}{100m},
\end{equation*} which proves, together with (\ref{1est}), that 
\begin{align}\label{supportm}
 \textup{Supp}(M^{(l)})&\subset \Big\{ (\xi_1,\dots,\xi_l)\in (\rn)^l:   |\xi_j-\mu_1|\leq \frac{1}{100m}, ~ 1\leq j\leq l \Big\}.
\end{align}
Since $\wh{\wt{\varphi}}$ is also supported in $\{\xi\in\rn: |\xi|\leq \frac{1}{100{m}}\}$, it is clear that 
\begin{align*}
\textup{Supp}(\sigma)&\subset  \Big\{ (\xi_1,\dots,\xi_m)\in (\rn)^m:   |\xi_j-\mu_1|\leq \frac{1}{100m}, ~ 1\leq j\leq m \Big\}\\
&\subset \Big\{\xxxi:=(\xi_1,\dots,\xi_m)\in (\rn)^m:    \frac{99}{100}\leq |\xxxi|\leq \frac{101}{100}     \Big\},
\end{align*} 
 which shows that $\sigma(2^l\xxxi )\wh{\Psi^{(m)}}(\xxxi)$ vanishes unless $-1\leq l\leq 1$.
Furthermore, using Lemma \ref{katoponce} and the scaling argument in 
the derivation of (\ref{scalingargument}), we have
\begin{equation*}
\mathcal{L}_{\sss}^{r,\Psi^{(m)}}[\sigma]\lesssim     \sup_{-1\leq l\leq 1}{\Big\Vert \sigma(2^l\cdot)\wh{\Psi^{(m)}}\Big\Vert_{L_{\sss}^{r}((\rn)^m)}}  \lesssim \Vert \sigma\Vert_{L_{\sss}^{r}((\rn)^m)}
\end{equation*}
and this is clearly less than a constant times
\begin{equation*}
 \big\Vert M^{(l)}\big\Vert_{L_{(s_1,\dots,s_l)}^{r}((\rn)^l)}\prod_{j=l+1}^{m}{\big\Vert {\wt{\varphi}}^{\vee}\big\Vert_{L_{s_j}^{r}(\rn)}}\lesssim \big\Vert (I-\Delta_1)^{s_1/2}\cdots (I-\Delta_l)^{s_l/2}M^{(l)}\big\Vert_{L^{r}((\rn)^l)}.
\end{equation*}
We observe that 
\begin{align*}
&\wh{M^{(l)}}(x_1,\dots,x_l)\\
&=\int_{(\rn)^l}{ ({K^{(l)}})^{\vee}\Big(\frac{1}{l}\sum_{k=1}^{l}{(\xi_k-\mu_1)} \Big)\Big[ \prod_{j=2}^{l} {\varphi}^{\vee}\Big(\frac{1}{l}\sum_{k=1}^{l}{(\xi_k-\xi_j)} \Big)\Big] \Big(\prod_{j=1}^{l}e^{-2\pi i\langle x_j,\xi_j\rangle}\Big)}d\xi_1\cdots d\xi_l.
\end{align*}
Using a change of variables with
\begin{equation*}
\zeta_1:=\frac{1}{l}\sum_{k=1}^{l}{(\xi_k-\mu_1)}, \qquad \text{ and }\qquad \zeta_j:=\frac{1}{l}\sum_{k=1}^{l}{(\xi_k-\xi_j)}, \quad 2\leq j\leq l
\end{equation*}
so that
\begin{equation}\label{system}
\xi_1=\zeta_1+\dots+\zeta_l+\mu_1, \qquad \text{ and }\qquad \xi_j=\zeta_1-\zeta_j+\mu_1, \quad 2\leq j\leq l,
\end{equation}
we see that
\begin{align}\label{mlexpression}
\wh{M^{(l)}}(x_1,\dots,x_l)&= le^{-2\pi i\langle \sum_{k=1}^{l}x_k,\mu_1\rangle}\int_{(\rn)^l}{({K^{(l)}})^{\vee}(\zeta_1)\Big(\prod_{j=2}^{l}{{\varphi}^{\vee}(\zeta_j)}\Big)e^{-2\pi i\langle \sum_{k=1}^{l}x_k,\zeta_1\rangle}}\nonumber \\
&\qquad \qquad \qquad \qquad \qquad \qquad \quad \quad \times {\Big(\prod_{j=2}^{l}{e^{-2\pi i\langle x_1-x_j,\zeta_j\rangle} }\Big)    }d\zeta_1\cdots d\zeta_l\nonumber\\
&=l K^{(l)}(x_1+\dots+x_l)\varphi(x_1-x_2)\cdots \varphi(x_1-x_l)e^{-2\pi i\langle x_1+\dots+x_l,\mu_1\rangle}
\end{align} since the Jacobian of the system (\ref{system}) is $l$.
Consequently,
\begin{align*}
&(I-\Delta_1)^{s_1/2}\cdots (I-\Delta_l)^{s_l/2}M^{(l)}(\xi_1,\dots,\xi_l)\\
&=l\int_{(\rn)^l}{ \Big( \prod_{j=1}^{l}\langle x_j \rangle^{s_j}\Big)    K^{(l)}(x_1+\dots+x_l)\varphi(x_1-x_2)\cdots \varphi(x_1-x_l)}\\
&\qquad \qquad \qquad \qquad \qquad {e^{-2\pi i\langle x_1+\dots+x_l,\mu_1\rangle}  e^{2\pi i\langle x_1,\xi_1\rangle}\cdots e^{2\pi i\langle x_l,\xi_l\rangle}  }dx_1\cdots dx_l
\end{align*}
and we perform another change of variables with
\begin{equation*}
y_1:=x_1+\dots+x_l, \qquad \text{ and }\qquad y_j:=x_1-x_j, \quad 2\leq j\leq l,
\end{equation*}
which is equivalent to 
\begin{equation*}
x_1=\frac{1}{l}\sum_{k=1}^{l}{y_k}, \qquad \text{ and }\qquad  x_j=\frac{1}{l}\sum_{k=1}^{l}{(y_k-y_j)}, \quad 2\leq j\leq l,
\end{equation*}
to obtain that the last expression is controlled by a constant times
\begin{align*}
&\int_{(\rn)^l}{\Big\langle \frac{1}{l}\sum_{k=1}^{l}y_k \Big\rangle^{s_1} \Big( \prod_{j=2}^{l}\Big\langle \frac{1}{l}\sum_{k=1}^{l}{(y_k-y_j)} \Big\rangle^{s_j}\Big) K^{(l)}(y_1)\Big( \prod_{j=2}^{l}\varphi(y_j)\Big)}\\
& \qquad \qquad \qquad \times e^{2\pi i\langle y_1,\frac{1}{l}(\xi_1+\dots+\xi_l)-\mu_1\rangle }\Big(\prod_{j=2}^{l}e^{2\pi i\langle y_j,\frac{1}{l}(\xi_1+\dots+\xi_l)-\xi_j\rangle} \Big)       dy_1\cdots dy_l.
\end{align*}
In conclusion, using a change of variables, we have
\begin{align}\label{conclude}
\mathcal{L}_{\sss}^{r,\Psi^{(m)}}[\sigma]&\lesssim \Big\Vert \int_{(\rn)^l}{\Big\langle \frac{1}{l}\sum_{k=1}^{l}y_k \Big\rangle^{s_1} \Big( \prod_{j=2}^{l}\Big\langle \frac{1}{l}\sum_{k=1}^{l}{(y_k-y_j)} \Big\rangle^{s_j}\Big)}\\
& \qquad \qquad \quad \times K^{(l)}(y_1)\Big( \prod_{j=2}^{l}\varphi(y_j)\Big)\Big(\prod_{j=1}^{l}e^{2\pi i\langle y_j,\xi_j\rangle} \Big)       dy_1\cdots dy_l\Big\Vert_{L^r( \xi_1,\dots,\xi_m )}.\nonumber
\end{align}
For sufficiently large $M>0$, let
\begin{equation*}
 \mathcal{N}_{(M)}(y_1,\dots,y_l):=\dfrac{ \Big\langle \frac{1}{l}\sum_{k=1}^{l}y_k \Big\rangle^{s_1} \prod_{j=2}^{l}\Big\langle \frac{1}{l}\sum_{k=1}^{l}{(y_k-y_j)} \Big\rangle^{s_j}}{\langle y_1\rangle^{s_1+\dots+s_l}\prod_{j=2}^{l}{\langle y_j\rangle^{M}} }.
\end{equation*}
Then the right-hand side of  (\ref{conclude}) can be written as
\begin{equation}\label{mainterm}
\Big\Vert T_{\mathcal{N}_{(M)}}\Big((K^{(l)}_{s_1+\dots+s_l})^{\vee}\otimes (\varphi^{(M)})^{\vee}\otimes \cdots\otimes(\varphi^{(M)})^{\vee}\Big) \Big\Vert_{L^r((\rn)^l)}
\end{equation}
where
\begin{equation*}
K_{(s_1+\dots+s_l)}^{(l)}(y_1):=\langle y_1\rangle^{s_1+\dots+s_l}K^{(l)}(y_1), \qquad \varphi^{(M)}(y):=\langle y\rangle^{M}\varphi(y).
\end{equation*} 
Now we need the following lemma whose proof will be provided in Section \ref{prooflemmas}.
\begin{lemma}\label{nmmultiplier}
Let $M>s_1+\dots+s_l+n+2$. Then $\mathcal{N}_{(M)}$ is an $L^r$ multiplier on $(\rn)^l$.
\end{lemma}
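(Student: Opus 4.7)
The plan is to apply the classical Mikhlin multiplier theorem on $\mathbb{R}^{nl}\cong(\rn)^l$, which reduces the claim to verifying that $\mathcal{N}_{(M)}$ is $C^\infty$ and satisfies
\[
\big|\partial^{\alpha}\mathcal{N}_{(M)}(y)\big|\lesssim_{\alpha}(1+|y|)^{-|\alpha|}
\]
for every multi-index $\alpha$ with $|\alpha|\leq[nl/2]+1$. Smoothness is immediate since the denominator never vanishes, so the substantive point is the decay of derivatives. To avoid the issue that $\mathcal{N}_{(M)}$ is bounded but not decaying in the $y_1$ direction, I would argue by cases based on the largest among $|y_1|,\dots,|y_l|$, and exploit the matching between the numerator's growth in $y_1$ and the $\langle y_1\rangle^{s_1+\dots+s_l}$ factor in the denominator.

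For the base case $|\alpha|=0$, observe that when $|y_1|\geq|y_k|$ for all $k$, the linear forms $z_1(y)=\tfrac{1}{l}\sum_{k}y_k$ and $z_j(y)=z_1(y)-y_j$ satisfy $|z_k|\leq 2|y_1|$, so the numerator of $\mathcal{N}_{(M)}$ is dominated by $C\langle y_1\rangle^{s_1+\dots+s_l}$, which cancels the corresponding denominator factor, and the remaining $\prod_{j\ge 2}\langle y_j\rangle^{-M}$ is bounded. When instead $|y_{j_0}|=\max_{k}|y_k|$ with $j_0\geq 2$, all $|z_k|\leq 2|y_{j_0}|$, so the numerator is bounded by $C\langle y_{j_0}\rangle^{s_1+\dots+s_l}$ which is absorbed by the $\langle y_{j_0}\rangle^M$ factor in the denominator since $M>s_1+\dots+s_l+n+2>s_1+\dots+s_l$.

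The derivative estimates follow by the same bifurcated analysis, combined with Leibniz's rule, the chain rule applied to the linear substitutions $w=z_k(y)$, and the elementary inequality $|\partial^{\gamma}\langle w\rangle^{s}|\lesssim_{\gamma,s}\langle w\rangle^{s-|\gamma|}$ for $w\in\rn$. Each application of $\partial_{y_i}$ either shifts the exponent of $\langle z_k\rangle^{s_k}$ down by one (producing a gain of $\langle y_{k^{\ast}}\rangle^{-1}\sim|y|^{-1}$ after cancellation with the $\langle y_1\rangle^{-(s_1+\dots+s_l)}$ factor in Case 1), or differentiates a $\langle y_j\rangle^{-M}$ factor (which already gives $\langle y_j\rangle^{-1}$ of gain). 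Iterating up to order $[nl/2]+1$ yields the required $(1+|y|)^{-|\alpha|}$ decay, and Mikhlin's theorem then gives the $L^r((\rn)^l)$ multiplier bound for $1<r<\infty$.

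The main obstacle is the bookkeeping for the derivative estimates in the first case, where one must verify that the delicate cancellation between the numerator's $\prod_k\langle z_k\rangle^{s_k}$ and the denominator's $\langle y_1\rangle^{s_1+\dots+s_l}$ is preserved at every order of differentiation; the small $n+2$ margin in the hypothesis $M>s_1+\dots+s_l+n+2$ is tailored to absorb the polynomial contributions arising from differentiating the $\langle y_j\rangle^{-M}$ factors up to the Mikhlin threshold (in particular, the $+n$ is needed to overcome the $n$-dimensional directional derivatives, while the $+2$ gives the buffer for the Mikhlin index $[nl/2]+1$ relative to the polynomial loss).
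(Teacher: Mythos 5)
The proposal does not work: the Mikhlin condition genuinely fails for $\mathcal{N}_{(M)}$, so the classical Mikhlin multiplier theorem on $\mathbb{R}^{nl}$ cannot be applied. The problem is that Mikhlin requires decay measured against the full length $|y|=|(y_1,\dots,y_l)|$, and $\mathcal{N}_{(M)}$ has a direction ($y_1$) in which it does not decay at all, while its derivatives in the other directions only gain $\langle y_j\rangle^{-1}$, not $|y|^{-1}$. Concretely, take $l\ge 2$, $y_1=Re_1$ with $R\to\infty$, $y_2=e_1$ fixed, and $y_3=\dots=y_l=0$. Then $|y|\approx R$, and all of $z_1=\tfrac1l\sum y_k$, $z_j=z_1-y_j$ are of size $\approx R/l$, so the numerator $\prod_k\langle z_k\rangle^{s_k}\approx (R/l)^{s_1+\dots+s_l}$ cancels against $\langle y_1\rangle^{s_1+\dots+s_l}\approx R^{s_1+\dots+s_l}$, leaving $\mathcal N_{(M)}\approx c\,\langle e_1\rangle^{-M}$. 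Applying $\partial_{y_2}$, the Leibniz term that hits $\langle y_2\rangle^{-M}$ contributes a nonzero constant (independent of $R$), while all other terms are $O(R^{-1})$. Hence $|\partial_{y_2}\mathcal N_{(M)}|\gtrsim 1$ as $|y|\to\infty$, so $|\partial^\alpha\mathcal N_{(M)}|\lesssim (1+|y|)^{-|\alpha|}$ is false already at $|\alpha|=1$. Your statement ``differentiates a $\langle y_j\rangle^{-M}$ factor (which already gives $\langle y_j\rangle^{-1}$ of gain)'' is exactly where the argument collapses: $\langle y_j\rangle^{-1}$ is not $\lesssim |y|^{-1}$ when $|y_j|\ll |y|$.

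The paper instead invokes the Marcinkiewicz (Lizorkin--Marcinkiewicz) multiplier theorem for the product structure $\rn\times\cdots\times\rn$, which only requires the coordinate-wise estimates
\[
\big|\partial_1^{\alpha_{(1)}}\cdots\partial_l^{\alpha_{(l)}}\mathcal N_{(M)}(y_1,\dots,y_l)\big|\lesssim \prod_{j=1}^l|y_j|^{-|\alpha_{(j)}|},\qquad |\alpha_{(j)}|\le n/2+1,
\]
and these \emph{do} hold, because a $y_j$-derivative need only produce a gain of $|y_j|^{-1}$, not $|y|^{-1}$. Your two-case decomposition (largest variable is $y_1$ versus largest is some $y_{j_0}$, $j_0\ge 2$) and the cancellation of $\prod_k\langle z_k\rangle^{s_k}$ against $\langle y_1\rangle^{s_1+\dots+s_l}$ are essentially the same devices the paper uses to verify the product-type estimate; the fix is simply to target the Marcinkiewicz condition, not the Mikhlin one. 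As a side note, the margin $n+2$ in $M>s_1+\dots+s_l+n+2$ is not tied to the Mikhlin index $[nl/2]+1$; the paper uses it to ensure $M-N-|\alpha_{(1)}|\ge n/2+1\ge |\alpha_{(j)}|$ in the case where one $\langle y_j\rangle^{-(M-N)}$ factor must absorb decay for two multi-indices simultaneously.
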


 By choosing $M>s_1+\dots+s_l+n+2$ and using Lemma \ref{nmmultiplier} and \ref{katoponce}, we obtain
\begin{align*}
(\ref{mainterm})&\lesssim \big\Vert (K^{(l)}_{(s_1+\dots+s_l)})^{\vee}\otimes (\varphi^{(M)})^{\vee}\otimes \cdots\otimes(\varphi^{(M)})^{\vee}\big\Vert_{L^r((\rn)^l)}\\
&\lesssim \big\Vert (I-\Delta)^{(s_1+\dots+s_l)/2}(K^{(l)})^{\vee}\big\Vert_{L^r(\rn)}= \big\Vert (I-\Delta)^{(s_1+\dots+s_l)/2}\big( \mathcal{H}^{\vee}\varphi^{\vee}\big)\big\Vert_{L^r(\rn)}\\
&\lesssim \big\Vert (I-\Delta)^{(s_1+\dots+s_l)/2}\mathcal{H}_{(s_1+\dots+s_l+n/r',\tau)}^{\vee}\big\Vert_{L^r(\rn)}=\big\Vert\wh{ \mathcal{H}_{(n/r',\tau)}}\big\Vert_{L^r(\rn)}
\end{align*}
and this is finite because of (\ref{hproperty4}) with $\tau>2/r$, which concludes that
\begin{equation*}
\mathcal{L}_{\sss}^{r,\Psi^{(m)}}[\sigma]<\infty.
\end{equation*}

To achieve 
\begin{equation}\label{achieve}
\Vert T_{\sigma}\Vert_{H^{p_1}\times \dots\times H^{p_m}\to L^p}=\infty,
\end{equation}
let
\begin{equation*}
f_1(x)=\dots=f_l(x):=2^n\wt{\varphi}(2x)e^{2\pi i\langle x,\mu_1\rangle},
\end{equation*}
\begin{equation*}
f_j(x):=\mathcal{H}_{(n/p_j,\tau_j)}\ast \varphi(x)e^{2\pi i\langle x,\mu_1\rangle}, \quad l+1\leq j\leq m.
\end{equation*}
Clearly, $\Vert f_j\Vert_{H^{p_j}(\rn)}\lesssim 1$ for $1\leq j\leq l$ and
\begin{equation*}
\Vert f_j\Vert_{H^{p_j}(\rn)}\approx \Vert f_j\Vert_{L^{p_j}(\rn)}\lesssim \big\Vert \mathcal{H}_{(n/p_j,\tau_j)}\big\Vert_{L^{p_j}(\rn)}\lesssim 1, \quad l+1\leq j\leq m
\end{equation*} due to (\ref{hproperty2}) with $\tau_j>2/p_j$, where the pointwise estimate $\mathcal{H}_{(n/p_j,\tau_j)}\ast \varphi(x)\lesssim \mathcal{H}_{(n/p_j,\tau_j)}(x)$ is applied.
On the other hand, using (\ref{supportm}) and the facts that $\varphi\ast \wt{\varphi}=\varphi$ and
\begin{equation*}
\wh{f_j}(\xi)=1 \qquad \text{ for }~~ |\xi-\mu_1|\leq \frac{1}{100m} ~ \text{ and }~  ~ 1\leq j\leq l,
\end{equation*}
we see that
\begin{equation*}
\sigma(\xi_1,\dots,\xi_m)\wh{f_1}(\xi_1)\cdots\wh{f_m}(\xi_m)=M^{(l)}(\xi_1,\dots,\xi_l)\wh{f_{l+1}}(\xi_{l+1})\cdots\wh{f_m}(\xi_{m}),
\end{equation*} which implies that
\begin{align*}
\big| T_{\sigma}\fff (x)\big|&= \big| \big( M^{(l)}\big)^{\vee}(x,\dots,x)\big| \big|f_{l+1}(x)\big|\cdots \big|f_{m}(x)\big|\\
&=l \big| K^{(l)}(lx)\big|\big| \varphi(0)\big|^{l-1} \prod_{j=l+1}^{m}\big|\mathcal{H}_{(n/p_{j},\tau_{j})}\ast \varphi(x)\big|
\end{align*} where we applied (\ref{mlexpression}) and the fact that $K^{(l)}$ is a radial function.
 Now, since 
 \begin{equation*}
 \mathcal{H}_{(s,\gamma)}\ast \varphi(x)\gtrsim \mathcal{H}_{(s,\gamma)}(x) \quad \text{ for any }~ s,\gamma>0,
 \end{equation*} which follows from the fact that $\varphi, \mathcal{H}_{(s,\gamma)}\geq 0$ and (\ref{hproperty1}),
we obtain that
\begin{align*}
\big\Vert T_{\sigma}\fff(x)\big\Vert_{L^p(\rn)}&\gtrsim \Big\Vert \mathcal{H}(l\cdot)\prod_{j=l+1}^{m}{\mathcal{H}_{(n/p_j,\tau_j)}}\Big\Vert_{L^p(\rn)}\\
&\approx\Big\Vert \mathcal{H}_{(s_1+\dots+s_l+n/r',\tau)}\prod_{j=l+1}^{m}{\mathcal{H}_{(n/p_j,\tau_j)}}\Big\Vert_{L^p(\rn)}\\
&= \big\Vert \mathcal{H}_{(s_1+\dots+s_l+n/p_{l+1}+\dots+n/p_m+n/r',\tau+\tau_{l+1}+\dots+\tau_m)} \big\Vert_{L^p(\rn)}.   
\end{align*}
Since $s_1+\dots+s_l+n/p_{l+1}+\dots+n/p_m+n/r'\leq n/p$ due to (\ref{equivcondition}), the last expression is greater than
\begin{equation*}
\big\Vert \mathcal{H}_{(n/p,\tau+\tau_{l+1}+\dots+\tau_m)}\big\Vert_{L^p(\rn)}=\infty
\end{equation*}
because of (\ref{hproperty2}) with $\tau+\tau_{l+1}+\dots+\tau_{m}<p/2$, which follows from (\ref{taucondition}).
This completes the proof of (\ref{achieve}).

When $l=m$, exactly the   same argument is applicable with $2/r<\tau<2m/r+2/r'<{2}/{p}$, $\sigma:=M^{(m)}$, and $f_j(x):=2^d\wt{\varphi}(2x)e^{2\pi i\langle x,\mu_1\rangle}$ for $1\leq j\leq m$. Since the proof is just a repetition, we omit the details.

\section{Proof of Proposition \ref{propo1}}

Let ${\Theta^{(m)}}$ be a Schwartz function on $(\rn)^m$ such that $0\leq \wh{\Theta^{(m)}}\leq 1$, $\wh{\Theta^{(m)}}(\xxxi)=1$ for $2^{-2}m^{-1/2}\leq |\xxxi|\leq 2^2m^{1/2}$, and 
$\textup{Supp}(\wh{\Theta^{(m)}})\subset \big\{ \xxxi \in(\rn)^m: 2^{-3}m^{-1/2}\leq |\xxxi|\leq 2^{3}m^{1/2}\big\}$.
Then using the Littlewood-Paley partition of unity $\big\{2^{jmn}\Psi^{(m)}(2^j\cdot)\big\}_{j\in\mathbb{Z}}$, the  triangle inequality, and Lemma \ref{katoponce}, we first see that
$\mathcal{L}_{\sss}^{r,{\Theta^{(m)}}}[\sigma]\lesssim  \mathcal{L}_{\sss}^{r,\Psi^{(m)}}[\sigma].$
Thus it suffices to prove the estimate
\begin{equation*}
\big\Vert T_{\sigma}\fff \big\Vert_{L^p(\rn)}\lesssim \mathcal{L}_{\sss}^{r,{\Theta^{(m)}}}[\sigma] \prod_{i=1}^{m}\Vert f_i\Vert_{L^{p_i}(\rn)}
\end{equation*}
as $L^{p_i}=H^{p_i}$ for $1<p_i\leq \infty$.
We adopt the notation $\mathcal{L}_{\sss}^r[\sigma]:=  \mathcal{L}_{\sss}^{r,{\Theta^{(m)}}}[\sigma] $ for simplicity.

It follows from (\ref{conditionsss}) that there exists $1<t<r$ such that
\begin{equation*}
s_1,\dots,s_m>n/t>n/r.
\end{equation*}
Since $\sigma(2^j\vec{\; \cdot\;} )\wh{\Theta^{(m)}}$ has a compact support in an annulus of a constant size, independent of $j$, we have
\begin{equation}\label{compactembedding1}
\mathcal{L}_{\sss}^{t}[\sigma]\lesssim \mathcal{L}_{\sss}^{r}[\sigma]
\end{equation} as $1<t<r$. See \cite[Section 5]{Gr2} for more details.

Using the Littlewood-Paley partition of unity $\{\psi_j\}_{j\in\mathbb{Z}}$,  we decompose $\sigma(\xxxi)$ as
\begin{align}\label{sigmadecompo}
\sigma(\xxxi)&=\sum_{j_1,\dots,j_m \in \mathbb{Z}}{\sigma(\xxxi)\widehat{\psi_{j_1}}(\xi_1)\cdots \widehat{\psi_{j_m}}(\xi_m)}\\
  &=\Big(\sum_{j_1\in\mathbb{Z}}\sum_{j_2,\dots,j_m\leq j_1}{\cdots}\Big)+\Big(\sum_{j_2\in\mathbb{Z}}\sum_{\substack{j_1<j_2\\j_3,\dots,j_m\leq j_2}}{\cdots}\Big)+\dots+ \Big(\sum_{j_m\in\mathbb{Z}}\sum_{j_1,\dots,j_{m-1}<j_m}{\cdots}\Big)\nonumber\\
  &=:\sigma^{(1)}(\xxxi)+\sigma^{(2)}(\xxxi)+\dots+\sigma^{(m)}(\xxxi).\label{sigmakappa}
\end{align}
We are only concerned with $\sigma^{(1)}$ appealing to symmetry  for the other cases. 
Thus, our goal is to show that 
\begin{equation*}
\big\Vert T_{\sigma^{(1)}}\fff \big\Vert_{L^p(\rn)}\lesssim \mathcal{L}_{\sss}^{r}[\sigma] \prod_{i=1}^{m}\Vert f_i\Vert_{L^{p_i}(\rn)}.
\end{equation*}

We write \begin{align*}
\sigma^{(1)}(\xxxi)&=\sum_{j\in\mathbb{Z}}\sum_{j_2,\dots,j_m\leq j}{\sigma(\xxxi)\widehat{\psi_j}(\xi_1)\widehat{\psi_{j_2}}(\xi_2)\cdots\widehat{\psi_{j_m}}(\xi_m)}\\
 &=\sum_{j\in\mathbb{Z}}{\sigma(\xxxi)\wh{\Theta^{(m)}}(\xxxi/2^j) \widehat{\psi_j}(\xi_1)\sum_{j_2,\dots,j_m\leq j}\widehat{\psi_{j_2}}(\xi_2)\cdots\widehat{\psi_{j_m}}(\xi_m)}, 
\end{align*} 
since $\wh{\Theta^{(m)}}(\xxxi/2^j)=1$ for $ 2^{j-1}\leq |\xi_1|\leq 2^{j+1}$ and $|\xi_i|\leq 2^{j+1}$ for $2\leq i\leq m$.
Let 
\begin{equation*}
\sigma_j(\xxxi):=\sigma(\xxxi)\wh{\Theta^{(m)}}(\xxxi/2^j). 
\end{equation*}
Then we note that
\begin{equation}\label{mkbound}
\big\Vert \sigma_j(2^j\cdot )\big\Vert_{L_{\sss}^t((\rn)^m)}\leq \mathcal{L}_{\sss}^t[\sigma]
\end{equation}  
and 
\begin{equation*}
\sigma^{(1)}(\xxxi)=\sum_{j\in\mathbb{Z}}{\sigma_j(\xxxi) \widehat{\psi_j}(\xi_1)\sum_{j_2,\dots,j_m\leq j}\widehat{\psi_{j_2}}(\xi_2)\cdots\widehat{\psi_{j_m}}(\xi_m)}.
\end{equation*}
We further decompose $\sigma^{(1)}$ as
\begin{equation*}
\sigma^{(1)}(\xxxi)=\sigma^{(1)}_{low}(\xxxi)+\sigma^{(1)}_{high}(\xxxi)
\end{equation*} where
\begin{equation*}
\sigma^{(1)}_{low}(\xxxi):=\sum_{j\in\mathbb{Z}}\sigma_j(\xxxi)\widehat{\psi_j}(\xi_1)\sum_{\substack{j_2,\dots,j_m\leq j\\ \max_{2\leq i\leq m}{(j_i)}\geq j-3-\lfloor \log_2{m}\rfloor}}{\widehat{\psi_{j_2}}(\xi_2)\cdots\widehat{\psi_{j_m}}(\xi_m)},
\end{equation*}
\begin{equation}\label{highdef}
\sigma^{(1)}_{high}(\xxxi):=\sum_{j\in\mathbb{Z}}\sigma_j(\xxxi)\widehat{\psi_j}(\xi_1)\sum_{j_2,\dots,j_m\leq j-4-\lfloor \log_2m\rfloor}{\widehat{\psi_{j_2}}(\xi_2)\cdots\widehat{\psi_{j_m}}(\xi_m)}.
\end{equation}
We refer to $T_{\sigma_{low}^{(1)}}$ as the low frequency part and $T_{\sigma_{high}^{(1)}}$ as the high frequency part of $T_{\sigma^{(1)}}$ $\big($due to the Fourier supports of the summands in $T_{\sigma_{low}^{(1)}}\fff$ and $T_{\sigma_{high}^{(1)}}\fff$$\big)$.

\subsection{Low frequency part}\label{lowfrequencypart}
To establish the estimate for the operator $T_{\sigma^{(1)}_{low}}$,
we first observe that
\begin{equation*}
T_{\sigma^{(1)}_{low}}\fff(x)=\sum_{j\in\mathbb{Z}}\sum_{\substack{j_2,\dots,j_m\leq j\\ \max_{2\leq i\leq m}{(j_i)}\geq j-3-\lfloor \log_2{m}\rfloor}} {T_{\sigma_j}\big( (f_1)_j,(f_2)_{j_2},\dots,(f_m)_{j_m}\big)(x)}
\end{equation*}
where $(g)_l:=\psi_l\ast g$ for $g\in \SS(\rn)$ and $l\in\zz$.
 It suffices to treat only the sum over $j_3,\dots,j_m\leq j_2$ and $j-3-\lfloor \log_2{m}\rfloor \leq j_2\leq j$, and we will actually prove that
\begin{equation}\label{goal1}
\Big\Vert \sum_{j\in\mathbb{Z}}\sum_{\substack{j-3-\lfloor \log_2{m}\rfloor \leq j_2\leq j\\ j_3,\dots,j_m\leq j_2  }}T_{\sigma_j}\big((f_1)_j,(f_2)_{j_2},\dots,(f_m)_{j_m} \big)\Big\Vert_{L^p(\rn)}\lesssim \mathcal{L}_{\sss}^r[\sigma]\prod_{i=1}^{m}\Vert f_i\Vert_{L^{p_i}(\rn)}. 
\end{equation}
Let $\phi$ be the Schwartz function defined in (\ref{phidef}) and  $\phi_j:=2^{jn}\phi(2^j\cdot)$ for $j\in\mathbb{Z}$. Let $(g)^l:=\phi_l\ast g$ for $g\in\SS(\rn)$. 
Then we can write
\begin{equation*}
\sum_{j_3,\dots,j_n\leq j_2}T_{\sigma_j}\big((f_1)_j,(f_2)_{j_2},\dots,(f_m)_{j_m} \big)(x)=T_{\sigma_j}\big((f_1)_j,(f_2)_{j_2},(f_3)^{j_2},\dots,(f_m)^{j_2} \big)(x).
\end{equation*}
Since the sum over $j_2$ in the left-hand side of (\ref{goal1}) is a finite sum over $j_2$ near $j$, we may consider only the case $j_2=j$ and thus our claim is 
\begin{equation}\label{mainmainclaim}
\Big\Vert \sum_{j\in\mathbb{Z}}{T_{\sigma_j}\big((f_1)_j,(f_2)_j,(f_3)^j,\dots,(f_m)^j \big)}\Big\Vert_{L^p(\rn)}\lesssim \mathcal{L}_{\sss}^r[\sigma]\prod_{i=1}^{m}{\Vert f_i\Vert_{L^{p_i}(\rn)}}.
\end{equation}

Using Lemma \ref{keyestilemma}, (\ref{mkbound}), and (\ref{compactembedding1}), we obtain the pointwise estimate 
\begin{align}\label{keyestimate}
&\big|T_{\sigma_j}\big((f_1)_j,(f_2)_j,(f_3)^j,\dots,(f_m)^j \big)(x) \big|\lesssim \mathcal{L}_{\sss}^r[\sigma]\Big( \prod_{i=1}^{2}\mathfrak{M}_{s_i,2^j}^{t}(f_i)_j(x)\Big)\Big(\prod_{i=3}^{m}\mathfrak{M}_{s_i,2^j}^{t}(f_i)^j(x)\Big).
\end{align}
Since $s_1,s_2>n/t=n/\min{(p_1,2,t)}=n/\min{(p_2,2,t)}$, it follows from Lemma \ref{equivalence} that for any dyadic cube $Q\in\mathcal{D}$ there exists    measurable  proper subsets $S_{Q}^1$ and $S_{Q}^2$ of $Q$ such that $|S_Q^1|,|S_Q^2|>\frac{3}{4}|Q|$ and 
\begin{equation}\label{equivX}
\Vert f_i\Vert_{X^{p_i}}\approx \Big\Vert \Big\{ \sum_{Q\in\mathcal{D}_j}{\Big( \inf_{y\in Q}{\mathfrak{M}_{s_i,2^j}^t(f_i)_j(y) }\Big)\chi_{S_Q^i}}\Big\}_{j\in\mathbb{Z}}\Big\Vert_{L^{p_i}(\ell^2)}, \quad i=1,2.
\end{equation}
Note that $|S_Q^{1}\cap S_Q^{2}|\geq \frac{1}{2}|Q|$ and thus, for any $\tau>0$
\begin{equation}\label{intersectionchi}
\chi_Q(x)\lesssim_{\tau}\mathcal{M}_{\tau}\big(\chi_{S_Q^{(1)}\cap S_Q^{(2)}}\big)(x)\chi_Q(x),
\end{equation} using the argument in (\ref{inversechi}).
Clearly, the constant in the inequality is independent of $Q$.
Now we choose $\tau<\min{(1,p)}$, and apply (\ref{keyestimate}), (\ref{intersectionchi}), and (\ref{maximal1}), as in the proof of Lemma \ref{bmoboundlemma}, to obtain
\begin{align*}
&\Big\Vert \sum_{j\in\mathbb{Z}}{T_{\sigma_j}\big((f_1)_j,(f_2)_j,(f_3)^j,\dots,(f_m)^j \big)}\Big\Vert_{L^p(\rn)}\\
&\lesssim \mathcal{L}_{\sss}^r[\sigma]\Big\Vert \sum_{j\in\mathbb{Z}}{\sum_{Q\in \mathcal{D}_j}{\Big(\prod_{i=1}^{2}{\mathfrak{M}_{s_i,2^j}^t(f_i)_j}\Big)\Big(\prod_{i=3}^{m}{\mathfrak{M}_{s_i,2^j}^t(f_i)^j}\Big)  \chi_Q  }}\Big\Vert_{L^{p}(\rn)}\\
&\lesssim \mathcal{L}_{\sss}^r[\sigma]  \Big\Vert \sum_{j\in\mathbb{Z}}{\sum_{Q\in \mathcal{D}_j}{\Big(\prod_{i=1}^{2}\Omega_{s_i,t}^{Q,1}f_i\Big)\Big( \prod_{i=3}^{m} \Omega_{s_i,t}^{Q,2}f_i\Big)  \mathcal{M}_{\tau}{(\chi_{S_Q^{1}\cap S_Q^{2}})  }}}\Big\Vert_{L^{p}(\rn)}\\
&\lesssim \mathcal{L}_{\sss}^r[\sigma]  \Big\Vert \sum_{j\in\mathbb{Z}}{\sum_{Q\in \mathcal{D}_j}{\Big(\prod_{i=1}^{2}\Omega_{s_i,t}^{Q,1}f_i\Big)\Big( \prod_{i=3}^{m} \Omega_{s_i,t}^{Q,2}f_i\Big)   {\chi_{S_Q^{1}}  }{\chi_{S_Q^{2}}  }}}\Big\Vert_{L^{p}(\rn)} ,
\end{align*}
where $\Omega_{s,t}^{Q,1}g:=\inf_{y\in Q}{\mathfrak{M}_{s,2^j}^t(g)_j(y)}$ and $\Omega_{s,t}^{Q,2}g:=\inf_{y\in Q}{\mathfrak{M}_{s,2^j}^t(g)^j(y)}$ for all $Q\in\mathcal{D}_j$.

Using H\"older's inequality and the fact that $\Omega_{s_i,t}^{Q,2}f_i\leq \mathfrak{M}_{s_i,t}^t(f_i)^j(x)$ for all $x\in Q\in\mathcal{D}_j$,  the $L^p$ norm in the last displayed expression is bounded by a constant times
\begin{align*}
& \Big( \prod_{i=1}^{2}\Big\Vert \Big\{\sum_{Q\in\mathcal{D}_j}{   \Omega_{s_i,t}^{Q,1}f_i  \cdot \chi_{S_Q^{i}}    }\Big\}_{j\in\mathbb{Z}}\Big\Vert_{L^{p_i}(\ell^2)}\Big) \Big(\prod_{i=3}^{m}{\big\Vert \big\{ \mathfrak{M}_{s_i,2^j}^{t}(f_i)^j\big\}_{j\in\mathbb{Z}}\big\Vert_{L^{p_i}(\ell^{\infty})}}\Big)\\
&\lesssim \Vert f_1\Vert_{X^{p_1}(\rn)}\Vert f_2\Vert_{X^{p_2}(\rn)}\prod_{i=3}^{m}{\Vert f_i\Vert_{H^{p_i}(\rn)}} , 
\end{align*}
where the inequality follows from Lemmas \ref{maximal2}, \ref{equivalence}, and the definition of Hardy space $H^{p_i}$. 
Since $H^{p_i}=L^{p_i}\subset X^{p_i}$ when $1<p_i\leq \infty$, we finally obtain (\ref{mainmainclaim}).

\subsection{High frequency part}
The proof for the high frequency part relies on the fact that if $\widehat{g_j}$ is supported on $\{\xi\in \rn : C^{-1} 2^{j}\leq |\xi|\leq C2^{j}\}$ for some $C>1$, then
\begin{equation}\label{marshall}
\Big\Vert \Big\{ \psi_j\ast \Big(\sum_{l=j-h}^{j+h}{g_l}\Big)\Big\}_{j\in\mathbb{Z}}\Big\Vert_{L^p(\ell^q)}\lesssim_{h,C} \big\Vert \big\{ g_j\big\}_{j\in\mathbb{Z}}\big\Vert_{L^p(\ell^q)}, \qquad 0<p<\infty
\end{equation} for $h\in \mathbb{N}$. The proof of (\ref{marshall}) is elementary and standard, so it is omitted here. Just use the estimate $|\psi_j\ast g_l(x)|\lesssim_{\sigma} \mathfrak{M}_{\sigma,2^l}g_l(x)$ for $0<\sigma<p,q$ and  $j-h\leq l\leq j+h$, and apply Lemma \ref{maximal2}.

We note that
\begin{equation*}
T_{\sigma_{high}^{(1)}}\fff(x)=\sum_{j\in\mathbb{Z}}{T_{\sigma_j}\big((f_1)_j,(f_2)^{j,m},\dots, (f_m)^{j,m} \big)(x)}
\end{equation*}
where $\phi_l$ is defined as in the previous subsection and  $(f_i)^{j,m}:=\phi_{j-4-\lfloor \log_2{m}\rfloor}\ast f_i$ for $2\leq i\leq m$.
Observe that the Fourier transform of $T_{\sigma_j}\big((f_1)_j,(f_2)^{j,m},\dots,(f_m)^{j,m} \big)$ is supported in $\big\{\xi\in\rn : 2^{j-3}\leq |\xi|\leq 2^{j+3} \big\}$ and thus (\ref{marshall}) yields that
\begin{equation}\label{squareexpression}
\big\Vert T_{\sigma_{high}^{(1)}}\fff \big\Vert_{L^p(\rn)} \lesssim  \big\Vert \big\{  T_{\sigma_j}\big((f_1)_j,(f_2)^{j,m},\dots,(f_m)^{j,m} \big)\big\}_{j\in\mathbb{Z}}\big\Vert_{L^p(\ell^{2})}.
\end{equation}
Moreover, it follows from Lemma \ref{keyestilemma}, (\ref{mkbound}), and (\ref{compactembedding1}), that
\begin{equation*}
\big| T_{\sigma_j}\big((f_1)_j,(f_2)^{j,m},\dots,(f_m)^{j,m} \big)(x)\big| \lesssim \mathcal{L}_{\sss}^r[\sigma]\mathfrak{M}_{s_1,2^j}^{t}(f_1)_j(x)\prod_{i=2}^{m}\mathfrak{M}_{s_i,2^j}^{t}(f_i)^{j,m}(x).
\end{equation*}

For $Q\in\mathcal{D}$ let $S_Q^1$ be a  measurable proper subset of $Q$ such that $|S_Q^1|>\frac{3}{4}|Q|$ and (\ref{equivX}) holds for $i=1$ as before, and
 by an analogous argument we  obtain
\begin{align*}
&\big\Vert T_{\sigma_{high}^{(1)}}\fff \big\Vert_{L^p(\rn)}\lesssim \mathcal{L}_{\sss}^r[\sigma]\Big\Vert  \Big\{ \mathfrak{M}_{s_1,2^j}^{t}(f_1)_j\prod_{i=2}^{m}\mathfrak{M}_{s_i,2^j}^{t}(f_i)^{j,m}\Big\}_{j\in\mathbb{Z}} \Big\Vert_{L^p(\ell^2)}\\
&\qquad \lesssim \mathcal{L}_{\sss}^r[\sigma]\Big\Vert  \Big\{ \sum_{Q\in\mathcal{D}_j}{ \Big( \inf_{y\in Q}{\mathfrak{M}_{s_1,2^j}^{t}(f_1)_j(y)}\Big) \Big[\prod_{i=2}^{m}\Big(\inf_{y\in Q}{\mathfrak{M}_{s_i,2^j}^{t}(f_i)^{j,m}(y)\Big) }\Big]\chi_{S_Q^1}} \Big\}_{j\in\mathbb{Z}} \Big\Vert_{L^p(\ell^2)}\\
&\qquad \lesssim \mathcal{L}_{\sss}^r[\sigma]\Big\Vert \Big\{ \sum_{Q\in\mathcal{D}_j}{\Big( \inf_{y\in Q}{\mathfrak{M}_{s_1,2^j}^t(f_1)(y)}\Big)\chi_{S_Q^1}}\Big\}_{j\in\mathbb{Z}}\Big\Vert_{L^{p_1}(\ell^2)} \prod_{i=2}^{m}{\Big\Vert \sup_{j\in\zz}\big|\phi_{j}\ast f_i\big|\Big\Vert_{L^{p_i}(\rn)}}\\
&\qquad \lesssim \mathcal{L}_{\sss}^r[\sigma]\Vert f_1\Vert_{X^{p_1}(\rn)}\prod_{i=2}^{m}{\Vert f_i\Vert_{H^{p_i}(\rn)}}\lesssim \mathcal{L}_{\sss}^r[\sigma]\prod_{i=1}^{m}{\Vert f_i\Vert_{L^{p_i}(\rn)}}.
\end{align*}

\hfill

\section{Proof of Proposition \ref{propo2}}
We  consider only the case $l<m$ as a similar and simpler procedure is applicable to  the case $l=m$.
Let $1\leq l<m$, $0<p_1,\dots,p_l\leq 1$, $p_{l+1}=\dots=p_m=\infty$, and $1/p=1/p_1+\dots+1/p_l$.
 For simplicity we assume that $\Vert f_{l+1}\Vert_{L^{\infty}(\rn)}=\cdots=\Vert f_{m}\Vert_{L^{\infty}(\rn)}=1$. Then the aim is to show that
  \begin{equation}\label{equ:TSigSmP}
    \big\Vert{T_{\sigma}\fff}\big\Vert_{L^p(\rn)} \lesssim \mathcal{L}_{\sss}^{r,\Psi^{(m)}}[\sigma] \prod_{i=1}^{l} \Vert{f_i}\Vert_{H^{p_i}(\rn)}.
  \end{equation}
  Let $f_i\in H^{p_i}(\rn)$ for $1\leq i\leq l$.
  Using  atomic representations, we write
  $$
  f_i = \sum_{k_i=1}^{\infty}\lambda_{i,k_i}a_{i,k_i},\quad 1\le i\le l,
  $$
  where $a_{i,k_i}$ are $L^\infty$-atoms for $H^{p_i}$ satisfying
  \begin{equation}\label{atomproperty1}
  \textup{Supp}(a_{i,k_i})\subset Q_{i,k_i},\quad \Vert{a_{i,k_i}}\Vert_{L^{\infty}(\rn)}\le {|{Q_{i,k_i}}|}^{-1/p_i},\quad \int_{Q_{i,k_i}}x^{\alpha}a_{i,k_i}(x)dx=0
  \end{equation}
  for ${\left\vert{\alpha}\right\vert}<N_i$ with $N_i$ large enough,
and \begin{equation}\label{atomproperty2}
\Big(\sum_{k_i=1}^{\infty}{\left\vert{\lambda_{i,k_i}}\right\vert}^{p_i}\Big)^{1/p_i} \lesssim\Vert{f_i}\Vert_{H^{p_i}(\rn)}.
\end{equation}
By the regularization in Lemma \ref{regularization}, we can assume that $\sigma$ is a Schwartz function whose Fourier transform has a compact support in $(\rn)^m$.
 Then Lemma \ref{switchsum} yields that
  \begin{equation}\label{exchangesumoperator}
  T_{\sigma}\fff(x) = \sum_{k_1=1}^{\infty}\cdots\sum_{k_l=1}^{\infty}\lambda_{1,k_1}\ldots \lambda_{l,k_l} T_{\sigma}\big(a_{1,k_1},\ldots,a_{l,k_l},f_{l+1},\ldots,f_m\big)(x)
  \end{equation}
  for a.e. $x\in\mathbb R^n.$

  For a cube $Q $ we denote by $Q^*$ its concentric dilate by a factor $10\sqrt{n}.$
   Now we can split $T_{\sigma}\fff$ into two parts and estimate
  \begin{equation*}
  \big|T_{\sigma}\fff(x)\big|\leq G_1(x) +G_2(x),
  \end{equation*}
  where
  \begin{align*}
  G_1&:= \sum_{k_1=1}^{\infty}\cdots\sum_{k_l=1}^{\infty}{|{\lambda_{1,k_1}}|}\cdots{|{\lambda_{l,k_l}}|}{\big|{T_{\sigma}(a_{1,k_1},\ldots,a_{l,k_l},f_{l+1},\ldots,f_m)}\big|}\chi_{Q^*_{1,k_1}\cap\cdots\cap Q^*_{l,k_l}},\\
   G_2&:= \sum_{k_1=1}^{\infty}\cdots\sum_{k_l=1}^{\infty}{|{\lambda_{1,k_1}}|}\cdots {|{\lambda_{l,k_l}}|} {\big|{T_{\sigma}(a_{1,k_1},\ldots,a_{l,k_l},f_{l+1},\ldots,f_m)}
  \big|}\chi_{(Q^*_{1,k_1}\cap\cdots\cap Q^*_{l,k_l})^c}.
  \end{align*}

The first part $G_1$ can be dealt with via Lemma \ref{grkalemma}.
Suppose that $Q^*_{1,k_1}\cap\cdots\cap Q^*_{l,k_l} \neq \emptyset$, as if this intersection is empty we are done.
From these cubes, choose a cube that has the minimum side length, and denote it by $R_{k_1,\ldots,k_l}$.
Then
\begin{equation*}
 Q^*_{1,k_1}\cap\cdots\cap Q^*_{l,k_l} \subset R_{k_1,\ldots,k_l}\subset Q^{\ast\ast}_{1,k_1}\cap\cdots\cap Q^{\ast\ast}_{l,k_l}, 
 \end{equation*}
where $Q^{**}_{i,k_i}:=(Q_{i,k_i}^*)^*$ denotes a dilation of $Q^{*}_{i,k_i}$ .
We shall prove
\begin{equation}\label{avgest}
 \frac{1}{|{R_{k_1,\ldots,k_l}}|}\int_{R_{k_1,\ldots,k_l}}{\big|{T_{\sigma}\big(a_{1,k_1},\ldots,a_{l,k_l},f_{l+1},\ldots,f_m\big)(y)}\big|} dy  \lesssim \mathcal{L}_{\sss}^{r,\Psi^{(m)}}[\sigma]\prod_{i=1}^l{\left\vert{Q_{i,k_i}}\right\vert}^{-1/{p_i}}. 
\end{equation}
To verify this, we may assume, without loss of generality, $R_{k_1,\ldots,k_l}=Q_{1,k_1}^{*}$.
Using the Cauchy-Schwarz inequality, the left-hand side of (\ref{avgest}) is majored by
\begin{align*}
\frac{1}{|Q_{1,k_1}^*|^{1/2}}\Big\Vert T_{\sigma}\big(a_{1,k_1},\ldots,a_{l,k_l},f_{l+1},\ldots,f_m\big)\Big\Vert_{L^2(\rn)}
\end{align*}
and this is less than a constant multiple of 
\begin{equation*}
\mathcal{L}_{\sss}^{r,\Psi^{(m)}}[\sigma]\frac{1}{|Q_{1,k_1}^*|^{1/2}}\Vert{a_{1,k_1}}\Vert_{L^2(\rn)}\prod_{i=2}^l\Vert{a_{i,k_i}}\Vert_{L^{\infty}(\rn)}\lesssim  \mathcal{L}_{\sss}^{r,\Psi^{(m)}}[\sigma]\prod_{i=1}^l{\left\vert{Q_{i,k_i}}\right\vert}^{-1/p_i}
\end{equation*} 
in view of Proposition \ref{propo1}.
This proves \eqref{avgest}.

We now apply Lemma \ref{grkalemma}, the estimate \eqref{avgest}, and the H\"older inequality to obtain
\begin{align*}
\Vert G_1\Vert_{L^p(\rn)}&\lesssim \Big\Vert \sum_{k_1=1}^{\infty}\cdots\sum_{k_l=1}^{\infty} \Big(\prod_{i=1}^{l} |{\lambda_{i,k_i}}| \Big)\chi_{R_{k_1,\ldots,k_l}} \frac{1}{|{R_{k_1,\ldots,k_l}}|}\\
   &\qquad\qquad\times\int_{R_{k_1,\ldots,k_l}}\big|{T_{\sigma}  \big(a_{1,k_1},\ldots,a_{l,k_l},f_{l+1},\ldots,f_m\big)(y) }\big| dy \Big\Vert_{L^p(\rn)}\\
   &\lesssim \mathcal{L}_{\sss}^{r,\Psi^{(m)}}[\sigma] \Big\Vert{ \sum_{k_1=1}^{\infty}\cdots\sum_{k_l=1}^{\infty}\Big(\prod_{i=1}^{l} |{\lambda_{i,k_i}}||{Q_{i,k_i}}|^{-1/p_i}\chi_{Q^{\ast\ast}_{i,k_i}}\Big) \Big\Vert_{L^p(\rn)}}\\
   &\leq \mathcal{L}_{\sss}^{r,\Psi^{(m)}}[\sigma] \prod_{i=1}^{l}\Big\Vert{\sum_{k_i=1}^{\infty}|{\lambda_{i,k_i}}| |{Q_{i,k_i}}|^{-1/p_i}\chi_{Q^{**}_{i,k_i}}}\Big\Vert_{L^{p_i}(\rn)},
    \end{align*}
and this clearly implies that
\begin{equation}\label{equ:G1Part} 
\Vert G_1\Vert_{L^p(\rn)}\lesssim \mathcal{L}_{\sss}^{r,\Psi^{(m)}}[\sigma]\prod_{i=1}^{l}\Vert{f_i}\Vert_{H^{p_i}(\rn)}
\end{equation}
as
\begin{equation*}
\Big\Vert{\sum_{k_i=1}^{\infty}|{\lambda_{i,k_i}}| |{Q_{i,k_i}}|^{-1/p_i}\chi_{Q^{**}_{i,k_i}}}\Big\Vert_{L^{p_i}(\rn)}\leq \Big( \sum_{k_i=1}^{\infty}|\lambda_{i,k_i}|^{p_i}\Big)^{1/p_i}\lesssim \Vert f_i\Vert_{H^{p_i}(\rn)}.
\end{equation*}

To obtain the estimate for $G_2$, we need the following lemma whose proof will be given in Section \ref{prooflemmas}.
\begin{lemma}\label{keylemma1}
   Let $1\le l<m$ and $0<p_1, \dots, p_l\le 1$. Let $a_i$, $1\leq i\leq l$, be atoms supported in the cube $Q_i$ such that
 \begin{equation}\label{atomconditions}
   \Vert{a_i}\Vert_{L^{\infty}(\rn)}\le {\left\vert{Q_i}\right\vert}^{-1/p_i},\qquad \int_{Q_i}x^{\alpha}a_i(x)dx=0
 \end{equation}
   for all ${\left\vert{\alpha}\right\vert}< N_i$ with $N_i$ sufficiently large. Let $\Vert f_{l+1}\Vert_{L^{\infty}(\rn)}=\cdots=\Vert f_m\Vert_{L^{\infty}(\rn)}=1$.
Suppose that (\ref{conditionss}) holds for all $J\subset \{1,\dots,l\}$ and  ${\sigma}$ satisfies $\mathcal{L}_{\sss}^{r,\Psi^{(m)}}[\sigma]<\infty$. 
  Then for any nonempty subset $J_0$ of $\{1,\dots,l\}$, there exist nonnegative functions $b_1^{J_0},\ldots,b_{l}^{J_0}$ such that
  \begin{equation*}
  \big\Vert{b_i^{J_0}} \big\Vert_{L^{p_i}(\rn)}\lesssim 1 \quad \text{ for }~ 1\le i\le l,
  \end{equation*} and 
    \begin{equation*}
    \big|{T_{\sigma}(a_1,\ldots,a_l,f_{l+1},\ldots,f_m)(x)}\big| \lesssim \mathcal{L}_{\sss}^{r,\Psi^{(m)}}[\sigma]b_1^{J_0}(x)\cdots b_{l}^{J_0}(x)
    \end{equation*}
    for all $x\in \big(\bigcap_{i\notin J_0}Q_i^*\big)\setminus \big(\bigcup_{i\in J_0}Q_i^*\big)$.
  \end{lemma}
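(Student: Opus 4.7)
The strategy is to decompose $\sigma$ in Littlewood-Paley pieces, exploit the vanishing moments of the atoms $a_i$ for $i\in J_0$ via Lemma \ref{smalllemma}, bound each dyadic piece using Lemma \ref{lem:LInfL2} combined with Hausdorff-Young (the same mechanism as in Lemma \ref{keyestilemma}), and finally sum geometrically over the frequency parameter. By Lemma \ref{regularization} we may assume $\sigma$ is Schwartz with $\wh{\sigma}$ compactly supported. Writing $\sigma=\sum_{j\in\zz}\sigma_j$ with $\sigma_j:=\sigma\,\wh{\Theta^{(m)}}(\cdot/2^j)$ as in Section \ref{proofpropo1}, the piece $\sigma_j$ is supported in an annulus of radius $\sim 2^j$ and satisfies $\|\sigma_j(2^j\cdot)\|_{L^r_{\sss}((\rn)^m)}\lesssim \mathcal{L}_{\sss}^{r,\Psi^{(m)}}[\sigma]$ uniformly in $j$. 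Setting $K_j:=\sigma_j^{\vee}$, one has
\begin{equation*}
T_{\sigma_j}(a_1,\dots,a_l,f_{l+1},\dots,f_m)(x)=\int_{(\rn)^m}K_j(x-y_1,\dots,x-y_m)\prod_{i=1}^l a_i(y_i)\prod_{i=l+1}^m f_i(y_i)\,dy_1\cdots dy_m.
\end{equation*}

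For each $i\in J_0$, since $x\notin Q_i^*$, I would apply Lemma \ref{smalllemma} in the $y_i$-variable (with $c_0$ the center of $Q_i$ and $M=M_i$ the vanishing-moment order of $a_i$), replacing $K_j$ by a derivative $\partial_{y_i}^{\alpha_i}K_j$ of order $|\alpha_i|=M_i+1$ at the cost of an extra factor $|y_i-c_{Q_i}|^{M_i+1}$; no cancellation is invoked for $i\notin J_0$. I would then insert the weights $\langle 2^j(x-y_i)\rangle^{s_i}$ in a H\"older inequality in $y_i$, control the resulting $L^{r'}$ kernel norm by pairing Lemma \ref{lem:LInfL2} with Hausdorff-Young (using $1<r\le 2$), and absorb the outcome into $\mathcal{L}_{\sss}^{r,\Psi^{(m)}}[\sigma]$. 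After also using $\|f_i\|_{L^\infty}=1$ and $s_i>n/r$ to control the integrals in $y_i$ for $i>l$, this yields a pointwise bound of the schematic form
\begin{equation*}
|T_{\sigma_j}(\cdots)(x)|\lesssim \mathcal{L}_{\sss}^{r,\Psi^{(m)}}[\sigma]\prod_{i\in J_0}\frac{(2^j\ell(Q_i))^{M_i+1}\,|Q_i|^{1-1/p_i}\,2^{jn}}{(1+2^j|x-c_{Q_i}|)^{s_i}}\prod_{\substack{i\le l\\ i\notin J_0}}\frac{|Q_i|^{1-1/p_i}\,2^{jn}}{(1+2^j|x-c_{Q_i}|)^{s_i}}.
\end{equation*}

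Summing over $j\in\zz$ splits into a low-frequency and a high-frequency regime determined by the scales $\ell(Q_i)$ and $|x-c_{Q_i}|$. With $M_i$ chosen large enough so that $M_i+1>s_i-n/p_i$, the gain $(2^j\ell(Q_i))^{M_i+1}$ controls low frequencies, while the decay $(1+2^j|x-c_{Q_i}|)^{-s_i}$ controls high frequencies. The convergence of the combined sum in the directions $i\in J_0$ is governed precisely by the strict inequality $\sum_{k\in J_0}(s_k/n-1/p_k)>-1/r'$ from \eqref{conditionss}. The outcome is a bound $b_i^{J_0}(x):=|Q_i|^{-1/p_i}\bigl(\ell(Q_i)/(\ell(Q_i)+|x-c_{Q_i}|)\bigr)^{\eta_i}$ for $i\in J_0$ with exponent $\eta_i$ satisfying $\eta_i p_i>n$, so that $\|b_i^{J_0}\|_{L^{p_i}(\rn)}\lesssim 1$; while for $i\notin J_0$ with $i\le l$ one takes $b_i^{J_0}(x):=|Q_i|^{-1/p_i}\chi_{Q_i^{**}}(x)$ (where $Q_i^{**}$ is a fixed dilate of $Q_i^*$), which also has $L^{p_i}$-norm $\lesssim 1$.

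The main obstacle is coordinating the choice of the vanishing-moment orders $M_i$, the derivative distribution in Lemma \ref{lem:LInfL2}, and the decay exponents $\eta_i$ so that the product structure is preserved while the geometric sum over $j$ converges. The decay exponents arising from the kernel bound in the directions $i\notin J_0$ cannot individually be pushed above $s_i$, and it is precisely the interaction between the $|J_0|$ cancellation directions and the $l-|J_0|$ non-cancellation directions, synthesized through the strict inequality in \eqref{conditionss} applied to $J=J_0$, that allows the combined pointwise estimate to split as a clean product with the required $L^{p_i}$-control of each factor.
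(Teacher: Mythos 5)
Your overall plan — regularize, split into dyadic pieces, use Lemma~\ref{smalllemma} for cancellation on cubes not containing $x$, control the kernel pieces through Lemma~\ref{lem:LInfL2} and Hausdorff--Young, and sum geometrically in $j$ — does start the same way the paper does, but the core of your argument has a genuine gap that the hypothesis of the lemma is specifically designed to bypass.

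The problem is in the passage from the pointwise bound on $T_{\sigma_j}$ to the claim $\|b_i^{J_0}\|_{L^{p_i}}\lesssim 1$. Your candidate $b_i^{J_0}(x)=|Q_i|^{-1/p_i}\bigl(\ell(Q_i)/(\ell(Q_i)+|x-c_{Q_i}|)\bigr)^{\eta_i}$ requires $\eta_i p_i>n$, i.e.\ $\eta_i>n/p_i$, to be in $L^{p_i}$. But if you trace through the $j$-sum of your schematic pointwise bound, the decay exponent $\eta_i$ that comes out is essentially $s_i$ (it is governed by the factor $\langle 2^j(x-c_i)\rangle^{-s_i}$, which is the only source of decay in the $i$-th direction). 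Hence your construction implicitly needs $s_i>n/p_i$ for \emph{each} $i\in J_0$. The hypothesis of the lemma only guarantees the \emph{aggregate} bound $\sum_{k\in J_0}(s_k/n-1/p_k)>-1/r'$, and it is perfectly consistent with that hypothesis to have $s_i<n/p_i$ for some $i\in J_0$. In that regime your candidate $b_i^{J_0}$ is not in $L^{p_i}$, so the factorization fails.

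The reason the paper's argument survives where a purely pointwise product bound fails is precisely that it never discards the $x$-dependence of the kernel factor. It holds \emph{one} variable $y_k$ ($k\in J_0$) outside the H\"older pairing, producing a genuine function $h_j^{(k,\iota)}(x)$ ($\iota\in\{0,1\}$) whose $L^{r'}$-norm, not its pointwise size, carries the decisive decay $2^{-jn/r'}$. Cancellation (Lemma~\ref{smalllemma}) is used in that single direction $k$ only; the resulting bounds with different $k$ are then combined through a geometric mean with convex weights $\beta_i>0$, $\sum_{i\in J_0}\beta_i=1$, and the $L^{p_i}$-norm of each $u_{i,j}$ is estimated by H\"older with exponents $1/p_i=\alpha_i+\beta_i/r'$, where $\alpha_i<s_i/n$. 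Demanding $\alpha_i,\beta_i>0$ with $\sum\beta_i=1$ is equivalent to $\sum_{i\in J_0}(s_i/n-1/p_i)>-1/r'$; this is exactly where the hypothesis enters, and it is what allows "borrowing" decay from a direction where $s_i>n/p_i$ to a direction where it is not. Your proposal of Taylor-expanding in \emph{all} $J_0$-directions simultaneously is not wrong per se, but it does not create this borrowing mechanism and does not replace the $L^{r'}$ Minkowski/H\"older step. As a smaller matter, your quoted factor $|Q_i|^{1-1/p_i}2^{jn}$ for $i\le l$, $i\notin J_0$ should simply be $|Q_i|^{-1/p_i}\chi_{Q_i^{*}}(x)$: for those directions no cancellation is used and the $y_i$-integral does not generate a $2^{jn}|Q_i|$ factor.
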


Let $J_0$ be a nonempty subset of $\{1,\dots,l\}$. Then Lemma \ref{keylemma1} ensures the existence of nonnegative functions $b_{1,k_1}^{J_0},\ldots,b_{l,k_l}^{J_0}$ such that
  \begin{equation*}
  \big|T_{\sigma}\big(a_{1,k_1},\ldots,a_{l,k_l},f_{l+1},\ldots,f_m\big)(x)\big| \lesssim \mathcal{L}_{\sss}^{r,\Psi^{(m)}}[\sigma]  b_{1,k_1}^{J_0}(x) \cdots b_{l,k_l}^{J_0}(x)
  \end{equation*}
  for all $x\in \big(\bigcap_{i\notin J_0}Q_{i,k_{i}}^*\big)\setminus \big(\bigcup_{i\in J_0}Q_{i,k_{i}}^*\big)$   and $\Vert{b_{i,k_i}^{J_0}}\Vert_{L^{p_i}(\rn)} \lesssim 1$.

 Now set 
 \begin{equation*}
  b_{i,k_i}:= \sum_{\emptyset \ne J_0\subset\left\{1,2,\ldots,l\right\}}b_{i,k_i}^{J_0}.
  \end{equation*}
  Since it is a finite sum, we first note that  $\Vert{b_{i,k_i}}\Vert_{L^{p_i}(\rn)} \lesssim 1$.
  In addition, we have the pointwise estimate
  \begin{align*}
   \big|{T_{\sigma}\big(a_{1,k_1},\ldots,a_{l,k_l},f_{l+1},\ldots,f_m\big)(x)}\big|  \chi_{(Q^*_{1,k_1}\cap\ldots\cap Q^*_{l,k_l})^c}(x) \nonumber 
  \lesssim \mathcal{L}_{\sss}^{r,\Psi^{(m)}}[\sigma] b_{1,k_1}(x) \cdots b_{l,k_l}(x),
  \end{align*}
 which yields that
  \begin{equation*}
  G_2(x)\lesssim \mathcal{L}_{\sss}^{r,\Psi^{(m)}}[\sigma]\prod_{i=1}^l\Big( \sum_{k_i=1}^{\infty}{|{\lambda_{i,k_i}}|}b_{i,k_i}(x) \Big).
  \end{equation*}
  Then we apply   H\"older's inequality to deduce  that
  \begin{align}\label{equ:G2Part}
  \Vert{G_2}\Vert_{L^p(\rn)}&\lesssim  \mathcal{L}_{\sss}^{r,\Psi^{(m)}}[\sigma] \prod_{i=1}^{l}\Big\Vert \sum_{k_i=1}^{\infty}{|\lambda_{i,k_i}|b_{i,k_i}}\Big\Vert_{L^{p_i}(\rn)} \lesssim \mathcal{L}_{\sss}^{r,\Psi^{(m)}}[\sigma] \prod_{i=1}^{l}\Vert{f_i}\Vert_{H^{p_i}(\rn)}  
  \end{align}
 because
  \begin{equation*}
  \Big\Vert \sum_{k_i=1}^{\infty}{|\lambda_{i,k_i}|b_{i,k_i}}\Big\Vert_{L^{p_i}(\rn)}\leq \Big( \sum_{k_i=1}^{\infty}{|\lambda_{i,k_i}|^{p_i}\Vert b_{i,k_i}\Vert_{L^{p_i}(\rn)}^{p_i}}\Big)^{1/p_i}\lesssim \Big( \sum_{k_i=1}^{\infty}{|\lambda_{i,k_i}|^{p_i}}\Big)^{1/p_i}\lesssim \Vert f_i\Vert_{H^{p_i}(\rn)}.
  \end{equation*}
  
  Combining (\ref{equ:G1Part}) and (\ref{equ:G2Part}), we finally obtain \eqref{equ:TSigSmP} as desired.
This completes the proof.

\hfill

\section{Proof of Proposition \ref{propo3}}
Let $1<r\leq 2$, $1\leq l < \rho \leq m$, and 
\begin{equation}\label{indexset}
\mathrm{I}:=\{1,\dots,l\}, \quad \mathrm{II}:=\{l+1,\dots,\rho\},\quad \mathrm{III}:=\{\rho+1,\dots,m\},\quad  \Lambda:=\mathrm{I}\cup \mathrm{II}\cup \mathrm{III}.
\end{equation}
Assume that $0<p_i\leq 1$ for $i\in\mathrm{I}$, $r\le p_{i}<\infty$ for $i\in\mathrm{II}$,  and ${1}/{p_1}+ \dots + {1}/{p_\rho} = {1}/{p}$.
Let $\Vert f_{i}\Vert_{L^{\infty}(\rn)}=1$ for $i\in\mathrm{III}$.
As in (\ref{sigmadecompo}), we write
\begin{equation*}
\sigma(\xxxi)=\sum_{j_1,\dots,j_m \in \mathbb{Z}}{\sigma(\xxxi)\widehat{\psi_{j_1}}(\xi_1)\cdots \widehat{\psi_{j_m}}(\xi_m)}.
\end{equation*}
If $\max{(j_1,\dots,j_m)}=j_k$, then there are two cases 
\begin{equation}
 j_k-3-\lfloor \log_2m\rfloor \leq \max_{j_i\not= j_k}{(j_i)}\leq j_k \tag{\textbf{Case1}}\label{case1},
\end{equation} 
\begin{equation}
\max_{j_i\not= j_k}{(j_i)}\leq j_k-4-\lfloor \log_2m\rfloor \tag{\textbf{Case2}}\label{case2}.
\end{equation}

For  (\ref{case1}), we utilize the argument in Section \ref{lowfrequencypart}. That is, we need to prove that for $1\leq \kappa_1<\kappa_2\leq m$
\begin{align}\begin{split}\label{case1claim}
&\Big\Vert \sum_{j\in\zz}{\big| T_{\sigma_j}\big( (f_1)^j,\dots,(f_{\kappa_1-1})^j,(f_{\kappa_1})_j,(f_{\kappa_1+1})^j,\dots} \\
&\qquad \qquad \qquad \qquad \qquad\qquad{,(f_{\kappa_2-1})^j,(f_{\kappa_2})_j,(f_{\kappa_2+1})^j,\dots,(f_m)^j \big)\big|}    \Big\Vert_{L^p(\rn)}\\
&\lesssim \mathcal{L}_{\sss}^{r}[\sigma]\prod_{i\in\mathrm{I}\cup\mathrm{II}}{\Vert f_i\Vert_{H^{p_i}(\rn)}},
\end{split}
\end{align} 
where $(g)_j:=\psi_j\ast g$ and $(g)^j:=\phi_j\ast g$ as before.

We remark that (\ref{case2}) is a high frequency part for which $T_{\sigma_{high}^{(\kappa)}}\big(f_1,\dots,f_m \big)$ is written as the sum, over $j_{\kappa}\in\zz$, of terms whose Fourier transform is supported in an annulus of size $2^{j_{\kappa}}$ where $\sigma^{(\kappa)}$ is defined as in (\ref{sigmakappa}) and $\sigma_{high}^{({\kappa})}$ is similar to (\ref{highdef}).
Thus, the square function characterization of $H^p$ will be applied to deal with this case as in (\ref{squareexpression}).
We will actually prove that for each $1\leq {\kappa}\leq m$
\begin{align}\label{case2claim}
&\Big\Vert \Big( \sum_{j\in\zz}{\big|T_{\sigma_j}\big((f_1)^{j,m},\dots,(f_{{\kappa}-1})^{j,m},(f_{\kappa})_j ,(f_{{\kappa}+1})^{j,m},\dots, (f_m)^{j,m}\big) \big|^2} \Big)^{1/2}\Big\Vert_{L^p(\rn)}\nonumber\\
&\lesssim  \mathcal{L}_{\sss}^{r}[\sigma]\prod_{i\in\mathrm{I}\cup\mathrm{II}}{\Vert f_i\Vert_{H^{p_i}(\rn)}}
\end{align} where $(g)^{j,m}:=\phi_{j-4-\lfloor \log_2m\rfloor}\ast g$.

\subsection{Proof of (\ref{case1claim}) for $1\leq {\kappa}_1<{\kappa}_2\leq m$}

Let $\wt{\psi_j}:=\psi_{j-1}+\psi_{j}+\psi_{j+1}$ so that $\wt{\psi_j}\ast \psi_j=\psi_j$ and for each $1\leq {\kappa}_1<{\kappa}_2\leq m$ we define

\begin{equation*}
\sigma_{j,1}^{{\kappa}_1,{\kappa}_2}:=\big(  \textup{ $ \underbrace{  \phi_{j}\otimes\cdots\otimes  \phi_{j} }_{({\kappa}_1-1)\textup{ times }}   $}    \otimes {\psi_j}\otimes \textup{ $  \underbrace{\phi_{j}\otimes\cdots\otimes \phi_{j}}_{({\kappa}_2-{\kappa}_1-1) \textup{ times }} $}  \otimes {\psi_j}\otimes \textup{ $  \underbrace{\phi_{j}\otimes\cdots\otimes \phi_{j}}_{(m-{\kappa}_2) \textup{ times }} $}\big)^{\wedge} \cdot \sigma_j,
\end{equation*} 
\begin{equation*}
\sigma_{j,2}^{{\kappa}_1,{\kappa}_2}:=\big(  \textup{ $ \underbrace{  \phi_{j}\otimes\cdots\otimes  \phi_{j} }_{({\kappa}_1-1)\textup{ times }}   $}    \otimes \wt{\psi_j}\otimes \textup{ $  \underbrace{\phi_{j}\otimes\cdots\otimes \phi_{j}}_{({\kappa}_2-{\kappa}_1-1) \textup{ times }} $}  \otimes \wt{\psi_j}\otimes \textup{ $  \underbrace{\phi_{j}\otimes\cdots\otimes \phi_{j}}_{(m-{\kappa}_2) \textup{ times }} $}\big)^{\wedge} \cdot \sigma_j.
\end{equation*} 
Then both $\sigma_{j,1}^{\kappa_1,\kappa_2}$ and $\sigma_{j,1}^{\kappa_1,\kappa_2}$ can be expressed in the form
\begin{equation*}
{\Xi}({\cdot}/2^j)\big) \cdot \sigma_j
\end{equation*} for some $\Xi\in\SS((\rn)^m)$ whose support is in a ball of a constant radius in $(\rn)^m$.
We observe that, thanks to Lemma \ref{katoponce}, for any $1<t<\infty$
\begin{equation}\label{katogeneral}
\big\Vert \Xi\cdot \sigma_j(2^j\cdot )\big\Vert_{L_{\sss}^{t}((\rn)^m)}\lesssim \big\Vert \sigma_j(2^j\cdot)\big\Vert_{L_{\sss}^{t}((\rn)^m)}\lesssim\mathcal{L}_{\sss}^{t}[\sigma],
\end{equation}  
and
\begin{align}
&T_{\sigma_{j,1}^{\kappa_1,\kappa_2}}\fff \nonumber\\
&=T_{\sigma_j}\big( (f_1)^j,\dots,(f_{{\kappa}_1-1})^j,(f_{{\kappa}_1})_j,(f_{{\kappa}_1+1})^j,\dots(f_{{\kappa}_2-1})^j,(f_{{\kappa}_2})_j,(f_{{\kappa}_2+1})^j,\dots,(f_m)^j \big)\label{1express}\\
&=T_{\sigma_{j,2}^{{\kappa}_1,{\kappa}_2}}\big(f_1,\dots,f_{{\kappa}_1-1},(f_{{\kappa}_1})_j,f_{{\kappa}_1+1},\dots,f_{{\kappa}_2-1},(f_{{\kappa}_2})_j,f_{{\kappa}_2+1},\dots,f_m \big).\label{2express}
\end{align}
Furthermore, if $1\leq \kappa_1<l+1<{\kappa}_2\leq m$, $T_{\sigma_{j,1}^{\kappa_1,\kappa_2}}\fff $ can be also written as
\begin{align}\label{3express}
T_{\sigma_{j,1}^{\kappa_1,\kappa_2}}\fff &=T_{\sigma_{j,2}^{{\kappa}_1,{\kappa}_2}}\big(f_1,\dots,f_{\kappa_1-1},(f_{\kappa_1})_{j},f_{\kappa_1+1},\dots, \\
&\qquad \qquad \qquad ,f_{{l}},(f_{l+1})^{j+1},f_{l+2},\dots,f_{{\kappa}_2-1},(f_{{\kappa}_2})_j,f_{{\kappa}_2+1},\dots,f_m \big)\nonumber
\end{align} since $\phi_{j+1}\ast \phi_{j}=\phi_{j}$. Similarly, for $l+1<{\kappa}_1<{\kappa}_2\leq m$, we have
\begin{align}\label{4express}
T_{\sigma_{j,1}^{\kappa_1,\kappa_2}}\fff &=T_{\sigma_{j,2}^{{\kappa}_1,{\kappa}_2}}\big(f_1,\dots,f_{l},(f_{l+1})^{j+1},f_{l+2},\dots, \\
&\qquad \qquad \qquad ,f_{{\kappa}_1-1},(f_{{\kappa}_1})_j,f_{{\kappa}_1+1},\dots,f_{{\kappa}_2-1},(f_{{\kappa}_2})_j,f_{{\kappa}_2+1},\dots,f_m \big)\nonumber.
\end{align}

Now we write, as in (\ref{exchangesumoperator}),
\begin{equation}\label{atomsepa}
T_{\sigma_{j,1}^{\kappa_1,\kappa_2}}\fff (x)=\sum_{k_1=1}^{\infty}\cdots\sum_{k_l=1}^{\infty}{\lambda_{1,k_1},\dots,\lambda_{l,k_l}T_{\sigma_{j,1}^{\kappa_1,\kappa_2}}\big( a_{1,k_1},\dots,a_{l,k_l},f_{l+1},\dots,f_m\big)(x)       }
\end{equation} where $a_{i,k_i}$ are $L^{\infty}$-atoms for $H^{p_i}$ satisfying (\ref{atomproperty1}) and (\ref{atomproperty2}).
Then we apply the following lemma that will be proved in Section \ref{prooflemmas}.

\begin{lemma}\label{keylemma2}
Let $1<r\leq 2$, $1\leq l<\rho\leq m$, and let $\mathrm{I}$, $\mathrm{II}$, $\mathrm{III}$, and $\Lambda$ be defined as in (\ref{indexset}). 
Suppose that $0<p_i\leq 1$ for $i\in\mathrm{I}$, $r\leq p_{i}<\infty$ for $i\in\mathrm{II}$, and $1/p=1/p_1+\dots+1/p_{\rho}$. 
Let $a_i$, $i\in\mathrm{I}$, be atoms supported in the cube $Q_i$ such that (\ref{atomconditions}) holds for all $|\alpha|<N_i$ with $N_i$ sufficiently large. Suppose that $(\ref{conditionss})$ holds for all $J\subset \mathrm{I}$. Let $f_i\in L^{p_i}(\rn)$ for $i\in\mathrm{II}$ and $\Vert f_{i}\Vert_{L^{\infty}(\rn)}=1$ for $i\in\mathrm{III}$. Then there exist nonnegative functions $b_i$, $i\in\mathrm{I}$, and $F_{i}$, $i\in\mathrm{II}$, on $\rn$ such that
\begin{equation}\label{kl2condition1}
 \sum_{j\in\zz}\big|T_{\sigma_{j,1}^{\kappa_1,\kappa_2}}\big( a_{1},\dots,a_{l},f_{l+1},\dots,f_m\big)(x)\big|  \lesssim \mathcal{L}_{\sss}^{r}[\sigma] \Big(\prod_{i\in\mathrm{I}}b_i(x)\Big)\Big( \prod_{i\in\mathrm{II}}F_i(x)\Big),
\end{equation}
\begin{equation*}
\Vert b_i\Vert_{L^{p_i}(\rn)}\lesssim 1, \qquad \Vert F_i\Vert_{L^{p_i}(\rn)}\lesssim \Vert f_i\Vert_{L^{p_i}(\rn)}.
\end{equation*}
\end{lemma}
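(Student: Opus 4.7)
The plan is to adapt the atomic argument used in the proof of Proposition \ref{propo2} (and in particular Lemma \ref{keylemma1}), with the key new ingredient being the exploitation of the tight spectral support provided by the two distinguished indices $\kappa_1,\kappa_2$ (coming from the two $\wh{\psi_j}$ factors in $\sigma_{j,1}^{\kappa_1,\kappa_2}$) to recover the $\mathrm{II}$-type $L^{p_i}$ factors via Littlewood--Paley theory.

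First I would choose among the equivalent representations (\ref{1express})--(\ref{4express}) according to the positions of $\kappa_1,\kappa_2$ relative to $\mathrm{I}$, $\mathrm{II}$, $\mathrm{III}$, so that the Schwartz functions $f_i$ for $i\in\mathrm{II}\cup\mathrm{III}$ appear in un-decomposed form whenever possible, and the atoms appear in a form that later permits exploitation of their vanishing moments via Lemma \ref{smalllemma}. Combined with Lemma \ref{keyestilemma} and the control $\Vert \sigma_{j,1}^{\kappa_1,\kappa_2}(2^j\cdot)\Vert_{L_{\sss}^{t}}\lesssim \mathcal{L}_{\sss}^{t}[\sigma]\lesssim \mathcal{L}_{\sss}^{r}[\sigma]$ from (\ref{katogeneral}) and (\ref{compactembedding1}), this yields the pointwise estimate
\begin{equation*}
\big| T_{\sigma_{j,1}^{\kappa_1,\kappa_2}}(a_1,\dots,a_l,f_{l+1},\dots,f_m)(x)\big|\lesssim \mathcal{L}_{\sss}^{r}[\sigma]\prod_{i\in\Lambda}\mathfrak{M}_{s_i,2^j}^{t}\big(g_i^{(j)}\big)(x)
\end{equation*}
for a suitable $1<t<r$, where $g_i^{(j)}$ is one of $(a_i)_j,(a_i)^j,(f_i)_j,(f_i)^j$ and the factors for $i=\kappa_1,\kappa_2$ are of the $\psi_j$-type (tight annular spectrum).

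Next I would sum in $j\in\zz$. Since the two factors associated with $\kappa_1$ and $\kappa_2$ carry tight spectral supports, Cauchy--Schwarz gives
\begin{equation*}
\sum_{j\in\zz}\big| T_{\sigma_{j,1}^{\kappa_1,\kappa_2}}(\cdots)(x)\big| \lesssim \mathcal{L}_{\sss}^{r}[\sigma]\,\, S_{\kappa_1}(x)\,S_{\kappa_2}(x)\prod_{i\in\Lambda\setminus\{\kappa_1,\kappa_2\}}U_i(x),
\end{equation*}
where $S_{\kappa}(x)=\bigl(\sum_{j}\bigl(\mathfrak{M}_{s_\kappa,2^j}^{t}(g_\kappa^{(j)})(x)\bigr)^{2}\bigr)^{1/2}$ and $U_i(x)=\sup_{j}\mathfrak{M}_{s_i,2^j}^{t}(g_i^{(j)})(x)$. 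For $i\in\mathrm{III}$ the factor $U_i(x)$ is trivially bounded by a constant since $\Vert f_i\Vert_{L^\infty}=1$ and $s_i>n/t$. For $i\in\mathrm{II}$, since $r\le p_i<\infty$ one has $s_i>n/r\ge n/\min(p_i,2,t)$, so Lemma \ref{maximal2} combined with the Littlewood--Paley characterization (\ref{littlewood}) of $L^{p_i}=H^{p_i}$ yields $\Vert S_i\Vert_{L^{p_i}}+\Vert U_i\Vert_{L^{p_i}}\lesssim \Vert f_i\Vert_{L^{p_i}}$, giving the functions $F_i$ of the lemma.

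The main obstacle is the construction of the functions $b_i$ for $i\in\mathrm{I}$, since for $p_i\le 1$ the Hardy--Littlewood or Peetre maximal function of an atom is not bounded in $L^{p_i}$ uniformly in the atom. Following the standard atomic philosophy, I would split the $j$-sum according to whether $2^{j}\ell(Q_i)\le 1$ or $2^{j}\ell(Q_i)>1$: in the first range, the naive size bound $\Vert a_i\Vert_{L^{\infty}}\le |Q_i|^{-1/p_i}$ together with the rapid decay of the Schwartz kernels $\psi_j,\phi_j$ produces a contribution controlled by $|Q_i|^{-1/p_i}$ on a fixed dilate of $Q_i$ with suitable off-diagonal decay; in the second range, Taylor's formula (Lemma \ref{smalllemma}) together with Lemma \ref{lem:LInfL2} transfers derivatives of the kernel onto $\mathcal{L}_{\sss}^{r,\Psi^{(m)}}[\sigma]$ and extracts a decay factor $(2^{j}\ell(Q_i))^{-(M_i+1)}$ from the vanishing moments of $a_i$, which after summing in $j$ yields a bound of the atomic form $b_i(x)\lesssim |Q_i|^{-1/p_i}\bigl(1+\ell(Q_i)^{-1}|x-c_{Q_i}|\bigr)^{-n-M_i}$ once $s_k$ is chosen strictly larger than $n/r$ (using the strict inequalities in (\ref{conditionss}) with $J=\{i\}$ to absorb the Peetre exponent). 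Choosing the moment order $M_i$ sufficiently large depending on $p_i$ then gives $\Vert b_i\Vert_{L^{p_i}}\lesssim 1$, and the careful bookkeeping that interleaves this atomic estimate with the Cauchy--Schwarz step above — in particular ensuring that the $\ell^2$-sum on the $\kappa$-indices can coexist with $\ell^\infty$-control on the $\mathrm{I}$-indices — constitutes the technical heart of the argument.
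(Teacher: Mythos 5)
The central gap is in the treatment of the indices $\kappa_1,\kappa_2$ when one or both of them lie in $\mathrm{III}$ (so $p_{\kappa_i}=\infty$). In your Cauchy--Schwarz step the factors attached to $\kappa_1,\kappa_2$ become $\ell^2_j$-square functions $S_{\kappa}(x)=\big(\sum_j (\mathfrak{M}_{s_\kappa,2^j}^t (f_{\kappa})_j(x))^2\big)^{1/2}$, and when $\kappa\in\mathrm{III}$ the only information available is $\Vert f_{\kappa}\Vert_{L^\infty}=1$. A Littlewood--Paley square function of an $L^\infty$ function is \emph{not} bounded --- $L^\infty$ has a Carleson/$BMO$-type $L^\infty(\ell^2)$ characterization, not a pointwise or $L^\infty$ one --- so $S_{\kappa}$ cannot be controlled by a constant. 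You do observe that the $\ell^\infty$-type factors $U_i$ for $i\in\mathrm{III}$ are trivially bounded, but you silently apply the same logic to the $\ell^2$-type factors $S_{\kappa_1},S_{\kappa_2}$ and that step fails precisely in the cases $\kappa_1\in\mathrm{III}$, $\kappa_2\in\mathrm{III}$, or both. This is exactly where the paper invokes its Lemma~\ref{bmoboundlemma}: it uses the representations \eqref{3express}/\eqref{4express}/\eqref{secondexpression2} to peel off a $\phi_{j+1}$-smoothed copy of $f_{l+1}$ (the index $l+1$ always lies in $\mathrm{II}$, hence in $L^{p_{l+1}}$), pairs the problematic $\mathfrak{M}_{s_\kappa,2^j}^t(f_\kappa)_j$ factors with that single $L^{p_{l+1}}$-factor inside one $j$-sum, and then applies the $H^{p_{l+1}}\times BMO^{\,(N-1)}\to L^{p_{l+1}}$ estimates \eqref{t1est}--\eqref{t3est}, which are proved using the $L^\infty(\ell^2)$--$BMO$ characterization of Lemma~\ref{equivalence}. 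That maneuver is the genuinely new ingredient relative to the $r=2$ literature, and your outline omits it.

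A secondary organizational divergence from the paper, not a gap but worth noting: you propose to split the $j$-sum according to whether $2^j\ell(Q_i)\le 1$ or $>1$, whereas the paper first localizes in $x$ over the sets $E_{J_0}$ for $J_0\subset\mathrm{I}$ and handles the scale split (through $h_j^{(k,0)}$ vs.\ $h_j^{(k,1)}$, respectively $H_j^{(k,0)}$ vs.\ $H_j^{(k,1)}$) only when $J_0\neq\emptyset$; when $J_0=\emptyset$ no scale split is needed at all since all atoms are supported near $x$. The two organizations are in principle compatible, but your phrasing glosses over the exponent bookkeeping (the choice of $\alpha_i,\beta_i$ with $\sum_{i\in J_0}\beta_i=1$, $s_i/n>\alpha_i$, $1/p_i=\alpha_i+\beta_i/t'$, using the hypothesis $\sum_{k\in J}(s_k/n-1/p_k)>-1/t'$ with a slightly improved $t<r$) that is what makes the powers $\min\big((2^j\ell(Q_i))^{\gamma_i},(2^j\ell(Q_i))^{-\delta_i}\big)$ jointly summable in $j$ with the correct $L^{p_i}$ normalization per atom. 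If you fill in the $\mathrm{III}$-indices gap via the $BMO$ mechanism above and make this exponent choice explicit, the rest of your plan matches the paper's strategy.
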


Lemma \ref{keylemma2} proves the existence of functions $b_{i,k_i}$ for $i\in\mathrm{I}$, $k_i\in \mathbb Z$, and $F_i$ for $i\in\mathrm{II}$, having the properties that
  \begin{align}\label{property1}
&  \big|{T_{\sigma_{j,1}^{\kappa_1,\kappa_2}}\big(a_{1,k_1},\ldots, a_{l,k_l},f_{l+1}, \ldots, f_{m}\big)(x)}\big| \lesssim \mathcal{L}_{\sss}^{r}[\sigma]\Big( \prod_{i\in\mathrm{I}} b_{i,k_i}(x)\Big) \Big(\prod_{i\in\mathrm{II}}F_i(x)\Big),
  \end{align}
\begin{equation}\label{property2}
\Vert{b_{i,k_i}}\Vert_{L^{p_i}(\rn)}\lesssim 1, \qquad  \Vert F_i\Vert_{L^{p_i}(\rn)}\lesssim \Vert f_i\Vert_{L^{p_i}(\rn)}.
\end{equation}
By using (\ref{1express}) and (\ref{atomsepa}), the left-hand side of (\ref{case1claim}) is less than
\begin{align*}
\Big\Vert \sum_{k_1=1}^{\infty}\cdots \sum_{k_l=1}^{\infty}   |\lambda_{1,k_1}|\cdots |\lambda_{l,k_l}|\sum_{j\in\zz} \big|T_{\sigma_{j,1}^{\kappa_1,\kappa_2}}\big(a_{1,k_1},\dots,a_{l,k_l},f_{l+1},\dots, f_m\big) \big|\Big\Vert_{L^p(\rn)} .
\end{align*} 
Then (\ref{property1}) and  H\"older's inequality yield that the preceding expression is dominated by a constant times
\begin{equation*}
\mathcal{L}_{\sss}^{r}[\sigma]\Big( \prod_{i\in\mathrm{I}}\Big\Vert \sum_{k_i=1}^{\infty}{|\lambda_{i,k_i}|b_{i,k_i}}\Big\Vert_{L^{p_i}(\rn)} \Big)\Big( \prod_{i\in\mathrm{II}}\Vert F_i \Vert_{L^{p_i}(\rn)}\Big).
\end{equation*}
It is obvious that $\Vert F_i\Vert_{L^{p_i}(\rn)}\lesssim \Vert f_i\Vert_{L^{p_i}(\rn)}$, and we also have
\begin{equation*}
\Big\Vert{\sum_{k_i=1}^{\infty} | \lambda_{i,k_i}| b_{i,k_i} }\Big\Vert_{L^{p_i}(\rn)} \leq \Big(\sum_{k_i=1}^{\infty} |{\lambda_{i,k_i}}|^{p_i} \Vert{b_{i,k_i}}\Vert_{L^{p_i}(\rn)}^{p_i}\Big)^{1/p_i} \lesssim \Big(\sum_{k_i=1}^{\infty} |{\lambda_{i,k_i}}|^{p_i}\Big)^{1/p_i} \lesssim \Vert{f_i}\Vert_{H^{p_i}(\rn)}
\end{equation*} 
having used (\ref{property2}). 
This proves (\ref{case1claim}).

\hfill

\subsection{Proof of (\ref{case2claim}) for $1\leq \kappa\leq m$}

Let $\wt{\psi_j}:=\psi_{j-1}+\psi_{j}+\psi_{j+1}$ as above, and for each $1\leq \kappa\leq m$ we define
\begin{equation*}
\sigma_{j,1}^{\kappa}:=\big(  \textup{ $ \underbrace{  \phi_{j-4-\lfloor \log_2m\rfloor}\otimes\cdots\otimes  \phi_{j-4-\lfloor \log_2m\rfloor} }_{(\kappa-1)\textup{ times }}   $}    \otimes {\psi_j}\otimes \textup{ $  \underbrace{\phi_{j-4-\lfloor \log_2m\rfloor}\otimes\cdots\otimes \phi_{j-4-\lfloor \log_2m\rfloor}}_{(m-\kappa) \textup{ times }} $}\big)^{\wedge} \cdot \sigma_j.
\end{equation*} 
\begin{equation*}
\sigma_{j,2}^{\kappa}:=\big(  \textup{ $ \underbrace{  \phi_{j-4-\lfloor \log_2m\rfloor}\otimes\cdots\otimes  \phi_{j-4-\lfloor \log_2m\rfloor} }_{(\kappa-1)\textup{ times }}   $}    \otimes \wt{\psi_j}\otimes \textup{ $  \underbrace{\phi_{j-4-\lfloor \log_2m\rfloor}\otimes\cdots\otimes \phi_{j-4-\lfloor \log_2m\rfloor}}_{(m-\kappa) \textup{ times }} $}\big)^{\wedge} \cdot \sigma_j.
\end{equation*} 
Then the argument in (\ref{katogeneral}) yields that for any $1<t<\infty$
\begin{equation}\label{sigmajkest}
\big\Vert \sigma_{j,1}^{\kappa}(2^j\cdot)\big\Vert_{L_{\sss}^{t}((\rn)^m)}, \big\Vert \sigma_{j,2}^{\kappa}(2^j\cdot)\big\Vert_{L_{\sss}^{t}((\rn)^m)}\lesssim\mathcal{L}_{\sss}^{t}[\sigma].
\end{equation}  
Moreover, we note that
\begin{align}
T_{\sigma_{j,1}^{\kappa}}\fff&=T_{\sigma_j}\big((f_1)^{j,m},\dots,(f_{{\kappa}-1})^{j,m},(f_{\kappa})_j ,(f_{{\kappa}+1})^{j,m},\dots, (f_m)^{j,m}\big)\nonumber\\
&=T_{\sigma_{j,2}^{\kappa}}\big(f_1,\dots,f_{{\kappa}-1},(f_{\kappa})_j,f_{{\kappa}+1},\dots,f_m \big) \label{condexpression1}
\end{align}
and if $l+1<{\kappa}\leq m$, it can be also written as
\begin{equation}\label{secondexpression2}
T_{\sigma_{j,1}^{\kappa}}\fff=T_{\sigma_{j,2}^{\kappa}}\big(f_1,\dots,f_{l},(f_{l+1})^{j+1,m},f_{l+2},\dots,f_{{\kappa}-1},(f_{\kappa})_j,f_{{\kappa}+1},\dots,f_m \big)
\end{equation} since $\phi_{j-3-\lfloor \log_2{m}\rfloor}\ast \phi_{j-4-\lfloor \log_2{m}\rfloor}=\phi_{j-4-\lfloor \log_2{m}\rfloor}$.
Therefore, (\ref{case2claim}) is reduced to
\begin{equation}\label{case2reduction}
\Big\Vert \Big( \sum_{j\in\zz}\big|   T_{\sigma_{j,1}^{\kappa}}\fff    \big|^2\Big)^{1/2}\Big\Vert_{L^p(\rn)}\lesssim \mathcal{L}_{\sss}^r[\sigma]\prod_{i\in\mathrm{I}\cup\mathrm{II}}{\Vert f_i\Vert_{H^{p_i}(\rn)}}.
\end{equation}

Now we write, as in (\ref{exchangesumoperator}),
\begin{equation}\label{case2express}
T_{\sigma_{j,1}^{\kappa}}\fff(x)=\sum_{k_1=1}^{\infty}\cdots\sum_{k_l=1}^{\infty}{\lambda_{1,k_1},\dots,\lambda_{l,k_l}T_{\sigma_{j,1}^{\kappa}}\big( a_{1,k_1},\dots,a_{l,k_l},f_{l+1},\dots,f_m\big)(x)       }
\end{equation} where $a_{i,k_i}$ are $L^{\infty}$-atoms for $H^{p_i}$ satisfying (\ref{atomproperty1}) and (\ref{atomproperty2}).
Then we need the following lemma whose proof will be given in Section \ref{prooflemmas}.
\begin{lemma}\label{keylemma3}
Let $1<r\leq 2$, $1\leq l<\rho\leq m$, and let $\mathrm{I}$, $\mathrm{II}$, $\mathrm{III}$, and $\Lambda$ be defined as in (\ref{indexset}). 
Suppose that $0<p_i\leq 1$ for $i\in\mathrm{I}$, $r\leq p_{i}<\infty$ for $i\in\mathrm{II}$, and $1/p=1/p_1+\dots+1/p_{\rho}$. 
Let $a_i$, $i\in\mathrm{I}$, be atoms supported in the cube $Q_i$ such that (\ref{atomconditions}) holds for all $|\alpha|<N_i$ with $N_i$ sufficiently large. Suppose that $(\ref{conditionss})$ holds for all $J\subset \mathrm{I}$. Let $f_i\in L^{p_i}(\rn)$ for $i\in\mathrm{II}$ and $\Vert f_{i}\Vert_{L^{\infty}(\rn)}=1$ for $i\in\mathrm{III}$. Then there exist nonnegative functions $b_i$, $i\in\mathrm{I}$, and $F_{i}$, $i\in\mathrm{II}$, on $\rn$ such that
\begin{equation}\label{kl3condition1}
\Big( \sum_{j\in\zz}\big|T_{\sigma_{j,1}^{\kappa}}\big( a_{1},\dots,a_{l},f_{l+1},\dots,f_m\big)(x)\big|^2\Big)^{1/2} \lesssim \mathcal{L}_{\sss}^{r}[\sigma] \Big(\prod_{i\in\mathrm{I}}b_i(x)\Big)\Big( \prod_{i\in\mathrm{II}}F_i(x)\Big),
\end{equation}
\begin{equation*}
\Vert b_i\Vert_{L^{p_i}(\rn)}\lesssim 1,\qquad \Vert F_i\Vert_{L^{p_i}(\rn)}\lesssim \Vert f_i\Vert_{L^{p_i}(\rn)}.
\end{equation*}
\end{lemma}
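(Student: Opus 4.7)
The plan is to mimic the atomic/Peetre-maximal blueprint of Lemma \ref{keylemma2}, but to carry the $\ell^2$ sum through the pointwise estimate by isolating the active index $\kappa$. Fix $1<t\le r$ with $s_i>n/t$ for all $i\in\Lambda$, and set $c:=4+\lfloor\log_2 m\rfloor$. By (\ref{sigmajkest}) and Lemma \ref{keyestilemma} applied to the representation (\ref{condexpression1}), for each $j\in\zz$ one has
\begin{equation*}
\bigl|T_{\sigma_{j,1}^{\kappa}}(a_1,\dots,a_l,f_{l+1},\dots,f_m)(x)\bigr| \lesssim \mathcal{L}_{\sss}^{r}[\sigma]\,\mathfrak{M}_{s_\kappa,2^j}^{t}(\psi_j\ast g_\kappa)(x)\prod_{i\ne\kappa}\mathfrak{M}_{s_i,2^j}^{t}(\phi_{j-c}\ast g_i)(x),
\end{equation*}
where $g_i:=a_i$ if $i\in\mathrm{I}$ and $g_i:=f_i$ otherwise.

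Next, for $i\ne\kappa$, extract a $j$-free pointwise majorant for the low-pass factor. For $i\in\mathrm{II}\setminus\{\kappa\}$, use $\mathfrak{M}_{s_i,2^j}^{t}(\phi_{j-c}\ast f_i)\lesssim\mathcal{M}_tf_i$ and set $F_i:=\mathcal{M}_tf_i$, noting $\Vert F_i\Vert_{L^{p_i}}\lesssim\Vert f_i\Vert_{L^{p_i}}$ since $p_i>t$. For $i\in\mathrm{I}\setminus\{\kappa\}$, use the near/far split developed in Lemma \ref{keylemma2}: combine the trivial bound $|Q_i|^{-1/p_i}\chi_{Q_i^{**}}$ on $Q_i^{**}$ with the polynomial tail on $(Q_i^{**})^c$ produced by Lemmas \ref{smalllemma} and \ref{lem:LInfL2} applied to $\phi_{j-c}\ast a_i$ via the moment conditions on $a_i$, yielding a $j$-free envelope $b_i$ with $\Vert b_i\Vert_{L^{p_i}}\lesssim 1$. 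With these majorants pulled out of the $\ell^2$ sum, the task reduces to the pointwise control of $\mathfrak{S}_\kappa(x):=\bigl(\sum_{j\in\zz}[\mathfrak{M}_{s_\kappa,2^j}^{t}(\psi_j\ast g_\kappa)(x)]^2\bigr)^{1/2}$. When $\kappa\in\mathrm{II}$, set $F_\kappa:=\mathfrak{S}_\kappa$ and apply Lemma \ref{maximal2} together with the Littlewood--Paley characterization (\ref{littlewood}) to obtain $\Vert F_\kappa\Vert_{L^{p_\kappa}}\lesssim\Vert f_\kappa\Vert_{H^{p_\kappa}}=\Vert f_\kappa\Vert_{L^{p_\kappa}}$. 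When $\kappa\in\mathrm{I}$, absorb $\mathfrak{S}_\kappa$ into $b_\kappa$ and bound $\Vert b_\kappa\Vert_{L^{p_\kappa}}\lesssim\Vert a_\kappa\Vert_{H^{p_\kappa}}\lesssim 1$ via the same square-function characterization applied to the atom $a_\kappa$.

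The main obstacle is the case $\kappa\in\mathrm{III}$, where $g_\kappa=f_\kappa\in L^\infty$ and $\mathfrak{S}_\kappa$ is only $BMO$-bounded, so no pointwise $j$-free majorant exists. The remedy, in the spirit of Lemma \ref{bmoboundlemma}, is to regroup the product so that the $\kappa$-th Peetre-maximal term at scale $2^j$ is paired with a chosen low-pass companion $F_{i_0}$ at the same scale, and then invoke the Peetre-maximal $BMO$ characterization of Lemma \ref{equivalence} to bound that pair by a $j$-free product without losing $L^{p_{i_0}}$ control on the companion factor. Executing this regrouping while preserving the pointwise product form $\prod_{i\in\mathrm{I}}b_i(x)\prod_{i\in\mathrm{II}}F_i(x)$ is the delicate bookkeeping step of the argument, and parallels the proof of (\ref{t3est}).
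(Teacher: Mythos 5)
Your starting pointwise estimate is correct, and the outline of the three cases for $\kappa$ (in $\mathrm{I}$, $\mathrm{II}$, $\mathrm{III}$) matches the paper's Case1--Case3. But there are two genuine gaps in how you propose to dispose of the $\ell^2$ sum.

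First, for $i\in\mathrm{I}\setminus\{\kappa\}$ you claim a $j$-free pointwise envelope $b_i$ with $\Vert b_i\Vert_{L^{p_i}}\lesssim 1$ majorizing $\mathfrak{M}_{s_i,2^j}^t(\phi_{j-c}\ast a_i)$ uniformly in $j$. This is not available off $Q_i^{\ast}$ with the given regularity: the hypotheses furnish $s_i>n/r$ and the sum condition $\sum_{k\in J}(s_k/n-1/p_k)>-1/r'$, but \emph{not} $s_i>n/p_i$. Without $s_i>n/p_i$, a $j$-uniform Peetre-type majorant of an $H^{p_i}$ atom need not lie in $L^{p_i}$ (already $\mathcal{M}_t a_i\notin L^{p_i}$ when $p_i\le 1<t$). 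What saves the day in the paper is a different mechanism: it decomposes $\rn$ into the sets $E_{J_0}$, and on $E_{J_0}$ with $J_0\ne\emptyset$ it replaces the $\ell^2$-sum by the $\ell^1$-sum, constructs \emph{$j$-dependent} proxies $u^{J_0}_{i,j}$ whose \emph{product} bounds $|\mathcal{G}_j|$, with the moment-cancellation factor $\min(H_j^{(k,0)},H_j^{(k,1)})$ distributed across $i\in J_0$ via exponents $\beta_i$ summing to $1$ (chosen using precisely the sum condition), and only \emph{after} summing in $j$ obtains the factorized envelopes $b_i^{J_0}=\sum_j u^{J_0}_{i,j}$. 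This correlated construction is the reason the individual $\Vert b_i^{J_0}\Vert_{L^{p_i}}\lesssim 1$ bounds are attainable; they cannot be obtained as $j$-uniform majorants of the actual $i$-th factor.

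Second, when $\kappa\in\mathrm{I}$, you propose $\Vert\mathfrak{S}_\kappa\Vert_{L^{p_\kappa}}\lesssim\Vert a_\kappa\Vert_{H^{p_\kappa}}$ by the Peetre-maximal square-function characterization. That characterization requires $s_\kappa>n/\min(p_\kappa,2,t)=n/p_\kappa$, which again is not guaranteed. The paper avoids this by defining $b_\kappa^{J_0}=(\sum_j(\mathcal{M}_t(a_\kappa)_j)^2)^{1/2}\chi_{Q_\kappa^{\ast}}$ only on the near region $E_\emptyset$, applying H\"older with $|Q_\kappa^{\ast}|^{1/p_\kappa-1/2}$ to reduce to an $L^2$ estimate (where $\mathcal{M}_t$ with $t<2$ is bounded on $L^2(\ell^2)$), and covering the far region $x\notin Q_\kappa^{\ast}$ through the $J_0\ni\kappa$ branch using moment decay and the $\ell^1$-step described above. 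Your proposal omits both the H\"older reduction to $L^2$ and the separate far-region treatment, so the $\kappa\in\mathrm{I}$ case does not close as written.
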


According to the above lemma, we can choose nonnegative functions $b_{i,k_i}$, $i\in\mathrm{I}$, and $F_i$, $i\in\mathrm{II}$, such that
\begin{equation}\label{property11}
\Big( \sum_{j\in\zz}\big|T_{\sigma_{j,1}^k}\big( a_{1,k_1},\dots,a_{l,k_l},f_{l+1},\dots,f_m\big)(x)\big|^2\Big)^{1/2} \lesssim \mathcal{L}_{\sss}^{r}[\sigma] \Big(\prod_{i\in\mathrm{I}}b_{i,k_i}(x)\Big)\Big( \prod_{i\in\mathrm{II}}F_i(x)\Big),
\end{equation}
\begin{equation}\label{property22}
\Vert b_{i,k_i}\Vert_{L^{p_i}(\rn)}\lesssim 1, \qquad \Vert F_i\Vert_{L^{p_i}(\rn)}\lesssim \Vert f_i\Vert_{L^{p_i}(\rn)}.
\end{equation}

Using (\ref{case2express}), a triangle inequality in $\ell^2$, (\ref{property11}), and the H\"older inequality, the left-hand side of (\ref{case2reduction}) is less than
\begin{align*}
&\Big\Vert \sum_{k_1=1}^{\infty}\cdots \sum_{k_l=1}^{\infty} |\lambda_{1,k_1}|\cdots |\lambda_{l,k_l}|\Big(\sum_{j\in\zz}\big|T_{\sigma_{j,1}^{\kappa}}\big( a_{1,k_1},\dots,a_{l,k_l},f_{l+1},\dots,f_m\big)\big|^2 \Big)^{1/2} \Big\Vert_{L^p(\rn)}\\
&\lesssim \mathcal{L}_{\sss}^{r}[\sigma] \Big\Vert \sum_{k_1=1}^{\infty}\cdots\sum_{k_l=1}^{\infty}|\lambda_{1,k_1}|\cdots |\lambda_{l,k_l}|    \Big(\prod_{i\in\mathrm{I}}b_{i,k_i}\Big)\Big( \prod_{i\in\mathrm{II}}F_i\Big)    \Big\Vert_{L^p(\rn)}\\
&\leq \mathcal{L}_{\sss}^{r}[\sigma]\Big(\prod_{i\in\mathrm{I}}\Big\Vert \sum_{k_i=1}^{\infty}{|\lambda_{i,k_i}|b_{i,k_i}}\Big\Vert_{L^{p_i}(\rn)} \Big)\Big(\prod_{i\in\mathrm{II}}\Vert F_i\Vert_{L^{p_i}(\rn)} \Big).
\end{align*} 
This is clearly majored by the right-hand side of (\ref{case2reduction}) and in view of 
 (\ref{property22}) and the proof is concluded.

\hfill

\section{Proof of Proposition \ref{propo4}}\label{proofpropo6}

The proof will be based on the following interpolation method for multilinear multipliers.
\begin{lemma}\label{interpolation}
Let $1<r\leq 2$, $0<p_1^0,\dots,p_m^0\leq \infty$, $0<p_1^1,\dots,p_m^1\leq \infty$, $1/p^0=1/p_1^0+\dots+1/p_m^0$, and $1/p^1=1/p_1^1+\dots+1/p_m^1$.
Let $s_1^0,\dots,s_m^0\geq 0$ and $s_1^1,\dots,s_m^1\geq 0$.
Suppose that 
\begin{equation}\label{boundassumption}
\big\Vert T_{\sigma} \big\Vert_{H^{p_1^l}\times\cdots\times H^{p_m^l}\to L^{p^{l}}}\lesssim \mathcal{L}_{(s_1^l,\dots,s_m^l)}^{r, \Psi^{(m)}}[\sigma], \quad l=0,1.
\end{equation}
Then for any $0<\theta<1$, $0<p,p_1,\dots,p_m\leq \infty$, and $s_1,\dots,s_m\geq 0$ satisfying
\begin{equation*}
1/p=(1-\theta)/p^0+\theta/p^1, \qquad 1/p_k=(1-\theta)/p_k^0+\theta/p_k^1 \quad\text{ for }~1\leq k\leq m,
\end{equation*}
\begin{equation*}
s_k=(1-\theta)s_k^0+\theta s_k^1 \quad \text{ for }~1\leq k\leq m,
\end{equation*} we have
\begin{equation}\label{interpolationresult}
\big\Vert T_{\sigma}\big\Vert_{H^{p_1}\times \cdots \times H^{p_m}\to L^p}\lesssim \mathcal{L}^{r,\Psi^{(m)}}_{(s_1,\dots,s_m)}[\sigma].
\end{equation} 

\end{lemma}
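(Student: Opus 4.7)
The plan is to apply Calder\'on's complex interpolation method to an analytic family of symbols that interpolates the Sobolev data $\sss^0$ and $\sss^1$ across the two boundary lines of the strip $\overline{S}:=\{z\in\cc:0\le \textup{Re}(z)\le 1\}$. Write $\sss(z):=(1-z)\sss^0+z\sss^1$ and $1/p_k(z):=(1-z)/p_k^0+z/p_k^1$, so that at $z=\theta$ these recover the target indices of the lemma. Thanks to Lemma \ref{regularization}, we may assume that $\sigma$ is a Schwartz function with compactly supported Fourier transform and that $\mathcal{L}_{\sss}^{r,\Psi^{(m)}}[\sigma]=1$; then $\sigma(2^j\cdot)\wh{\Psi^{(m)}}$ is Schwartz and supported in a fixed annulus of $(\rn)^m$, uniformly in $j\in\zz$.

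The first key step is to construct an analytic family $\{\sigma_z\}_{z\in\overline{S}}$ with $\sigma_\theta=\sigma$ such that, for $l\in\{0,1\}$ and $t\in\rr$,
$$
\mathcal{L}_{\sss^l}^{r,\Psi^{(m)}}[\sigma_{l+it}]\lesssim (1+|t|)^{N}\mathcal{L}_{\sss}^{r,\Psi^{(m)}}[\sigma]
$$
for some integer $N$ depending only on the data. The natural choice is to decompose $\sigma$ into Littlewood-Paley pieces at scales $2^j$, rescale the $j$-th piece to the unit annulus via $\xxxi\mapsto 2^j\xxxi$, apply a Bessel-type multiplier of ``order'' $\sss(z)-\sss$ in each variable, and then reassemble at scale $2^j$. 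Since every rescaled piece lives in a fixed compact set in the $\xxxi$-variable, Lemma \ref{katoponce} together with the standard $L^r$ bounds for imaginary powers of $I-\Delta$ show that this correction changes the $L^r_{\sss^l}$ Sobolev norm only by at most a polynomial factor in $|\textup{Im}(z)|$. Including a Gaussian damping factor $e^{(z-\theta)^2}$ in the definition of $\sigma_z$ guarantees admissibility of the family in the sense of Stein.

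To conclude, we combine the endpoint hypothesis (\ref{boundassumption}) with a three-lines lemma for analytic families of multilinear operators: on each line $\textup{Re}(z)=l$ we obtain
$$
\big\Vert T_{\sigma_{l+it}}(f_1,\dots,f_m)\big\Vert_{L^{p^l}(\rn)}\lesssim (1+|t|)^{N}\prod_{k=1}^m\Vert f_k\Vert_{H^{p_k^l}(\rn)}.
$$
The principal obstacle is that $H^{p_k}$ with $p_k\le 1$ and $L^p$ with $p<1$ are only quasi-Banach, so Calder\'on's complex interpolation functor is not directly applicable. Following the strategy of Calder\'on-Torchinsky and Grafakos-Kalton, we realize the analytic family at the level of an $(m+1)$-linear pairing $\la T_{\sigma_z}(f_1,\dots,f_m),g\ra$ evaluated on a fixed finite atomic decomposition of each $f_k$ (whose atoms have scale-homogeneous $L^\infty$ normalization compatible with $z$-dependent dilations) together with a suitable test function $g$ on the dual side. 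A standard maximum-principle argument applied to the resulting scalar analytic function of $z$, with the Gaussian damping absorbing the polynomial boundary growth, then yields (\ref{interpolationresult}) at $z=\theta$.
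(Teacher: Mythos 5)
Your proposal matches the paper's proof in all essential respects: the paper also constructs the analytic family $\sigma^z$ by decomposing $\sigma$ into Littlewood--Paley pieces, rescaling to the unit annulus, applying $(I-\Delta_k)^{(s_k-s_k(z))/2}$ in each variable, smoothly cutting off with $\wh{\wt{\Psi^{(m)}}}$, and reassembling; it then invokes the analytic-family interpolation theorem for Hardy spaces (citing Calder\'on, Calder\'on--Torchinsky, Fefferman--Stein, Janson--Jones, Str\"omberg--Torchinsky for the quasi-Banach machinery you flag), and controls the boundary $\mathcal{L}$-norms via Lemma~\ref{katoponce} plus the Marcinkiewicz multiplier theorem for the imaginary-order Bessel factors. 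The only cosmetic difference is your Gaussian damping $e^{(z-\theta)^2}$ in place of the paper's rational prefactor $(1+\theta)^{mn+1}/(1+z)^{mn+1}$; both serve the same admissibility purpose.
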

\begin{proof}

Since the proof is more or less standard, we only provide a sketch of it.

Let $\wt{\Psi^{(m)}}:=2^{-mn}\Psi(\vec{\;\cdot\;}/2)+\Psi^{(m)}+2^{mn}\Psi^{(m)}(2\vec{\;\cdot\;})$ so that $\wt{\Psi^{(m)}}\ast \Psi^{(m)}=\Psi^{(m)}$.
We construct a family of multilinear Fourier multipliers $\sigma^z$ as
\begin{align*}
\sigma^z(\xxxi)&:=\frac{(1+\theta)^{mn+1}}{(1+z)^{mn+1}}\sum_{j\in\zz}(I-\Delta_1)^{-(s_1^0(1-z)+s_1^1z)/2}\cdots (I-\Delta_m)^{-(s_m^0(1-z)+s_m^1z)/2}\\
 &\qquad\qquad \qquad \Big( (I-\Delta_1)^{s_1/2}\cdots (I-\Delta_m)^{s_m/2}\big( \sigma(2^j\vec{\;\cdot\;})\wh{\Psi^{(m)}}\big)\Big)(\xxxi/2^j)\wh{\wt{\Psi^{(m)}}}(\xxxi/2^j).
\end{align*}
Note that $\sigma^{\theta}=\sigma$ and it follows from the interpolation theorem for analytic families of operators that
\begin{align*}
\Vert T_{\sigma}\Vert_{H^{p_1}\times\cdots\times H^{p_m}\to L^p}\leq \Big( \sup_{t\in\rr }\mathcal{L}_{(s_1^0,\dots,s_m^0)}^{r,\Psi^{(m)}}[ \sigma^{it}]\Big)^{1-\theta} \Big( \sup_{t\in\rr }\mathcal{L}_{(s_1^1,\dots,s_m^1)}^{r,\Psi^{(m)}}[ \sigma^{1+it}]\Big)^{\theta}
\end{align*} by applying (\ref{boundassumption}).
We refer to \cite{Ca, Ca_To, Fe_St2, Ja_Jo, Str_To} for more details. 

We now observe that for each $l=0,1$, due to compact support conditions and Lemma \ref{katoponce},
\begin{align*}
\mathcal{L}_{(s_1^l,\dots,s_m^l)}^{r,\Psi^{(m)}}[\sigma^{l+it}]&=\sup_{j\in\zz}{\Big\Vert \sigma^{l+it}(2^j\cdot )\wh{\Psi^{(m)}}\Big\Vert_{L^{r}_{(s_1^l,\dots,s_m^l)}((\rn)^m)}}\\
&\lesssim \frac{1}{(1+|t|)^{mn+1}}\sup_{j\in\zz}{\Big\Vert (I-\Delta_1)^{-it(s_1^0-s_1^1)/2}\cdots (I-\Delta_m)^{-it(s_m^0-s_m^1)/2}}\\
&\qquad \qquad \qquad  {(I-\Delta_1)^{s_1/2}\cdots (I-\Delta_m)^{s_m/2}\big(\sigma(2^j\cdot)\wh{\Psi^{(m)}}\big)\Big\Vert_{L^r((\rn)^m)}}\\
&\lesssim \mathcal{L}_{(s_1,\dots,s_m)}^{r,\Psi^{(m)}}[\sigma]
\end{align*}
where we applied the Marcinkiewicz multiplier theorem in the last inequality. 
This proves (\ref{interpolationresult}).
\end{proof}

We now state the following delicate interpolation result whose proof is based on that of \cite[Lemma 3.7]{Gr_Mi_Ng_Tom}. 
\begin{lemma}\label{interpolation2}
Let $1<r\leq 2$, $m\in\nn$ and  $0<p_1,\dots,p_m\leq \infty$.
For $\ppp:=(p_1,\dots,p_m)$ let 
\begin{equation*}
\Gamma_m(\ppp):=\Big\{\sss:\sum_{k\in J}{\big( {s_k}/{n}-{1}/{p_k} \big)}\geq -{1}/{r'}\quad \text{ for any }~ J\subset \{1,\dots,m\}\Big\}
\end{equation*}
and for each $1\leq u\leq m$ define
\begin{equation*}
\Lambda_m^{u}(\ppp):=\big\{\sss:s_u\geq {n}/{p_u}-{n}/{r'}, \quad s_i\geq {n}/{p_i} ~\text{ for all }~i\not= u \big\}.
\end{equation*}
Then $\Gamma_m(\ppp)$ is the convex hull of $\Lambda_m^u(\ppp)$ for $1\leq u\leq m$.
\end{lemma}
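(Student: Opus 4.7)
The plan is to show that both $\Gamma_m(\ppp)$ and $\mathrm{conv}\big(\bigcup_{u=1}^{m}\Lambda_m^u(\ppp)\big)$ coincide with the single sublevel set
\[
\big\{\sss \in \rr^m :\Phi(\sss) \le 1/r'\big\},\qquad \Phi(\sss):=\sum_{k=1}^{m}\big(1/p_k - s_k/n\big)_+,
\]
where $(\cdot)_+$ denotes the positive part. For the easy inclusion $\mathrm{conv}(\bigcup_u \Lambda_m^u)\subset\Gamma_m$, I would use that $\Gamma_m(\ppp)$ is cut out by linear inequalities and is therefore convex, so it suffices to check $\Lambda_m^u(\ppp)\subset\Gamma_m(\ppp)$ for each $u$. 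Given $\sss\in\Lambda_m^u$ and $J\subset\{1,\dots,m\}$, a routine case split on whether $u\in J$ combined with the defining inequalities of $\Lambda_m^u$ immediately yields $\sum_{k\in J}(s_k/n-1/p_k)\ge -1/r'$.

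The reverse inclusion rests on the structural observation that each $\Lambda_m^u(\ppp)$ is the shifted orthant $s^{(u)}+\rr_{\ge 0}^m$, where the vertex is defined by $s^{(u)}_u:=n/p_u-n/r'$ and $s^{(u)}_i:=n/p_i$ for $i\neq u$. Writing any convex combination of elements of $\bigcup_u \Lambda_m^u$ as a convex combination of these vertices plus a nonnegative perturbation, I would derive the identity
\[
\mathrm{conv}\Big(\bigcup_{u=1}^{m}\Lambda_m^u(\ppp)\Big) = \mathrm{conv}\big(s^{(1)},\dots,s^{(m)}\big) + \rr_{\ge 0}^m.
\]
Consequently $\sss$ belongs to this set iff there exists a probability vector $(\lambda_1,\dots,\lambda_m)$ with $s_k\ge n/p_k - \lambda_k n/r'$ for every $k$. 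Choosing each $\lambda_k$ as small as permitted, namely $\lambda_k=r'(1/p_k-s_k/n)_+$, the feasibility condition $\sum_k \lambda_k \leq 1$ becomes exactly $r'\,\Phi(\sss)\le 1$.

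To close the argument, I would match $\Gamma_m(\ppp)$ with the same sublevel set by noting that the whole family of inequalities $\sum_{k\in J}(s_k/n-1/p_k)\ge -1/r'$ is governed by the single worst case $J=J^*:=\{k:s_k/n<1/p_k\}$, where the left-hand side reduces to $-\Phi(\sss)$. Combining the two descriptions yields the equality $\Gamma_m(\ppp)=\mathrm{conv}\big(\bigcup_u\Lambda_m^u(\ppp)\big)$. The only step requiring a little care is the convex-hull identity for the union of shifted orthants; the remainder is elementary bookkeeping with positive parts.
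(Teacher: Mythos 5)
Your proof is correct, and it takes a genuinely different route from the paper's. The paper works by induction on $m$: it truncates at a large $M$, introduces the bounded sets $\Gamma_m^M(\ppp)$ and $H_m^M(\ppp)$, decomposes $\Gamma_m^M$ according to whether each $s_l$ lies above or below $n/p_l$, handles the piece $\Gamma_m^{0,M}$ by exhibiting it as a simplex with known vertices, and reduces the pieces $\Gamma_m^{l,M}$ to the $(m-1)$-dimensional case on the facets $s_m=n/p_m$ and $s_m=M$, finally letting $M\to\infty$. Your argument sidesteps both the truncation and the induction by characterizing each side as the sublevel set $\{\Phi\le 1/r'\}$ of the convex piecewise-linear function $\Phi(\sss)=\sum_k(1/p_k-s_k/n)_+$: for $\Gamma_m(\ppp)$ this comes from observing that the system of inequalities indexed by $J$ is governed by the single worst-case choice $J^*=\{k:s_k/n<1/p_k\}$, while for $\mathrm{conv}(\bigcup_u\Lambda_m^u)$ it comes from noting that each $\Lambda_m^u$ is the translate $s^{(u)}+\rr_{\ge 0}^m$ of the standard orthant, so that $\mathrm{conv}\big(\bigcup_u(s^{(u)}+\rr_{\ge 0}^m)\big)=\mathrm{conv}(s^{(1)},\dots,s^{(m)})+\rr_{\ge 0}^m$, a clean Minkowski-sum identity that makes membership equivalent to the existence of a probability vector $(\lambda_k)$ with $\lambda_k\ge r'(1/p_k-s_k/n)_+$. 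What your approach buys is a more transparent and explicit description of $\Gamma_m(\ppp)$ and a shorter proof with no auxiliary parameter $M$; what the paper's buys is that its inductive structure and explicit simplex decomposition exhibit precisely which corner points carry the interpolation argument in Proposition \ref{propo4}, which is the way the lemma is actually used downstream. The only nontrivial ingredient on your side is the convex-hull identity for a union of shifted orthants, which is easily justified: the inclusion $\supset$ writes $\sum_u\lambda_u s^{(u)}+c=\sum_u\lambda_u(s^{(u)}+c)$, and $\subset$ groups any finite convex combination by its source $\Lambda_m^u$ and uses that $\rr_{\ge 0}^m$ is a convex cone.
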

\begin{proof}
Let $H_m(\ppp)$ be the convex hull of $\Lambda_m^u(\ppp)$ for $1\leq u\leq m$ and then we need to show that $H_m(\ppp)=\Gamma_m(\ppp)$.

We first note that $\Gamma_m(\ppp)$ is convex as it is the intersection of $2^m-1$ convex sets. Since $\Gamma_m(\ppp)$ contains $\Lambda_m^u(\ppp)$ for all $1\leq u\leq m$, it is clear that
$H_m(\ppp)\subset \Gamma_m(\ppp)$.

We now verify $\Gamma_m(\ppp)\subset H_m(\ppp)$. For this one we restrict the size of $s_i$, $1\leq i\leq m$.
Let $M$ be a sufficiently large number such that $M>mn(1/p_1+\dots+1/p_m)$ and let 
\begin{equation*}
\Gamma_m^M(\ppp):=\Gamma_m(\ppp)\cap \big\{\sss:s_i\leq M ~1\leq i\leq m\big\},
\end{equation*}
\begin{equation*}
\Lambda_m^{u,M}(\ppp):=\Lambda_m^u(\ppp)\cap \big\{\sss:s_i\leq M ~1\leq i\leq m\big\},
\end{equation*}
\begin{equation*}
H_m^{M}(\ppp):=H_m(\ppp)\cap \big\{\sss:s_i\leq M ~1\leq i\leq m\big\},
\end{equation*}
and we actually prove that 
\begin{equation}\label{gammah}
\Gamma_m^M(\ppp)\subset H_m^M(\ppp) \quad \text{ for all }~ 0<p_1,\dots,p_m\leq \infty,
\end{equation}
from which we obtain the desired result by taking $M\to \infty$.
We use an induction argument beginning with the case $m=2$.

When $m=2$, it is trivial because $\Gamma_2^M(\ppp)$ is the convex hull of the five points $(M,M)$, $(n/p_1-n/r',M)$, $(n/p_1-n/r',n/p_2)$, $(n/p_1,n/p_2-n/r')$, and $(M,n/p_2-n/r')$.

Now we fix $m>2$ and assume that (\ref{gammah}) holds with $m$ replaced by $m-1$.
We denote
\begin{equation*}
\Gamma_m^{0,M}(\ppp):=\big\{\sss\in \Gamma_m^M(\ppp):n/p_l-n/r'\leq s_l\leq n/p_l ~ \text{ for all }~ 1\leq l\leq m \big\},
\end{equation*}
\begin{equation*}
\Gamma_m^{l,M}(\ppp):=\big\{\sss\in \Gamma_m^M(\ppp):n/p_l\leq s_l\leq M \big\},\qquad 1\leq l\leq m.
\end{equation*}
It is easy to see that $\Gamma_m^M(\ppp)=\bigcup_{l=0}^{m}\Gamma_m^{l,M}(\ppp)$ and thus it is enough to show that
\begin{equation}\label{gamma0}
\Gamma_m^{0,M}(\ppp)\subset H_m^M(\ppp),
\end{equation}
\begin{equation}\label{gamma1}
\Gamma_m^{l,M}(\ppp)\subset H_m^M(\ppp) \quad \text{ for all }~ 1\leq l\leq m.
\end{equation}

We note that $\Gamma_m^{0,M}(\ppp)$ is the intersection of the two sets
\begin{equation*}
\big\{ \sss:  n/p_l-n/r'\leq s_l\leq n/p_l \quad \text{ for all }~ 1\leq l\leq m\big\}
\end{equation*} and 
\begin{equation*}
\big\{\sss: s_1+\dots+s_m\geq n/p_1+\dots+n/p_m-n/r' \big\},
\end{equation*}  which would be  a standard $m$-simplex with the $m+1$ vertices 
$$(n/p_1,\dots,n/p_m),~ (n/p_1-n/r',n/p_2,\dots,n/p_m),~\dots,~ (n/p_1,\dots,n/p_{m-1},n/p_m-n/r').$$
Since the vertices of the simplex are contained in the convex set $H_m^M(\ppp)$,  (\ref{gamma0}) holds.

To achieve (\ref{gamma1}) we consider only the case $l=m$ as the other cases will follow from a rearrangement.
We additionally define 
\begin{equation*}
\Gamma_{m,1}^{m,M}(\ppp):=\big\{\sss\in \Gamma_m^M(\ppp):s_m=n/p_m \big\},\quad \Gamma_{m,2}^{m,M}(\ppp):=\big\{\sss\in \Gamma_m^M(\ppp):s_m=M \big\}
\end{equation*} and then (\ref{gamma1}) with $l=m$ follows once we prove
\begin{equation}\label{inductionclaim}
\Gamma_{m,1}^{m,M}(\ppp), \Gamma_{m,2}^{m,M}(\ppp)\subset H_m^M(\ppp)
\end{equation}
as $H_{m}^M(\ppp)$ is a convex set.
Therefore, let us prove (\ref{inductionclaim}).
For simplicity, we denote $\ppp^{*m}:=(p_1,\dots,p_{m-1})$ and $\sss^{*m}:=(s_1,\dots,s_{m-1})$ so that $\ppp=(\ppp^{*m},p_m)$ and $\sss=(\sss^{*m},s_m)$.
We observe that
\begin{equation*}
\Gamma_{m,1}^{m,M}(\ppp)=\big\{\sss:\sss^{*m}\in \Gamma_{m-1}^M(\ppp^{*m}),~s_m=n/p_m \big\}.
\end{equation*}
By using the induction hypothesis on $m-1$, we obtain 
\begin{equation*}
\Gamma_{m,1}^{m,M}(\ppp)\subset \big\{\sss:\sss^{*m}\in H_{m-1}^M(\ppp^{*m}),~s_m=n/p_m \big\}
\end{equation*} where the right-hand side is the convex hull of the sets
\begin{equation*}
\big\{\sss:\sss^{*m}\in \Lambda_{m-1}^u(\ppp^{*m}),~s_m=n/p_m \big\}\subset \Lambda_m^u(\ppp), \quad 1\leq u\leq m-1.
\end{equation*}
From the definition of $H_m^M(\ppp)$, it follows that $\Gamma_{m,1}^{m,M}(\ppp)\subset H_m^M(\ppp)$.
Similarly, we have
\begin{equation*}
\Gamma_{m,2}^{m,M}(\ppp)\subset \big\{\sss:\sss^{*m}\in H_{m-1}^M(\ppp^{*m}),~s_m=M \big\}\subset H_m^M(\ppp)
\end{equation*} because $M>n/p_m$ is sufficiently large. This proves (\ref{inductionclaim}).
\end{proof}

Now we prove Proposition \ref{propo4} by induction.

Assume $l=1$ and treat only the case
\begin{equation*}
1<p_1<r,\quad 0<p_2,\dots,p_{\rho}\leq 1, \quad r\leq p_{\rho+1},\dots,p_m\leq \infty.
\end{equation*}
In this case,   condition (\ref{minimal}) is equivalent to
\begin{equation*}
s_1, s_{\rho+1},\dots,s_m>n/r, \quad \text{ and }\quad \sum_{k\in J}{\big({s_k}/{n}-{1}/{p_k}\big)}>-{1}/{r'}
\end{equation*}  for all nonempty subsets $J\subset \{1,\dots,\rho\}$.
Then Lemma \ref{interpolation2} yields that $\sss$ satisfying the above conditions belongs to one of the following sets
\begin{align*}
\mathfrak{S}_u&:=\{\sss: s_u>n/p_u-n/r', \quad s_i>n/p_i ~\text{ for }~i\not= u, ~ 1\leq i \leq \rho \} \\
&\qquad \qquad \qquad \qquad \qquad \qquad \qquad \qquad \cap \{\sss:s_1,s_{\rho+1},\dots,s_m>n/r\}, \quad 1\leq u\leq \rho, \\
\mathfrak{S}_0&:=\{\sss=\theta_1\sss_1+\dots+\theta_{\rho}\sss_{\rho}: \theta_1+\dots+\theta_{\rho}=1,~ 0<\theta_i<1,~\sss_i\in\mathfrak{S}_i,~1\leq i\leq \rho  \}.
\end{align*}
It suffices to show that for $1\leq u\leq \rho$,  $\sss\in \mathfrak{S}_u$ implies (\ref{boundresult})  because the case when $\sss\in\mathfrak{S}_0$ can be proved by using Lemma \ref{interpolation} at most $\rho-1$ times.
If $\sss\in\mathfrak{S}_1$, then the assumptions in Lemma \ref{interpolation} hold with 
\begin{equation*}
(p_1^0,\dots,p_m^0)=(1,p_2,\dots,p_m), \quad (s_1^0,\dots,s_m^0)=(s_1,\dots,s_m)
\end{equation*} and
\begin{equation*}
(p_1^1,\dots,p_m^1)=(r,p_2,\dots,p_m), \quad (s_1^0,\dots,s_m^0)=(s_1,\dots,s_m),
\end{equation*}
due to Proposition \ref{propo1}, \ref{propo2}, and \ref{propo3}, and now (\ref{boundresult}) follows from Lemma \ref{interpolation}. Note that $s_1>n/r=n-n/r'$.

If $\sss\in \mathfrak{S}_u$ for $2\leq u\leq \rho$, then we choose $0<\theta<1$ such that 
\begin{equation*}
s_1>n/p_1=(1-\theta) n+\theta n/r.
\end{equation*} 
We also select $t^0>n$ and $t^1>n/r$ satisfying $s_1=(1-\theta)t^0+\theta t^1$.
Then we interpolate between the two cases 
\begin{equation*}
(p_1^0,\dots,p_m^0)=(1,p_2,\dots,p_m), \quad (s_1^0,\dots,s_m^0)=(t^0,s_2,\dots,s_m)
\end{equation*}
 and 
 \begin{equation*}
 (p_1^1,\dots,p_m^1)=(r,p_2,\dots,p_m), \quad (s_1^1,\dots,s_m^1)=(t^1,s_2,\dots,s_m)
 \end{equation*} 
 using Lemma \ref{interpolation}.
 Here, the assumptions in Lemma \ref{interpolation} with the above two cases follow from Proposition \ref{propo1}, \ref{propo2}, and \ref{propo3}.
 This finally yields (\ref{boundresult}).

We now consider the cases $l\geq 2$ and suppose, by induction, that the claimed assertion holds for $|\mathfrak{L}|=l-1$.
Without loss of generality, we may assume that
$1<p_1,\dots,p_l<r$, $0<p_{l+1},\dots,p_{\rho}\leq 1$, and $r\leq p_{\rho+1},\dots,p_m\leq \infty$, and accordingly, 
we have
\begin{equation*}
s_1,\dots,s_l,s_{\rho+1},\dots,s_m>n/r, \quad \text{ and } \quad \sum_{k\in J}{\big({s_k}/{n}-{1}/b{p_k} \big)}>-{1}/{r'}
\end{equation*} for any nonempty subset $J\subset \{1,\dots,\rho\}$. Similarly as in the case $l=1$, we need to handle only the case that for $1\leq u\leq \rho$
\begin{equation*}
s_1,\dots,s_l,s_{\rho+1},\dots,s_m>n/r,\qquad  s_u>n/p_u-n/r', ~s_i>n/p_i ~\text{ for }~ i\not= u, ~1\leq i\leq \rho.
\end{equation*}
Since $l\geq 2$, we may choose $1\leq v\leq l$ such that $v\not= u$. 
Clearly, 
\begin{equation}\label{svcondition}
s_v>n/p_v~ (>n/r)
\end{equation} since $1<p_v<r$, and $s_u>\max{\big( n/p_u-n/r',n/r \big)}$.
Let $0<\theta<1$ be the number satisfying $1/p_v=(1-\theta)+\theta/r$ and then there exist $t^0>n$ and $t^1>n/r$ so that $s_v=(1-\theta)t^0+\theta t^1$ because of (\ref{svcondition}).
We apply the induction hypothesis to obtain the boundedness with 
\begin{equation*}
(p_1^0,\dots,p_m^0)=(p_1,\dots,p_{v-1},1,p_{v+1},\dots,p_m), ~ (s_1^0,\dots,s_m^0)=(s_1,\dots,s_{v-1},t^0,s_{v+1},\dots,s_m)
\end{equation*}
and another one with
\begin{equation*}
(p_1^1,\dots,p_m^1)=(p_1,\dots,p_{v-1},r,p_{v+1},\dots,p_m), ~ (s_1^0,\dots,s_m^0)=(s_1,\dots,s_{v-1},t^1,s_{v+1},\dots,s_m).
\end{equation*}
Since these are the assumptions in Lemma \ref{interpolation}, (\ref{boundresult}) holds as a result of the lemma.
This completes the proof of Proposition \ref{propo4}.

\section{Proofs of the key lemmas}\label{prooflemmas}

\subsection{Proof of Lemma \ref{nmmultiplier}}
Let $1\leq l\leq m$. The sufficiently large number $M>0$ shall be chosen later.
We utilize an argument of the Marcinkiewicz multiplier theorem. Indeed, we will actually show that for any multi-indices $\alpha_{(1)}$,\dots, $\alpha_{(l)}$ in $\zz^n$ with $|\alpha_{(j)}|\leq n/2+1$ for $1\leq j\leq l$,
\begin{equation}\label{marcin}
\big|\partial_1^{\alpha_{(1)}}\cdots\partial_l^{\alpha_{(l)}}\mathcal{N}_{(M)}(y_1,\dots,y_l) \big|\lesssim_{\alpha_{(1)},\dots,\alpha_{(l)}}|y_1|^{-|\alpha_{(1)}|}\cdots |y_l|^{-|\alpha_{(l)}|}.
\end{equation}
We first observe that
\begin{align}\label{multiplierpoint}
\big|\partial_1^{\alpha_{(1)}}\cdots\partial_l^{\alpha_{(l)}} \mathcal{N}_{(M)}(y_1,\dots,y_l)\big|& \lesssim \sum_{u=1}^{l}\sum_{\alpha^1_{(u)}+\dots+\alpha^{l+1}_{(u)}=\alpha_{(u)}} \Big|  \partial_1^{\alpha^1_{(1)}}\cdots \partial_l^{\alpha^1_{(l)}} \Big\langle \frac{1}{l}\sum_{k=1}^{l}y_k \Big\rangle^{s_1}\Big|\\
&\qquad \quad  \times  \bigg(\prod_{j=2}^{l}\Big| \partial_1^{\alpha_{(1)}^j}\cdots\partial_l^{\alpha_{(l)}^{j}}\Big\langle \frac{1}{l}\sum_{k=1}^{l}{(y_k-y_j)} \Big\rangle^{s_j}\Big|\bigg)\nonumber\\
&\qquad \quad \times \Big|\partial_1^{\alpha^{l+1}_{(1)}}{\langle y_1\rangle^{-(s_1+\dots+s_l)}} \Big|  \bigg( \prod_{j=2}^{l}\Big| \partial_j^{\alpha_{(j)}^{l+1}} {\langle y_j\rangle^{-M}}\Big| \bigg).\nonumber
\end{align}

Using the argument in \cite[p450]{Gr0}, we obtain that
\begin{align*}
\Big|  \partial_1^{\alpha^1_{(1)}}\cdots \partial_l^{\alpha^1_{(l)}} \Big\langle \frac{1}{l}\sum_{k=1}^{l}y_k \Big\rangle^{s_1}\Big|&\lesssim \Big\langle \frac{1}{l}\sum_{k=1}^{l}y_k \Big\rangle^{s_1-(|\alpha_{(1)}^{1}|+\dots+|\alpha_{(l)}^{1}|)},\\
\Big| \partial_1^{\alpha_{(1)}^j}\cdots\partial_l^{\alpha_{(l)}^{j}}\Big\langle \frac{1}{l}\sum_{k=1}^{l}{(y_k-y_j)}  \Big\rangle^{s_j}\Big|&\lesssim \Big\langle \frac{1}{l}\sum_{k=1}^{l}{(y_k-y_j)}  \Big\rangle^{s_j-(|\alpha_{(1)}^j|+\dots+|\alpha_{(l)}^{j}|)},\\
 \Big|\partial_1^{\alpha^{l+1}_{(1)}}{\langle y_1\rangle^{-(s_1+\dots+s_l)}} \Big|&\lesssim {\langle y_1\rangle^{-(s_1+\dots+s_l+|\alpha_{(1)}^{l+1}|)}},\\
\Big| \partial_j^{\alpha_{(j)}^{l+1}} \langle y_j\rangle^{-M}\Big|&\lesssim {\langle y_j\rangle^{-(M+|\alpha_{(j)}^{l+1}|)}}.
\end{align*}
We choose a positive number $N$ such that  $M>N+n+2>s_1+\dots+s_l+n+2$.
Since
\begin{equation*}
\dfrac{\Big\langle \frac{1}{l}\sum_{k=1}^{l}y_k \Big\rangle^{s_1}  \prod_{j=2}^{l} \Big\langle \frac{1}{l}\sum_{k=1}^{l}{(y_k-y_j)} \Big\rangle^{s_j}}{\langle y_1\rangle^{s_1+\dots+s_l}\prod_{j=2}^{l}{\langle y_j\rangle^{N}} }\lesssim 1,
\end{equation*} 
we finally obtain that the right-hand side of (\ref{multiplierpoint}) is dominated by a constant times the product of
\begin{align*}
I_1:={ \Big\langle \frac{1}{l}\sum_{k=1}^{l}y_k \Big\rangle^{-(|\alpha_{(1)}^{1}|+\dots+|\alpha_{(l)}^{1}|)}},&\qquad I_2:=\prod_{j=2}^{l} {\Big\langle \frac{1}{l}\sum_{k=1}^{l}{(y_k-y_j)} \Big\rangle^{-(|\alpha_{(1)}^{j}|+\dots+|\alpha_{(l)}^j|)}},  \\ 
I_3:={\langle y_1\rangle^{-|\alpha_{(1)}^{l+1}|}},& \qquad I_4:=\prod_{j=2}^{l}{\langle y_j\rangle^{-(M-N+|\alpha_{(j)}^{l+1}|)}}.
\end{align*}

If $|y_1|>2l|y_j|$ for all $2\leq j\leq l$, then
\begin{equation*}
I_1\lesssim { \langle y_1 \rangle^{-|\alpha_{(1)}^{1}|}}\quad\text{and}\quad I_2\lesssim \prod_{j=2}^{l} {\langle y_1\rangle^{-|\alpha_{(1)}^{j}|}},  
\end{equation*} which implies that
\begin{equation*}
I_1\times I_2\times I_3\times I_4 \lesssim {\langle y_1\rangle^{-|\alpha_{(1)}|}}\prod_{j=2}^{l}{\langle y_j\rangle^{-(M-N)}}\lesssim |y_1|^{-|\alpha_{(1)}|}\cdots |y_l|^{-|\alpha_{(l)}|}
\end{equation*}
for $M-N>n+2>n/2+1$.

Now assume that $|y_1|\leq 2l \max{(|y_2|,\dots,|y_l|)}$ and actually, only the case $|y_1|\leq 2l |y_2|$ will be considered. In that case, we see that
\begin{align*}
I_1\times I_2\times I_3\times I_4\leq I_4&\lesssim {\langle y_1\rangle^{-|\alpha_{(1)}|}} {\langle y_2\rangle^{-(M-N-|\alpha_{(1)}|)}}\prod_{j=3}^{l}{\langle y_j\rangle^{-(M-N)}}\\
&\lesssim  |y_1|^{-|\alpha_{(1)}|}\cdots |y_l|^{-|\alpha_{(l)}|}
\end{align*} for $M-N\geq M-N-|\alpha_{(1)}|>n+2-|\alpha_{(1)}|\geq |\alpha_{(j)}|$ for $2\leq j\leq l$.

This proves (\ref{marcin}).

 \subsection{Proof of Lemma \ref{keylemma1}}
 
   Without loss of generality, we assume that $J_0=\{1,\ldots,v\}$ for some $1\le v\leq l$, and $\Vert{f_i}\Vert_{L^{\infty}(\rn)}=1$ for all $l+1\leq i\leq m$. 
   Fix
   \begin{equation*}
   x\in \Big(\bigcap_{i=v+1}^lQ_i^*\Big)\setminus \Big(\bigcup_{i=1}^{v}Q_i^*\Big) .
   \end{equation*}
   (When $v=l,$ just fix $x\in \mathbb R^n\setminus (\bigcup_{i=1}^lQ_i^*)$.)
   Now we   write
    \begin{equation*}
    T_{\sigma}(a_1,\ldots,a_l, f_{l+1},\ldots,f_m)(x) = \sum_{j\in\mathbb Z}g_j(x),
    \end{equation*}
    where
 \begin{equation}\label{gjx}
 g_j(x):= \int_{(\rn)^m}2^{jmn} K_j\big(2^j(x-y_1), \ldots, 2^j(x-y_m)\big)\Big( \prod_{i=1}^{l}a_i(y_i)\Big)\Big( \prod_{i=l+1}^{m} f_{i}(y_{i})\Big) d\yyy
    \end{equation}
    with $K_j:= \big(\sigma(2^j\cdot)\wh{\Psi^{(m)}} \big)^{\vee}$.
    Let $c_i$ be the center of the cube $Q_i$ $(1\le i\le l)$.
    For $1\le i\le v$, since $x\notin Q_i^*$, we have ${|{x-c_i}|}\approx {|{x-y_i}|}$ for all $y_i\in Q_i$.
    Fix $1\le k\le v$ and for $1\leq u\leq w\leq m$ denote
    \begin{equation*}
    K_j^{(u,w)}(x,\yyy):=K_j\big(y_1,\dots,y_{u-1},2^j(x-y_u),\dots,2^j(x-y_{w}),y_{w+1},\dots,y_m\big)
    \end{equation*}
    for convenience of notation.
 We see that
    \begin{align*}
      &\Big( \prod_{i=1}^{v}\langle2^j(x-c_i)\rangle^{s_i}\Big){|{g_j(x)}|}\\
      & \lesssim 2^{jmn} \Big(\prod_{i=1}^l\Vert{a_i}\Vert_{L^{\infty}(\rn)}\Big) \int_{Q_1\times\cdots\times{Q_{l}}\times (\rn)^{m-l}} \Big(\prod_{i=1}^{v}\langle2^j(x-y_i)\rangle^{s_i}\Big){\big|{K_j^{(1,m)}\big(x,\yyy\big)}\big|}d\yyy 
     \end{align*}
  and the integral in the preceding expression is less than
  \begin{align*}
  &\int_{Q_1\times\cdots\times Q_v\times (\rn)^{m-v}}{\Big(\prod_{i=1}^{v}\langle 2^j(x-y_i)\rangle^{s_i} \Big)\big|K_j^{(1,m)}\big(x,\yyy\big) \big|    }d\yyy\\
  &=2^{-jn(m-v)}\int_{Q_1\times\cdots\times Q_v\times (\rn)^{m-v}}{\Big(\prod_{i=1}^{v}\langle 2^j(x-y_i)\rangle^{s_i} \Big) \big|K_j^{(1,v)}\big(x,\yyy\big) \big|    }d\yyy\\
  &\leq 2^{-jn(m-v)}\Big( \prod_{i=1}^{v}{|Q_i|}\Big)\int_{(\rn)^{m-v}}{\Big[\int_{y_k\in Q_k}{|Q_k|^{-1}\langle 2^j(x-y_k)\rangle^{s_k}}} \\
  &\qquad \times {{\Big\Vert  \Big( \prod_{\substack{i=1\\i\not= k}}^{v}\langle y_i\rangle^{s_i}\Big)K_j^{(k,k)}\big(x,\yyy\big)    \Big\Vert_{L^{\infty}(y_1,\dots,y_{k-1},y_{k+1},\dots,y_v)}}dy_k \Big]}dy_{v+1}\cdots dy_m.
  \end{align*}   
Using Lemma \ref{lem:LInfL2}, the integral in the last expression is majored by a constant multiple of
\begin{align*}
&2^{-jn(m-v)}\Big( \prod_{i=1}^{v}{|Q_i|}\Big)\int_{y_k\in Q_k}{|Q_k|^{-1}\langle 2^j(x-y_k)\rangle^{s_k}} \\
  &\qquad  \times {\Big[ \int_{(\rn)^{m-v}}}{{\Big\Vert  \Big( \prod_{\substack{i=1\\i\not= k}}^{v}\langle y_i\rangle^{s_i}\Big)K_j^{(k,k)}\big(x,\yyy\big)    \Big\Vert_{L^{r'}(y_1,\dots,y_{k-1},y_{k+1},\dots,y_v)}} }dy_{v+1}\cdots dy_m \Big]dy_k
\end{align*} 
and this is further estimated by
\begin{align*}
&2^{-jn(m-v)}\Big( \prod_{i=1}^{v}|Q_i|\Big)\int_{y_k\in Q_k}{|Q_k|^{-1}\langle 2^j(x-y_k)\rangle^{s_k}}   \\
&\qquad \qquad \qquad \times  {\Big\Vert   \Big( \prod_{\substack{i=1\\i\not= k}}^{m}\langle y_i\rangle^{s_i}\Big)K_j^{(k,k)}\big(x,\yyy\big)    \Big\Vert_{L^{r'}(y_1,\dots,y_{k-1},y_{k+1},\dots,y_m)}     }dy_k ,
\end{align*} 
by applying   H\"older's inequality, as $s_i>n/r$ for $v+1\leq i\leq m$.
We finally obtain
\begin{equation}\label{noderivativeest}
\Big( \prod_{i=1}^{v}\langle 2^j(x-c_i)\rangle^{s_i}\Big)|g_j(x)| \leq 2^{jvn}\Big(\prod_{i=1}^{v}{|Q_i|^{1-1/p_i}}\Big)\Big(\prod_{i=v+1}^{l}{b_i(x)} \Big)h_j^{(k,0)}(x), 
\end{equation}
where  $b_i(x):= {\left\vert{Q_i}\right\vert}^{-1/{p_i}}\chi_{Q_i^*}(x)$ for $v+1\le i\le l$ and
 \begin{equation*}
    h_j^{(k,0)}(x):=  \dfrac1{{|{Q_k}|}}\int_{Q_k}\langle 2^j(x-y_k)\rangle^{s_k} {\Big\Vert   \Big( \prod_{\substack{i=1\\i\not= k}}^{m}\langle y_i\rangle^{s_i}\Big)K_j^{(k,k)}\big(x,\yyy\big)    \Big\Vert_{L^{r'}(y_1,\dots,y_{k-1},y_{k+1},\dots,y_m)}     }dy_k.
    \end{equation*}
 The functions $b_i$, $v+1\le i\le l$, obviously satisfy the estimate $\Vert{b_i}\Vert_{L^{p_i}(\rn)} \lesssim 1$, and the Minkowski inequality with $1<r'<\infty$ gives
   \begin{align}\label{hr'}
    \big\Vert{h_j^{(k,0)}}\big\Vert_{L^{r'}(\rn)}&\leq \frac{1}{|Q_k|}\int_{Q_k}{\Big(\int_{\rn}{\langle 2^j(x-y_k)\rangle^{r's_k}\Big\Vert \cdots \Big\Vert_{L^{r'}((\rn)^{m-1})}^{r'}}dx \Big)^{1/r'}}dy_k\nonumber\\
    &\lesssim 2^{-jn/r'}\Big\Vert \Big( \prod_{i=1}^{m}{\langle \cdot_i\rangle^{s_i}\Big)K_j}\Big\Vert_{L^{r'}((\rn)^m)}\lesssim 2^{-jn/r'}\mathcal{L}_{\sss}^{r,\Psi^{(m)}}[\sigma]
    \end{align}
    where we made use of  a change of variables and applied the Hausdorff-Young inequality in the 
    preceding inequalities.

    On the other hand, using the vanishing moment condition of $a_k$ and Lemma \ref{smalllemma}, we write
    \begin{align*}
    \big|g_j(x)\big| &\lesssim 2^{jmn}\sum_{|\alpha|=N_k+1}\int_0^1{\int_{(\rn)^m}{\big( 2^j|y_k-c_k|\big)^{N_k+1}}}\Big( \prod_{i=1}^{l}{|a_i(y_i)|}\Big)\\
    &\times {{{\Big|\partial_k^{\alpha}K_j\big(2^j(x-y_1),\dots,2^j(x-y_{k-1}),2^jx_{c_k,y_k}^{t},2^j(x-y_{k+1}),\dots,2^j(x-y_m)\big) \Big|} }d\yyy}dt    
    \end{align*}
    where
    $x_{c_k,y_k}^t:=x-c_k-t(y_k-c_k)$ and $\partial^{\alpha}_{k}K_j (z_1, \dots, z_m):= \partial_{z_k}^{\alpha} K_j (z_1, \dots, z_m)$.
    Notice that $|{x_{c_k,y_k}^t}|\approx |{x-c_k}|$ for $x\not\in Q_k^{\ast}$, $y_k\in Q_k$, and $0<t<1$.
Repeating the preceding argument that is used to establish (\ref{noderivativeest}), we also obtain
    \begin{equation}\label{equ:H1NFunc}
   \Big(  \prod_{i=1}^{v}\langle 2^j(x-c_i)\rangle^{s_i}\Big){|{g_j(x)}|}\lesssim 2^{jvn}\Big(\prod_{i=1}^{v}{|{Q_i}|}^{1-1/{p_i}} \Big)\Big(\prod_{i=v+1}^lb_i(x)\Big) h_j^{(k,1)}(x),
    \end{equation}
    where $b_i(x):=|Q_i|^{-1/p_i}\chi_{Q_i^*}(x)$ as before, and
    \begin{align*}
    h_j^{(k,1)}(x)&:=\big( 2^jl(Q_k)\big)^{N_k+1}\sum_{|\alpha|=N_k+1}  \dfrac1{{|{Q_k}|}}\int_0^1\int_{Q_k}\langle 2^jx_{c_k,y_k}^{t}\rangle^{s_k}\\
       & \quad \times \Big\Vert{\Big( \prod_{\substack{i=1\\i\ne k}}^{m}\langle\cdot_i\rangle^{s_i} \Big)\partial_k^{\alpha}K_j\big(\cdot_1,\ldots,\cdot_{k-1},2^jx_{c_k,y_k}^{t},\cdot_{k+1}, \dots,\cdot_m\big)}\Big\Vert_{L^{r'}((\rn)^{m-1})}dy_k dt.
    \end{align*}
Now Minkowski's inequality and Lemma \ref{lem:LInfL2} yield that
\begin{equation}\label{hr1'}
    \big\Vert{h_j^{(k,1)}}\big\Vert_{L^{r'}(\rn)}\lesssim 2^{-\frac{jn}{r'}}\big(2^jl(Q_k)\big)^{N_k+1}\mathcal{L}_{\sss}^{r,\Psi^{(m)}}[\sigma].
\end{equation} which is the counterpart of (\ref{hr'}) for $h_j^{(k,1)}$.

    Combining \eqref{noderivativeest} and \eqref{equ:H1NFunc}, we obtain
    \begin{equation}\label{equ:GjQ2}
  |g_j(x)| \lesssim 2^{jvn}\Big(\prod_{i=1}^{v}{|Q_i|}^{1-1/{p_i}}\langle2^j(x-c_i)\rangle^{-s_i}\Big)\Big(\prod_{i=v+1}^l b_i(x)\Big)\min\Big( h_j^{(k,0)}(x),h_j^{(k,1)}(x)\Big)
    \end{equation}
    for all $x\in \big(\bigcap_{i=v+1}^lQ_i^*\big)\setminus \big(\bigcup_{i=1}^{v}Q_i^*\big)$ and all $1\le k\le v.$

    Now we will construct nonnegative functions $u_{i,j}$ for $1\leq i \leq v$ such that
\begin{equation*}
|{g_j(x)}|\lesssim \mathcal{L}_{\sss}^{r,\Psi^{(m)}}[\sigma]\Big( \prod_{i=1}^{v}u_{i,j}(x)\Big)\Big( \prod_{i=v+1}^lb_i(x)\Big)
\end{equation*}
for all  $x\in \big(\bigcap_{i=v+1}^lQ_i^*\big)\setminus \big(\bigcup_{i=1}^{v}Q_i^*\big)$ and 
\begin{equation}\label{ujkest}
\Big\Vert{\sum_{j\in\zz}u_{i,j}}\Big\Vert_{L^{p_i}(\rn)}\lesssim 1.
\end{equation}
Then the lemma follows by taking 
\begin{equation}
b_i:=\sum_{j\in\zz}u_{i,j} \quad 1\leq i\leq v.
\end{equation} 
For this, we choose $\lambda_{i}$, $1\le i\le v$, such that
 \begin{equation*}
    0\leq \lambda_i<{1}/{r'}, \quad {s_i}/{n}>{1}/{p_i}-{1}/{r'}+\lambda_i, \quad \sum_{i=1}^{v}\lambda_i= {(v-1)}/{r'}.
 \end{equation*}
This is possible since the second condition in (\ref{conditionss}), with $J\subset \{1,\dots,v\}$, yields
\begin{equation*}
\sum_{i=1}^{v}{\min{\big( 0,{s_i}/{n}-{1}/{p_i} \big)}}>-{1}/{r'},
\end{equation*}
which further implies
 \begin{equation*}
    \sum_{i=1}^v\min\big({1}/{r'},{s_i}/{n}-{1}/{p_i}+{1}/{r'}\big)>{(v-1)}/{r'}.
 \end{equation*}
We set  
\begin{equation}\label{betak}
\alpha_{i}:= {1}/{p_i}-{1}/{r'}+\lambda_i \quad \text{and} \quad \beta_{i}:= 1-r'\lambda_i.
\end{equation}
    Then we have $\alpha_i >0$, $\beta_i >0$, and $\sum_{i=1}^{v}\beta_i =1$.
Letting
    \begin{equation*}
    u_{i,j}(x):= \big( \mathcal{L}_{\sss}^{r,\Psi^{(m)}}[\sigma]\big)^{-\beta_i}2^{jn}{\left\vert{Q_i}\right\vert}^{1-1/{p_i}}\langle2^j(x-c_i)\rangle^{-s_i}\chi_{(Q_i^*)^c}(x)
 \min\Big( h_j^{(i,0)}(x),h_j^{(i,1)}(x)\Big)^{\beta_i}
    \end{equation*} for $1\leq i\leq v$, it is easy to see, from \eqref{equ:GjQ2}, that
   \begin{equation}\label{gjpointest}
    |g_j(x)|\lesssim \mathcal{L}_{\sss}^{r,\Psi^{(m)}}[\sigma]\Big( \prod_{i=1}^{v}u_{i,j}(x)\Big)\Big(\prod_{i=v+1}^lb_i(x)\Big)
    \end{equation}
     for all  $x\in (\cap_{i=v+1}^lQ_i^*)\setminus (\cup_{i=1}^{v}Q_i^*).$
     
It remains to verify (\ref{ujkest}).
    Since $1/{p_i} = \alpha_i+{\beta_i}/{r'},$   H\"older's inequality yields
    \begin{align*}
   \big\Vert{u_{i,j}}\big\Vert_{L^{p_i}(\rn)}&\leq \big(\mathcal{L}_{\sss}^{r,\Psi^{(m)}}[\sigma]\big)^{-\beta_i}2^{jn}{|{Q_i}|}^{ 1- 1/{p_i}}\big\Vert{\langle 2^j(\cdot-c_i)\rangle^{-s_i}\chi_{(Q_i^*)^c}}\big\Vert_{L^{{1}/{\alpha_i}}(\rn)}\\
    &\qquad \qquad \times \min{\Big(\big\Vert h_j^{(i,0)}\big\Vert_{L^{r'}(\rn)}^{\beta_i},  \big\Vert h_j^{i,1} \big\Vert_{L^{r'}(\rn)}^{\beta_i}\Big)}.
    \end{align*}
    We observe that
\begin{equation*}
 \big\Vert{\langle 2^j(\cdot-c_i)\rangle^{-s_i}\chi_{(Q_i^*)^c}}\big\Vert_{L^{1/\alpha_i}(\rn)} \approx 2^{-jn\alpha_i}\min\big(1,\big(2^jl(Q_i)\big)^{-(s_i-\alpha_in)}\big)
\end{equation*}
since ${s_i}/{\alpha_i}>n$.
In addition, it follows from (\ref{hr'}) and (\ref{hr1'}) that
\begin{equation*}
 \min{\Big(\big\Vert h_j^{(i,0)}\big\Vert_{L^{r'}(\rn)}^{\beta_i},  \big\Vert h_j^{(i,1)} \big\Vert_{L^{r'}(\rn)}^{\beta_i}\Big)}\lesssim 2^{-jn\beta_i/r'}\big(\mathcal{L}_{\sss}^{r,\Psi^{(m)}}[\sigma] \big)^{\beta_i}\min{\big(1,\big( 2^jl(Q_i)\big)^{\beta_i(N_i+1)} \big)}.
\end{equation*}
 In conclusion, we have
\begin{equation}\label{uijest}
\big\Vert{u_{i,j}}\big\Vert_{L^{p_i}(\rn)}\lesssim \begin{cases}
{  (2^{j}l(Q_i) )^{- (n/p_i-n) +  \beta_i(N_i+1)}  }, & { \quad \text{if} \quad  2^{j} l(Q_i) \le 1  }\\
{  (2^{j}l(Q_i) )^{- (n/p_i-n) -(s_i- \alpha_i n)}  }, & {  \quad  \text{if}  \quad   2^{j} l(Q_i) > 1 }.
\end{cases}
\end{equation}
We choose $N_i$ sufficiently large so that $- (n/p_i-n) +  \beta_i(N_i+1)>0$, and then (\ref{ujkest}) follows immediately.

The proof of Lemma \ref{keylemma1} is done.

\subsection{Proof of Lemma \ref{keylemma2}}
It follows from (\ref{conditionss}) that there exists $1<t<r$ such that 
\begin{equation}\label{stot2}
s_1,\dots,s_m>n/t>n/r, \qquad \sum_{k\in J}{\big({s_k}/{n}-{1}/{p_k} \big)}>-{1}/{t'}>-{1}/{r'}
\end{equation} for every nonempty subset $J\subset \{1,\dots,l\}$. Then (\ref{compactembedding1}) holds.

For each $J_0\subset \mathrm{I}$, let 
\begin{equation*}
E_{J_0}:=\Big( \bigcap_{i\in \mathrm{I}\setminus J_0}Q_i^*\Big)\setminus \Big( \bigcup_{i\in J_0}{Q_i^*}\Big)
\end{equation*} where $E_{\emptyset}=\bigcap_{i\in \mathrm{I}}Q_i^*$ for $J_0=\emptyset$, and $E_{\mathrm{I}}=\big( \bigcup_{i\in \mathrm{I}}{Q_i^*}\big)^c$ for $J_0=\mathrm{I}$.
Then we see that the left-hand side of (\ref{kl2condition1}) can be decomposed as
\begin{align*}
\sum_{J_0\subset \mathrm{I}}\Big( \sum_{j\in\zz}\big|T_{\sigma_{j,1}^{\kappa_1,\kappa_2}}\big( a_{1},\dots,a_{l},f_{l+1},\dots,f_m\big)(x)\big|\Big)\chi_{E_{J_0}}(x).
\end{align*}
Since it is a finite sum over $J_0$, it suffices to show that for each $J_0\subset \mathrm{I}$, there exist functions $b_i^{J_0}$, $1\leq i\leq l$, and $F_i^{J_0}$, $l+1\leq i\leq \rho$ having the properties that
for $x\in E_{J_0}$
\begin{equation}\label{kl22condition1}
 \sum_{j\in\zz}\big|T_{\sigma_{j,1}^{\kappa_1,\kappa_2}}\big( a_{1},\dots,a_{l},f_{l+1},\dots,f_m\big)(x)\big| \lesssim \mathcal{L}_{\sss}^{r}[\sigma]\Big( \prod_{i\in\mathrm{I}}b_i^{J_0}(x)\Big)\Big( \prod_{i\in\mathrm{II}}F_i^{J_0}(x)\Big),
\end{equation}
\begin{equation}\label{kl22condition2}
\Vert b_i^{J_0}\Vert_{L^{p_i}(\rn)}\lesssim 1,\quad \text{ for }~ i\in\mathrm{I},
\end{equation}
\begin{equation}\label{kl22condition3}
\Vert F_i^{J_0}\Vert_{L^{p_i}(\rn)}\lesssim \Vert f_i\Vert_{L^{p_i}(\rn)}, \quad \text{ for }~ i\in \mathrm{II}.
\end{equation}

\hfill

We first consider the case $J_0=\emptyset$ and divide the proof into six cases  based on the location of $\kappa_1$ and $\kappa_2$. Let $x\in E_{J_0}$.

{\bf Case1 : $\kappa_1,\kappa_2\in \mathrm{I}.$} 
By applying (\ref{2express}), Lemma \ref{keyestilemma}, (\ref{katogeneral}), (\ref{compactembedding1}), and (\ref{maximalcompare}),  we have
\begin{align*}
&\big| T_{\sigma_{j,2}^{\kappa_1,\kappa_2}}\big(a_1,\dots,a_l,f_{l+1},\dots,f_m \big)(x)\big|\\
&\lesssim\mathcal{L}_{\sss}^{r}[\sigma]\mathcal{M}_t(a_{\kappa_1})_j(x)\mathcal{M}_t(a_{\kappa_2})_j(x)\Big( \prod_{i\in\mathrm{I}\setminus \{\kappa_1,\kappa_2\}}\mathcal{M}_ta_i(x)\Big)\Big(\prod_{i\in \mathrm{II}}{\mathcal{M}_tf_i(x)} \Big),
\end{align*} since $\mathcal{M}_tf_i(x)\leq \Vert f_i\Vert_{L^{\infty}(\rn)}=1$ for $i\in\mathrm{III}$.
Now we take the sum over $j\in\zz$ to both sides and apply the Cauchy-Schwarz inequality.
Then (\ref{kl22condition1}) follows from taking
\begin{align*}
b_i^{J_0}(x)&:=\Big(\sum_{j\in\zz} \big( \mathcal{M}_t(a_i)_j(x)\big)^2 \Big)^{1/2}\chi_{Q_i^*(x)},\qquad  i\in \{\kappa_1,\kappa_2\}\\
b_i^{J_0}(x)&:=\mathcal{M}_ta_i(x)\chi_{Q_i^*}(x), \qquad i\in\mathrm{I}\setminus \{\kappa_1,\kappa_2\},\\
F_i^{J_0}(x)&:=\mathcal{M}_tf_i(x), \qquad   i\in \mathrm{II}.
\end{align*}
Moreover, using   H\"older's inequality, (\ref{maximal1}) with $t<2$, and (\ref{littlewood}), we obtain
\begin{equation*}
\Vert b_i^{J_0}\Vert_{L^{p_i}(\rn)}\leq |Q_i^*|^{1/p_i-1/2}\big\Vert \big\{ \mathcal{M}_t(a_i)_j\big\}_{j\in\zz}\big\Vert_{L^2(\ell^2)}\lesssim |Q_i|^{1/p_1-1/2}\Vert a_i\Vert_{L^2(\rn)}\lesssim 1, ~ i\in \{\kappa_1,\kappa_2\},
\end{equation*}
\begin{equation*}
\Vert b_i^{J_0}\Vert_{L^{p_i}(\rn)}\leq |Q_i^*|^{1/p_i-1/2}\big\Vert \mathcal{M}_ta_i\big\Vert_{L^2(\rn)}\lesssim |Q_i|^{1/p_i-1/2}\Vert a_i\Vert_{L^2(\rn)}\lesssim 1, \quad  i\in \mathrm{I}\setminus \{\kappa_1,\kappa_2\},
\end{equation*}
\begin{equation*}
\Vert F_i^{J_0}\Vert_{L^{p_i}(\rn)}\lesssim \Vert f_i\Vert_{L^{p_i}(\rn)},\quad i\in \mathrm{II},
\end{equation*} which completes the proof of (\ref{kl22condition2}) and (\ref{kl22condition3}).

{\bf Case2 : $\kappa_1,\kappa_2\in \mathrm{II}.$}
Similarly, (\ref{kl22condition1}) holds with
\begin{align*}
b_i^{J_0}(x)&:=\mathcal{M}_ta_i(x)\chi_{Q_i^*}(x),\qquad  i\in \mathrm{I},\\
F_{i}^{J_0}(x)&:=\Big(\sum_{j\in\zz}\big(\mathcal{M}_t(f_{i})_j(x)\big)^2\Big)^{1/2}, \qquad i\in\{ \kappa_1,\kappa_2\},\\
F_i^{J_0}(x)&:=\mathcal{M}_tf_i(x), \qquad  i\in \mathrm{II}\setminus \{\kappa_1,\kappa_2\}.
\end{align*}
Obviously, (\ref{kl22condition2}) and (\ref{kl22condition3}) are clear as (\ref{littlewood}) is applied when $i\in \{\kappa_1,\kappa_2\}$.

{\bf Case3 : $\kappa_1,\kappa_2\in \mathrm{III}.$}
In this case, we cannot use the classical Littlewood-Paley theory as $L^{\infty}$ norm is not characterized by $L^{\infty}$ norm of a square function.
Instead, we can benefit from Lemma \ref{bmoboundlemma}, using $\mathfrak{M}_{\sigma,2^j}^t$, not $\mathcal{M}_t$.
By applying (\ref{4express}), Lemma \ref{keyestilemma}, (\ref{katogeneral}), (\ref{compactembedding1}), and (\ref{maximalcompare}), we obtain
\begin{align*}
&\big| T_{\sigma_{j,1}^{\kappa_1,\kappa_2}}\big(a_1,\dots,a_l,f_{l+1},\dots,f_m \big)(x)\big| \lesssim \mathcal{L}_{\sss}^{r}[\sigma]\Big( \prod_{i\in\mathrm{I}} \mathcal{M}_ta_i(x)\Big)  \mathfrak{M}_{s_{l+1},2^j}^t(f_{l+1})^{j+1}(x) \\
&\qquad \qquad \qquad \qquad \qquad\qquad \times   \Big(\prod_{i\in\mathrm{II}\setminus \{l+1\}}{\mathcal{M}_tf_i(x)} \Big)\mathfrak{M}_{s_{\kappa_1},2^j}^{t}(f_{\kappa_1})_j(x)\mathfrak{M}_{s_{\kappa_2},2^j}^{t}(f_{\kappa_2})_j(x).
\end{align*}
Now we take
\begin{align*}
b_i^{J_0}(x)&:=\mathcal{M}_ta_i(x)\chi_{Q_i^*}(x),\qquad  i\in \mathrm{I},\\
F_{l+1}^{J_0}(x)&:=\sum_{j\in\zz}\mathfrak{M}_{s_{l+1},2^j}^t(f_{l+1})^{j+1}(x)\mathfrak{M}_{s_{\kappa_1},2^j}^{t}(f_{\kappa_1})_j(x)\mathfrak{M}_{s_{\kappa_2},2^j}^{t}(f_{\kappa_2})_j(x),\\
F_i^{J_0}(x)&:=\mathcal{M}_tf_i(x), \qquad   i\in \mathrm{II}\setminus \{l+1\}.
\end{align*}
Then (\ref{kl22condition1}), (\ref{kl22condition2}), and (\ref{kl22condition3}) are immediate for $i\not= l+1$, and the case $i=l+1$ follows from Lemma \ref{bmoboundlemma}.

{\bf Case4 : $\kappa_1\in \mathrm{I}$, $\kappa_2\in \mathrm{II}$.}
Using the arguments in {\bf Case1} and {\bf Case2},
we are done with the choices 
\begin{align*}
b_{\kappa_1}^{J_0}(x)&:=\Big(\sum_{j\in\zz} \big( \mathcal{M}_t(a_{\kappa_1})_j(x)\big)^2 \Big)^{1/2}\chi_{Q_{\kappa_1}^*}(x),\\
b_i^{J_0}(x)&:=\mathcal{M}_ta_i(x)\chi_{Q_i^*}(x),\qquad   i\in \mathrm{I}\setminus \{\kappa_1\},\\
F_{\kappa_2}^{J_0}(x)&:=\Big(\sum_{j\in\zz}\big(\mathcal{M}_t(f_{\kappa_2})_j(x)\big)^2\Big)^{1/2},\\ 
F_i^{J_0}(x)&:=\mathcal{M}_tf_i(x), \qquad   i\in \mathrm{II}\setminus \{\kappa_2\}.
\end{align*}

{\bf Case5 : $\kappa_1\in \mathrm{I}$, $\kappa_2\in \mathrm{III}$.}
It follows from  (\ref{3express}), Lemma \ref{keyestilemma},  (\ref{katogeneral}), (\ref{compactembedding1}), and (\ref{maximalcompare}) that
(\ref{kl22condition1}) holds with
\begin{align*}
b_{\kappa_1}^{J_0}(x)&:=\Big(\sum_{j\in\zz} \big( \mathcal{M}_t(a_{\kappa_1})_j(x)\big)^2 \Big)^{1/2}\chi_{Q_{\kappa_1}^*}(x),\\
b_i^{J_0}(x)&:=\mathcal{M}_ta_i(x)\chi_{Q_i^*}(x),\qquad  i\in \mathrm{I}\setminus \{\kappa_1\},\\
F_{l+1}^{J_0}(x)&:=\Big(\sum_{j\in\zz}\big( \mathfrak{M}_{s_{l+1},2^j}^t(f_{l+1})^{j+1}(x)\big)^2\big(\mathfrak{M}_{s_{\kappa_2},2^j}^{t}(f_{\kappa_2})_j(x)\big)^2\Big)^{1/2},\\
F_i^{J_0}(x)&:=\mathcal{M}_tf_i(x), \qquad   i\in \mathrm{II}\setminus \{l+1\},
\end{align*}
and it is clear that (\ref{kl22condition1}), (\ref{kl22condition2}), and (\ref{kl22condition3}) hold. Especially, (\ref{kl22condition3}) for $i=l+1$ is due to Lemma \ref{bmoboundlemma}.

{\bf Case6 : $\kappa_1\in \mathrm{II}$, $\kappa_2\in \mathrm{III}$.}
 The similar arguments can be applied with
\begin{align*}
b_i^{J_0}(x)&:=\mathcal{M}_ta_i(x)\chi_{Q_i^*}(x),\qquad  i\in \mathrm{I},\\
F_{\kappa_1}^{J_0}(x)&:=\sum_{j\in\zz}\mathfrak{M}_{s_{\kappa_1},2^j}^t(f_{\kappa_1})_j(x)\mathfrak{M}_{s_{\kappa_2},2^j}^t(f_{\kappa_2})_j(x), \\
F_i^{J_0}(x)&:=\mathcal{M}_tf_i(x), \quad   i\in \mathrm{II}\setminus \{\kappa_1\}.
\end{align*}
Note that Lemma \ref{bmoboundlemma} implies 
$$\Vert F_{\kappa_1}^{J_0}\Vert_{L^{p_{\kappa_1}}(\rn)}\lesssim \Vert f_{\kappa_1}\Vert_{L^{p_{\kappa_1}}(\rn)}\Vert f_{\kappa_2}\Vert_{BMO}\lesssim \Vert f_{\kappa_1}\Vert_{L^{p_{\kappa_1}}(\rn)}.$$

\hfill

Next we consider the case $J_0\not=\emptyset$. 
In this case the proof is based on the idea in the proof of Lemma \ref{keylemma1}.
For notational convenience, let 
\begin{equation}\label{gjdef}
\mathcal{G}_j:=T_{\sigma_{j,1}^{\kappa_1,\kappa_2}}\big( a_{1},\dots,a_{l},f_{l+1},\dots,f_m\big).
\end{equation} 
Here, the notation $\mathcal{G}_j$ does not contain two parameters $\kappa_1$ and $\kappa_2$ as the arguments below are universal for any $1\leq \kappa_1<\kappa_2\leq m$. We note that $\mathcal{G}_j$ plays a similar role as $g_j$ in (\ref{gjx}).

We shall prove that there exist nonnegative functions $u^{J_0}_{i,j}$, $i\in J_0$, such that for $x\in E_{J_0}$ and $j\in \mathbb Z$,
\begin{equation}\label{equ:019}
\big|\mathcal{G}_j(x)\big| \lesssim \mathcal{L}_{\sss}^{r}[\sigma] \Big(\prod_{i\in J_0} u^{J_0}_{i,j}(x)\Big)\Big(\prod_{i\in \mathrm{I} \setminus J_0} |{Q_{i}}|^{-1/p_i} \chi_{Q_{i}^{\ast}}(x)\Big) \Big(\prod_{i\in \mathrm{II} } \mathcal{M}_{t}(f_{i})(x)\Big),
\end{equation}
 and 
\begin{equation}\label{equ:020}
\Vert{u^{J_0}_{i,j}}\Vert_{L^{p_i}(\rn)} \lesssim \min \big( \big(2^{j}\ell (Q_{i})\big)^{\gamma_i}, \big(2^{j}\ell (Q_{i})\big)^{-\delta_i}\big)
\end{equation}
for some $\gamma_i, \delta_i>0$, which are the counterparts of (\ref{gjpointest}) and (\ref{uijest}), respectively.

If we have such functions $u^{J_0}_{i,j}$, then \eqref{kl22condition1} holds with the functions
\begin{equation}\label{bjidef}
b^{J_0}_{i}:=\sum_{j\in \zz}u^{J_0}_{i,j} \quad\text{ for }~ i\in J_0, \qquad  \qquad b^{J_0}_{i}= |{Q_{i}}|^{-1/p_i}\chi_{Q_{i}^*} \quad \text{ for }~ i\in \mathrm{I} \setminus J_0,
\end{equation}
\begin{equation}\label{fiidef}
F^{J_0}_{i}:=\mathcal{M}_t(f_i) \qquad\text{ for }\quad i\in \mathrm{II}.
\end{equation}
The estimate (\ref{kl22condition2}) for  $i\in \mathrm{I} \setminus J_0$ is obvious and when $i\in J_0$ it follows from (\ref{equ:020}).
 In addition, (\ref{kl22condition3}) holds via the $L^{p_i}$-boundedness of $\mathcal{M}_{t}$.

From now on, let us construct $u_{i,j}^{J_0}$ having the properties (\ref{equ:019}) and $(\ref{equ:020})$.
Fix $x\in E_{J_0}$ and write
\begin{equation*}
\mathcal{G}_j(x)=\int_{(\rn)^m}{2^{jmn}K_j\big(2^j(x-y_1),\dots,2^j(x-y_m) \big)\Big( \prod_{i\in\mathrm{I}} a_{i}(y_i)\Big) \Big(\prod_{i\in\mathrm{II}\cup\mathrm{III}}f_{i}(y_i)\Big)}d\yyy
\end{equation*} where $K_j:=\big(\sigma_{j,1}^{\kappa_1,\kappa_2}(2^j\cdot) \big)^{\vee}$.
Let $c_i$ denote the center of the cube $Q_i$ and use the notation
\begin{equation*}
K_j^{(u,w)}(x,\yyy):=K_j\big(y_1,\dots,y_{u-1},2^j(x-y_u),\dots,2^j(x-y_w),y_{w+1},\dots,y_m\big)
\end{equation*}
for simplicity, as before.

Since $|{x-c_i}|\approx |{x-y_i}|$ for $x\not\in Q_{i}^{\ast}$ and $y_{i}\in Q_{i}$, we see that
\begin{align*}
&\Big( \prod_{i\in J_0}\langle 2^j(x-c_i) \rangle^{s_i}\Big)  |{\mathcal{G}_j(x)}|\\
&\lesssim 2^{jmn}\int_{({\rn})^{m}} \Big( \prod_{i\in J_0}\langle 2^j(x-y_i) \rangle^{s_i}\Big)  \big|{K_{j}^{(1,m)}\big(x,\yyy\big)}\big| \Big( \prod_{i\in \mathrm{I}}|{a_{i}(y_i)}|\Big) \Big( \prod_{i\in\mathrm{II} \cup \mathrm{III} }|{f_{i}(y_i)}|\Big) d\yyy\\
&\leq 2^{jmn}\int_{(\rn)^m} \Big( \prod_{i\in J_0}\langle 2^j(x-y_i) \rangle^{s_i}\Big)  \big|{K_{j}^{(1,m)}\big(x,\yyy\big)}\big| \Big( \prod_{i\in \mathrm{I}} |{Q_{i}}|^{-1/p_i}\chi_{Q_{i}}(y_i)\Big) \Big( \prod_{i\in\mathrm{II}} |{f_{i}(y_i)}|\Big) d\yyy.
\end{align*}
We now fix  $k\in J_0$ and estimate the last integral by 
\begin{align*}
&\int_{y_k\in\rn} \Big\Vert{\Big( \prod_{i\in J_0\cup \mathrm{II}} \langle 2^j(x-y_i) \rangle^{s_i} \Big) K_{j}^{(1,m)}\big(x,\yyy\big)}\Big\Vert_{L^{\infty}(\yyy_{J_0\setminus \{k\}})L^{1}(\yyy_{\mathrm{I} \setminus J_0})L^{t'}(\yyy_{\mathrm{II}})L^{1}(\yyy_{\mathrm{III}})}\\
&\qquad \qquad \times \Big\Vert{\prod_{i\in J_0}|{Q_{i}}|^{-1/p_i}\chi_{Q_{i}}(y_i)}\Big\Vert_{L^{1}(\yyy_{J_0\setminus \{k\}})} \Big\Vert{\prod_{i\in \mathrm{I} \setminus J_0}|{Q_{i}}|^{-1/p_i}\chi_{Q_{i}}(y_i)}\Big\Vert_{L^{\infty}(\yyy_{\mathrm{I} \setminus J_0})}\\
&\qquad  \qquad \times \Big\Vert{\prod_{i\in \mathrm{II}}\langle 2^j(x-y_i) \rangle^{-s_i} f_{i}(y_i)}\Big\Vert_{L^{t}(\yyy_{\mathrm{II}})}  dy_{k},
\end{align*}
where we used the notations $\displaystyle \yyy_J:=\otimes_{i\in J}y_i$ for all $J$ (for example, $\yyy_{\mathrm{I}}=(y_1,\dots,y_l)$, $\yyy_{\mathrm{II}}=(y_{l+1},\dots,y_{\rho})$, and so on),
and
\begin{equation*}
\Vert{F(z_1,z_2)}\Vert_{L^{p}(z_1)L^{q}(z_2)}:=\big\Vert  \Vert F(z_1,z_2)\Vert_{L^p(z_1)} \big\Vert_{L^q(z_2)}. 
\end{equation*}
Using a change of variables we write
\begin{align*}
&\Big\Vert{\Big( \prod_{i\in J_0\cup \mathrm{II}}\langle 2^j(x-y_i) \rangle^{s_i}\Big)  K_{j}^{(1,m)}\big(x,\yyy \big) }\Big\Vert_{L^{\infty}(\yyy_{J_0\setminus \{k\}})L^{1}(\yyy_{\mathrm{I} \setminus J_0})L^{t'}(\yyy_{\mathrm{II}})L^{1}(\yyy_{\mathrm{III}})}\\
&=2^{-jn \textup{Card}(\mathrm{I} \setminus J_0)}2^{-(jn/t')\textup{Card}(\mathrm{II})}2^{-jn \textup{Card}(\mathrm{III})}  \\
&\qquad \times \Big\Vert  \langle 2^j(x-y_k)\rangle^{s_k}\Big( \prod_{i\in J_0\cup \mathrm{II}\setminus \{k\}}\langle y_i\rangle^{s_i}\Big) K_j^{(k,k)}\big(x,\yyy \big)\Big\Vert_{L^{\infty}(\yyy_{J_0\setminus \{k\}})L^{1}(\yyy_{\mathrm{I} \setminus J_0})L^{t'}(\yyy_{\mathrm{II}})L^{1}(\yyy_{\mathrm{III}})}.
\end{align*}
Now   H\"older's inequality with $s_i>n/t$ and Lemma \ref{lem:LInfL2} yield
\begin{align*}
&\Big\Vert  \langle 2^j(x-y_k)\rangle^{s_k}\Big( \prod_{i\in J_0\cup \mathrm{II}\setminus \{k\}}\langle y_i\rangle^{s_i}\Big) K_j^{(k,k)}\big(x,\yyy \big)\Big\Vert_{L^{\infty}(\yyy_{J_0\setminus \{k\}})L^{1}(\yyy_{\mathrm{I} \setminus J_0})L^{t'}(\yyy_{\mathrm{II}})L^{1}(\yyy_{\mathrm{III}})}\\
&\lesssim \Big\Vert \langle 2^j(x-y_k)\rangle^{s_k}   {\Big( \prod_{i\in \Lambda\setminus \{k\}} \langle y_i \rangle^{s_i} \Big) K_{j}^{(k,k)}\big(x,\yyy\big) }\Big\Vert_{ L^{\infty}(\yyy_{J_0\setminus \{k\}}) L^{t'}(\yyy_{\Lambda \setminus J_0})}\\
&\lesssim \Big\Vert \langle 2^j(x-y_k) \rangle^{s_k} \Big( \prod_{i\in \Lambda\setminus \{k\}} \langle  y_i \rangle^{s_i} \Big)  K_{j}^{(k,k)}\big(x,\yyy \big) \Big\Vert_{L^{t'}(\yyy_{\Lambda\setminus \{k\}})}.
\end{align*}
Morover, we have
\begin{align*}
\Big\Vert{\prod_{i\in J_0}|{Q_{i}}|^{-1/p_i}\chi_{Q_{i}}(y_i)}\Big\Vert_{L^{1}(\yyy_{J_0\setminus \{k\}})}&\lesssim \Big(\prod_{i\in J_0}|Q_i|^{1-1/p_i} \Big)\chi_{Q_k}(y_k)|Q_k|^{-1},\\
\Big\Vert{\prod_{i\in \mathrm{I} \setminus J_0}|{Q_{i}}|^{-1/p_i}\chi_{Q_{i}}(y_i)}\Big\Vert_{L^{\infty}(\yyy_{\mathrm{I} \setminus J_0})}&\leq \prod_{i\in \mathrm{I}\setminus J_0}{|Q_i|^{-1/p_i}},\\
\Big\Vert{\prod_{i\in \mathrm{II}}\langle 2^j(x-y_i) \rangle^{-s_i} f_{i}(y_i) }\Big\Vert_{L^{t}(y_{\mathrm{II}})}&\lesssim  2^{-(jn/t)\textup{Card}(\mathrm{II})}\prod_{i\in\mathrm{II}}\mathfrak{M}_{s_i,2^j}^tf_i(x)  \\
&\lesssim 2^{-(jn/t)\textup{Card}(\mathrm{II})} \prod_{i\in \mathrm{II}} \mathcal{M}_{t}(f_{i})(x),
\end{align*} 
where the last inequality follows from (\ref{maximalcompare}) with $s_i>n/t$.

Combining the above inequalities, we obtain that for $x\in E_{I_0}$,
\begin{align*}
\Big( \prod_{i\in J_0}\langle 2^j(x-c_i) \rangle^{s_i}\Big)  |{\mathcal{G}_j(x)}| &\lesssim 2^{jn \textup{Card}(J_0)}  H_j^{(k,0)}(x) \Big(\prod_{i\in J_0}|{Q_i}|^{1-1/p_i} \Big) \\
&\qquad \qquad \times\Big( \prod_{i\in \mathrm{I} \setminus J_0}|{Q_i}|^{-1/p_i}\Big)\Big( \prod_{i\in \mathrm{II} }\mathcal{M}_{t}(f_i)(x)\Big)
\end{align*}
where $H_j^{(k,0)}$ is defined as 
 \begin{align*}
    H_j^{(k,0)}(x)&:=  \dfrac1{{|{Q_k}|}}\int_{Q_k}\langle 2^j(x-y_k)\rangle^{s_k} \Big\Vert{\Big( \prod_{i\in\Lambda\setminus \{k\}}\langle y_i\rangle^{s_i} \Big) K_j^{(k,k)}\big(x,\yyy\big)}\Big\Vert_{L^{t'}(\yyy_{\Lambda\setminus \{k\}})}dy_k,
    \end{align*} which is the counterpart of $h_j^{(k,0)}$ in the proof of Lemma \ref{keylemma1}.
Then the argument that led to (\ref{hr'}), with (\ref{compactembedding1}), proves that
\begin{equation}\label{equ:025}
\Vert H_j^{(k,0)} \Vert_{L^{t'}(\rn)}\lesssim 2^{-jn/t'}\mathcal{L}_{\sss}^r[\sigma].
\end{equation}

  On the other hand, applying the vanishing moment condition of $a_k$ and Lemma \ref{smalllemma}, we write
    \begin{align*}
    \big|\mathcal{G}_j(x)\big| &\lesssim 2^{jmn}\sum_{|\alpha|=N_k+1}\int_0^1{\int_{(\rn)^m}{\big( 2^j|y_k-c_k|\big)^{N_k+1}}}\Big(\prod_{i\in I}{|Q_i|^{-1/p_i}\chi_{Q_i}(y_i)}\Big)\\
    &\quad \times {{{\Big|\partial_k^{\alpha}K_j\big(2^j(x-y_1),\dots,2^j(x-y_{k-1}),2^jx_{c_k,y_k}^{t},2^j(x-y_{k+1}),\dots,2^j(x-y_m)\big) \Big|} }}\\
    &\quad \times \Big( \prod_{i\in \mathscr{II}}{|f_i(y_i)|}\Big)d\yyy dt    
    \end{align*}
    where $x_{c_k,y_k}^t:=x-c_k-t(y_k-c_k)$.
   Since $|{x_{c_k,y_k}^t}|\approx |{x-c_k}|$ for $x\not\in Q_k^{\ast}$, $y_k\in Q_k$, and $0<t<1$, arguing as in (\ref{equ:H1NFunc}), we obtain that for $x\in E_{J_0}$,
    \begin{align*}
   &\Big( \prod_{i\in J_0}\langle 2^j(x-c_i)\rangle^{s_i}\Big){|{\mathcal{G}_j(x)}|}\\
   &\lesssim 2^{jmn}\big(2^jl(Q_k) \big)^{N_k+1} \sum_{|\alpha|=N_k+1}\int_0^1 \int_{y_k\in\rn}     \Big\Vert \langle 2^jx_{c_k,y_k}^t \rangle^{s_k} \Big( \prod_{i\in J_0\cup {II}\setminus \{k\}}{ \langle 2^j(x-y_i)\rangle^{s_i}}\Big)\\
   &\qquad    \times   \partial^{\alpha}_k K_j\big(2^j(x-y_1),\dots ,2^j(x-y_{k-1}),2^j x_{c_k,y_k}^t,2^j(x-y_{k+1}),\dots,\\
   &\qquad  \qquad \qquad \qquad \qquad \qquad \qquad \qquad , 2^j(x-y_m)\big)  \Big\Vert_{L^{\infty}(\yyy_{J_0\setminus \{k\}})L^{1}(\yyy_{\mathrm{I} \setminus J_0})L^{t'}(\yyy_{\mathrm{II}})L^{1}(\yyy_{\mathrm{III}})}  \\
   &\qquad \times \Big\Vert{\prod_{i\in J_0}|{Q_{i}}|^{-1/p_i}\chi_{Q_{i}}(y_i)}\Big\Vert_{L^{1}(\yyy_{J_0\setminus \{k\}})} \Big\Vert{\prod_{i\in \mathrm{I} \setminus J_0}|{Q_{i}}|^{-1/p_i}\chi_{Q_{i}}(y_i)}\Big\Vert_{L^{\infty}(\yyy_{\mathrm{I} \setminus J_0})}\\
& \qquad \times \Big\Vert{\prod_{i\in \mathrm{II}}\langle 2^j(x-y_i) \rangle^{-s_i} f_{i}(y_i)}\Big\Vert_{L^{t}(\yyy_{\mathrm{II}})}  dy_{k} dt\\
    & \lesssim 2^{jn \textup{Card}(J_0)} H_j^{(k,1)}(x) \Big(\prod_{i\in J_0}|{Q_i}|^{1-1/p_i} \Big)\Big( \prod_{i\in \mathrm{I} \setminus J_0}|{Q_i}|^{-1/p_i}\Big)\Big( \prod_{i\in \mathrm{II} }\mathcal{M}_{t}(f_i)(x)\Big)
    \end{align*}
 where 
    \begin{align*}
    H_j^{(k,1)}(x)&:=\big( 2^jl(Q_k)\big)^{N_k+1}\sum_{|\alpha|=N_k+1}  \dfrac1{{|{Q_k}|}}\int_0^1\int_{Q_k}\langle 2^jx_{c_k,y_k}^{t}\rangle^{s_k}\\
       & \quad \times \Big\Vert{\prod_{i\in\Lambda\setminus \{k\}}\langle\cdot_i\rangle^{s_i}  \partial_k^{\alpha}K_j\big(\cdot_1,\ldots,\cdot_{k-1},2^jx_{c_k,y_k}^{t},\cdot_{k+1}, \dots,\cdot_m\big)}\Big\Vert_{L^{t'}((\rn)^{m-1})}dy_k dt.
    \end{align*}
Using   Minkowski's inequality, Lemma \ref{lem:LInfL2} and (\ref{compactembedding1}), we deduce  
\begin{equation}\label{equ:027}
\Vert{H_j^{(k,1)}}\Vert_{L^{t'}({\mathbb R}^n)}\lesssim 2^{-jn/t'} \big( 2^jl(Q_k)\big)^{N_k+1}\mathcal{L}_{\sss}^{r}[\sigma].
\end{equation}

So far, we have proved that for $x\in E_{J_0}$ and $k\in J_0$,
\begin{align}\label{gjest}
|{\mathcal{G}_{j}(x)}| &\lesssim  2^{jn \textup{Card}(J_0)}  \Big(\prod_{i\in J_0}\langle 2^j(x-c_i) \rangle^{-s_i} |{Q_i}|^{1-1/p_i }\chi_{(Q_i^*)^c}(x)\Big)\\
&\quad \times \Big( \prod_{i\in \mathrm{I} \setminus J_0}|{Q_i}|^{-1/p_i}\chi_{Q_i^*}(x)\Big)\Big( \prod_{i\in \mathrm{II} }\mathcal{M}_{t}(f_i)(x)\Big) \min{\big(H_j^{(k,0)}(x), H_j^{(k,1)}(x)\big)}. \nonumber
\end{align}

We choose $\{\alpha_i\}_{i\in I_0}$ and $\{\beta_i\}_{i\in I_0}$ as in (\ref{betak}) by replacing $\{1,\dots,v\}$ and $r'$ by $J_0$ and $t'$, respectively, which is possible since 
\begin{equation*}
\sum_{i\in J_0}{\min{\big(0,{s_i}/{n}-{1}/{p_i} \big)}}>-{1}/{t'}
\end{equation*}  
by virtue of   condition (\ref{stot2}).
Then we have
\begin{equation*}
\alpha_i,\beta_i >0, \quad {s_i}/{n}>{1}/{p_i}- {\beta_{k}}/{t'}=\alpha_i, \quad \sum_{i\in J_0}\beta_i =1.
\end{equation*}
Now if we set 
\begin{equation*}
u_{i,j}^{J_0}(x):= \big(\mathcal{L}_{\sss}^{r}[\sigma] \big)^{-\beta_i}2^{jn}|{Q_i}|^{1-1/p_i} \langle 2^j(x-c_i) \rangle^{-s_i} \chi_{(Q_i^*)^{c}}(x) \Big( \min{\big(H_j^{(i,0)}(x), H_j^{(i,1)}(x)\big)} \Big)^{\beta_i},
\end{equation*}
(\ref{equ:019}) is immediate from (\ref{gjest}) since $\sum_{i\in J_0}\beta_i=1$.

It remains to verify (\ref{equ:020}).
 H\"older's inequality with $1/p_i=\beta_i/r'+\alpha_i$ yields that
\begin{align*}
\Vert{u_{i,j}^{J_0}}\Vert_{L^{p_i}(\rn)} &\leq  \big( \mathcal{L}_{\sss}^{r}[\sigma]\big)^{-\beta_i}2^{jn}l(Q_i)^{n(1-1/p_i)} \big\Vert{\langle 2^j(\cdot-c_i) \rangle ^{-s_i}\chi_{(Q_i^*)^{c}}}\big\Vert_{L^{1/\alpha_i}(\rn)}\\
 &\qquad \qquad \times \min{\Big(\Vert H_j^{(i,0)}\Vert_{L^{t'}(\rn)}^{\beta_i},\Vert H_j^{(i,1)}\Vert_{L^{t'}(\rn)}^{\beta_i} \Big)}.
\end{align*}
Since $s_i >\alpha_i n$, we have
\begin{equation*}
\big\Vert{\langle 2^j(\cdot-c_i) \rangle ^{-s_i} \chi_{(Q_i^*)^{c}}}\big\Vert_{L^{1/\alpha_i}(\rn)} \lesssim 2^{-jn\alpha_i}\min{\big(1,\big(2^jl(Q_i)\big)^{-(s_i-\alpha_i n)} \big)},
\end{equation*}
and the estimates \eqref{equ:025} and \eqref{equ:027} prove
\begin{align*}
\min{\Big(\Vert H_j^{(i,0)}\Vert_{L^{t'}(\rn)}^{\beta_i},\Vert H_j^{(i,1)}\Vert_{L^{t'}(\rn)}^{\beta_i} \Big)} \lesssim  2^{-jn\beta_i/t'} \big(\mathcal{L}_{\sss}^{r}[\sigma] \big)^{\beta_i}\min{\big( 1,\big( 2^jl(Q_i)\big)^{\beta_i(N_i+1)}\big)}.
\end{align*}
Thus, 
\begin{equation*}
\Vert u_{i,j}^{J_0}\Vert_{L^{p_i}(\rn)}\lesssim \begin{cases}
\big( 2^jl(Q_i)\big)^{-(n/p_i-n)+\beta_i(N_i+1)},\quad  & \text{ if }~ 2^jl(Q_i)\leq 1\\
 \big( 2^jl(Q_i)\big)^{-(n/p_i-n)-(s_i-\alpha_i n)}, \quad & \text{ if }~ 2^j(Q_i)>1
\end{cases}
\end{equation*} since $1-\alpha_i-\beta_i/t'=1-1/p_i$.
This implies \eqref{equ:020} with $\gamma_j= -(n/p_i-n)+\beta_i(N_i+1)$ and $\delta_i=n/p_i-n+s_i-\alpha_i n$.
We have $\gamma_k,\delta_k>0$ as $N_k$ is sufficiently large and $s_i>\alpha_i n$.

This completes the proof of Lemma \ref{keylemma2}.

\hfill

\subsection{Proof of Lemma \ref{keylemma3}}

The proof is similar to that of Lemma \ref{keylemma2}.
As in the proof of Lemma \ref{keylemma2}, we choose $1<t<r$ such that
\begin{equation*}
s_1,\dots,s_m>d/t>d/r, \qquad \sum_{k\in J}{\big({s_k}/{n}-{1}/{p_k} \big)}>-{1}/{t'}>-{1}/{r'}
\end{equation*}
for every nonempty subset $J\subset \mathrm{I}$, and observe that (\ref{compactembedding1}) holds.

For each $J_0\subset \mathrm{I}$, let 
\begin{equation*}
E_{J_0}:=\Big( \bigcap_{i\in \mathrm{I}\setminus J_0}Q_i^*\Big)\setminus \Big( \bigcup_{i\in J_0}{Q_i^*}\Big)
\end{equation*} 
and we decompose the left-hand side of (\ref{kl3condition1}) as
\begin{align*}
\sum_{J_0\subset \mathrm{I}}\Big( \sum_{j\in\zz}\big|T_{\sigma_{1,j}^{\kappa}} \big( a_{1},\dots,a_{l},f_{l+1},\dots,f_m\big)(x)\big|^2\Big)^{1/2}\chi_{E_{J_0}}(x).
\end{align*}
Since it is a finite sum over $J_0$, we need to prove that for each $J_0\subset I$, there exist nonnegative functions $b_i^{J_0}$, $i\in \mathrm{I}$, and $F_i^{J_0}$, $ i\in \mathrm{II}$ satisfying that for all $x\in E_{J_0}$
\begin{align}\label{kl33condition1}
&\Big( \sum_{j\in\zz}\big|T_{\sigma_{j,1}^{\kappa}}\big( a_{1},\dots,a_{l},f_{l+1},\dots,f_m\big)(x)\big|^2\Big)^{1/2}\lesssim \mathcal{L}_{\sss}^{r}[\sigma]\Big( \prod_{i\in\mathrm{I}}b_i^{J_0}(x)\Big)\Big( \prod_{i\in\mathrm{II}}F_i^{J_0}(x)\Big),
\end{align} 
\begin{equation}\label{kl33condition2}
\Vert b_i^{J_0}\Vert_{L^{p_i}(\rn)}\lesssim 1,\quad \text{ for }~ i\in \mathrm{I},
\end{equation}
\begin{equation}\label{kl33condition3}
\Vert F_i^{J_0}\Vert_{L^{p_i}(\rn)}\lesssim \Vert f_i\Vert_{L^{p_i}(\rn)}, \quad \text{ for }~ i\in \mathrm{II}.
\end{equation}

\hfill

Let us first assume $J_0=\emptyset$. In this case, the proof consists of three cases.

{\bf Case1 : $\kappa \in \mathrm{I}$. } 
Using (\ref{condexpression1}), Lemma \ref{keyestilemma}, (\ref{maximalcompare}), (\ref{sigmajkest}), and (\ref{compactembedding1}), we obtain
\begin{equation*}
\big| T_{\sigma_{j,1}^{\kappa}}\big(a_1,\dots,a_l,f_{l+1},\dots,f_m \big)(x)\big|\lesssim \mathcal{L}_{\sss}^{r}[\sigma]\mathcal{M}_t(a_{\kappa})_j(x)\Big( \prod_{i\in \mathrm{I}\setminus \{\kappa\}}\mathcal{M}_ta_i(x)\Big)\Big(\prod_{i\in\mathrm{II}}{\mathcal{M}_tf_i(x)} \Big),
\end{equation*} where we applied $\mathcal{M}_tf_i(x)\leq \Vert f_i\Vert_{L^{\infty}(\rn)}=1$ for $i\in\mathrm{III}$.
We now take 
\begin{align*}
b_{\kappa}^{J_0}(x)&:=\Big(\sum_{j\in\zz} \big( \mathcal{M}_t(a_{\kappa})_j(x)\big)^2 \Big)^{1/2}\chi_{Q_{\kappa}^*}(x),\\
b_i^{J_0}(x)&:=\mathcal{M}_ta_i(x)\chi_{Q_i^*}(x),\qquad i\in \mathrm{I}\setminus \{\kappa\},\\
F_i^{J_0}(x)&:=\mathcal{M}_tf_i(x), \qquad  i\in \mathrm{II}
\end{align*}
and then (\ref{kl33condition1}) holds.
Furthermore, (\ref{kl33condition2}) and (\ref{kl33condition3}) follow from   H\"older's inequality, (\ref{maximal1}) with $t<2$, and (\ref{littlewood}). 
To be specific, the estimates for $i\in\mathrm{I}\setminus \{\kappa\}$ or for $i\in\mathrm{II}$ are clear, and 
\begin{equation*}
\Vert b_{\kappa}^{J_0}\Vert_{L^{p_{\kappa}}(\rn)}\leq |Q_{\kappa}^*|^{1/p_{\kappa}-1/2}\big\Vert \big\{ \mathcal{M}_t(a_\kappa)_j\big\}_{j\in\zz}\big\Vert_{L^2(\ell^2)}\lesssim |Q_{\kappa}|^{1/p_{\kappa}-1/2}\Vert a_{\kappa}\Vert_{L^2(\rn)}\lesssim 1.
\end{equation*}

{\bf Case2 : $\kappa \in \mathrm{II}$. }  It can be proved in a similar way. Indeed, (\ref{kl33condition1}) holds with
\begin{align*}
b_i^{J_0}(x)&:=\mathcal{M}_ta_i(x)\chi_{Q_i}(x),\qquad i\in\mathrm{I},\\
F_{\kappa}^{J_0}(x)&:=\Big(\sum_{j\in\zz}\big(\mathcal{M}_t(f_{\kappa})_j(x)\big)^2\Big)^{1/2},\\
F_i^{J_0}(x)&:=\mathcal{M}_tf_i(x), \qquad  i\in\mathrm{II}\setminus \{\kappa\}.
\end{align*}
It is also obvious that (\ref{kl33condition2}) and (\ref{kl33condition3}) hold as (\ref{littlewood}) is applied in the case $i=\kappa$.

{\bf Case3 : $\kappa \in \mathrm{III}$. } 
 We utilize Lemma \ref{bmoboundlemma} as we did in {\bf Case3} that appeared in the proof of Lemma \ref{keylemma2}.
Using (\ref{secondexpression2}), Lemma \ref{keyestilemma}, (\ref{maximalcompare}), (\ref{sigmajkest}), and (\ref{compactembedding1}), we obtain that
\begin{align*}
&\big| T_{\sigma_{j,1}^{\sigma+1}}\big(a_1,\dots,a_l,f_{l+1},\dots,f_m \big)(x)\big|\\
 &\lesssim \mathcal{L}_{\sss}^{r}[\sigma] \Big( \prod_{i\in\mathrm{I}}\mathcal{M}_ta_i(x)\Big) \mathfrak{M}_{s_{l+1},2^j}^t(f_{l+1})^{j+1,m}(x) \Big(\prod_{i\in\mathrm{II}\setminus \{l+1\}}{\mathcal{M}_tf_i(x)} \Big)\mathfrak{M}_{s_{\kappa},2^j}^{t}(f_{\kappa})_j(x).
\end{align*}
Now we take
\begin{align*}
b_i^{J_0}(x)&:=\mathcal{M}_ta_i(x)\chi_{Q_i}(x),\qquad i\in\mathrm{I},\\
F_{l+1}^{J_0}(x)&:=\Big(\sum_{j\in\zz}\big(\mathfrak{M}_{s_{l+1},2^j}^t(f_{l+1})^{j+1,m}(x)\big)^2 \big( \mathfrak{M}_{s_{\kappa},2^j}^t(f_{\kappa})_j(x)\big)^2 \Big)^{1/2},\\
F_i^{J_0}(x)&:=\mathcal{M}_tf_i(x), \quad  i\in \mathrm{II}\setminus \{l+1\}.
\end{align*}
Then (\ref{kl33condition1}), (\ref{kl33condition2}), and (\ref{kl33condition3}) are all true for $i\not= l+1$, and (\ref{kl33condition3}) for $i=l+1$ follows from Lemma \ref{bmoboundlemma}.

\hfill

Now we consider the case $J_0\not=\emptyset$.
The proof is immediate from the argument in the proof of Lemma \ref{keylemma2}. 
We define, like (\ref{gjdef}), 
\begin{equation*}
\mathcal{G}_j:=T_{\sigma_{j,1}^{\kappa}}\big(a_1,\dots,a_l,f_{l+1},\dots,f_m\big).
\end{equation*} 
Then (\ref{equ:019}) still holds in the present case with (\ref{equ:020}). Let $b_i^{J_0}$, $i\in \mathrm{I}$, and $F_i^{J_0}$, $i\in \mathrm{II}$, be defined as in (\ref{bjidef}) and (\ref{fiidef}), and apply the embedding $\ell^1\hookrightarrow \ell^2$ to obtain
 that the left-hand side of (\ref{kl33condition1}) is bounded by
\begin{align*}
\sum_{j\in\zz}{\big| \mathcal{G}_j(x)\big|}\lesssim \mathcal{L}_{\sss}^{r}[\sigma]\Big(\prod_{i=1}^{l}b_i^{J_0}(x) \Big)\Big(\prod_{i=l+1}^{\rho}F_{i}^{J_0}(x) \Big),
\end{align*} which proves (\ref{kl33condition1}).
In addition, (\ref{kl33condition2}) and (\ref{kl33condition3}) are obvious from (\ref{kl22condition2}) and (\ref{kl22condition3}), respectively.

This completes the proof.

\medskip
\noindent {\bf Acknowledgment:} We would like to thank Professors M. Mastylo and N. Tomita for providing us important references related to complex interpolation.  We would also like to thank the anonymous referee for 
 his/her careful reading and useful comments.

\end{document}